        \pgfplotsset{compat=1.6}
\definecolor{aliceblue}{rgb}{0.9, 0.95, 1.0}
\definecolor{pallido}{RGB}{221,227,227}
\newcommand\Z{{\mathbb Z}}
\newcommand{\C}{{\mathbb C}}
\newcommand{\R}{{\mathbb R}}
\newcommand{\Te}{Teichm\"{u}ller }
\newcommand{\pslc}{{\mathrm{PSL}_2 (\mathbb{C})}}
\newcommand{\pslr}{{\mathrm{PSL}_2 (\mathbb{R})}}
\newcommand{\slc}{{\mathrm{SL}_2 (\mathbb{C})}}
\newcommand{\affc}{\text{Aff}(\C)}
\newcommand{\cp}{\mathbb{C}\mathrm{P}^1}
\theoremstyle{plain}                    
\newtheorem{thm}{Theorem}[section]
\newtheorem{thma}{Theorem}
\newtheorem{cora}[thma]{Corollary}
\newtheorem{lem}[thm]{Lemma}
\newtheorem{prop}[thm]{Proposition}
\newtheorem{cor}[thm]{Corollary}
\theoremstyle{definition}
\newtheorem{defn}[thm]{Definition}
\theoremstyle{remark}
\newtheorem{rmk}[thm]{Remark}
\newtheorem{claim}{Claim}
\title[Monodromy of Schwarzian equations with regular singularities]{Monodromy of Schwarzian equations\\ with regular singularities}
\author{Gianluca Faraco}
\address{Max Planck Institute for Mathematics, Bonn, Germany}
\email{gianlucafaraco@mpim-bonn.mpg.de}
\author{Subhojoy Gupta}
\address{Department of Mathematics, Indian Institute of Science, Bangalore, India}
\email{subhojoy@iisc.ac.in}
\begin{document}

\begin{abstract} Let $S$ be a punctured surface of finite type and negative Euler characteristic. We determine all possible representations $\rho:\pi_1(S) \to \pslc$ that arise as the monodromy of the Schwarzian equation  on $S$ with regular singularities at the punctures. Equivalently, we determine the holonomy representations of complex projective structures on $S$, whose Schwarzian derivatives (with respect to some uniformizing structure) have poles of order at most two at the punctures. Following earlier work that dealt with the case when there are no apparent singularities, our proof reduces to the case of realizing a degenerate representation with apparent singularities. This mainly involves explicit constructions of complex affine structures on punctured surfaces, with prescribed holonomy. As a corollary, we determine the representations that arise as the holonomy of spherical metrics on $S$ with cone-points at the punctures. 
 \end{abstract}

\maketitle
\tableofcontents

\section{Introduction}

\noindent Consider the Schwarzian equation 
\begin{equation}\label{schwarz}
y^{\prime\prime} + \frac{1}{2} {q} y = 0
\end{equation} 
on a punctured Riemann sphere $X= \cp \setminus \{p_1,p_2,\ldots p_k\}$, where the prescribed meromorphic coefficient function $q$ has poles of order at most two at the punctures. This is a second-order complex linear differential equation with regular singularities  and thus admits meromorphic solutions that span a complex vector space of dimension two (see, for example \cite[\S15.3]{Ince}). The monodromy of the solutions determines a representation ${\rho}: \pi_1(X) \to \pslc$, and determining which monodromy groups appear has been a subject of classical work (\textit{e.g.} \cite{PActa}). 

\smallskip

\noindent More generally,  let $S$ be an arbitrary punctured Riemann surface of finite-type and negative Euler characteristic, and let $\Pi = \pi_1(S)$ be its fundamental group. When $S$ is equipped with a complex structure, the Schwarzian equation \eqref{schwarz} makes sense by passing to the universal cover; the coefficient $q$ is then the lift of a holomorphic quadratic differential on the surface that has poles of order at most two at the punctures. Equivalently, the solutions of \eqref{schwarz} also determine a \textit{complex projective (or $\cp$-) structure} on the surface, which is a geometric structure on the surface modelled on $\cp$; the monodromy of the solutions is then the holonomy or monodromy of the geometric structure (see \S\ref{prel} for a discussion). It has been a long-standing problem to determine which conjugacy-classes of representations $\rho:\Pi \to \pslc$ arise as the monodromy of such a $\cp$-structure, when one is allowed to vary the marked complex structure on $S$.  The work in \cite{GupMon1} provided an answer under the assumption that there is no \textit{apparent singularity} (see Definition \ref{appsing}). This clarified, in particular, a remark of Poincar\'{e} in \cite[pg.218]{PActa} concerning the case of the punctured Riemann sphere, where he wrote

\begin{quote}
    {``On peut \textit{en g\'{e}n\'{e}ral} trouver une \'{e}quation du $2^d$ ordre, sans points \`{a} apparence singuli\`{e}re qui admette un groupe donn\'{e}."}
\end{quote}

\noindent Indeed, we had showed in \cite{GupMon1} that the monodromy groups that do arise are exactly those that are  \textit{non-degenerate} as in Definition \ref{degen}. 

\smallskip

\noindent In this article, we drop the assumption that there are no apparent singularities, and solve the problem, providing a complete characterization of the monodromy groups of the Schwarzian equation with regular singularities. In other words, for a surface $S_{g,k}$ of genus $g$ and $k\geq 1$ punctures that has negative Euler characteristic, we determine the image of the monodromy map 
\begin{equation}\label{monmap} 
\Psi:  \mathcal{P}_g(k) \to \text{Hom}\big(\pi_1(S_{g,k}), \pslc\big)/\pslc 
\end{equation}
where $\mathcal{P}_g(k)$ is  the space of meromorphic projective structures on $S_{g,k}$, with respect to a choice of a marked complex structure, such that each puncture corresponds to a pole of order at most two, and the target space is the \textit{$\pslc$-representation variety}, the space of surface-group representations into $\pslc$ up to the action by conjugation. To be more precise, two representations are equivalent, i.e.\  $\rho_1 \sim \rho_2$ if the \textit{closures} of their $\pslc$-orbits intersect; this coincides with geometric invariant theory quotient (see, for example, \cite{Newstead}). We note here that with our condition on the orders of the poles, the dimensions of the two spaces in the domain and range of the monodromy map $\Psi$ coincide. The question of determining the image of the above monodromy map was  mentioned in \cite[Question 4, pg. 554]{Luo}. Our main result (Theorem \ref{thm1}) below answers this completely.  

\medskip 

\noindent To state our results, we recall the following definitions from \cite{GupMon1}: 

\begin{defn}\label{degen}   A representation $\rho:\Pi \to \pslc$ is said to be \textit{degenerate} if $\rho(\Pi)$ preserves a set $F$ on $\cp$ where either (a) $F = \{p\}$ (i.e.\ a global fixed point) and the monodromy around each puncture is a parabolic fixing $p$ or the identity element, or (b) $F = \{p,q\}$ and the monodromy around each puncture fixes $p$ and $q$. Otherwise, $\rho$ is said to be \textit{non-degenerate}.  
\end{defn}

\begin{defn}\label{appsing}
A representation $\rho:\Pi \to \pslc$ is said to have an \textit{apparent singularity} at a puncture of $S_{g,k}$ if $\rho(\gamma) = \text{Id}$ for the peripheral loop $\gamma$ around the puncture. We also say a complex projective structure has an \textit{apparent singularity} at a puncture, if the monodromy around it is trivial. In that case the puncture is either a branch-point or a regular point of the structure (see Definition \ref{brpt} and the discussion following it).
\end{defn}

\noindent We shall prove:

\begin{thma}\label{thm1}  Let $\Pi$ be the fundamental group of a surface $S_{g,k}$ of genus $g$ and $k\geq 1$ punctures, where $2-2g-k<0$. A representation $\rho:\Pi \to \pslc$ arises as the monodromy of a $\cp$-structure in $\mathcal{P}_g(k)$ if and only if one of the following hold:
\begin{itemize}
\item[(i)] $\rho$ is a non-degenerate representation, or
\item[(ii)] $\rho$ is a degenerate representation, with at least one apparent singularity, with the only exceptions being:
\begin{itemize}
    \item the trivial representation, when $g>0$ and $k=1$ or $2$, and 
    \item a representation whose image is a group of order two, when $g>0$ and $k=1$.
\end{itemize} 
\end{itemize}
\end{thma}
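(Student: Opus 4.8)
The plan is to establish the two directions separately. For necessity, the key observation is that the monodromy of any $\cp$-structure with poles of order at most two at the punctures lies in $\pslc$ and, around each puncture, is conjugate to a Möbius transformation coming from the indicial equation of the regular singularity; in particular each peripheral holonomy is either trivial (an apparent singularity) or has a fixed point structure compatible with a pole of order at most two. If $\rho$ is degenerate with \emph{no} apparent singularity, then by the analysis in \cite{GupMon1} (the non-degeneracy criterion) such a $\rho$ cannot be realized. This leaves, on the ``only if'' side, the task of ruling out the two listed exceptional cases: the trivial representation for $g>0$, $k\in\{1,2\}$, and the order-two image for $g>0$, $k=1$. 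For these I would argue by a direct dimension/Euler-characteristic count combined with the Riemann–Hurwitz-type constraint on branch points that an apparent singularity forces, showing that no meromorphic projective structure on $S_{g,k}$ with the prescribed pole orders can have such degenerate monodromy — essentially because the developing map would descend to a branched cover of $\cp$ with too few branch points relative to what the genus requires.

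For sufficiency, the non-degenerate case (i) is already handled by \cite{GupMon1}: every non-degenerate $\rho$ is realized, even with no apparent singularity. So the whole new content is case (ii): realizing a degenerate $\rho$ that has at least one apparent singularity (and is not one of the exceptions). Here the strategy, as the abstract signals, is to reduce to explicit constructions of complex \emph{affine} structures on punctured surfaces with prescribed holonomy — this is natural because a degenerate representation fixing a point $p\in\cp$ (case (a) of Definition \ref{degen}) has image in $\affc = \mathrm{Stab}_{\pslc}(p)$, and a representation fixing a pair $\{p,q\}$ (case (b)) has image in the stabilizer of two points, which up to index two is $\C^*$ acting by scaling. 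I would first normalize so that the global fixed set is $\{0\}$ or $\{0,\infty\}$, then build a $\cp$-structure on $S_{g,k}$ by prescribing a developing map that is equivariant under $\rho$ and has an apparent singularity (branch point) at the designated puncture(s). Concretely, one cuts $S_{g,k}$ along a suitable system of arcs/curves into a planar piece, puts an affine structure on the pieces using $\rho$ to prescribe the gluing transition maps, and arranges an extra branch point to absorb the monodromy obstruction — the apparent singularity is exactly the place where one has the freedom to insert such a branch point.

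The main obstacle, I expect, is the book-keeping in these explicit affine constructions: one must verify that the prescribed developing map genuinely has poles of order at most two at \emph{all} punctures (not just the apparent one), that the branching is exactly at the punctures and nowhere in the interior, and that the construction still works in the delicate low-complexity cases ($g$ small, $k=1$) where there is little room to maneuver — it is precisely the failure of this maneuvering room that produces the two exceptions in the theorem. A secondary subtlety is the case of a dihedral-type degenerate representation in case (b) where $\rho(\Pi)\not\subset \C^*$ but has an order-two quotient swapping $p$ and $q$; there the developing map is equivariant only for an index-two subgroup and one must patch across the orientation-type reversal, which changes the local model at the punctures. I would handle this by passing to the corresponding double cover, performing the affine construction there, and then descending. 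Finally, one must check that the resulting structures lie in $\mathcal{P}_g(k)$ for \emph{some} marked complex structure on $S_{g,k}$, which is automatic since the complex structure underlying a $\cp$-structure is whatever the developing-map construction produces — but one should confirm it is of finite type with the punctures genuinely punctures, i.e.\ the construction does not accidentally fill in a puncture.
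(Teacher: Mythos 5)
Your overall outline matches the paper's: the ``no apparent singularity $\Rightarrow$ non-degenerate'' implication is indeed quoted from earlier work (the paper uses Theorem 6.1 of \cite{AllBrid} together with Proposition 3.1 of \cite{GupMon1}), the trivial-representation exception is excluded exactly by a Riemann--Hurwitz argument (Lemma \ref{trivex}), non-degenerate realizability is imported from \cite{GupMon1}, and the degenerate case is attacked via explicit affine/polygonal constructions with a branch point inserted at the apparent singularity. Two points, however, need attention. First, for the order-two exception your ``developing map descends to a branched cover'' argument only applies verbatim to the trivial representation; for $\mathrm{Im}(\rho)\cong\mathbb{Z}_2$ the paper (Lemma \ref{excase}) must first \emph{lift} a hypothetical structure to the index-two cover associated to $\ker\rho$, where the pulled-back monodromy becomes trivial and the Riemann--Hurwitz obstruction then applies. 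Lifting is automatic, so this is a small fixable omission rather than an error.

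The genuine gap is your treatment of the dihedral case (b), where some elements of $\rho(\Pi)$ swap the preserved pair $\{p,q\}$: you propose to build the structure on the index-two cover (where the representation becomes co-axial) and then ``descend.'' Descending is the wrong direction: a projective structure on the double cover whose monodromy is $\rho|_{\ker}$ (or the restriction to the index-two subgroup) need not be invariant under the deck involution, and without such invariance it does not induce any structure on $S_{g,k}$; moreover, even an invariant structure only pins down the monodromy on the index-two subgroup, not the full representation $\rho$, so one would still have to check the swapping elements are realized correctly. Producing an involution-invariant structure with prescribed equivariant developing map is essentially as hard as the original problem, and the paper does not attempt it: instead it handles dihedral representations directly (Section \S\ref{ssdih}), using Lemma \ref{onedihhandle} to change handle-generators so that exactly one handle carries genuinely dihedral monodromy and the complement is co-axial, realizing that handle by an explicit quadrilateral construction (Lemma \ref{dihg1k1}, using the normal forms \eqref{standform2}--\eqref{standform3}) and then grafting it onto the co-axial piece. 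A similar remark applies to the once-punctured affine case, where the paper's real work (chains of quadrilaterals controlled by the unitary part, the pentagon Lemma \ref{pent_lemma} for non-co-axial $\rho$, and grafting of projective handles) is exactly the ``book-keeping'' your sketch defers; as written, your proposal does not contain an argument that survives in the low-complexity cases $k=1$, which is where the exceptions and the hardest constructions live.
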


\medskip

\noindent The case of a \textit{closed} surface $S_g$ where $g\geq 2$ was handled by the work of Gallo-Kapovich-Marden in \cite{GKM}, who showed that non-elementary representations that lifts to $\slc$ are exactly those that arise as  monodromy representations of $\cp$-structures on $S_g$. Note that a non-elementary representation is automatically non-degenerate as defined above (for a comparison between these notions see \cite[\S2.4]{GupMon1}).  Gallo-Kapovich-Marden had also stated the problem of what happens for punctured surfaces, see \cite[Problem 12.2.1]{GKM}. This paper solves this --  the theorem above answers the ``existence" part of their problem, and for the ``nonuniqueness" part, we shall observe that the constructions in the paper imply the following:

\begin{cora}\label{cor:fiber} 
The non-empty fibers of the monodromy map \eqref{monmap} are infinite in cardinality. 
\end{cora}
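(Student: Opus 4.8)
The plan is to fix a representation $\rho$ lying in the image of $\Psi$, take one complex projective structure $P$ with holonomy $\rho$ furnished by Theorem~\ref{thm1} (or by \cite{GupMon1} when $\rho$ is non-degenerate), and exhibit an infinite family of pairwise non-isomorphic structures in $\mathcal{P}_g(k)$ with the same monodromy, produced from $P$ by holonomy-preserving surgeries. The cleanest such surgery is grafting: if $S_{g,k}$ carries an essential simple closed curve $\gamma$ --- for instance a peripheral loop --- with $\rho(\gamma)$ loxodromic, then inserting along $\gamma$ a cylinder covered by a $\langle \rho(\gamma)\rangle$-invariant annular region of $\cp$, wrapped $n$ times, yields structures $P_n$ ($n\ge 1$) all with holonomy $\rho$ and all lying in $\mathcal{P}_g(k)$; since the conformal modulus of the inserted cylinder is a projective invariant that tends to infinity with $n$, the $P_n$ are pairwise non-isomorphic, so the fiber over $[\rho]$ is infinite. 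This already covers every $\rho$ possessing a loxodromic element realized by a simple closed curve, in particular the generic non-degenerate representations and all those with a loxodromic peripheral.

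For the remaining representations --- those with image in a compact subgroup (the ``spherical'' case, where only elliptic elements occur) or in an affine/parabolic subgroup, and in particular the degenerate representations with apparent singularities that constitute the new content of Theorem~\ref{thm1} --- the plan is to read off an honest free parameter from the explicit constructions in the body of the paper. Each such construction assembles a realizing structure by gluing model pieces along curves or arcs, with the holonomy prescribed in advance and manifestly insensitive to: (a) the choice of marked complex structure on a building block, (b) the positions of the points or arcs along which pieces are glued, or (c) when a puncture is a branch point of prescribed order, the location of that branch point among the compatible structures. Any one of these supplies an infinite (indeed, in cases (a) and (b), continuous) family of structures in $\mathcal{P}_g(k)$ over the single point $[\rho]$, distinctness being certified by a conformal invariant --- an extremal length of a curve system, or a branch datum --- that genuinely varies with the parameter. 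When $\rho$ takes values in an affine subgroup there is also a uniform way to see this: branched affine structures on a fixed complex structure with a fixed linear holonomy already form an infinite space, a torsor modelled on closed meromorphic one-forms with prescribed poles whose periods lie in $2\pi i\,\Z$, of which there is a full rank-$(2g+k-1)$ lattice.

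The main obstacle is uniformity together with non-degeneracy of the deformation: one must verify that in every case at least one of the above mechanisms is available, i.e. that no representation in the image of $\Psi$ is so rigid that grafting, the free parameters of the constructions, and the affine deformations all fail simultaneously; and one must check that the deformation parameter is not absorbed by a change of marking, so that the structures produced are genuinely distinct points of $\mathcal{P}_g(k)$ rather than the same structure re-marked. Both points are handled by pinning down, along each family, a computable projective invariant --- the modulus of a grafted cylinder, an extremal length, or the cross-ratio data of branch points at the punctures --- which takes infinitely many values; this is precisely where the explicit nature of the constructions proving Theorem~\ref{thm1} enters. Combining this with Theorem~\ref{thm1} then gives that every non-empty fiber of $\Psi$ is infinite in cardinality.
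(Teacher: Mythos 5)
Your first mechanism (grafting along a simple closed curve with loxodromic holonomy, wrapped $n$ times) is sound and standard, but it only covers representations that admit such a curve. The cases it leaves out --- unitary/elliptic image, finite image, trivial representation, and the affine and dihedral degenerate representations --- are exactly the representations that form the new content of Theorem \ref{thm1}, and for these your argument is a plan rather than a proof. You yourself identify the main obstacle (that the ``free parameters'' in the constructions might be absorbed by isomorphism or re-marking, so one must exhibit an invariant that genuinely varies), but you never resolve it: varying the base-point or the gluing rays in the polygon constructions can easily produce isomorphic structures (e.g.\ translating or rescaling the whole polygon configuration), and no computation of an extremal length, modulus, or branch datum is actually carried out. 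Moreover, the ``torsor'' claim for the affine case is not correct as stated: adding a closed meromorphic one-form with periods in $2\pi i\,\Z$ to $d\log f'$ preserves only the \emph{linear part} of the holonomy, while the translation parts change in general, so the structures produced need not lie in the same fiber of $\Psi$ at all.

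For comparison, the paper avoids all of this with a single uniform surgery: \emph{bubbling} along an admissible arc, i.e.\ an embedded arc joining two punctures whose developing image is embedded in $\cp$ (Definition \ref{bubb}). Bubbling glues in a full slit copy of $\cp$, so it manifestly preserves the monodromy and (by the computation of \S\ref{sdpunct}) keeps the Schwarzian poles of order at most two, and iterating it $m$ times gives infinitely many structures distinguished by the extra wrappings of the developing map --- no delicate conformal invariant is needed. The only case-by-case work is to exhibit one admissible arc in each constructed structure: pleating lines of the pleated plane in the non-degenerate case, edges of the polygonal curves (handle-generator arcs) in the degenerate constructions of \S\ref{proof2}--\S\ref{ssdih}, and pullbacks of arcs under the branched cover in the trivial case. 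If you want to salvage your approach, you would need to supply, for each exceptional family of representations, an explicit verification that your deformation parameter changes a projective invariant; as written, the fiber-infiniteness for precisely the hardest representations is not established.
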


\noindent We mention some further directions to ``explore the nonuniqueness" (quoting Gallo-Kapovich-Marden) at the end of this Introduction.

\medskip 

\noindent The case of a non-degenerate representation (case (i) in Theorem \ref{thm1}) follows from the work in \cite{GupMon1}; hence the present article, though independent of that paper, can be considered as its sequel. We provide a new and simplified discussion of the construction of that paper in \S\ref{proof1}, that used a geometric interpretation of certain cross-ratio coordinates introduced (in a more general context) by Fock-Goncharov in \cite{FG}. We also use one of the results of  Allegretti-Bridgeland in  \cite{AllBrid}  which implies that  in the case of no apparent singularities, the monodromy representation is necessarily non-degenerate (see Theorem \ref{ab61}). Indeed, together with Lemma \ref{trivex} concerning the case when the entire monodromy representation is trivial, and Lemma \ref{excase} concerning the case when the monodromy group, i.e.\ the image of the monodromy representation, has order two,  this establishes the ``only if" direction in Theorem \ref{thm1}.   
\medskip 

\noindent The main work in this paper is to handle the remaining case of degenerate representations, namely, to construct $\cp$-structures on the punctured surface with a specified monodromy representation as in case (ii) in Theorem \ref{thm1}. Note that this is specific to punctured surfaces, as when the surface is closed, the analogue of the degenerate case, i.e.\ elementary representations, does not arise.
In case (a) of Definition \ref{degen} of a degenerate representation, namely when the image of a degenerate representation has a global fixed point on $\cp$,  it can be conjugated into the subgroup $\text{Aff}(\C) = \{ z\mapsto a z + b\ \vert\ a \in \C^\ast,\ b \in \C\} $ of complex affine maps.  The strategy of the proof in this case is to construct a \textit{(complex) affine structure} on $S_{g,k}$, whose monodromy is the prescribed representation. Thus, in the course of the proof of Theorem \ref{thm1}, we prove the following result concerning the monodromy groups of affine structures on a punctured surface:

\begin{thma}\label{thm2} Let $\Pi$ be the fundamental group of $S_{g,k}$ as in Theorem \ref{thm1}, such that the number of punctures $k\geq 2$.  Then any non-trivial representation $\rho:\Pi \to \text{Aff}({\C})$ arises as the monodromy of a complex affine structure on $S_{g,k}$. If $k\ge3$, then every representation is realizable as the monodromy of a complex affine structure on $S_{g,k}$.
\end{thma}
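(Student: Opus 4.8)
The plan is to construct the desired affine structure by decomposing the surface into simple pieces on which explicit affine structures can be built, and then gluing these together so that the prescribed holonomy is realized. First I would reduce to a normal form for the representation: since $\rho:\Pi \to \affc$ is nontrivial, we may consider the linear part $L\rho:\Pi \to \C^\ast$ obtained by composing with $z \mapsto az+b \mapsto a$; the structure of $\rho$ is then governed by whether $L\rho$ is trivial (so $\rho$ has values in the translation subgroup $\C$) or not. In the first case $\rho$ is, up to conjugation, a homomorphism $\Pi \to (\C,+)$, i.e.\ essentially a closed $\C$-valued $1$-form, and I would realize it as the holonomy of a translation structure (a meromorphic $1$-form) on $S_{g,k}$ — here the freedom to choose the complex structure, together with $k \geq 2$ punctures to absorb the ``residue'' data, makes this elementary via period constructions. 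In the general case where $L\rho$ is nontrivial, I would pick a standard generating set $a_1,b_1,\dots,a_g,b_g,c_1,\dots,c_k$ for $\Pi$ with the single relation $\prod [a_i,b_i]\prod c_j = 1$ and arrange, after conjugation, that at least one peripheral generator, say $c_1$, has nontrivial linear part (or handle the case where all peripheral linear parts are trivial but some handle generator is not, which forces $k\ge 2$ peripheral loops to carry the compensating translations).

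The core of the construction is to cut $S_{g,k}$ along a collection of arcs and curves into a union of (a) once-punctured or twice-punctured disks/annuli and (b) a pair of pants or a handle piece, on each of which one writes down an affine structure by hand: on an annular neighborhood of a puncture whose monodromy is $z\mapsto az+b$ with $a\neq 1$ one uses the affine structure pulled back from a punctured disk around $0$ by a suitable branched map (the standard ``$z^\mu$'' model), and when $a=1$ but $b\neq 0$ one uses the exponential-type structure (the puncture as a logarithmic singularity of a $1$-form), while when the peripheral element is trivial one may either leave a regular point or introduce a branch point there — this is exactly where the hypothesis $k\geq 2$ enters, since distributing nontrivial translation monodromy among peripheral loops requires at least two punctures when the ambient linear holonomy is trivial. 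I would then glue these pieces along their boundary circles using affine transition maps in $\affc$, checking that the resulting developing map and holonomy agree with $\rho$ on each generator; the relation in $\Pi$ is automatically satisfied because the pieces fit together topologically.

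The main obstacle I anticipate is the bookkeeping around \emph{translation} (parabolic-type) monodromy: when $L\rho$ is trivial or when some peripheral generators have trivial linear part, one cannot use the convenient $z^\mu$ local models and must instead realize translations, which behave additively rather than multiplicatively, and the obstruction to doing this on a surface with too few punctures is genuine — this is why $k=1$ is excluded in the theorem's first assertion and why the stronger conclusion (\emph{every} representation realizable, including the trivial one) needs $k \geq 3$. Concretely, with $k\ge 3$ one has enough punctures to absorb both a nonzero total translation and to place a branch point realizing the trivial representation via a nonconstant developing map, whereas with $k=2$ the trivial representation and the order-two cases are precisely the ones that might fail — so the delicate point is a careful case analysis of exactly which translation-valued data can be spread across $k$ punctures, and showing that for $k\ge 2$ every \emph{nontrivial} $\rho$ survives this analysis. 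I expect the non-triviality hypothesis to be used exactly to guarantee that at least one generator (peripheral or handle) carries a non-identity element around which one can anchor the developing map, ruling out the degenerate collapse that occurs for the trivial representation when $k$ is small.
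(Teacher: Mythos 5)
Your overall strategy (decompose the surface, build local affine models, glue) is in the same constructive spirit as the paper, but as written it has two genuine gaps. First, the heart of the matter is missing: you never explain how to build the \emph{handle} pieces. An affine structure on a one-holed torus whose handle generators have prescribed images $A,B\in\affc$ (and hence boundary holonomy $[A,B]$, which need not be trivial) is exactly the hard step; the paper constructs it from an explicit immersed polygon in $\C$ (the quadrilateral with vertices $p$, $A(p)$, $AB(p)$, $B(p)$, completed by unbounded regions and identified by $A$ and $B$), and this requires first normalizing the generating set by mapping classes so that no handle generator maps to the identity (Lemmas \ref{cbas}, \ref{cbas2}), plus a separate device (half-translation structures pulled from a quadratic differential on $\cp$) when $\mathrm{Im}(\rho)$ has order two. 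Saying one ``writes down an affine structure by hand'' on such a piece, while only detailing the annular models near punctures, leaves the essential construction unproved. Relatedly, your case bookkeeping is off: for $k=2$ the order-two-image representations \emph{are} realizable (that exception only arises for $k=1$, outside the theorem's hypotheses), and the only exclusion at $k=2$ is the trivial representation.

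Second, your gluing step is not sound as stated. For geometric structures, gluing two pieces along closed boundary circles is not guaranteed by matching holonomy: one needs the germs of the developing maps along the two boundary collars to agree up to an element of $\affc$, and affine annuli or ends with the same boundary holonomy can be non-isomorphic (for a dilation there are Hopf-type versus exponential models; for a translation, strips versus planes versus branched ends), so ``the relation in $\Pi$ is automatically satisfied because the pieces fit together topologically'' conflates a topological gluing with a geometric one. The paper avoids this entirely: it never glues along closed curves, but only slits and reglues along arcs and rays whose developed images are matched explicitly (Definitions \ref{glue-ray} and \ref{glue2}, and the holonomy-preserving variant of Definition \ref{glue-new} using an auxiliary copy of $\C$), and it handles the trivial representation for $k\ge3$ by pulling back the structure on $\cp\setminus\{0,1,\infty\}$ under a branched covering. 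Your plan could plausibly be repaired, but only by supplying precisely these missing ingredients -- the handle piece with prescribed (possibly non-commuting, possibly order-two) holonomy and a gluing mechanism that controls developing germs rather than just holonomy -- which is where the actual content of the paper's proof lies.
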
 

\noindent Our construction of an affine structure involves considering unbounded polygons on $\C$ with sides paired by affine maps, and certain gluing methods which we introduce (\textit{e.g.} Definition \ref{glue-new}). The resulting affine surface acquires branch-points that arise from the vertices of the polygon after the identification, that we delete to obtain an affine structure on a punctured surface. Since there is always an additional puncture at infinity, our proof of Theorem \ref{thm2} requires at least two punctures.

\smallskip

\noindent In the case that the surface has exactly one puncture, we rely on a refinement of the construction alluded to above. Our strategy then is to use the action of the mapping class group to choose a generating set of the fundamental group so that the corresponding polygonal curve bounds an immersed disk in $\cp$. When the representation is \textit{co-axial}, \textit{i.e.} when the pair of points in (b) of Definition \ref{degen} is globally fixed, we also introduce some further new operations (\textit{e.g.} Definition \ref{bubbhand}). In these cases, even when the monodromy is affine, we obtain a \textit{projective} structure with the desired monodromy, which is not necessarily affine. 

\smallskip

\noindent Indeed, the recent work of \cite{Fils2} shows that in the case of a once-punctured surface, there are some further exceptions to the existence of an affine structure (see \cite[Proposition 1.5]{Fils2}).  In fact, this case can also be handled using the main result of \cite{Fils2}; we describe this alternative approach in \S\ref{altproofk1}, which involves proving the existence of a projective structure  with the desired monodromy and a single branch-point, except when the monodromy group is of order two (\textit{c.f.} Lemma \ref{excase}). 
\noindent The remaining degenerate representations still fall in (b) of Definition \ref{degen}, but the pair of points is $\cp$ is preserved, not globally fixed by such a representation; in this case our argument relies on similar techniques. 


\medskip

\noindent One feature of our methods is that they are constructive, and can be potentially implemented in an algorithm. 
The constructions we introduce also yield results on other special classes of $\cp$-structures that are geometric structures in their own right. We mention some of these below, and leave the discussion of others (\textit{e.g.} half-translation structures, see Definition \ref{def_halftrans}) for future papers. 

\medskip
\noindent \textit{\textbf{Translation structures.}} A special case of a complex affine structure is a \textit{translation structure} which has monodromy in the (smaller) subgroup of complex {translations} $\{z\mapsto z + a \ \vert\ a \in \C\} \cong \C$. Such a structure acquires a holomorphic (abelian) differential $\omega$ that is the pullback of the differential $dz$ on $\C$ via the charts, and prescribing its monodromy is equivalent to prescribing the periods of $\omega$. For a punctured surface, in our recent work in \cite{CFG}  we determine the possible periods for meromorphic differentials in each strata with prescribed orders of zeroes and poles. In this paper, we include a proof of the translation-structure case of Theorem \ref{thm2}, in part to motivate the proof of the general case. This follows the general strategy employed in \cite{CFG}, although some arguments are different. 

\medskip 

\noindent \textit{\textbf{Spherical structures.}} Another class of projective structures that are interesting in their own right are \textit{spherical cone-metrics} on a surface. For such a structure, the developing map is to the round sphere $\mathbb{S}^2$, and the monodromy lies in $\text{PSU}(2) \cong \text{SO}(3,\mathbb{R})$. Note that the monodromy around any cone point of angle $\alpha$ is an elliptic rotation by that angle. The space $\mathcal{MS}ph_{g,k}(\vartheta)$ of such spherical cone-metrics on $S_{g,k}$ with a set of prescribed cone-angles $\vartheta$ at the punctures admits a forgetful projection to the moduli space $\mathcal{M}_{g,k}$; this has been a subject of much study --  see \cite{Mondello-Panov1} and \cite{Mondello-Panov2} and the references therein. What has been less studied is the {monodromy map} in this context, namely, the forgetful projection to the space of surface-group representations into $\text{SO}(3,\mathbb{R})$. 

\smallskip

\noindent Here, we provide the following corollary to our main theorem:

\begin{cora}\label{cor:spher} Let $\Pi$ be the fundamental group of $S_{g,k}$ as in Theorem \ref{thm1} and let $\rho:\Pi \to \text{\emph{SO}}(3,\mathbb{R})$ be a representation. Then $\rho$ is the monodromy of some spherical cone-metric on $S_{g,k}$ with cone-points only at the punctures, if and only if it satisfies conditions (i) and (ii) in the statement of Theorem \ref{thm1}. 
\end{cora}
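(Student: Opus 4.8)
The plan is to derive Corollary \ref{cor:spher} from Theorem \ref{thm1} by transferring the statement between the two categories of structures: $\cp$-structures in $\mathcal{P}_g(k)$ on the one hand, and spherical cone-metrics on the other. The key point is that the round metric on $\mathbb{S}^2 \cong \cp$ is invariant under $\psu \cong \mathrm{SO}(3,\mathbb{R})$, so a $\cp$-structure whose developing map takes values in $\mathbb{S}^2$ and whose holonomy lies in $\mathrm{SO}(3,\mathbb{R})$ pulls back the round metric to a cone-metric on $S_{g,k}$; conversely a spherical cone-metric, being locally isometric to $\mathbb{S}^2$ away from the cone-points, provides a developing map to $\mathbb{S}^2$ with holonomy in $\mathrm{SO}(3,\mathbb{R})$, i.e.\ a $\cp$-structure. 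So the content of the corollary is really: \emph{a representation $\rho:\Pi \to \mathrm{SO}(3,\mathbb{R})$ is realized by some $\cp$-structure in $\mathcal{P}_g(k)$ if and only if it is realized by one whose developing map lands in $\mathbb{S}^2$ and holonomy stays in $\mathrm{SO}(3,\mathbb{R})$.}

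First I would establish the ``only if'' direction, which is immediate: if $\rho$ is the holonomy of a spherical cone-metric, the associated $\cp$-structure lies in $\mathcal{P}_g(k)$ (the cone-points give regular singularities of the Schwarzian, i.e.\ poles of order at most two, with the cone angle controlling the residue), so by Theorem \ref{thm1} the representation $\rho$ satisfies (i) or (ii). One should also note that none of the exceptional cases in (ii) can occur for an $\mathrm{SO}(3,\mathbb{R})$-representation: the relevant exceptions are the trivial representation with $g>0, k\in\{1,2\}$ and the order-two image with $g>0,k=1$, and one checks that these do not obstruct realizability here because an $\mathrm{SO}(3,\mathbb{R})$-valued degenerate representation with a global fixed point on $\mathbb{S}^2$ must fix the antipodal point as well — so it is always co-axial, hence of type (b) — and the construction for type (b) does not hit those exceptions. (Alternatively, the trivial and order-two $\mathrm{SO}(3,\mathbb{R})$-representations in the excluded range are precisely the ones that remain unrealizable, and one has to confirm the corollary's phrasing with ``conditions (i) and (ii)'' is read inclusively of those exceptions — I would state explicitly that these exceptional representations are also not realizable by spherical cone-metrics, matching Lemma \ref{trivex} and Lemma \ref{excase}.)

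The substantive direction is ``if'': given $\rho:\Pi \to \mathrm{SO}(3,\mathbb{R})$ satisfying (i) or (ii), produce a spherical cone-metric. The idea is to first invoke Theorem \ref{thm1} to get \emph{some} $\cp$-structure $Z$ with holonomy $\rho$, and then upgrade it to one compatible with the round metric. For the non-degenerate case (i), the natural route is to observe that since $\rho$ lands in the compact group $\mathrm{SO}(3,\mathbb{R})$, it is in particular a non-elementary (or at least non-degenerate) representation into $\psu$, and one can run a grafting/deformation argument: the space of $\cp$-structures with holonomy $\rho$ is acted on by movements and grafting, and among all such structures one seeks one whose developing image avoids an open hemisphere's worth of obstruction so that the round metric pulls back globally — more concretely, one uses that for representations into $\psu$ the relevant $\cp$-structures are exactly the spherical cone-surfaces, a fact that can be extracted by combining Theorem \ref{thm1} with a CR/uniformization-type argument identifying $\psu$-$\cp$-structures with spherical metrics. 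For the degenerate case (ii), where $\rho(\Pi)$ is conjugate into a group fixing an antipodal pair $\{p, p^\ast\}$ (the co-axial case), the holonomy lies in the rotation group about the axis through $p$, which is a circle subgroup of $\mathrm{SO}(3,\mathbb{R})$; here I would directly build a spherical cone-metric by an explicit construction — taking spherical ``footballs'' (spindles) and spherical polygons bounded by arcs of great circles through $p$ and $p^\ast$, glued by the rotations prescribing $\rho$, paralleling the affine/projective constructions of Theorem \ref{thm2} but performed intrinsically on $\mathbb{S}^2$ rather than on $\C$.

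The main obstacle I expect is precisely this last upgrade in the degenerate co-axial case: translating the polygon-gluing constructions of Theorem \ref{thm2} — which are carried out in the complex-affine world with unbounded polygons in $\C$ — into the compact spherical setting, where ``unbounded'' directions must be replaced by convergence to the fixed points $p, p^\ast$, and where one must track cone-angles carefully (each must be a positive real, and the angle-excess has to be consistent with the Gauss–Bonnet constraint $\sum(\vartheta_i - 2\pi) = 4\pi(1-g) - 2\pi k \cdot(\text{something})$, i.e.\ the spherical area must come out positive). Making the gluing produce genuinely a \emph{metric} (no overlaps, total angle accounted for correctly at each identified vertex) rather than merely a developing map is the delicate part; I would handle it by mimicking the branched-covering viewpoint — realizing the spherical cone-surface as a branched cover of $\mathbb{S}^2$ restricted to the punctures — and checking the monodromy of that branched cover equals $\rho$ by construction. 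An alternative, cleaner strategy that sidesteps new constructions: show abstractly that any $\cp$-structure with holonomy in $\psu$ and with regular singularities \emph{is} a spherical cone-metric, using that the developing map into $\cp = \mathbb{S}^2$ together with $\psu$-equivariance automatically pulls back the round metric (the only subtlety being at branch-points/apparent singularities, which become cone-points of angle a multiple of $2\pi$, still a valid spherical cone-metric); then the corollary follows from Theorem \ref{thm1} with essentially no extra work beyond checking the exceptional cases as above.
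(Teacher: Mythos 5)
Your fallback (``cleaner'') strategy is the right skeleton, but the step you dismiss as automatic is exactly where the content lies, and as written this is a genuine gap. If the holonomy lies in $\psu\cong\text{SO}(3,\mathbb{R})$, equivariance of the developing map does give a smooth spherical metric on the \emph{punctured} surface; what the corollary asserts, however, is that this metric extends over each puncture as a \emph{cone-point}. You flag a subtlety only at apparent singularities, but the issue arises at every puncture: a priori a pole of order at most two of the Schwarzian could produce non-conical local behavior of the developing map (logarithmic terms, or a local model $w\mapsto w^{c}$ with $c\notin\mathbb{R}$), and this must be excluded. The repair is the local theory of regular singularities: peripheral monodromy in $\psu$ is elliptic or trivial, which forces the exponent difference to be real and rules out the logarithmic (parabolic) case, so the developing map is asymptotically a power map and the puncture is a genuine cone-point. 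You never make this argument; ``equivariance automatically pulls back the round metric'' only yields a metric on the open surface. Your primary plan has further problems: in the non-degenerate case the grafting/deformation idea of making the developing image ``avoid a hemisphere'' is neither needed nor a proof (no injectivity or avoidance is required to pull back the metric -- the whole difficulty is at the punctures), in the degenerate case you propose new intrinsic spherical polygon constructions whose feasibility you yourself identify as the main obstacle, and the Gauss--Bonnet worry is moot because the corollary prescribes only the monodromy, hence the cone angles only modulo $2\pi$. Also, the first half of your parenthetical in the ``only if'' paragraph is wrong -- the trivial and order-two $\text{SO}(3,\mathbb{R})$-representations in the excluded range do occur and remain unrealizable (Lemmata \ref{trivex} and \ref{excase}); your ``Alternatively'' reading is the correct one.

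For comparison, the paper does not argue abstractly: it re-uses the structures already built in the proof of Theorem \ref{thm1} and verifies the cone-point property for those specific constructions. In the non-degenerate case it shows via the pleated plane that the total bending angle $\alpha$ at each lift of a puncture is positive (finitely many pleating lines, since the peripheral monodromy is elliptic), so the developing map extends as $w\mapsto w^{\alpha/2\pi}$; in the degenerate case the polygon and gluing constructions of \S\ref{proof2}--\S\ref{ssdih} visibly produce punctures that are branch-points, regular points, or cone-points of angle $\alpha+2\pi n$. Your abstract route can be completed (the needed Frobenius-type analysis at regular singular points is in the references the paper cites), and it would even be somewhat more general than the paper's construction-specific check; but until that local analysis is supplied, the central claim of your ``if'' direction is unproved.
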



\noindent It worth noting that prescribing the monodromy around the punctures determines the cone-angles only modulo $2\pi$; prescribing these cone-angles \textit{in addition to} the monodromy is a more delicate problem.  For a punctured sphere, the work of Eremenko in \cite{Eremenko} solves this for the case when the prescribed monodromy is co-axial (which, in this context, is equivalent to being degenerate).  


\medskip 

\noindent \textit{\textbf{Branched $\cp$-structures.}} 
Finally, $\cp$-structures on a surface which are allowed to have additional branch-points are of independent interest; see Definition \ref{brpt}. These were first studied in \cite{Mandelbaum}; see \cite{CDF} or  \cite{BDG} for more recent results. As a consequence of Theorem \ref{thm1} we obtain the following result concerning such branched projective structures on a punctured surface:

\begin{cora}\label{cor:branch}
Let $\Pi$ be the fundamental group of $S_{g,k}$ as in Theorem \ref{thm1}. Every representation $\rho:\Pi \to\pslc$ arises as the monodromy of some (possibly branched) $\cp$-structure on $S_{g,k}$, with at most two branch-points. 
\end{cora}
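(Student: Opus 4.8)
The plan is to derive Corollary \ref{cor:branch} directly from Theorem \ref{thm1} by a case analysis on whether the given representation $\rho:\Pi\to\pslc$ is degenerate or not. If $\rho$ is non-degenerate, then by case (i) of Theorem \ref{thm1} it is already realized as the monodromy of an \emph{unbranched} $\cp$-structure in $\mathcal{P}_g(k)$, so there is nothing to prove. The remaining work is entirely about the degenerate case, and the key observation is that the exceptions in case (ii) of Theorem \ref{thm1} can be absorbed by permitting one or two branch-points.

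\medskip

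For a degenerate representation $\rho$ that has at least one apparent singularity, case (ii) again gives an \emph{unbranched} realization, unless $\rho$ is one of the two listed exceptions (the trivial representation when $g>0$ and $k\in\{1,2\}$, or a representation with image of order two when $g>0$ and $k=1$). So the heart of the argument is to handle two situations: first, a degenerate $\rho$ with \emph{no} apparent singularity, and second, the finite list of exceptional representations. For the first situation, the strategy is a \textbf{bubbling} argument: start from the punctured surface $S_{g,k}$, and modify it at one puncture so as to \emph{create} an apparent singularity there. Concretely, one can add a branch-point near a chosen puncture $p_i$ by the standard operation of grafting in a copy of $\cp$ (a ``bubble'') along an embedded arc ending at $p_i$; this does not change the monodromy of any peripheral loop \emph{except} it can be arranged so that the loop around $p_i$ becomes trivial (equivalently, one replaces the structure near $p_i$ by a branched chart $z\mapsto z^m$). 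One must check that the $\cp$-structure obtained this way still lies in the correct topological type and has a pole of order at most two at each puncture; since $z\mapsto z^m$ has Schwarzian with a pole of order two, this is compatible with $\mathcal{P}_g(k)$. After this modification, $\rho$ now qualifies as a degenerate representation with at least one apparent singularity and we re-apply case (ii) — but now with the caveat that the bubbling has been incorporated constructively, so we should instead say: the constructions underlying the proof of case (ii) produce, for \emph{every} degenerate $\rho$, a $\cp$-structure with at most one branch-point, where the single branch-point appears precisely at the apparent singularity that must be created. This is exactly the content alluded to in the remark after Corollary \ref{cor:fiber} and in \S\ref{altproofk1}, where a projective structure with a single branch-point is produced ``except when the monodromy group is of order two.''

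\medskip

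The second situation — the short list of exceptional representations — is where a \emph{second} branch-point becomes necessary, and I expect this to be the main obstacle. For the trivial representation with $g>0$ and $k=1$ or $2$: here $\rho$ is the monodromy of a $\cp$-structure on the \emph{closed} surface $S_g$ (for instance the Fuchsian one, or a branched one), and one needs to puncture it while keeping the monodromy trivial and introducing at most two branch-points. The natural move is to take an unbranched $\cp$-structure on $S_g$ with trivial monodromy — which exists since $S_g$ admits branched $\cp$-structures with trivial holonomy (pull back the standard structure on $\cp$ by a branched cover $S_g\to\cp$, which by Riemann--Hurwitz has $2g-2+2=2g$ branch-points counted with multiplicity, but one can also use a degree-two cover with $2g+2$ simple branch-points and then reduce) — and excise one or two of its regular points as punctures; the branch-points are unchanged, but we must bound their number by two. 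Getting a \emph{low-degree} branched cover realizing the trivial representation with at most two branch-points after puncturing requires a careful choice; alternatively, use the explicit constructions of the paper (bubbling a trivial structure, or the handle-attachment operation of Definition \ref{bubbhand}) to add exactly the branch-points needed. For the order-two image representation with $g>0$, $k=1$: this is the obstruction in Lemma \ref{excase} to an unbranched realization, and one resolves it by a one-point bubbling that adds a single branch-point, turning it into a realizable case — total branch-points at most two (one from making the monodromy co-axial-compatible, one from the bubble). In every case the orders of the poles at the punctures stay $\le 2$, because all the operations (bubbling, handle attachment, branched charts $z\mapsto z^m$) introduce only poles of order two in the Schwarzian and branch-points in the interior. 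I would organize the final write-up as: (1) non-degenerate case, done by Theorem \ref{thm1}(i); (2) degenerate with apparent singularity, done by Theorem \ref{thm1}(ii) or, if exceptional, by adding one bubble; (3) degenerate without apparent singularity, create an apparent singularity by one bubbling operation and invoke (2); then tally the branch-points to see the bound of two is respected.
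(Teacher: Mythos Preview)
Your overall case division is right, and the non-degenerate case and the non-exceptional degenerate-with-apparent-singularity case are indeed immediate from Theorem~\ref{thm1}. But the handling of the remaining cases contains a genuine gap.

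\smallskip

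\textbf{Degenerate with no apparent singularity.} Your bubbling argument does not work as stated. Bubbling is an operation on an \emph{existing} $\cp$-structure, and it \emph{never} changes monodromy --- in particular it cannot ``arrange so that the loop around $p_i$ becomes trivial.'' Since you do not yet have a structure on $S_{g,k}$ with monodromy $\rho$, there is nothing to bubble; and even if you had one, bubbling near $p_i$ would leave the peripheral monodromy around $p_i$ exactly as it was. The paper's fix is different and much simpler: regard $\rho$ as a representation $\overline{\rho}:\pi_1(S_{g,k+1})\to\pslc$ by declaring an extra puncture with trivial monodromy. Since $k+1\ge 2$ and $\overline{\rho}$ is non-trivial (if $\rho$ had no apparent singularity then $\rho$ is non-trivial), none of the exceptions in Theorem~\ref{thm1}(ii) can occur, so $\overline{\rho}$ is realized by an \emph{unbranched} structure on $S_{g,k+1}$. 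Now fill in the extra puncture: by Theorem~\ref{ab61} it cannot be a regular point (else $\rho$ would be realized unbranched on $S_{g,k}$, contradicting degeneracy with no apparent singularity), so it is a single branch-point. This gives exactly one branch-point, not two.

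\smallskip

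\textbf{The exceptional representations.} Here your proposal is too vague to count as a proof. For the trivial representation on $S_{g,k}$ with $k\in\{1,2\}$, you mention hyperelliptic covers with $2g+2$ branch-points, which badly overshoots the bound of two; ``reducing'' these is not explained. The paper instead passes to $S_{g,3}$, invokes Lemma~\ref{triv} (which produces an unbranched $\cp$-structure on $S_{g,3}$ with trivial monodromy via a degree-$(2g+1)$ cover of $\cp$ branched over exactly three points), and then fills in one or two of the three punctures with branched charts --- giving one branch-point if $k=2$ and two if $k=1$. For the order-two image on $S_{g,1}$, ``one-point bubbling'' again cannot help: bubbling preserves the monodromy image, so it remains of order two, and Lemma~\ref{excase} still obstructs an unbranched realization. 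The paper gives an explicit half-translation construction (slit $\cp$ along a segment and its negative, glue to get a torus with two order-$4\pi$ branch-points, iterate for higher genus) producing a structure with exactly two branch-points on $S_g$; removing one yields a branched structure on $S_{g,1}$ with a single branch-point.

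\smallskip

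In summary: replace ``bubbling to create an apparent singularity'' with ``adding an extra puncture with trivial monodromy and applying Theorem~\ref{thm1} on the larger surface,'' and replace the vague treatment of the two exceptions with the explicit constructions of Lemmas~\ref{triv} and the half-translation surface argument.
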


\noindent The recent work in \cite{Nas} also proves similar results, and develops completely different techniques for constructing branched projective structures with prescribed monodromy. 

\bigskip 

\noindent It remains to understand the fibers of the monodromy map \eqref{monmap} better, for example how projective structures in the same fiber are related, and we plan to address this in future work. The recent work \cite{BBCR} deals with the case when the surface is the thrice-punctured sphere $S_{0,3}$. For a closed surface, this was handled in the work of Baba in \cite{Baba2}, and it is conceivable that one can develop the analogues of the techniques there, in the punctured case. The main result of \cite{Luo} implies that  the fiber $\Psi^{-1}(\rho)$ is a discrete set when the monodromy representation $\rho$ is irreducible, and the monodromy around each puncture is  non-trivial and not parabolic. In contrast, in the case all the punctures are branch-points and $\rho$ is quasi-Fuchsian, the fibers can be connected, and the work in \cite{CDF} determines when that happens.  

\bigskip

\noindent \textbf{Plan of the paper.} The paper is organized as follows. In section \S\ref{prel} we begin by recalling the basic background of meromorphic projective structures on surfaces and their monodromy representations. In \S\ref{proof1}, we deal with the case of non-degenerate representations which essentially follows by \cite{GupMon1}, an earlier work by the second named author. The proof of the Theorem \ref{thm1} for degenerate representations is the main core of the present paper and it is developed along sections \S\ref{proof2}, \S\ref{saff2} and \S\ref{ssdih}. In section \S\ref{proof2} we provide a proof of Theorem \ref{thm2} which handles affine representations in the case of surfaces with at least two punctures. In section \S\ref{saff2}, instead, we deal with the complementary case of affine representations for once-punctured surfaces. Finally, in section \S\ref{ssdih} we deal with dihedral representations. The final section \S\ref{coros} concludes with the proofs of Corollaries \ref{cor:fiber}, \ref{cor:spher} and \ref{cor:branch}. 

\bigskip

\noindent \textbf{Acknowledgments.} Half of the present work was done while GF was affiliated with the IISc Bangalore. Despite the author could not fully enjoy his period there because of the ongoing pandemic, he is grateful to the Department of Mathematics and everyone who helped him. SG is grateful for the support of the Department of Science and Technology (DST) MATRICS Grant no. MT/2017/000706.

\section{Preliminaries}\label{prel}

\noindent Let $S$ be a surface of finite type (open or closed) and of negative Euler characteristic. 

\subsection{Meromorphic projective structures}\label{mps} A \textit{$\cp$-structure} or a \textit{(complex) projective structure} on a surface $S$ is a maximal atlas of charts to $\cp$ such that the transition functions are M\"{o}bius maps, \textit{i.e.} elements of $\pslc$; in other words, it is a $(G,X)$-structure where $G=\pslc$ and $X = \cp$. Such a geometric structure can be equivalently described in terms of the \textit{developing map} $f:\widetilde{S} \to \cp$ and the \textit{monodromy} (or holonomy)  representation $\rho:\pi_1(S) \to \pslc$; the developing map is a $\rho$-equivariant immersion,  satisfying  $f(\gamma \cdot x) = \rho(\gamma) \cdot f(x)$ for each $x\in \widetilde{S}$ and each $\gamma \in \pi_1(S)$.  Note that the developing map is determined up to a post-composition by a M\"{o}bius map $A$, and $\rho$ is defined up to conjugation, so that the pair $(\text{dev},\rho)$ and $(A\cdot \text{dev}, A\cdot \rho\cdot A^{-1})$ define the same $\cp$-structure.  

\smallskip 

\noindent An example of a $\cp$-structure on $S$ is a \textit{hyperbolic} structure, where the developing map develops into the upper hemisphere of $\cp$ (that can be identified with the hyperbolic plane $\mathbb{H}^2$), and the monodromy is a  discrete (Fuchsian) representation $\rho:\pi_1(S) \to \pslr$. 

\medskip 

\noindent We often consider a $\cp$-structure with an additional \textit{marking}, that is, a additional choice of a homeomorphism that identifies $S$ with our complex projective surface, such that two marked structures that differ by a homeomorphism homotopic to the identity are considered equivalent.  

\noindent A change of marking on a marked $\cp$-structure is effected by the action of an element of the mapping class group of $S$; this  changes the monodromy representation by  pre-composing with the corresponding automorphism of $\pi_1(S)$, which is the action of the mapping class group on the $\pslc$-representation variety. Note that for a punctured surface, an element of its mapping class group can permute the punctures.  Thus, while seeking a $\cp$-structure with a prescribed monodromy representation $\rho$, it suffices to find one whose monodromy lies in the mapping class group orbit of $\rho$. We shall use this to our advantage later in the paper.

\medskip

\noindent The relation with the Schwarzian equation  arises through the \textit{Schwarzian derivative} $\tilde{q} = S(f) dz^2$ of the developing map $f$, where 
\begin{equation}\label{schw-deriv}
S(f) = \left( \frac{f^{\prime\prime}}{f^\prime}\right)^{\prime}  - \frac{1}{2} \left( \frac{f^{\prime\prime}}{f^\prime}\right)^{2}. 
\end{equation}
The quadratic differential thus defined is invariant when $f$ is pre-composed by a M\"{o}bius transformation; since $f$ is $\rho$-equivariant, $\tilde{q}$ descends to a quadratic differential $q$ on $S$,  that is holomorphic with respect to the complex structure induced by the $\cp$-structure. 

\noindent Conversely, fix a complex structure on $S$ such that $S = \mathbb{H}^2/\Gamma$, where $\Gamma$ is a Fuchsian representation. Let $y_0$ and $y_1$ be two linearly independent solutions of  the Schwarzian equation 
\begin{equation}\label{schwarz1}
y^{\prime\prime} + \frac{1}{2} {\tilde{q}} y = 0
\end{equation} 
on the universal cover $\widetilde{S} = \mathbb{H}^2$, where $\tilde{q}$ is a $\Gamma$-invariant holomorphic quadratic differential. 
Then the ratio $f: = y_0/y_1$ defines a developing map to $\cp$, that is equivariant with respect to a monodromy representation $\rho:\Gamma \to \pslc$, defining a $\cp$-structure on $S$.  It is an exercise to show that then the Schwarzian derivative of $f$ equals $\tilde{q}$, so that this is indeed an inverse construction. \\

\noindent For a \textit{closed} surface $S$,  the deformation space  $\mathcal{P}(S)$ of marked $\cp$-structures on $S$ can be identified, via the correspondence sketched above, with the space of holomorphic quadratic differentials $\mathcal{Q}(S)$ that forms a bundle over \Te space $\mathcal{T}(S)$, where the fiber over a marked Riemann surface $X$ is the vector space $\mathcal{Q}(X)$ of holomorphic quadratic differentials on $X$. For details, see for example \cite{Dum} or \cite{Gunn}; for more on the geometry of these spaces, see \cite{Faraco}.

\noindent In the case that $S$ is not closed, the above correspondence still holds, except that now the space of holomorphic quadratic differentials on an open Riemann surface is infinite dimensional. However, we can restrict to \textit{meromorphic} quadratic differentials, where the punctures of $S$ are either removable singularities or poles of finite order: 

\begin{defn} A \textit{meromorphic projective structure} on a surface $S_{g,k}$ of negative Euler characteristic is a $\cp$-structure such that if we equip $S_{g,k}$ with a complete hyperbolic metric of finite volume, such that the universal cover $\widetilde{S_{g,k}} \cong \mathbb{H}^2$, then the Schwarzian derivative of the developing map $f:\mathbb{H}^2 \to \cp$ descends to a meromorphic quadratic differential on the surface. As described in the Introduction, in this paper, we shall consider the space $\mathcal{P}_g(k)$ of marked meromorphic projective structures on $S_{g,k}$ whose corresponding meromorphic quadratic differentials have poles of order at most two at the punctures.
\end{defn}

\textit{Remark.} The above definition, and the property that the poles have order at most two, is independent of the choice of complete hyperbolic structure, which merely serves as a ``reference" projective structure. See also \cite[Definition 3.1]{AllBrid}.

\smallskip 

\noindent Any $\cp$-structure on the closed surface $S_g$ becomes an example of a meromorphic projective structures in  $\mathcal{P}_g(k)$ after $k$ points are deleted; another set of examples include \textit{branched} projective structures on $S_g$ mentioned in the Introduction (see, for example, \cite{Mandelbaum}) with $k$ branch-points (as defined below) after they are deleted to form the $k$ punctures.  

\begin{defn}[Branch-point]\label{brpt} A \textit{branch-point} of a branched projective structure is a point around which  the developing map is of the form $z\mapsto z^n$ in local coordinates, where $n>1$. Note that its Schwarzian derivative, as computed by \eqref{schw-deriv} has a pole of order two at such a point. Away from the branch-points, a branched projective structure is a $\cp$-structure in the usual sense, i.e.\ the developing map is an immersion and each point is \textit{regular} or unbranched. 
\end{defn} 

\noindent For an account of the solutions of the Schwarzian equation \eqref{schwarz} around a pole of order two of $q$, see \cite[\S2.3]{GupMon1} or \cite[\S4.1]{BBCR} for a summary and Chapter IX of \cite{Saint-Gervais} for more details. In fact the eigenvalues of the monodromy around such a pole is determined by the \textit{quadratic residue} at the pole  (i.e.\ the coefficient of the $z^{-2}$ term in the Laurent series expansion of $q$) -- see Lemma 2.3 of \cite{GupMon1} and the remark following that, for a statement summarizing that relation. 
In particular, it turns out that the definition of an \textit{apparent singularity} as in Definition \ref{appsing} is equivalent to saying that the Schwarzian equation has meromorphic solutions around the pole -- see \S2.3 of \cite{GupMon1} for a discussion. In particular, as already noted in Definition \ref{appsing}, the $\cp$-structure will have a branch-point (as above) or a regular point at such a puncture; we shall use this later in the paper.

\medskip 

\noindent We mention here that one can also define spaces of meromorphic projective structures with poles of order \textit{greater}  than two, where the corresponding Schwarzian equation has \textit{irregular} singularities; see \cite{GM1} for an account, including a description of the asymptotic of the developing map around such a singularity. The (framed) monodromy representations of such structures (\textit{c.f.} the next section) were characterized in \cite{GM2}. 

\subsection{Framed representations} 

\noindent Following Fock-Goncharov (in \cite{FG}), a \textit{framed representation} of $\Pi = \pi_1(S_{g,k})$ to $\pslc$ is a representation $\rho:\Pi \to \pslc$ together with a \textit{framing}, which, roughly speaking, is an assignment of  a point in $\cp$ (which is a ``flag" in $\C^2$) to each puncture of $S_{g,k}$. More precisely,  let the \textit{Farey set} $F_\infty$ be the set of points on the ideal boundary of the universal cover of $S_{g,k}$, that  corresponding to the lifts of the punctures; a \textit{framing} of a $\cp$-structure on $S_{g,k}$ is a  a $\rho$-equivariant map  $\beta: F_\infty \to \cp$.  The \textit{moduli space of framed representations} $\widehat{\chi}(\Pi)$ is then the orbit space of such pairs under the action of $\pslc$ that identifies $(\rho,\beta) \sim (A\cdot \rho \cdot A^{-1}, A\cdot \beta)$ for each $A\in \pslc$.  The space $\widehat{\chi}(\Pi)$   is in fact a moduli stack -- see Lemma 1.1 and Definition 2.1  of \cite{FG}, or \S4.1 and Lemma 9.1 of \cite{AllBrid}.\\

\noindent In \cite{AllBrid} Allegretti-Bridgeland defined the following notion (see also \cite{GM2}) :

\begin{defn}\label{degen-f} A \textit{degenerate framed representation}  is a pair $(\rho,\beta) \in \widehat{\chi}(\Pi)$   that satisfies one of the following conditions:
\begin{itemize} 
\item[(i)] There is a set of two points $F = \{p_-,p_+\} \in \cp$ such that the image of $\beta$ lies in $F$, and for any peripheral loop $\gamma$, $\rho(\gamma)$ fixes the points in $F$.
\item[(ii)] There is a single point $p_0 \in \cp$ such that  the image of $\beta$ lies in $F$, and for any peripheral loop $\gamma$, $\rho(\gamma)$ is a parabolic element fixing $p_0$, or is the identity element.  
\end{itemize} 
A framed representation is \textit{non-degenerate} if it is not degenerate.
\end{defn} 

\begin{rmk}
Since properties (i) or (ii) above are invariant under the $\pslc$-action described above, the notion  is well-defined for elements of $\widehat{\chi}(\Pi)$. 
\end{rmk}


\noindent The following result from \cite{GupMon1} allows us to go back and forth between a non-degenerate framed representation and a non-degenerate representation in the sense of Definition \ref{degen}:

\begin{prop}[Proposition 3.1 of \cite{GupMon1}]\label{prop31gup}  Forgetting the framing of a degenerate (respectively, non-degenerate) framed  representation yields a representation $\rho:\Pi \to \pslc$ that is degenerate (respectively, non-degenerate). Moreover, a non-degenerate representation $\rho$ can be equipped with a framing $\beta$ such that the pair $(\rho,\beta)$ is a non-degenerate framed representation.
\end{prop}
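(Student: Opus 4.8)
The plan is to prove both assertions by direct analysis of the definitions of degeneracy for framed versus unframed representations. For the first assertion, suppose $(\rho,\beta)$ is degenerate in the sense of Definition \ref{degen-f}. In case (i), the image of $\beta$ lies in a set $F = \{p_-,p_+\}$ that is pointwise fixed by $\rho(\gamma)$ for every peripheral $\gamma$; I want to conclude that $\rho(\Pi)$ preserves $F$ and each peripheral element fixes both points, which is exactly condition (b) of Definition \ref{degen} --- the only subtlety is checking that $\rho(\Pi)$ preserves $F$ (not just the peripheral subgroups), and here one uses that $\beta$ is $\rho$-equivariant and $F_\infty$ is $\Pi$-invariant, so $\rho(g)$ must permute $\beta(F_\infty) \subseteq F$; since a surface group with punctures is generated (up to finite index considerations) by peripheral elements, or more directly since $\beta(F_\infty)$ is infinite unless it is contained in $F$ and $\rho(g)$ permutes this set, one deduces $\rho(\Pi)$ preserves $F$. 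In case (ii), the image of $\beta$ lies in $\{p_0\}$, every peripheral $\rho(\gamma)$ is parabolic fixing $p_0$ or the identity, and the same equivariance argument forces $\rho(\Pi)$ to fix $p_0$, giving condition (a). The non-degenerate case is the contrapositive of what we are about to do for the last sentence.

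For the converse direction within the first assertion --- that forgetting the framing of a \emph{non-degenerate} framed representation yields a \emph{non-degenerate} representation --- I would argue again by contrapositive: if the underlying $\rho$ were degenerate, I must produce a framing making $(\rho,\beta)$ degenerate, contradicting non-degeneracy of the given pair; but in fact this is not quite the right logical shape, since the framing is given, not chosen. The cleaner approach: if $\rho$ is degenerate in sense (a), it has a global fixed point $p$ with all peripheral monodromies parabolic-fixing-$p$ or trivial; then the given framing $\beta$, being $\rho$-equivariant and with $\rho$ fixing $p$, need not map into $\{p\}$ a priori --- so one must be more careful. The right statement to use is that if $\rho$ is degenerate then \emph{every} framing is degenerate; this holds because a $\rho$-equivariant map $\beta: F_\infty \to \cp$ must have image contained in the fixed set of $\rho(\Pi)$ restricted appropriately --- actually the image $\beta(F_\infty)$ is a $\rho(\Pi)$-invariant set, and in the degenerate cases the minimal invariant sets are forced to be (contained in) $F$ or $\{p_0\}$, because the peripheral elements (which fix the relevant endpoints of $F_\infty$) constrain each $\beta(x)$ to be a fixed point of $\rho(\gamma_x)$ where $\gamma_x$ is the peripheral loop at $x$. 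This last point is the technical heart and I expect it to be the main obstacle: one must verify that $\rho$-equivariance of $\beta$ together with the structure of peripheral subgroups pins down $\beta(x)$ to lie in the prescribed finite set, using that a parabolic has a unique fixed point and a non-identity element fixing two marked points has no other freedom.

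For the final sentence --- that a non-degenerate $\rho$ admits a framing $\beta$ with $(\rho,\beta)$ non-degenerate --- I would proceed constructively. For each puncture $p_i$ with peripheral loop $\gamma_i$, I need to assign a flag $\beta(x)$ for each lift $x \in F_\infty$ of $p_i$, compatibly and $\rho$-equivariantly; it suffices to pick, for one lift $x_i$ of each puncture, a point $\beta(x_i) \in \cp$ fixed by $\rho(\gamma_i)$ (such a point always exists: every element of $\pslc$ has a fixed point on $\cp$), and then extend by equivariance to the whole $\Pi$-orbit, which is consistent precisely because $\beta(x_i)$ is fixed by the stabilizer $\langle \gamma_i \rangle$ of $x_i$. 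One then checks that the resulting pair is non-degenerate: if it were degenerate, say of type (i) with image of $\beta$ in some $F = \{p_-,p_+\}$ fixed pointwise by all peripherals, then since $\rho$ is non-degenerate this pair $F$ is not preserved by all of $\rho(\Pi)$, but the image of $\beta$ is $\rho(\Pi)$-invariant, contradiction; similarly for type (ii). The one place needing care is when the fixed-point choices are partially forced --- e.g. some $\rho(\gamma_i)$ is parabolic with a unique fixed point --- and one must ensure the union of chosen points is not accidentally trapped in a common $\rho(\Pi)$-invariant pair or singleton; but non-degeneracy of $\rho$ is exactly the hypothesis that rules this out, possibly after perturbing the choices at the loxodromic/elliptic punctures where there are two fixed points to choose between. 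I expect assembling this perturbation argument cleanly to be the only mildly delicate step.
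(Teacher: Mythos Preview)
The paper does not supply its own proof of this proposition; it is quoted from \cite{GupMon1}, so there is no argument here to compare against. Your treatment of the ``Moreover'' clause --- choose for each puncture a fixed point of its peripheral monodromy, extend $\rho$-equivariantly, and if the resulting framing happens to land in a one- or two-point configuration satisfying Definition~\ref{degen-f}, perturb the choice at a puncture where the peripheral has two fixed points --- is correct, and non-degeneracy of $\rho$ is precisely what guarantees such a perturbation is available. This is the only part of the proposition the present paper actually uses (in \S\ref{proof1} and for Theorem~\ref{ab61}).

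For the first sentence, however, the obstacle you flag is real and cannot be closed with the definitions exactly as recorded here. Your equivariance argument shows only that $\rho(\Pi)$ permutes the \emph{image} $\beta(F_\infty)$, not the ambient two-point set $F$ of Definition~\ref{degen-f}(i). If $\beta(F_\infty)=\{p_-\}$ while the peripherals are loxodromic with fixed set $\{p_-,p_+\}$, you get that $\rho(\Pi)$ fixes $p_-$ but nothing forces it to preserve $\{p_-,p_+\}$; since the peripherals are not parabolic, $\rho$ then satisfies neither (a) nor (b) of Definition~\ref{degen}. Concretely, on $S_{1,2}$ take $\rho(\alpha)$ parabolic fixing $0$, $\rho(\beta)=\mathrm{Id}$, $\rho(\gamma_1)(z)=2z$, $\rho(\gamma_2)=\rho(\gamma_1)^{-1}$, and $\beta\equiv 0$: then $(\rho,\beta)$ is degenerate framed of type~(i) with $F=\{0,\infty\}$, yet $\rho(\Pi)$ preserves no pair of points and the peripherals are not parabolic, so $\rho$ is non-degenerate. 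In the opposite direction, the trivial representation on $S_{0,3}$ with a framing taking three distinct values is non-degenerate framed but degenerate unframed. The upshot is that the two assertions in the first sentence are not literal converses under the definitions transcribed in this paper; you should consult \cite{GupMon1} directly for the precise hypotheses (that paper works under a standing assumption excluding apparent singularities, and likely formulates the correspondence slightly differently) rather than attempt to force the argument through as written.
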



\noindent The following result is a direct consequence of Theorem 6.1 of \cite{AllBrid} and Proposition \ref{prop31gup} :

\begin{thm}\label{ab61}  If $\rho:\Pi \to \pslc$ is the monodromy representation of a meromorphic $\cp$-structure in  $\mathcal{P}_g(k)$ with no apparent singularities, then $\rho$ is non-degenerate.
\end{thm}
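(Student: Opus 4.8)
The plan is to combine the two cited results verbatim. By Proposition~\ref{prop31gup}, forgetting the framing of a non-degenerate framed representation produces a non-degenerate representation; conversely, the last sentence of that proposition allows any non-degenerate representation to be \emph{framed} non-degenerately, so it suffices to show that the monodromy of a meromorphic $\cp$-structure in $\mathcal{P}_g(k)$ with no apparent singularities, when equipped with the framing that the projective structure canonically carries, is a non-degenerate \emph{framed} representation in the sense of Definition~\ref{degen-f}. Theorem~6.1 of \cite{AllBrid} is precisely the statement that this canonical framed representation is non-degenerate, so the result follows once the framing is in place.

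First I would recall how a meromorphic $\cp$-structure with poles of order at most two determines a framing: around a puncture where the monodromy is non-trivial, the local solutions of the Schwarzian equation with a regular singularity have a distinguished eigendirection (the ``leading" solution), which developing-map-wise picks out a point of $\cp$; making this choice equivariantly over all lifts of the punctures in the Farey set $F_\infty$ produces the $\rho$-equivariant map $\beta: F_\infty \to \cp$. This is exactly the framing discussed in \S4.1 of \cite{AllBrid} (and recalled in \S\ref{prel} above), and it is well-defined precisely because there is no apparent singularity, so that the monodromy around each puncture is a genuine non-identity element with at least one fixed point. Thus the pair $(\rho,\beta)$ is a genuine element of $\widehat{\chi}(\Pi)$, and Theorem~6.1 of \cite{AllBrid} applies to it.

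Second, I would invoke Theorem~6.1 of \cite{AllBrid}: the framed monodromy of such a structure is non-degenerate. Then Proposition~\ref{prop31gup} — the ``forgetting the framing" direction — immediately yields that $\rho$ itself is non-degenerate as in Definition~\ref{degen}, completing the argument. The only real content is the bookkeeping in the previous paragraph: checking that the absence of apparent singularities is exactly the hypothesis needed to produce a well-defined framing (so that \cite[Theorem 6.1]{AllBrid} is applicable) and that the notion of non-degeneracy used there (Definition~\ref{degen-f}) matches, after forgetting the framing, the notion in Definition~\ref{degen}. That compatibility is precisely what Proposition~\ref{prop31gup} packages, so no further work is required.

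The main (and only) obstacle is conceptual rather than computational: one must be confident that the framing attached to a pole of order two with non-trivial monodromy is canonical and equivariant, and that Theorem~6.1 of \cite{AllBrid} is stated in a form that covers all structures in $\mathcal{P}_g(k)$ with no apparent singularities (in particular, allowing parabolic monodromy around punctures, where the single fixed point gives the framing). Granting the cited results as stated, the proof is essentially a one-line deduction.
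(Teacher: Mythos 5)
Your proposal is correct and follows exactly the paper's own route: the paper derives Theorem \ref{ab61} as a direct consequence of Theorem 6.1 of \cite{AllBrid} (non-degeneracy of the canonically framed monodromy in the absence of apparent singularities) combined with Proposition \ref{prop31gup} (forgetting the framing preserves non-degeneracy). Your additional discussion of how the framing is produced at a regular singular point is just an unpacking of the input from \cite{AllBrid} and does not change the argument.
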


\section{Proof of Theorem \ref{thm1}: Non-degenerate representations}\label{proof1}

\noindent In this section we shall fix an arbitrary non-degenerate representation $\rho:\Pi \to \pslc$, and show that there is a meromorphic projective structure in the space $\mathcal{P}_g(k)$ that has monodromy $\rho$. 
This proves the ``if" direction of Theorem \ref{thm1} in case (i) of the statement of the theorem. 
The proof is exactly the same as that in \cite{GupMon1}; the key observation is that our assumption in that paper that $\rho$ has no apparent singularity in fact plays no role in our construction. In what follows we provide a condensed (and simplified) account of this construction, and refer at times to \cite{GupMon1} for further discussion. 

\smallskip

\noindent The first observation is that by Proposition \ref{prop31gup} we can define a framing $\beta:F_\infty \to \cp$ such that the pair $(\rho,\beta)$ is a non-degenerate framed representation in the sense of Definition \ref{degen-f}. 

\subsection{Fock-Goncharov coordinates} In \cite{FG} Fock-Goncharov described coordinates on the moduli space of framed representations $\widehat{\chi}(\Pi)$ as follows.

\smallskip

\noindent Let $\hat{\rho}  = (\rho, \beta)$ be a framed representation. Choose an ideal triangulation $T$ of $S_{g,k}$; in the universal cover, this lifts to an ideal triangulation $\tilde{T}$ of the universal cover, with ideal vertices in the Farey set $F_\infty$. For each edge $e \in {T}$, choose a lift $\tilde{e} \in \tilde{T}$. Let  $p_1,p_2,p_3,p_4$ be the four ideal vertices of the two ideal triangles adjacent to $\tilde{e}$, in counter-clockwise order on the ideal boundary $\partial_\infty \widetilde{S_{g,k}} \cong \mathbb{S}^1$. Associated with $e$, we can then define a \textit{(complex) cross ratio} 
\begin{equation}\label{cross}
C(\hat{\rho}, e) = \frac{(z_1 - z_2) (z_3- z_4)}{(z_2- z_3) (z_1 - z_4)}
\end{equation} 
where $z_i = \beta(p_i)$ for $i=1,2,3,4$. 
Note that this is well-defined since by the equivariance of the framing $\beta$,  a different choice of lift of $e$ would yield a quadruple of points that differs by a M\"{o}bius transformation, that has the same cross-ratio. 

\noindent The set of such cross-ratios defines an element of $(\C^\ast)^N$ where $N$ is the number of edges of $T$, and this tuple is said to be the \textit{Fock-Goncharov coordinates} of the framed representation $\hat{\rho}$, with respect to our choice of ideal triangulation. 

\noindent Note, however, that the cross-ratio associated with $e$ above is well-defined only when the quadruple of points $z_1,z_2,z_3,z_4$ are \textit{distinct}, and hence for a fixed $T$, the Fock-Goncharov coordinates are only well-defined for a \textit{generic} framed representation.  We shall use the following theorem of Allegretti-Bridgeland: 

\begin{thm}[Theorem 9.1 of \cite{AllBrid}]  For any non-degenerate framed representation $\hat{\rho}$, there exists an ideal triangulation $T$ such that the Fock-Goncharov coordinates  of $\hat{\rho}$ with respect to $T$  are  well-defined.
\end{thm}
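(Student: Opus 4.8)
The plan is to convert the statement into a purely combinatorial assertion about ideal triangulations and then establish it by a minimality argument using Whitehead moves (flips). The starting observation is a reformulation of what ``well-defined'' means. Unwinding \eqref{cross}: for an edge $e$ whose adjacent ideal quadrilateral has cyclically ordered ideal vertices $p_1,p_2,p_3,p_4$, the cross ratio $C(\hat\rho,e)$ is a well-defined element of $\C^\ast$ precisely when the four ``sides'' of that quadrilateral are non-degenerate, i.e.\ $\beta(p_i)\neq\beta(p_{i+1})$ for the cyclic indices. Since each such side is itself an edge of $T$, and conversely each edge of $T$ occurs as a side of the quadrilateral adjacent to one of its neighbouring edges, the Fock--Goncharov coordinates of $\hat\rho$ with respect to $T$ are all well-defined if and only if \emph{no edge of $T$ is collapsed}, where an edge is called collapsed if its two lifted endpoints $x,y\in F_\infty$ satisfy $\beta(x)=\beta(y)$. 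This is a well-defined property of an edge of $T$: by $\rho$-equivariance of $\beta$, the condition $\beta(x)=\beta(y)$ is unchanged when $(x,y)$ is replaced by $(\gamma x,\gamma y)$, so it is independent of the chosen lift. Equivalently, one seeks an ideal triangulation of $S_{g,k}$ each of whose triangles has its three ideal vertices sent by $\beta$ to three distinct points of $\cp$.

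Next I would record the role of non-degeneracy: it forces $|\beta(F_\infty)|\geq 3$. Indeed $\beta(F_\infty)$ is $\rho(\Pi)$-invariant (because $\beta(\gamma\cdot x)=\rho(\gamma)\cdot\beta(x)$), and each peripheral $\rho(\gamma)$ fixes the $\beta$-value of any lift fixed by $\gamma$; so if $\beta$ took at most two values, its image would be a $\rho(\Pi)$-invariant set of at most two points, each fixed by every peripheral element, putting $(\rho,\beta)$ into case (i) or (ii) of Definition \ref{degen-f}, contrary to hypothesis. In particular there do exist ideal triangulations with at least one non-collapsed edge.

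The core is then a flip/minimality argument. Among all ideal triangulations of $S_{g,k}$, pick one, $T$, with the fewest collapsed edges; the claim is that this number is $0$. Suppose not, and let $e$ be a collapsed edge. After an auxiliary flip to handle the standard point that $e$ need not be flippable when its two sides lie on a common triangle, I may assume $e$ is the diagonal of an embedded ideal quadrilateral $Q=p_1p_2p_3p_4$. Flipping $e$ replaces it by the other diagonal $e'$ of $Q$ and leaves every other edge --- hence its collapsed status --- unchanged, so the number of collapsed edges changes by $-1+[\,e'\ \text{collapsed}\,]$; by minimality of $T$, $e'$ must again be collapsed. Translated through the definition, this says $\beta(p_1)=\beta(p_3)$ and $\beta(p_2)=\beta(p_4)$: every collapsed edge of a minimizing triangulation sits inside an ideal quadrilateral that $\beta$ two-colours in an alternating pattern. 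Since minimizing triangulations are closed under flipping collapsed edges (the same count-bookkeeping shows the flipped triangulation is again minimizing), I would then propagate this local rigidity --- moving the alternating pattern into neighbouring quadrilaterals by further flips, using that the surface is connected together with the equivariance of $\beta$ --- to conclude that $\beta$ takes at most two values and that each peripheral element fixes both of them. This contradicts non-degeneracy, so the minimum is $0$ and the corresponding $T$ proves the theorem.

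\textbf{Main obstacle.} I expect the hard part to be exactly this last propagation step: passing from ``each collapsed edge lies in a two-coloured square'' to ``$\beta$ is globally two-valued''. One must rule out the collapsed edges being trapped in a proper subsurface, and treat carefully the interaction with the punctures --- this is where the fixed-point conditions of Definition \ref{degen-f} are genuinely used --- as well as, in the once-punctured case, the monogon subtleties of the flip graph. An alternative that sidesteps some of this bookkeeping is to consider the equivalence relation on $F_\infty$ generated by the collapsed edges of \emph{all} ideal triangulations simultaneously, and to show directly that if no triangulation is collapse-free then this relation has at most two classes; by the same equivariance argument as above, that again forces $(\rho,\beta)$ to be degenerate, a contradiction.
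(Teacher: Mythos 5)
The first thing to note is that the paper itself contains no proof of this statement: it is imported wholesale from Allegretti--Bridgeland (Theorem 9.1 of \cite{AllBrid}), so there is no argument in the paper to measure yours against, and the only meaningful comparison is with the proof in that reference. Judged on its own, your proposal is a strategy rather than a proof. The reformulation (well-definedness of all coordinates amounts to the absence of collapsed edges) and the local flip computation are fine: if $T$ minimises the number of collapsed edges and $e$ is a flippable collapsed edge, minimality does force $\beta(p_1)=\beta(p_3)$ and $\beta(p_2)=\beta(p_4)$. But the passage from this local rigidity to ``$\beta$ takes at most two values and every peripheral element fixes both'' is exactly the content of the theorem, and you give no mechanism for it: one must control how the alternating pattern spreads to triangles meeting no collapsed edge, handle non-embedded quadrilaterals and the non-flippable edges inside self-folded triangles (where your ``auxiliary flip'' may itself change the count, so minimality is not obviously preserved), and do all of this equivariantly. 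You flag this yourself as the main obstacle; as written it is a gap, not a technicality.

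There is also a concrete error earlier on: the claim that non-degeneracy forces $\lvert\beta(F_\infty)\rvert\ge 3$ does not follow from Definition \ref{degen-f} as quoted. The two-valued case is fine, since the image of $\beta$ is $\rho(\Pi)$-invariant and a M\"obius map preserving a two-point set while fixing one of its points fixes both, so clause (i) applies. But if $\beta$ is constant equal to $p$, clause (ii) additionally demands that every peripheral monodromy be parabolic fixing $p$ (or trivial), and clause (i) demands a common second fixed point; neither is implied. Concretely, on $S_{0,3}$ take $\rho(\gamma_1)(z)=2z$, $\rho(\gamma_2)(z)=3z+1$ and the constant framing $\beta\equiv\infty$: this framed representation satisfies neither (i) nor (ii) of Definition \ref{degen-f}, yet every edge of every ideal triangulation is collapsed, so both your step 2 and your endgame contradiction fail for it. This shows that the theorem must be read with the precise degeneracy condition used in \cite{AllBrid} (of which Definition \ref{degen-f} appears to be a loose paraphrase), and any correct argument has to invoke that exact condition, in particular to dispose of the constant-framing case, which your sketch cannot reach at all.
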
 

\noindent Henceforth, in this section, we shall assume that we have fixed such a choice of ideal triangulation $T$, for the non-degenerate framed representation $\hat{\rho}= (\rho, \beta)$ we fixed at the beginning of this section. 

\subsection{Pleated planes in $\mathbb{H}^3$}\label{pleatedplane} Recall that $\cp$ is the ideal boundary of hyperbolic $3$-space $\mathbb{H}^3$.
The Fock-Goncharov coordinates of a framed representation $\hat{\rho}= (\rho,\beta)$ with respect to an ideal triangulation $T$ can be interpreted as defining a geometric object, namely a \textit{pleated plane} in $\mathbb{H}^3$. This pleated plane is a $\rho$-equivariant  map 
\begin{equation}\label{pplane}
\Psi: \widetilde{S} \to \mathbb{H}^3
\end{equation}
defined on the universal cover of our surface $S_{g,k}$, such that each ideal triangle $\Delta \in \tilde{T}$ with, say, ideal vertices $p_1,p_2,p_3$, maps to the totally-geodesic ideal triangle in $\mathbb{H}^3$ with ideal vertices $\beta(p_1),\beta(p_2),\beta(p_3)$. Here we assume that $\Psi$ preserves orientation, so that the image is an oriented surface in $\mathbb{H}^3$ that is ``piecewise totally-geodesic".

\subsection{Constructing a projective structure}\label{consprojstrucnd} Given a non-degenerate framed representation $\hat{\rho}= (\rho,\beta)$ as the beginning of the section, we now describe how to construct a projective structure in $\mathcal{P}_g(k)$ with monodromy $\rho$. This is exactly as in \cite{GupMon1}, however here we provide a  condensed and simplified discussion. In particular, here shall avoid the intermediate steps of ``straightening" the pleated plane and then "grafting". 

\smallskip

\noindent The main idea is that a pleated plane $\Psi$ constructed in the previous subsection  also defines a projective structure by considering its  "shadow"  at the boundary at infinity of $\mathbb{H}^3$. More precisely, on each totally-geodesic ideal triangle $\Psi(\Delta)$ we can consider the hyperbolic Gauss map $\mathcal{G}_{\Delta}$, in the normal direction consistent with the orientation of $\Psi$. On each ideal triangle $\Delta \in \tilde{T}$, define the map $\Psi^0_\infty = \mathcal{G}_\Delta \circ \Psi$. Note that the image of each ideal triangle is a triangle on $\cp$ with sides that are circular arcs that form ``cusps" at the three vertices. 

\noindent Note that this defines a map $\Psi^0_\infty: \widetilde{S} \to \cp$ that is $\rho$-equivariant. However it may not be continuous: suppose $\Delta_l,\Delta_r$ are two adjacent triangles, such that $\Psi(\Delta_l)$ and $\Psi(\Delta_r)$ lie in totally geodesic planes that intersect at an angle $\alpha \in (0, 2\pi)$ along a common geodesic line $l$ (which is the $\Psi$-image of the common edge $\tilde{e}$  between $\Delta_l$ and $\Delta_r$).  In that case the image of $\tilde{e}$ under $\mathcal{G}_{\Delta_l}$ and $\mathcal{G}_{\Delta_r}$ are arcs $\alpha_l,\alpha_r$  of great circles on $\cp$ with a common pair of endpoints,  that differ by an elliptic rotation of an angle $\alpha$.  We shall call a region in $\cp$  bounded by such a pair of arcs a "lune".

 \begin{figure}
  \centering
  \includegraphics[scale=0.34]{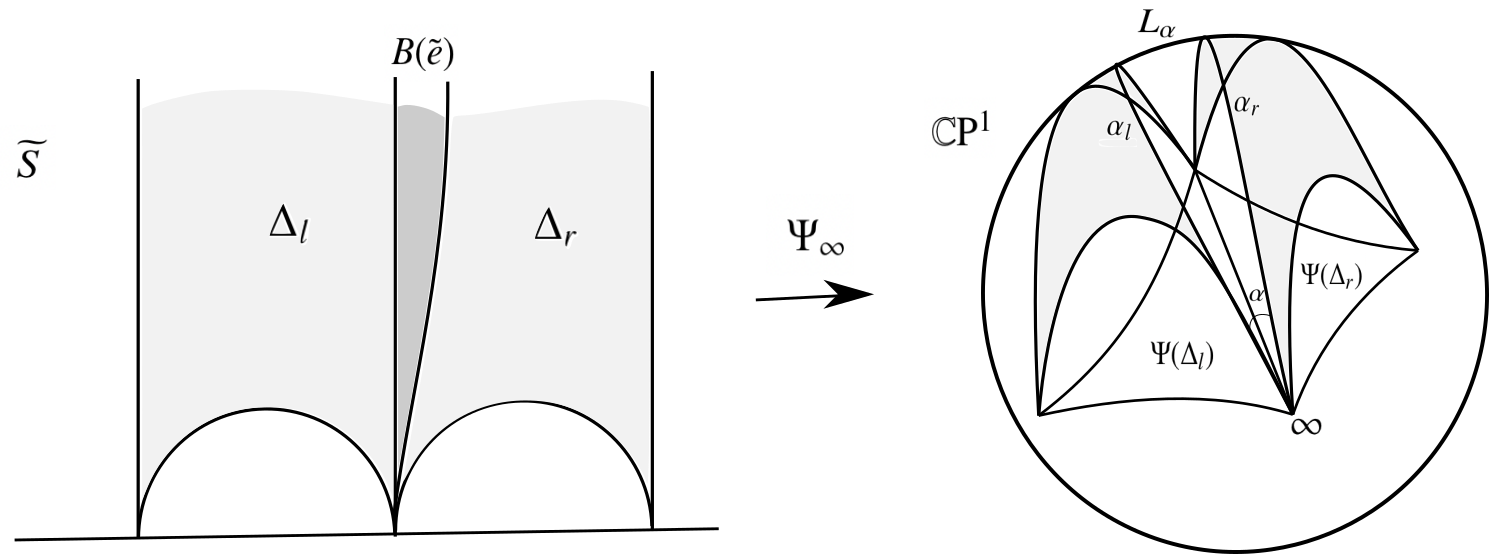}
  \caption{The map $\Psi_\infty$ maps the grafted region $B(\tilde{e})$ to the ``lune" $L_\alpha$ on $\cp$ bounded by the circular arcs $\alpha_l$ and $\alpha_r$. The shaded regions on the right are the images of ${\Delta_l}$ and ${\Delta_r}$ under $\Psi^0_\infty$ (see \S3.3).}
\end{figure}

\noindent We can, however modify the map $\Psi^0_\infty$ to obtain a \textit{continuous} map 
 \begin{equation}\label{psi-inf}
     \Psi_\infty: \widetilde{S} \to \cp
 \end{equation}
 as follows: for each pair of adjacent triangles $\Delta_l,\Delta_r$ in the domain $\widetilde{S}$ as above, cut along the common edge $\tilde{e}$, and glue in a bigon $B(\tilde{e})$ such that the sides of the bigon are identified with the resulting two sides $\tilde{e}_\pm$. Define a map from $B(\tilde{e})$ to $\cp$ such that the two sides map to the arcs $\alpha_l,\alpha_r$ described above, and the image is the lune  bounded by $\alpha_l \cup \alpha_r$, such that the resulting map from $\Delta_l \cup B(\tilde{e}) \cup \Delta_r $ is a smooth orientation-preserving immersion to $\cp$.

 \noindent We can define the maps on the "grafted" bigons in a $\Pi$-equivariant manner, so that the final map $\Psi_\infty$ 
 as in \eqref{psi-inf} is $\rho$-equivariant, and a smooth immersion. Hence, it defines a projective structure $P$ on the surface $S_{g,k}$ with monodromy $\rho$.

\subsection{Schwarzian derivative at the punctures}\label{sdpunct}
It remains to show that the projective structure $P$ we just constructed on $S_{g,k}$ in fact lies in the space $\mathcal{P}_g(k)$, that is, the Schwarzian derivative of the developing map with respect to a suitable reference projective structure, has a pole of order at most two. 

\noindent To see this, we uniformize the underlying Riemann surface structure on $S_{g,k}$ to obtain a hyperbolic metric of finite area, such that each puncture is a cusp. This will serve as a reference projective structure.  

\noindent Let $\mathbb{D}^\ast = \{ 0 <\lvert w\rvert < 1\} $ be a conformal punctured-disk neighborhood of a puncture.
Lifting to the universal cover $\widetilde{S}$, a neighborhood of an ideal point $p \in F_\infty$ would look exactly like a neighborhood of $\infty$ in the upper-half space model of $\mathbb{H}^2$, where the edges of the triangulation $\widetilde{T}$ are the vertical geodesics, and the deck-translation corresponding to the parabolic element around the cusp is  a (positive) translation. Moreover, can choose this conformal identification with the model $\mathbb{H}^2$ such that the translation is $z\mapsto z+1$, namely  $\mathbb{H}^2/\langle z\mapsto z+1 \rangle \cong \mathbb{D}^\ast$ via the map $z\mapsto w := e^{2\pi i z}$. If there are $n$ geodesic sides of the triangulation $T$ asymptotic to that cusp, then a fundamental domain $\Delta$ of the action will comprise  $n$ ideal triangles  $\Delta_1,\Delta_2,\ldots, \Delta_n$ together with $n$ ``bigons"  $B(\tilde{e}_1),B(\tilde{e}_1),\ldots B(\tilde{e}_n)$ that were grafting in. Here, $\tilde{e}_i$ is the ``right-hand" edge  of 
$\Delta_i$ for each $1\leq i\leq n$.

\noindent From our definition of the map $\Psi_\infty$ above, the images of $B(\tilde{e}_1),B(\tilde{e}_2),\ldots, B(\tilde{e}_n)$ are each a lune in $\cp$ with a common endpoint $\beta(p)$, and the images of $\Delta_1,\Delta_2,\ldots, \Delta_n$ in a neighborhood of $\infty$ are regions bounded by circular arcs that form a ``cusp" (of angle zero) at $\beta(p)$.  Each successive region shares a common circular arc with the preceding one. Hence the union of their images near $\beta(p)$ looks like a region bounded by two circular arcs that intersect at some angle $\alpha \in \mathbb{R}^+$ (if $\alpha>2\pi$ then the lune is immersed in $\cp$).  Here, the angle $\alpha$ is the sum of the angles at $\beta(p)$ of the lunes that are the images of the grafted bigons. 

\noindent The rest of the argument is exactly as in the proof of Proposition 3.5 in \cite{GupMon1}. 
Consider first the case when the total ``bending angle" $\alpha$ around $\beta(p)$ is positive. We can assume, by post-composing with an appropriate M\"{o}bius map, that $\beta(p) =\infty \in \cp$. 
The conformal developing map from a neighborhood of $\infty$ in $\Delta$ then maps to a neighborhood of $\infty$ of a  lune $L_\alpha \subset \cp$ that has vertices at $0,\infty$. Such a conformal map has the asymptotic form $f(z) = e^{-i\alpha z}$ for $\lvert z \rvert \gg 1$, and hence on the punctured disk $\mathbb{D}^\ast$, has the expression $\tilde{f}(w) = w^{-\alpha/2\pi} $. A computation of the Schwarzian derivative using \eqref{schw-deriv} then yields $S(\tilde{f})$ has a pole of order $2$ at the puncture, as desired. 

\noindent In the case when the total bending angle $\alpha=0$, the conformal developing map is the identity map $f(z)=z$, and the Schwarzian derivative thus yields the constant quadratic differential $dz^2$ on $\Delta$ that has the expression $-\frac{1}{4\pi^2}w^{-2} dw^2$ on $\mathbb{D}^\ast$, once again with a pole of order two.

\medskip

\noindent The discussion in this section thus proves:

\begin{prop}\label{prop:nondegen}  Let $\Pi$ be the fundamental group of a surface $S_{g,k}$ of genus $g$ and $k\geq 1$ punctures, where $2-2g-k<0$.  Any non-degenerate representation $\rho:\Pi \to \pslc$ arises as the monodromy of a $\cp$-structure in $\mathcal{P}_g(k)$.
\end{prop}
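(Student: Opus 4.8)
The plan is to follow the construction recalled in \S\ref{proof1}. Start with the given non-degenerate $\rho:\Pi\to\pslc$ and first promote it to a framed representation: by Proposition \ref{prop31gup} choose a $\rho$-equivariant framing $\beta:F_\infty\to\cp$ so that $\hat\rho=(\rho,\beta)$ is a non-degenerate framed representation in the sense of Definition \ref{degen-f}. Then, by Theorem 9.1 of \cite{AllBrid}, fix an ideal triangulation $T$ of $S_{g,k}$ with respect to which all the Fock--Goncharov cross-ratios \eqref{cross} are well-defined, i.e.\ the four framing points around each lifted edge are distinct. This is the one place non-degeneracy is genuinely needed; the existence of an ideal triangulation at all uses only $k\geq 1$ and $2-2g-k<0$.

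Next I would realize these coordinates as a pleated plane $\Psi:\widetilde S\to\mathbb{H}^3$, sending each ideal triangle $\Delta\in\widetilde T$ with vertices $p_1,p_2,p_3$ to the totally-geodesic ideal triangle in $\mathbb{H}^3$ on $\beta(p_1),\beta(p_2),\beta(p_3)$, with the bending along each interior edge prescribed by the associated cross-ratio; equivariance of $\beta$ makes $\Psi$ a $\rho$-equivariant orientation-preserving map. Taking its shadow at infinity, on each $\Delta$ compose $\Psi$ with the hyperbolic Gauss map in the oriented normal direction to get a $\rho$-equivariant map $\Psi^0_\infty:\widetilde S\to\cp$ whose restriction to each triangle is an embedding onto a region of $\cp$ bounded by three circular arcs. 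This map typically fails to be continuous across an edge $\tilde e$ along which the two adjacent geodesic planes meet at angle $\alpha\in(0,2\pi)$, the two Gauss images of $\tilde e$ then bounding a lune. To repair this, cut along each edge and graft in a bigon mapped homeomorphically onto the corresponding lune, performed $\Pi$-equivariantly, so that the result $\Psi_\infty:\widetilde S\to\cp$ is a $\rho$-equivariant smooth immersion; it is thus the developing map of a $\cp$-structure $P$ on $S_{g,k}$ with monodromy $\rho$.

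The last and most delicate step is checking that $P$ lies in $\mathcal{P}_g(k)$, i.e.\ that relative to a uniformizing finite-area cusped hyperbolic structure as reference, the Schwarzian derivative of the developing map has a pole of order at most two at each puncture. Here one works in a model neighborhood of an ideal point: normalize the parabolic deck transformation to $z\mapsto z+1$ so that $\mathbb{D}^\ast\cong\mathbb{H}^2/\langle z\mapsto z+1\rangle$ via $w=e^{2\pi i z}$, and observe that a fundamental domain consists of the finitely many ideal triangles and grafted bigons asymptotic to the cusp; their $\Psi_\infty$-images assemble near $\beta(p)$ into a lune of total angle $\alpha\geq 0$ equal to the sum of the grafting angles. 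After moving $\beta(p)$ to $\infty$, the conformal developing map near $\infty$ is asymptotic to $f(z)=e^{-i\alpha z}$, i.e.\ $\tilde f(w)=w^{-\alpha/2\pi}$ when $\alpha>0$, and to $f(z)=z$, i.e.\ the constant differential $dz^2=-\tfrac{1}{4\pi^2}w^{-2}dw^2$ when $\alpha=0$; in either case \eqref{schw-deriv} gives a pole of order exactly two, as required. The main obstacle, beyond the bookkeeping needed to carry out the grafting equivariantly and keep $\Psi_\infty$ a smooth immersion, is precisely this asymptotic control of the developing map at the cusps; the rest is a faithful unpacking of Proposition \ref{prop31gup} and the cited results of Allegretti--Bridgeland.
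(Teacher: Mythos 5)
Your proposal is correct and follows essentially the same route as the paper's own argument in \S\ref{proof1}: framing via Proposition \ref{prop31gup}, choosing a triangulation with well-defined Fock--Goncharov coordinates by Allegretti--Bridgeland, passing through the pleated plane and its Gauss-map shadow with equivariantly grafted lunes, and concluding with the same asymptotic Schwarzian computation at the cusps. No substantive differences to report.
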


\section{Proof of Theorem \ref{thm2}: Affine holonomy with at least two punctures}\label{proof2}

\noindent In this section we start to deal with the complementary case when $\rho:\pi_1(S_{g,k}) \to \pslc$ is a degenerate representation (see Definition \ref{degen}). Here we shall consider surfaces $S_{g,k}$ with at least two punctures, that is $k\ge2$.  Our main goal in this section is to prove our Theorem \ref{thm2}, that is the case (ii) of Theorem \ref{thm1} for affine representations when $S_{g,k}$ has at least two punctures.

\begin{thm}
Let $\Pi$ be the fundamental group of $S_{g,k}$ as in Theorem \ref{thm1}, such that the number of punctures $k\geq 2$. Then any non-trivial representation $\rho:\Pi \to \text{Aff}({\C})$ arises as the monodromy of a complex affine structure on $S_{g,k}$. If $k\ge3$, then every representation is realizable as the monodromy of a complex affine structure on $S_{g,k}$.
\end{thm}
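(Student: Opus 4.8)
The plan is to construct an explicit affine structure on $S_{g,k}$ realizing a given non-trivial representation $\rho:\Pi \to \affc$ by building a suitable polygonal fundamental domain in $\C$ with sides paired by the affine maps $\rho(\gamma_i)$, and then deleting the punctures that arise. First I would reduce to a standard generating set: pick a one-holed handle decomposition so that $\Pi$ is generated by $a_1,b_1,\dots,a_g,b_g,c_1,\dots,c_k$ with the single surface relation $\prod[a_i,b_i]\prod c_j = 1$, where $c_j$ is the peripheral loop around the $j$-th puncture. The target affine surface will be assembled from an unbounded polygon (a $(4g+2k)$-gon, roughly) whose edge-pairing side maps are prescribed to be $\rho(a_i), \rho(b_i), \rho(c_j)$; the combinatorics of the gluing is dictated by the surface relation, exactly as in the translation-surface constructions of \cite{CFG}, but now with genuinely affine (scaling) side identifications. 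The vertices of the polygon, after identification, become cone-type branch points of the resulting affine structure, and these — together with a necessarily-present point at infinity — are exactly the $k$ punctures we delete. This is why the argument needs $k\geq 2$: one puncture is ``spent'' on the point at infinity.

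**Key steps, in order.**

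First, I would treat the \emph{translation} case $\rho:\Pi\to\C$ separately (as the paper announces it does), since there the side maps are translations and one can literally take a polygon in $\C$ whose edge vectors are the prescribed periods; the obstruction there is purely the combinatorial one of realizing the prescribed period data by an embedded or immersed polygon with the right edge-identification pattern, which is the content of \cite{CFG}. Second, for the general affine case, I would set up the polygon: place one distinguished vertex, then lay out successive edges so that traversing the boundary word $\prod[a_i,b_i]\prod c_j$ returns to the start; the affine maps $\rho(\gamma_i)$ act on edges, and because each $\rho(c_j)$ has a well-defined multiplier (its linear part), the local picture at each puncture-vertex is a ``cone'' of some affine angle. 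Third, I would verify that the identification space is a surface of the right genus and number of punctures — this is a Euler-characteristic count — and that the developing map it carries is an immersion away from the deleted points, with monodromy precisely $\rho$. Fourth, I would handle the case distinction in the statement: when $k\geq 3$ there is enough room to realize \emph{every} representation including the trivial one (using the extra puncture(s) as branch points of a branched translation structure with prescribed trivial holonomy), whereas for $k=2$ one must exclude the trivial $\rho$, which is dealt with by Lemma~\ref{trivex}; the non-trivial affine representations with $k=2$ are realizable by a more careful single-vertex polygon construction.

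**Main obstacle.**

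The hard part will be controlling the geometry of the polygon so that the side-pairing maps $\rho(\gamma_i)$ actually glue up into an \emph{immersed} (not just abstract) affine surface — i.e.\ ensuring the developing map is a local homeomorphism everywhere except at the chosen vertices, and that no unwanted branching or self-overlap forces extra singularities or changes the topological type. When the linear parts of the $\rho(\gamma_i)$ are not all trivial, the edges are no longer straight-line translates of one another, so the naive ``lay out edge vectors'' recipe fails and one must instead work with edges that are arcs adapted to the affine maps, and show the cyclic gluing closes up consistently; this is precisely where the new gluing operations (the analogue of Definition~\ref{glue-new}) are needed. A secondary subtlety is bookkeeping the total affine angle at the vertex-punctures so that each really does become a legitimate puncture of $S_{g,k}$ (a pole of order at most two of the associated Schwarzian, once one projectivizes), and matching this count against the required $k$ punctures — getting exactly the right number, no more and no fewer, is where the constraint $k\geq2$, and the separate small-$k$ exceptions, come from.
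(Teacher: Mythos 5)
Your outline has the right flavor (polygons with affine side-pairings, a deleted branch point, an end at infinity, translation case first, Lemma~\ref{trivex} for the $k=2$ exclusion), but as written it contains a genuine error in the puncture bookkeeping and defers precisely the step that constitutes the theorem. First, the bookkeeping: in any such polygonal construction the vertices are identified to one (or a few) points, and deleting such a point produces a puncture with \emph{trivial} monodromy, since a small loop around a branch point develops to a loop around a regular value of $z\mapsto z^n$. Hence punctures at which $\rho(c_j)\neq \mathrm{Id}$ can never arise as deleted vertices; they must be non-compact ends of the unbounded polygon (this is exactly how the paper produces them: slit planes and strips in Propositions~\ref{trans2} and~\ref{affg0}, plus the end at infinity). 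So your claim that ``the vertices, together with the point at infinity, are exactly the $k$ punctures'' does not add up, and the local model you would need at a puncture with, say, dilation monodromy is an affine cone/end whose immersed realization is itself a nontrivial construction, not a vertex of a finite-angle corner.

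Second, the single $(4g+2k)$-gon carrying the full relator is not carried out, and the difficulties you flag as ``the main obstacle'' are exactly where all the work lies; the paper avoids them by a different architecture. It first normalizes generators by mapping classes so that every handle-generator has non-trivial image (Lemmas~\ref{cbas} and~\ref{cbas2}), builds one explicit quadrilateral-plus-rays piece per handle (Propositions~\ref{trans1} and~\ref{affk2}) and one slit-plane piece per non-trivial puncture (Propositions~\ref{trans2} and~\ref{affg0}), and then assembles the pieces by a new holonomy-preserving ray-gluing (Definition~\ref{glue-new}) that interposes a copy of $\C$ --- needed precisely because affine maps do not commute, so naive slit-and-reglue conjugates the peripheral monodromies; your one-polygon layout faces the same non-commutativity when closing up the boundary word while keeping the prescribed peripheral classes. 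Your proposal also misses two sub-cases that defeat the generic polygon and require separate constructions: $\mathrm{Im}(\rho)\cong\mathbb{Z}_2$, where the quadrilateral degenerates and the paper resorts to half-translation structures (Definition~\ref{def_halftrans}, second part of Proposition~\ref{affk2}), and the case where the product of the handle commutators is a non-trivial translation $C$, which forces adjusting one peripheral monodromy to $A_2C^{-1}$ before gluing (Case~2 of Proposition~\ref{aff}). Finally, for the trivial representation with $k\geq 3$ and $g>0$, ``a branched translation structure with trivial holonomy'' is not free: it amounts to a branched cover $S_g\to\cp$ with only three critical points, i.e.\ a Hurwitz-existence input (Lemma~\ref{triv}). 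Without these ingredients the proposal is a plan for a proof rather than a proof.
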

 

\medskip

\noindent In \S4.1 we shall handle the case when $\rho$ is the trivial representation, in \S4.2 the case when $\rho(\Pi)$ has a single global fixed point $p\in \cp$ and the monodromy of each element is a translation. In \S4.3 we deal with the more general case when the monodromy of each element is an \textit{affine} map. This would include also include the case when $\rho(\Pi)$ fixes a set $\{p,q\} \subset \cp$, and is "co-axial".  Note that the necessity of the presence of one apparent singularity follows from the work of Allegretti-Bridgeland (see Theorem \ref{ab61}).

\subsection{Trivial representation}\label{sstriv}

We prove the first of the exceptional cases mentioned in Theorem \ref{thm1}. To state the first result, we introduce the following terminology:

\begin{defn}[Handles, handle-generators]\label{handle} On a marked surface $S_{g,k}$ of some positive genus $g>0$, a \textit{handle} is an embedded subsurface $\Sigma$ that is homeomorphic to $S_{1,1}$, and a \textit{handle-generator} is a simple closed curve that is one of the generators of $H_1(\Sigma, \mathbb{Z})$. A \textit{pair of handle-generators} for a handle will refer to a pair of simple closed curves $\{\alpha, \beta\}$ that generate $H_1(\Sigma, \mathbb{Z})$; in particular, $\alpha$ and $\beta$ intersect once. 
\end{defn}

\begin{lem}\label{trivex} Suppose $S_{g,k}$ is a surface where $g>0$ and $k=1$ or $2$. Then the monodromy  $\rho$ of any $\cp$-structure in $\mathcal{P}_g(k)$ is non-trivial. Moreover, there is at least one loop $\gamma_0$ that is a handle-generator, such that $\rho(\gamma_0)$ is non-trivial. 
\end{lem}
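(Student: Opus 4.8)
The plan is to argue by contradiction, using the Euler-characteristic obstruction to the existence of a $\cp$-structure on a closed surface together with a simple degree/Gauss--Bonnet count. Suppose $\rho$ is the monodromy of a $\cp$-structure $P$ in $\mathcal{P}_g(k)$ with $k=1$ or $2$, and suppose $\rho$ is trivial (or, for the ``moreover", that $\rho$ kills every handle-generator). If $\rho$ is trivial, then every peripheral loop has trivial monodromy, so by Definition \ref{appsing} every puncture is an apparent singularity, i.e.\ a branch-point or a regular point of the structure. Since $\rho$ is trivial, the developing map $f:\widetilde{S_{g,k}}\to\cp$ descends to a holomorphic map $F: \overline{S_{g,k}} \to \cp$ from the \emph{closed} surface $S_g$ (the punctures being removable singularities or branch-points of $F$), which is a branched cover of $\cp$ branched only over the images of the punctures, with branching orders $n_1,\dots,n_k \ge 1$ at those points. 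Let $d = \deg F$. The Riemann--Hurwitz formula then gives
\begin{equation}\label{rh-triv}
2 - 2g = 2d - \sum_{i=1}^k (n_i - 1).
\end{equation}

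Now I would extract a contradiction from \eqref{rh-triv} in the range $g>0$, $k\le 2$. Since each $n_i\ge 1$, the total ramification satisfies $\sum (n_i-1) \le$ (something controlled by $k$ and $d$); more precisely, if $k=1$ then $\sum(n_i-1) = n_1 - 1 \le ?$ — here the key point is that an $n$-fold branch-point of a degree-$d$ map forces $n \le d$, and if $n_1 = d$ then $F$ is totally ramified over one point, forcing $g = 0$ directly from \eqref{rh-triv}; if $n_1 < d$ then $2-2g = 2d - (n_1-1) \ge 2d - (d-1) = d+1 \ge 2$, again forcing $g = 0$. For $k=2$, $\sum(n_i - 1) = n_1 + n_2 - 2 \le 2d - 2$ with equality only if $\{n_1,n_2\}=\{d,d\}$, i.e.\ $F$ is totally ramified over both points; in the non-equality case \eqref{rh-triv} gives $2-2g \ge 2$, so $g=0$; and in the totally-ramified case $F$ is a cyclic cover $z\mapsto z^d$ and again $g=0$. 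In every case $g = 0$, contradicting $g > 0$. This proves the first assertion.

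For the ``moreover" statement, I would argue that if $\rho(\gamma_0)=\mathrm{Id}$ for \emph{every} handle-generator $\gamma_0$, then $\rho$ factors through the quotient of $\Pi$ by the normal subgroup generated by all handle-generators; since the handle-generators of a standard system generate a free factor whose quotient is the free group $\pi_1(S_{0,k})$ on $k-1$ (for $k\ge1$) peripheral classes — more carefully, killing all the $a_i,b_i$ collapses $S_{g,k}$ homotopically to $S_{0,k}$ — the monodromy is then really the monodromy of a $\cp$-structure pulled back to (or compatible with) a structure on $S_{0,k}$. When $k=1$ this means $\rho$ is trivial outright, and we are back to the previous case. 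When $k=2$, $\rho$ is then determined by the single peripheral element of $S_{0,2}$, so $\rho(\Pi)$ is cyclic and abelian; either it is trivial (previous case) or it has a global fixed structure and I would need to run a Gauss--Bonnet / developing-map argument at the punctures: since all handle-generators develop trivially, the developing map is still a well-defined holomorphic map on each handle, which (being a closed-surface contribution with no monodromy) forces enough ramification to again violate \eqref{rh-triv} with $g>0$.

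\textbf{Main obstacle.} The delicate part is the ``moreover" clause: the contradiction there is not purely homological, because $\rho$ restricted to the handle-generators being trivial does not immediately make the \emph{whole} representation trivial when $k=2$. I expect the real work is to show that a $\cp$-structure on $S_{g,2}$ whose monodromy kills every handle-generator forces the underlying branched cover picture on each handle to have branching that \eqref{rh-triv} cannot accommodate for $g>0$ — essentially that a genus-$g$ handle cannot be ``developed flat" into $\cp$ with trivial monodromy and with all the branching concentrated at just two punctures. Making this precise may require a local analysis of the developing map near the two punctures (using the order-$\le 2$ pole condition and Definition \ref{brpt}) rather than a one-line Riemann--Hurwitz count, and it is here that I would be most careful.
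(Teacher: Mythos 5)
Your proof of the first assertion is correct and is essentially the paper's argument: trivial monodromy lets the developing map descend, the punctures are apparent singularities and hence branch-points or regular points (this is where the regular-singularity/order $\le 2$ hypothesis enters, via Definition \ref{appsing}), so one gets a branched cover $S_g\to\cp$ whose only possible ramification points are the $k\le 2$ punctures, and Riemann--Hurwitz (which you make explicit, the paper leaves it as a one-liner) forces $g=0$. The case $k=1$ of the ``moreover'' clause is also fine, since the $2g$ handle-generators generate $\pi_1(S_{g,1})$.

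The genuine gap is exactly where you flagged it: $k=2$ with all handle-generators killed but $\rho(\gamma_1)=A\neq\mathrm{Id}$, $\rho(\gamma_2)=A^{-1}$. Your proposed repair --- that the developing map restricted to each handle ``forces enough ramification'' to contradict Riemann--Hurwitz --- cannot work: a handle is a one-holed torus, i.e.\ a surface with boundary, and it does admit an immersion into $\C\subset\cp$ with trivial holonomy (the paper's own polygonal constructions are close relatives of such immersions), so no local ramification is forced and there is no handle-by-handle Euler-characteristic obstruction; the obstruction is global. The missing ingredient is Theorem \ref{ab61} (Allegretti--Bridgeland): a representation killing all handle-generators has cyclic image, hence is degenerate in the sense of Definition \ref{degen}, and if both peripheral monodromies were non-trivial it would have no apparent singularity, contradicting Theorem \ref{ab61}. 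Therefore at least one puncture is an apparent singularity, and then the monodromy around the other puncture is the product of the commutators of the handle-generators, hence also trivial, so $\rho$ is trivial and the first part gives the contradiction. This is precisely how the paper argues (its phrase ``we already know that there is one apparent singularity'' is doing this work), so your write-up is incomplete without some substitute for that input.
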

\begin{proof} Suppose there is a $\cp$-structure on $S_{g,k}$ (where $k=1$ or $2$) such that the monodromy representation is trivial, i.e. $\rho(\gamma) = \text{Id}$ for all $\gamma \in \Pi$.  In particular, any puncture is an apparent singularity, and since this projective structure corresponds to the Schwarzian equation with regular singularities, is either a regular point or branch-point.  

\noindent Since the monodromy is trivial, the developing map (defined on the universal cover) in fact descends to a well-defined map $\overline{dev}: S_{g,k} \to \cp$ (where recall $k=1$ or $2$). Since the developing map is an immersion, this map can be thought of as a covering map from the closed surface $S_g$ to $\cp$ that is possibly branched at one or two points. That is, such a map has at most two critical values on $\cp$.  Since $g>0$, it follows from the Riemann-Hurwitz formula that there cannot be such a branched covering.

\noindent If we only assume that $\rho(\gamma) = \text{Id}$ whenever $\gamma$ is one of the $2g$ loops that are the generators of the handles, then we can in fact show that the representation $\rho$ is trivial: this is immediate if $k=1$, and if $k=2$, note that we already know that there is one apparent singularity; the monodromy around the other singularity is then the product of commutators of the handle-generators, and hence also trivial. 
\end{proof}

\noindent We now show that in all other cases, one can construct a projective structure with trivial monodromy:

\begin{lem}\label{triv} Let $S_{g,k}$ be a surface where $k\geq 3$. Then there is a $\cp$-structure in $\mathcal{P}_g(k)$ whose monodromy representation is trivial. 
\end{lem}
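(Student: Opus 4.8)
The plan is to construct a branched $\cp$-structure on the closed surface $S_g$ with exactly $k \geq 3$ branch-points (or, when that is impossible for parity reasons, with $k$ marked points of which some are genuinely regular), and then delete those $k$ points to obtain a meromorphic projective structure in $\mathcal{P}_g(k)$ with trivial monodromy. The key point is that a $\cp$-structure on $S_{g,k}$ with trivial monodromy is the same as a (possibly branched) holomorphic map $\overline{\mathrm{dev}}\colon S_g \to \cp$ whose branch-points all lie among the $k$ deleted points; by Definition \ref{brpt}, each branch-point contributes a pole of order two to the Schwarzian derivative, and each regular deleted point is a removable singularity, so the resulting structure automatically lies in $\mathcal{P}_g(k)$. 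So the task reduces to producing a branched cover $S_g \to \cp$ with at most $k$ critical values (equivalently, all critical points among a chosen set of $k$ points), which is purely a matter of the Riemann--Hurwitz formula together with a Hurwitz-existence (covering construction) argument.

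First I would fix a degree $d$ and count: if $\overline{\mathrm{dev}}$ has degree $d$ and total branching number $b$ (sum over all critical points of $(\text{local degree}-1)$), then Riemann--Hurwitz gives $2 - 2g = 2d - b$, i.e.\ $b = 2d - 2 + 2g$. I want all critical points to occur among $k$ prescribed points of $S_g$, so I need to realize a branch locus of total order $b = 2d-2+2g$ distributed over $\leq k$ points, each point carrying local degree at most $d$ (hence branching at most $d-1$). The cleanest concrete model: take $\overline{\mathrm{dev}}$ to be a composition, or build it directly as a cyclic/hyperelliptic-type cover of $\cp$ branched over $k$ points. For instance, a cyclic cover $w^d = \prod_{i=1}^{k}(z - a_i)^{m_i}$ with suitable exponents $m_i$ realizes genus $g$ by Riemann--Hurwitz once the $m_i$ are chosen so that $\gcd(m_1,\dots,m_k,d)$ and the local monodromies give the right Euler characteristic; since $k \geq 3$ there is enough freedom in the choice of $d$ and the $m_i$ to hit any $g \geq 0$. (When $g = 0$ one can simply take a branched cover of $\cp$ by $\cp$ with $k$ critical values, which exists for $k \geq 2$ by elementary Hurwitz theory; the constraint $2-2g-k<0$ with $k\geq 3$ leaves no small exceptional cases.) I would then check that the branch-points can be taken to be exactly $k$ distinct points — if a naive construction produces fewer than $k$ critical points, one adds extra critical points of order $2$ (splitting off simple branch-points) or, failing that, simply declares the remaining deleted points to be regular points of the structure, which is permitted.

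The main obstacle, and the step I would spend the most care on, is the parity/divisibility bookkeeping in Riemann--Hurwitz: $b = 2d - 2 + 2g$ has a fixed parity for fixed $d$, so not every distribution of branching orders over $k$ points is a priori achievable, and one must exhibit, for every $(g,k)$ with $k\geq 3$, an explicit choice of degree $d$ and local branching data that (a) sums to the correct $b$, (b) uses at most $k$ points, and (c) is realizable by an actual connected branched cover (the Hurwitz existence problem). For $k \geq 3$ this is not tight — one has a free branch-point to absorb any parity discrepancy — so the construction goes through, but writing it uniformly across all genera requires either a clean family (e.g.\ the cyclic covers above, or iterated simple branched covers) or an appeal to the classical fact that for $k\geq 3$ marked points on $\cp$ and any target genus $g$, a branched cover with critical values among those points exists. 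I would present the cyclic-cover family as the explicit witness and verify the genus formula, noting that the $k\geq 3$ hypothesis is exactly what removes the low-complexity exceptions that appear in Lemma \ref{trivex}. Finally, I would record that the resulting $\cp$-structure has trivial monodromy by construction (the developing map descends to $S_g$), and that it lies in $\mathcal{P}_g(k)$ by the Schwarzian computation recalled after Definition \ref{brpt}.
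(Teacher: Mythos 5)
Your proposal is correct and follows essentially the same route as the paper: pull back the standard projective structure on $\cp$ along a branched cover $S_g\to\cp$ all of whose critical points lie among the $k$ deleted points (the paper obtains such a cover, of degree $2g+1$ and totally ramified over $0,1,\infty$, from the solved Hurwitz existence problem, while your cyclic curve $w^{2g+1}=z(z-1)$ is an explicit algebraic model of that very cover; for $g=0$ one can simply puncture $\cp$, and for $k>3$ one deletes extra regular points). The only point needing care is your parenthetical claim that ``at most $k$ critical values'' is equivalent to ``all critical points among $k$ chosen points'': a critical value can have several critical preimages, and what the construction really requires is at most $k$ critical \emph{points} in the domain, which you must enforce by total ramification (e.g.\ $\gcd(m_i,d)=1$ in the cyclic family) — with that adjustment, which your intended witnesses satisfy, the argument is complete.
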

\begin{proof}
\noindent In the case that $g=0$, it is easy matter to define a projective structure with trivial monodromy on $S_{0,k}$, in fact it is sufficient to consider $\cp$ with $k$ punctures. Otherwise, we use the fact that the Hurwitz problem of the existence of branched coverings with prescribed branching data, that is solved for $g>0$ -- see Proposition 3.3. of \cite{E-K-S} and also \cite{Husemoller}. 

\noindent In particular, that implies that there is a branched covering $\Pi:S_g \to \cp$ of degree $2g+1$ that is branched over $0,1,\infty \in \cp$, each of ramification order $2g+1$ and has exactly three critical points, say $p,q,r$ on $S_g$, such that $p = \Pi^{-1}(0), q = \Pi^{-1}(1)$ and $r = \Pi^{-1}(\infty)$, see Figure \ref{branchcover}.

\begin{figure}[h]
  \centering
  \includegraphics[scale=0.42]{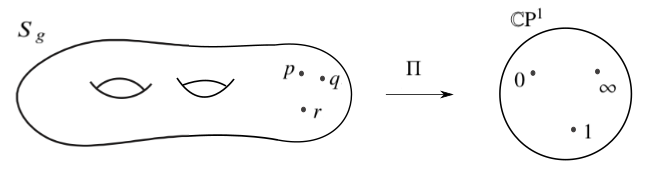}
  \caption{The map $\Pi$  in Lemma \ref{triv} is a branched cover over the sphere with exactly three critical points and three branch-points. }
  \label{branchcover}
\end{figure}

\noindent Then, the standard projective structure on $\cp \setminus \{0,1,\infty\}$ naturally pulls back to a projective structure on $S_g \setminus \{p,q,r\}$ that has trivial holonomy. Note that one can also equip  $\cp \setminus \{0,1,\infty\} = \mathbb{C} \setminus \{0,1\}$  with the standard  translation structure (see \S4.2) and pull it back to $S_g \setminus \{p,q,r\}$; the induced abelian differential $\omega$ extends to the closed surface $S_g$ and has two zeroes at $p$ and $q$, and a pole at $r$. 
\end{proof}

\subsection{Translation structures}\label{sstrans}

In this section we shall assume that  $\rho:\Pi \to \C$ is a non-trivial representation, where 
\begin{equation*}
 \C \cong \{\ z\mapsto z + c \ \vert\  c \in \C \ \}
\end{equation*}
is the subgroup of $\pslc$ comprising translations.
Note that since this subgroup is abelian, $\rho$ factors through the first homology group $\Gamma_{g,k} := H_1(S_{g,k}, \mathbb{Z})$, and can be thought of as a period character (or simply character) $\chi_{g,k}:\Gamma_{g,k} \to \C$. 

\medskip

\noindent Note that the residue theorem implies that:

\begin{lem} For any homomorphism  $\chi_{g,k}:\Gamma_{g,k} \to \C$ as above, then sum of the values around the peripheral curves is zero, namely 
\begin{equation}\label{sumC}
\sum\limits_{i=1}^k \chi_{g,k}(\gamma_i) =0
\end{equation}
where $\gamma_i$ is the simple closed curve around the $i$-th puncture.
\end{lem}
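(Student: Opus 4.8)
The plan is to realize the character $\chi_{g,k}$ as the period map of a meromorphic abelian differential on the underlying closed surface $S_g$, and then apply the classical residue theorem. First I would choose a complex structure on $S_{g,k}$, or equivalently fill in the punctures to obtain a closed Riemann surface $X$ of genus $g$ together with $k$ marked points $x_1,\dots,x_k$; the curves $\gamma_i$ encircling the punctures become small loops around the $x_i$. Recall that on $S_{g,k}$ a character $\chi:\Gamma_{g,k}\to\C$ determines a translation structure (by Theorem \ref{thm2}, or more elementarily by integrating any closed $1$-form representing the de Rham class dual to $\chi$), and the associated abelian differential $\omega = dz$ pulled back through the charts is holomorphic on $S_{g,k}$ with at worst a pole at each $x_i$.

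The key computation is then that for each $i$, the value $\chi_{g,k}(\gamma_i)$ equals the integral $\int_{\gamma_i}\omega$, which by definition of the residue is $2\pi i\cdot\operatorname{Res}_{x_i}\omega$. Summing over $i$ and invoking the residue theorem on the closed surface $X$, namely $\sum_{i=1}^k\operatorname{Res}_{x_i}\omega = 0$ (since $\omega$ has no other poles), yields $\sum_{i=1}^k\chi_{g,k}(\gamma_i)=0$ as claimed. The only subtlety is matching orientations and the sign convention so that the $\gamma_i$, oriented as the boundary of $S_{g,k}$ viewed inside $X$, are exactly the positively-oriented small loops around the $x_i$; with the standard convention this is automatic.

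Alternatively, and perhaps more cleanly for a purely topological statement, I would give a homological argument avoiding the choice of a complex structure: in $H_1(S_{g,k},\Z)$ the sum $\sum_{i=1}^k[\gamma_i]$ of the peripheral classes, with boundary orientations, is null-homologous — it bounds the fundamental relative $2$-chain given by $S_{g,k}$ itself (equivalently, in the standard surface presentation the product of the peripheral loops equals the product of commutators $\prod[\alpha_j,\beta_j]$, which dies in homology). Since $\chi_{g,k}$ is a homomorphism on $\Gamma_{g,k}$, it vanishes on any null-homologous cycle, giving \eqref{sumC} immediately. I expect the main (and only) obstacle to be bookkeeping: fixing once and for all the orientation of each $\gamma_i$ so the statement is literally true with a plus sign rather than requiring signs $\epsilon_i=\pm1$; once that convention is pinned down, either argument is a one-line deduction.
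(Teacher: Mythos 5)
Your proposal is correct, and your second argument is actually cleaner than what the paper does: the paper offers no written proof at all, merely prefacing the lemma with ``the residue theorem implies,'' i.e.\ it implicitly views $\chi_{g,k}$ as the period character of a meromorphic abelian differential and sums residues. Your homological argument avoids analysis entirely: with the peripheral loops consistently oriented, $\gamma_1\cdots\gamma_k$ is (conjugate to) a product of commutators in $\pi_1(S_{g,k})$, so $\sum_i[\gamma_i]=0$ in $\Gamma_{g,k}=H_1(S_{g,k},\Z)$ and any homomorphism to $\C$ kills it; this proves the statement for \emph{every} homomorphism with no auxiliary structure, which is exactly the level of generality the lemma asserts. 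One caution about your first (analytic) route: invoking Theorem~\ref{thm2} to produce the translation structure would be circular, since this lemma is used in the constructions (e.g.\ Proposition~\ref{trans1}) that prove Theorem~\ref{thm2}; and an arbitrary closed $1$-form representing the class dual to $\chi_{g,k}$ need not be meromorphic at the filled-in points, so the residue theorem on $X$ does not literally apply --- though the argument is easily repaired by applying Stokes' theorem to the closed $1$-form on the compact surface-with-boundary obtained by removing small disks around the punctures, which gives $\sum_i\int_{\gamma_i}\omega=0$ directly. In short: your homological argument is complete and preferable; your analytic argument needs the Stokes (rather than residue-on-$X$) formulation to avoid both the circularity and the meromorphy issue.
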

\medskip

\textit{Remark.} We shall say that a "puncture has trivial monodromy" if $\chi_{g,k}(\gamma) =0$ where $\gamma$ is the loop around that puncture. 

\bigskip 

\noindent In the case that $g>0$ and $k>1$  we shall also divide the character into $\chi_{g,1}$ and $\chi_{0, k+1}$ as follows:
choose a separating loop $\gamma$ that divides the surface $S_{g,k}$ into a subsurface $S_{g,1}$ containing all the handles and one boundary component, namely $\gamma$, and a subsurface $S_{0,k+1}$ that is topologically a punctured sphere, where one of the punctures is actually a boundary component $\gamma$, see Figure \ref{splitsurface}.   

\smallskip

\noindent By restricting to $S_{g,1}$ and $S_{0,k+1}$, the character $\chi_{g,k}$ determines homomorphisms 
\begin{equation}\label{chig1}
\chi_{g,1}:\Gamma_{g,1} \to \C
\end{equation}
and 
\begin{equation}\label{chi0n}
\chi_{0,k+1}:\Gamma_{0,k+1} \to \C
\end{equation}
 respectively, where $\Gamma_{g,1}$ and $\Gamma_{0,n+1}$ are the homology groups of the subsurfaces $S_{g,1}$ and $S_{0,k+1}$ respectively. Note that in either case,  the boundary loop $\gamma$ is trivial in homology.

\begin{figure}[h]
  \centering
  \includegraphics[scale=0.45]{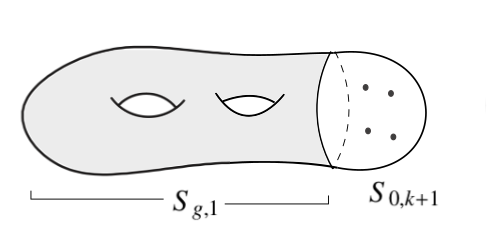}
  \caption{The surface $S_{g,k}$ can be divided into a subsurface homeomorphic to $S_{g,1}$ (shown shaded) and its complement, homeomorphic to $S_{0,k+1}$. }
  \label{splitsurface}
\end{figure}

\medskip

\noindent We shall need the following observation:

\begin{lem}\label{cbas} Let $\chi_{g,k}:\Gamma_{g,k} \to \C$ be a character and let its associated homomorphism  $\chi_{g,1}$ as in \eqref{chig1} be a non-trivial representation. Then there is a change of homology basis $A\in \mathrm{Sp}(2g, \mathbb{Z})$ such that $\chi^\prime_{g,k} = \chi_{g,k} \circ A$ has an associated restriction $\chi^\prime_{g,1}$ such that $\chi^\prime_{g,1}(\gamma) \neq 0 $ for each handle-generator $\gamma$ on $S_{g,1}$. 
\end{lem}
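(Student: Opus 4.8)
The plan is to work directly with the standard symplectic basis of $\Gamma_{g,k}$ and repeatedly apply elementary symplectic transvections to arrange that $\chi_{g,1}$ does not vanish on any handle-generator. Fix a standard basis $a_1,b_1,\ldots,a_g,b_g$ for $H_1(S_{g,1},\mathbb Z)$, where $\{a_i,b_i\}$ is a pair of handle-generators for the $i$-th handle. First I would observe that since $\chi_{g,1}$ is non-trivial, at least one of the $2g$ values $\chi_{g,1}(a_i),\chi_{g,1}(b_i)$ is non-zero; after a symplectic permutation of the handles and possibly swapping $a_1\leftrightarrow b_1$ (both operations lie in $\mathrm{Sp}(2g,\mathbb Z)$) we may assume $\chi_{g,1}(a_1)\neq 0$. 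The key point is that a handle-generator is not an arbitrary primitive class: it must be part of a symplectic pair, i.e. a primitive class $\gamma$ admitting $\delta$ with $\langle\gamma,\delta\rangle=1$. So the statement to be proved is really: we can choose a symplectic basis on which $\chi_{g,1}$ is non-zero on every basis element; it then follows automatically (from the discussion after Definition \ref{handle}, since every handle-generator in the new marking is one of the chosen basis curves) that $\chi'_{g,1}$ is non-zero on all handle-generators in the new marking.

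The main steps are as follows. \emph{Step 1: fix the first handle.} If $\chi_{g,1}(b_1)=0$, replace $b_1$ by $b_1 + a_1$; this is the transvection $T_{a_1}$, lies in $\mathrm{Sp}(2g,\mathbb Z)$, leaves $a_1$ fixed, and makes $\chi_{g,1}(b_1)=\chi_{g,1}(a_1)\neq 0$. So after one step we may assume $\chi_{g,1}(a_1)\neq 0$ and $\chi_{g,1}(b_1)\neq 0$. \emph{Step 2: propagate to the other handles.} For each $i\geq 2$, replace $a_i$ by $a_i + a_1$ if $\chi_{g,1}(a_i)=0$: since $\langle a_i,a_1\rangle = 0$, the map $a_i\mapsto a_i+a_1$ (with $b_1\mapsto b_1-b_i$, or more precisely the transvection along $a_1+a_i$ or a composition of two commuting transvections) extends to a symplectic automorphism, and makes $\chi_{g,1}(a_i)=\chi_{g,1}(a_1)\neq 0$; do the same with $b_i$ using $b_1$. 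The only subtlety is bookkeeping: each transvection used to fix handle $i$ must not re-zero the values on handles already fixed, which is why one uses transvections supported on a Lagrangian pair ($a_1$ for the $a$'s, $b_1$ for the $b$'s) that only add the fixed non-zero value $\chi_{g,1}(a_1)$ or $\chi_{g,1}(b_1)$ onto the target, never subtracting it. \emph{Step 3: conclude.} The composition $A$ of all these transvections is the desired element of $\mathrm{Sp}(2g,\mathbb Z)$, and $\chi'_{g,k}=\chi_{g,k}\circ A$ has $\chi'_{g,1}=\chi_{g,1}\circ A$ non-zero on every element of the new standard basis, hence on every handle-generator.

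The main obstacle I anticipate is purely organizational rather than conceptual: one must check that the sequence of transvections genuinely defines a symplectic change of basis and that the ``fixing'' moves for one handle do not undo the work done on previous handles. This is handled by being careful to always transvect along the classes $a_1$ and $b_1$ (whose $\chi_{g,1}$-values have been made non-zero and are never altered in Steps 2 onward, since $a_1$ pairs trivially with all $a_i$ and $b_1$ pairs trivially with all $b_i$), so that every move strictly adds a fixed non-zero complex number to a previously-zero value. One should also note the possibility that adding $\chi_{g,1}(a_1)$ to a value could in principle produce zero — but this only happens if the value was $-\chi_{g,1}(a_1)\neq 0$ to begin with, so there was nothing to fix; thus only genuinely zero values are ever modified, and they become non-zero. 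Finally, one remarks that the restriction $\chi'_{0,k+1}$ plays no role here — the lemma only constrains $\chi'_{g,1}$ — and that the argument is insensitive to whether individual punctures have trivial monodromy.
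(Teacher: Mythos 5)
There is a genuine gap, and it sits exactly at the case that needs the most care. Your Step 2 move ``$a_i\mapsto a_i+a_1$, all other basis vectors fixed'' is not symplectic (the new $a_i$ pairs non-trivially with $b_1$), so, as you yourself note, it must be accompanied by the compensation $b_1\mapsto b_1-b_i$; dually, the move $b_i\mapsto b_i+b_1$ forces $a_1\mapsto a_1-a_i$. This contradicts the invariant your ``obstacle'' paragraph relies on, namely that the values of $\chi_{g,1}$ on $a_1,b_1$ are never altered after Step 1 and that the moves ``only add, never subtract'': the compensations change $\chi_{g,1}(b_1)$ by $-\chi_{g,1}(b_i)$ and $\chi_{g,1}(a_1)$ by $-\chi_{g,1}(a_i)$. (Also, the parenthetical alternatives are not correct: the transvection along $a_1+a_i$ fixes every $a_j$ and instead modifies $b_1$ and $b_i$ by multiples of $a_1+a_i$, so neither it nor a product of the commuting transvections $T_{a_1},T_{a_i}$ realizes $a_i\mapsto a_i+a_1$.) The failure is concrete: take a handle with $\chi_{g,1}(a_i)=\chi_{g,1}(b_i)=0$, which is precisely the hard case. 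Your first move gives $a_i'=a_i+a_1$ with $\chi_{g,1}(a_i')=\chi_{g,1}(a_1)$ (and leaves $\chi_{g,1}(b_1)$ unchanged only because $\chi_{g,1}(b_i)=0$); but your second move, ``fix $b_i$ using $b_1$'', forces $a_1\mapsto a_1-a_i'$, whose value is $\chi_{g,1}(a_1)-\chi_{g,1}(a_i')=0$. So the procedure as written re-zeroes the first handle.

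The repair is not difficult, and it is essentially what the paper does. After importing the non-zero value into $a_i$, fix $b_i$ by an \emph{intra-handle} twist $b_i\mapsto b_i\pm a_i'$ (a genuine transvection along $a_i'$, which touches no other basis vector), or alternatively keep track of signs/multiples so the compensated value stays non-zero. The paper's proof follows the same overall strategy as yours — handle-by-handle modification by elements of $\mathrm{Sp}(2g,\mathbb{Z})$ realized by Dehn twists and a two-handle mixing move — but it treats the doubly-degenerate handle separately, using an explicit element sending $\{\gamma_0,\gamma_1,\alpha,\beta\}$ to $\{\gamma_0-\alpha,\gamma_1+m\alpha,\alpha,\beta+m\gamma_0+\gamma_1\}$ (with a free integer parameter $m$ to guarantee a non-zero value, and noting the other handle is unaffected because $\chi_{g,1}(\alpha)=0$), followed by a further Dehn twist inside that handle; the interference you dismissed is exactly what that case analysis is there to control. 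With the intra-handle fix (or a sign/parameter choice) inserted, your argument goes through; without it, the claimed bookkeeping invariant is false.
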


\begin{proof}
Since $\chi_{g,1}$ is non-trivial, there is some simple closed curve $\gamma_0$ that is a handle-generator on $S_{g,1}$, such that $\chi_{g,1}(\gamma_0) \neq 0 $. Let $\{\gamma_0, \gamma_1\}$ be the handle-generators of such a handle. Now suppose $\chi_{g,1}(\gamma_1) =0$, we can replace the handle-generator $\gamma_1$ with the curve $\gamma_1^\prime$ which is obtained by Dehn-twisting $\gamma_1$ around $\gamma_0$ (see Figure \ref{changeofbasis2}). In homology $\gamma_1^\prime = \gamma_0 + \gamma_1$, and hence we now have $\chi_{g,1}(\gamma_1) \neq 0$.  Thus, we have a basis of homology such that at least one of the handles has both its handle-generators with non-trivial monodromy.

\begin{figure}[h]
  \centering
  \includegraphics[scale=0.45]{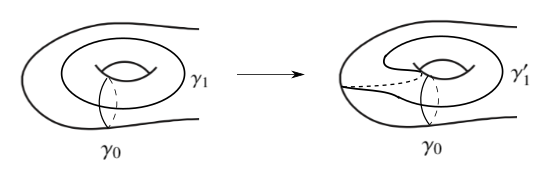}
  \caption{A Dehn-twist changes the pair of handle-generators $\{\gamma_0, \gamma_1\}$ to the pair $\{\gamma_0, \gamma_1^\prime\}$ where $\gamma^\prime_1 = \gamma_0 + \gamma_1$ in homology. }
  \label{changeofbasis2}
\end{figure}

\noindent Now suppose there is a handle with handle-generators $\{\alpha,\beta\}$ such that $\chi_{g,1}(\alpha) = \chi_{g,1}(\beta) =0$, then consider the element of $ \mathrm{Sp}(2g, \mathbb{Z})$ that changes the homology  basis  $\{\gamma_0, \gamma_1, \alpha,\beta\}$  of the two handles to the  new homology basis $\{\gamma_0 - \alpha, \gamma_1 + m\cdot \alpha, \alpha , \beta+m\cdot \gamma_0 + \gamma_1\}$, for an integer $m$, leaving the other generators unchanged. (See \cite{Martens} for this, and related elements of $ \mathrm{Sp}(2g, \mathbb{Z})$.)  The handle thus acquires a new pair of generators $\{\alpha,\beta+m\cdot \gamma_0 + \gamma_1\}$, where one of the new handle-generators has non-trivial monodromy for some $m$, \textit{i.e.}  $\chi_{g,1}(\beta+m\cdot \gamma_0 + \gamma_1) \neq 0$ for some $m$, since $\chi_{g,1}(\gamma_0)\neq 0$. The holonomy of the other handle remains unchanged since $\chi_{g,1}(\alpha)=0$.  By a further change of basis as in the previous paragraph, by Dehn-twisting $\alpha$ around the curve representing $\beta+ \gamma_0$, we can ensure there is a pair of generators of the handle which are \textit{both} non-trivial in monodromy. 

\noindent Applying either of these changes of bases to each of the handles, we can ensure that we obtain a homology basis $\{\alpha_1,\beta_1,\alpha_2,\beta_2,\ldots \alpha_g,\beta_g\}$ via a change of basis matrix $A \in  \mathrm{Sp}(2g, \mathbb{Z})$ such that the homomorphism $\chi^\prime_{g,k} = \chi_{g,k} \circ A$ satisfies  $\chi^\prime_{g,1}(\alpha_i) \neq 0 $  and $\chi^\prime_{g,1}(\beta_i) \neq 0 $ for each $1\leq i\leq g$. 
\end{proof} 

\textit{Remark.} Recall from the discussion in \S\ref{mps} that we only require to realize a monodromy representation up to the action of the mapping class group. Hence, since the change of basis above is realized by a mapping class, we can assume that the handle-generators are each non-trivial once we know that $\chi_{g,1}$ is non-trivial.

\medskip

\noindent In this section, our strategy would be to define a \textit{translation structure} on $S_{g,k}$ with the prescribed holonomy that lies in $\C$. Recall that it is a special case of a complex projective structure, comprising an atlas of charts to $\C$ such that the transition maps are translations. Note that such an atlas equips the resulting surface with a complex structure. and a translation structure is then  equivalent to  a non-vanishing holomorphic vector field on the Riemann surface, or equivalently, a non-vanishing holomorphic $1$-form $\omega$. Note that the punctures could be regular points, or zeroes or poles of $\omega$; a related problem when we require some punctures to be poles, and others to be  zeroes of $\omega$, with prescribed orders,  is dealt with in \cite{CFG}.

\noindent Such a translation structure can be defined by gluing sides of a polygon, as we now describe: 

\begin{defn}[Translation surface]\label{psurf}  A \textit{translation surface} is obtained by starting with a collection of (possibly non-compact) polygons in $\C$ bounded by straight lines and/or straight-line segments and/or rays, and identifying such sides pairwise by translations.  The resulting surface $\Sigma$  thus acquires a Euclidean metric, with possible cone-singularities (with cone-angles an integer multiple of $2\pi$)at points arising from the by identifications of the vertices of the polygons. In the complement of such cone-points, we then obtain a translation structure as defined above. The standard differential $dz$ on the polygons descend to  the holomorphic $1$-form $\omega$ on the surface, and the zeroes of $\omega$ are precisely at the cone-points, which are branch-points of the translation surface. The periods of $\omega$ define a representation $\chi:H_1(\Sigma, \mathbb{Z}) \to \C$ that is the \textit{holonomy} of the translation surface $\Sigma$; note that if the branch-points are removed from $\Sigma$, then each additional puncture has trivial monodromy around it.
\end{defn} 

\textit{Remark.} In the case that the polygons are non-compact, the translation structure will have at least one puncture ``at infinity", where the abelian differential $\omega$ has a pole. The order of such a pole can be determined from the flat geometry of the corresponding end: if the end is cylindrical, then the pole has order one, if it is a planar end (i.e.\ like that of $\C$) then the pole is of order two, and a pole of order $n>2$ has an end which is isometric to an $(n-1)$-fold cover of a planar end, branched at $\infty$. \\

\noindent The following construction allows to define a new translation surface by gluing together two translation surfaces with poles.

\begin{figure}
  \centering
  \includegraphics[scale=0.43]{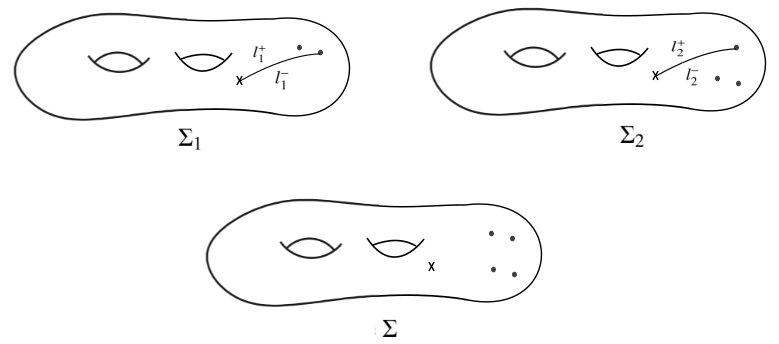}
  \caption{Gluing $\Sigma_1$ and $\Sigma_2$ along rays  results in a new surface $\Sigma$, see Definition \ref{glue-ray}. }
\end{figure}

\begin{defn}[Gluing along a ray]\label{glue-ray}  Suppose $\Sigma_1$ and $\Sigma_2$ are two translation surfaces, each with at least one pole. Let  $l_i \subset \Sigma_i$ for $i=1,2$ be an embedded straight-line ray that starts from a cone-singularity (or a regular point) and ends in a pole. Assume that $l_1$ and $l_2$  develop onto infinite rays on $\C$ that are parallel.  Then we can define a translation surface $\Sigma$ as follows: slit each ray $l_i$ and denote the resulting sides by $l_i^+$ and $l_i^-$; then identify $l_1^+$ with $l_2^-$ and $l_1^-$ and $l_2^+$ by a translation. If $\Sigma_i$ is homeomorphic to $S_{g_i,k_i}$ for $i=1,2$, then the resulting surface $\Sigma$ is homeomorphic to $S_{g_1+g_2, k_1+k_2-1}$. Note that the starting points of the rays are identified to a branch-point on $\Sigma$ with a cone-angle that is the sum of the corresponding angles on $\Sigma_1$ and $\Sigma_2$, and the other endpoints (at infinity) are identified to a higher order pole.  See Figure 5.
\end{defn}

\noindent We finally introduce the notion of \textit{algebraic volume} of a representation $\chi$ as follows:

\begin{defn}[Algebraic volume]\label{volumets} 
Let $\Sigma$ be a translation structure on once-punctured surface $S_{g,1}$ with holonomy $\chi:H_1(\Sigma, \mathbb{Z})\longrightarrow \C$. The \textit{algebraic volume} of $\chi$ is defined as the quantity
\begin{equation}
    \text{Vol}(\chi)=\sum_{i=1}^g \Im\big(\,\overline{\chi(\alpha_i)}\,\chi(\beta_i)\,\big)
\end{equation} where $\{\alpha_1,\beta_1,\dots,\alpha_g,\beta_g\}$ is any symplectic basis of $H_1(\Sigma,\mathbb{Z})$, and $\Im(c)$ denotes the imaginary part of a complex number $c$. If $\Sigma$ is a translation structure on a generic surface $S_{g,\,k}$ then we define the algebraic volume of $\chi$ as the algebraic volume of the sub-representation $\chi_{g,1}$ introduced in \eqref{chig1}. We have emphasized above the term \textit{algebraic} in order to distinguish this notion of volume from its geometric counterpart. In what follows we do not need to consider the geometric volume and henceforth for simplicity we abridge the notation to just ``volume".
\end{defn}

\noindent In our construction, we shall also need:

\begin{defn}[Volume of a quadrilateral]
Let $a$ and $b$ be two complex numbers and let $\mathcal{Q}$ be the (possibly degenerate) quadrilateral spanned by the corresponding vectors. The volume of $\mathcal{Q}$ is then defined as
\begin{equation}
  \text{Vol}(\mathcal{Q})=\text{Vol}(a,\,b)= \Im\big(\overline{a}\,b\big).
\end{equation} In particular, the volume is null if and only $a=\lambda b$ for some $\lambda\in\mathbb{R}$.
\end{defn}


\noindent We begin with the case that $k=2$, i.e.\ there are exactly two punctures; note that our assumption of negative Euler characteristic implies that $g>0$. We show:

\begin{prop}\label{trans1} Let $g>0$. Any non-trivial representation $\chi_{g,2}:\Gamma_{g,2} \to \C$ with at least one puncture with trivial monodromy, appears as the holonomy of some translation structure on $S_{g,2}$, where one of the punctures corresponds to a zero of the abelian differential. 
\end{prop}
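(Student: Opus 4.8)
The plan is to exhibit an explicit translation surface using the gluing operations of \S\ref{sstrans}, following the general strategy of \cite{CFG}. Since $k=2$, the residue relation \eqref{sumC} reads $\chi_{g,2}(\gamma_1)+\chi_{g,2}(\gamma_2)=0$, so the hypothesis that one puncture has trivial monodromy forces the monodromy around the other to vanish as well. Hence $\chi_{g,2}$ is completely determined by its restriction $\chi_{g,1}$ to the handle subsurface (as in \eqref{chig1}) together with the requirement that both peripheral values vanish, and $\chi_{g,1}$ is non-trivial because $\chi_{g,2}$ is. By Lemma \ref{cbas} and the remark after it, after acting by a mapping class of $S_{g,2}$ supported on the handle subsurface, I may assume that for the chosen symplectic basis $\{\alpha_1,\beta_1,\dots,\alpha_g,\beta_g\}$ the values $a_i:=\chi_{g,1}(\alpha_i)$ and $b_i:=\chi_{g,1}(\beta_i)$ are all non-zero. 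It then suffices to produce a translation structure on $S_{g,2}$ whose holonomy is $a_i$ on $\alpha_i$ and $b_i$ on $\beta_i$, has trivial monodromy around both punctures, and has one of the punctures as a zero of the underlying abelian differential.

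\textbf{Construction.} I would build the surface one handle at a time. For each $i$, one first constructs a translation surface $\Sigma_i\cong S_{1,1}$ whose holonomy on a standard pair of handle-generators is $(a_i,b_i)$, whose single puncture is a pole of order two (a planar end), and which carries an embedded straight ray from its unique cone-point to that pole; after a rotation all these rays may be taken to be positive horizontal rays, so that the hypotheses of Definition \ref{glue-ray} hold. Such a block is the genus-one, single-pole instance of the realization problem and is cut out of an unbounded polygon in $\C$ as in \cite{CFG}; the one point requiring care is the degenerate case in which $a_i$ and $b_i$ are $\mathbb{R}$-linearly dependent, where a non-convex or slit polygon is used in place of a parallelogram. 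Iterating Definition \ref{glue-ray}, I would glue $\Sigma_1,\dots,\Sigma_g$ successively along such rays. Since gluing along a ray sends $S_{g',k'}$ and $S_{g'',k''}$ to $S_{g'+g'',\,k'+k''-1}$, the result $\Sigma$ is homeomorphic to $S_{g,1}$ with its single puncture a pole, and the cone-points of the blocks are amalgamated into one cone-point of angle $6\pi g$, which is a multiple of $2\pi$ strictly exceeding $2\pi$. Puncturing $\Sigma$ at this cone-point yields a surface homeomorphic to $S_{g,2}$ in which one puncture is a zero of the abelian differential $\omega$ and the other is a pole.

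\textbf{Verification and main obstacle.} It then remains to match the holonomy of this structure with $\chi_{g,2}$. The ray-gluings modify the translation structure only in a neighborhood of the distinguished rays, so the periods of the handle-generators of each $\Sigma_i$ are unchanged and equal $a_i$ or $b_i$; thus the holonomy agrees with $\chi_{g,1}$ on the handle-generators, which together with a peripheral loop generate $H_1(S_{g,2},\mathbb{Z})$. The monodromy around the branch-point puncture is trivial because the translation structure extends over a zero with trivial linear part, and the monodromy around the pole is trivial because the corresponding peripheral loop is a product of commutators of the handle-generators while the holonomy group, being a group of translations, is abelian. Hence the holonomy is $\chi_{g,1}$ extended by zero on the peripheral loops, that is, $\chi_{g,2}$, as wanted. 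The step I expect to be the main obstacle is the construction of the handle blocks $\Sigma_i$ realizing \emph{every} pair $(a_i,b_i)$ of non-zero complex numbers — especially the $\mathbb{R}$-linearly dependent case — together with the bookkeeping needed to keep all the distinguished rays parallel so that Definition \ref{glue-ray} applies at every stage; this is exactly where the notion of the volume of a quadrilateral (and of the algebraic volume, Definition \ref{volumets}) and the methods of \cite{CFG} enter.
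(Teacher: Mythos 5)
Your argument is correct in substance, but it is not the route the paper's own proof takes: it is essentially the ``easier alternative construction'' that the paper records in the remark immediately following its proof of Proposition \ref{trans1}. The paper's official proof, for $g\ge 2$, builds a single chain of (possibly degenerate) quadrilaterals $\mathcal{Q}_1,\dots,\mathcal{Q}_g$ arranged along a line $\ell$ that separates the handles of positive volume from those of non-positive volume, and attaches half-planes and unbounded regions to this chain so that the puncture at infinity is a pole of order \emph{exactly} two; your construction instead glues $g$ genus-one blocks successively along rays as in Definition \ref{glue-ray}, which produces one zero (your cone angle $6\pi g$) but a pole of order $g+1$. For the proposition as stated this is perfectly adequate, since only one puncture is required to be a zero, and your holonomy verification (periods of the handle-generators unchanged by the slit-and-glue, peripheral loops trivial by \eqref{sumC} and abelianness, reduction to non-zero $a_i,b_i$ via Lemma \ref{cbas}) matches the paper's; the genus-one block with a double-order pole and a degenerate (collinear) case handled by a slit is exactly the paper's $g=1$ construction, so the ``main obstacle'' you flag is already resolved there. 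What the more complicated chain construction buys is precisely the order-two pole, which the paper needs later in Proposition \ref{transurfk1}: an order-two pole is a planar end, so $\infty$ is a regular point of the underlying projective structure and can be filled in, whereas your glued surface loses this. One small correction: you cannot make the distinguished rays horizontal ``after a rotation'', since rotating a translation surface multiplies every period by $e^{i\theta}$ and destroys the prescribed holonomy; instead, use the freedom in the block construction to fix in advance a single direction avoiding the finitely many directions parallel to the edges of all the quadrilaterals, and take every ray in that common direction --- this is how the paper's remark (and Definition \ref{glue-ray}) proceeds.
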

\begin{proof}
Note that when there are exactly two punctures, and one of them has trivial monodromy, then from \eqref{sumC} it then follows that the other puncture also has trivial monodromy. Since $\chi_{g,2}$ is non-trivial, the associated representation $\chi_{g,1}$ as in \eqref{chig1} is non-trivial. By Lemma \ref{cbas} we can assume that each handle-generator maps to a non-zero complex number. 

\smallskip

\noindent Now, construct a translation surface $\Sigma$ homeomorphic to $S_{g,1}$ and having holonomy $\chi:=\chi_{g,1}$  as follows: First, we consider the case when $g=1$; let the pair of handle-generators be $\{\alpha,\beta\}$, and let $A,B:\C\to \C$ be the translations $z\mapsto z+ \chi(\alpha)$ and $z\mapsto z+ \chi(\beta)$ respectively. Note that $A,B$ commute, and neither is the identity map since the monodromy around each handle-generator is non-trivial. 

\noindent Define four closed directed straight-line segments $\{e_1,e_2,e_3,e_4\}$ in $\C$ as follows: let $p\in \C$ be any point and let $q := AB(p) = BA(p)$. Then define 
\begin{equation*}
    e_1 := \overline{A(p)\,q},\,\,e_2 := \overline{p\,A(p)},\,\,e_3 := \overline{p\,B(p)},\,\,e_4 := \overline{B(p)\,q}.
\end{equation*}
\noindent Note that $A(e_3) = e_1$ and  $B(e_2) = e_4$. If $\overline{e_i}$ denotes the line-segment $e_i$ with its direction reversed, then we see that $L := \overline{e_1}\cup \overline{e_2} \cup {e_3} \cup {e_4}$ is a closed  directed loop based at $q$. Notice that $L$ bounds a (possibly degenerate) quadrilateral. 

\smallskip

\noindent Now, we consider four embedded polygons $R_1,R_2,R_3,R_4$ in $\C$, where
\begin{itemize}
\item[\textit{(i)}] each $R_i$ is bounded by two infinite rays $r_i$ and $r_{i+1}$, and $e_i$ (where the indices $1\leq i\leq 4$  are cyclically ordered, such that $r_5$ is actually $r_1$), and 

\item[\textit{(ii)}] $R_1$ and $R_3$ lie on opposite sides of $e_1$ and $e_3$ respectively, and $R_2$ and $R_4$ lie on opposite sides of $e_2$ and $e_4$ respectively.
\end{itemize}

\begin{figure}
  \centering
  \includegraphics[scale=0.4]{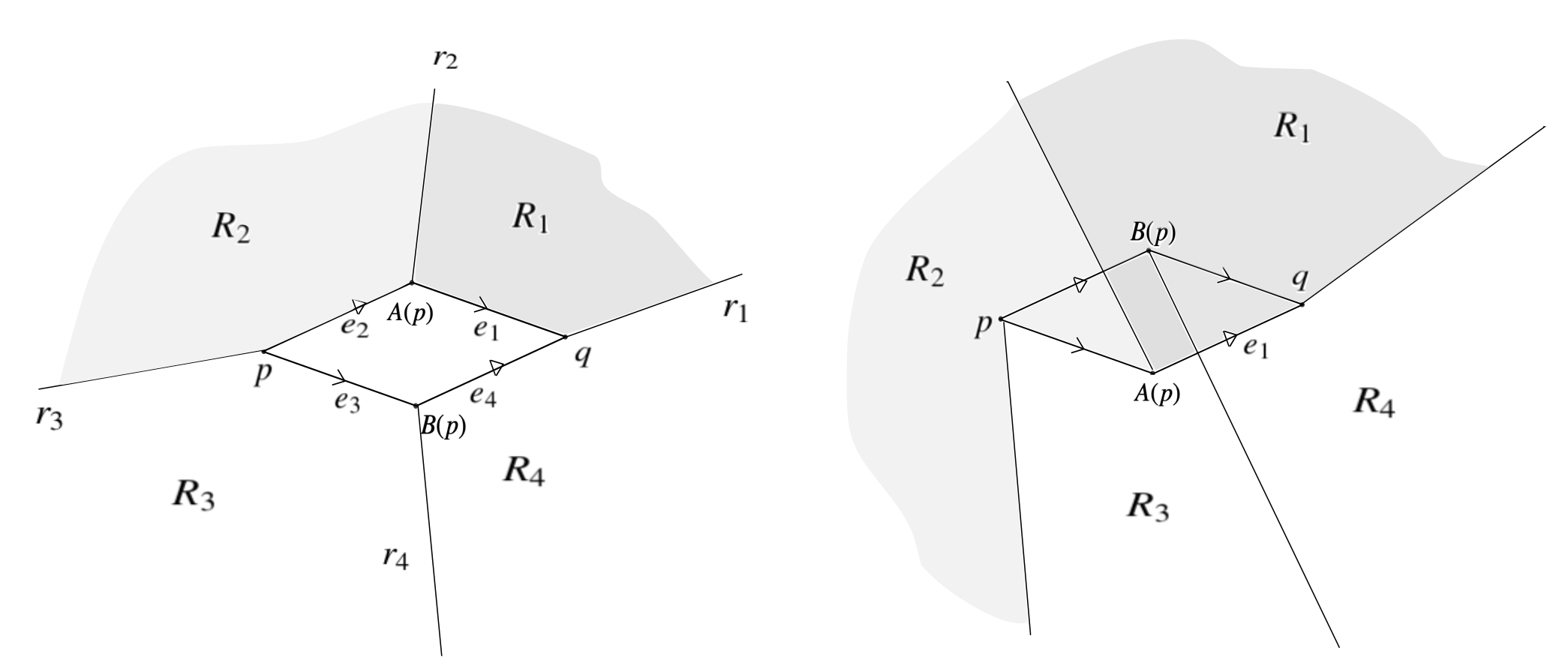}
  \caption{The identifications described in Proposition \ref{trans1} results in a translation surface homeomorphic to $S_{1,1,}$, a torus with a puncture (at infinity). The regions $R_1,R_2, R_3,R_4$ are chosen to lie on the left or right side of the closed polygonal curve $L = \overline{e_1}\cup \overline{e_2} \cup {e_3} \cup {e_4}$ depending on whether the volume of the handle is non-positive or positive (figures on left and right respectively). }
  \label{leftrightchoice} 
\end{figure}

\noindent Since $R_1$ and $R_2$ share the boundary ray $r_2$, either both lie on the left sides of the directed edges $e_1$ and $e_2$ respectively, or both lie on the right sides of those edges. Similarly, the pair of adjacent regions $R_3$ and $R_4$ either are both on the right sides of $e_3$ and $e_4$, or are both on their left sides. Thus, the choice of which side (left or right) of $e_1$ that $R_1$ should lie, determines which side of $e_i$ the region $R_i$ lies, for the remaining  $i=2,3,4$.  This choice is determined by the final requirement: 

\begin{itemize}
\item[\textit{(iii)}] $R_1$ lies on the left-hand side of $e_1$.
\end{itemize}

\smallskip

\noindent Note that if $\text{Vol}\big(\chi(\alpha), \chi(\beta)\big)=0$, then the four segments $e_1,e_2,e_3,e_4$ are collinear, and can be thought as lying on the boundary of a slit, the exterior of which is the union of regions $R_1\cup R_2\cup R_3 \cup R_4$. 

\smallskip

\noindent From requirement \textit{(i)} above we know  $R_i$ and $R_{i+1}$ already share a boundary ray $r_{i+1}$ (for the cyclically ordered indices $1\leq i\leq 4$),  and it follows that $R_1\cup R_2\cup R_3 \cup R_4$ is topologically a punctured disk immersed in $\C$, with boundary $L$ and a puncture at $\infty$. 

\noindent The translation surface $\Sigma$ is obtained by identifying the remaining boundary sides (the segments along the loop $L$) as follows: $e_3$ is identified with $e_1$ via the translation $A$, and $e_2$ is identified with $e_4$ via the translation $B$.  Note that requirement \textit{(ii)} above ensures that we obtain a surface. It is easy to see that $\Sigma$ is homeomorphic to the punctured torus $S_{1,1}$  (with the puncture at $\infty$), and the holonomy equals $\chi$.  Note that the holonomy is with respect to a fixed pair of oriented loops that are the handle-generators on $S_{1,1}$; the choice in \textit{(iii)} results in the desired orientation of these loops on $\Sigma$. 

\smallskip

\noindent The puncture at $\infty$ has trivial monodromy, and the induced abelian differential $\omega$ has a pole of order two at that point. There is one branch-point on $\Sigma$ (or equivalently, one zero of $\omega$) -- namely,  the one obtained from the endpoints of the segments $e_1,e_2,e_3,e_4$ after the identifications. Thus if we remove the branch-point, we obtain a surface homeomorphic to $S_{1,2}$, equipped with a translation structure, with holonomy $\chi$ where both punctures with trivial monodromy, as desired.

\smallskip

\noindent In the case that $g\ge2$, we shall proceed as follows. Let $\{\alpha_1,\beta_1,\dots,\alpha_g,\beta_g,\gamma_1,\gamma_2\}$ be a generating set of $\pi_1(S_{g,2})$ where $\{\alpha_i,\beta_i\}$ is a pair of handle generators for the $i$-th handle, where $1\le i\le g$. Let $A_i$ and $B_i$ denote the images of $\alpha_i$ and $\beta_i$ via $\chi_{g,2}$ respectively. We may assume that any handle generator is non-trivial as a consequence of Lemma \ref{cbas}.
For any point $p\in\C$, each pair $\{A_i,\,B_i\}$ determines a (possibly degenerate) quadrilateral $\mathcal{Q}_i\subset \C$. We can order these quadrilaterals in such a way the volume of $\mathcal{Q}_i$ is positive for $1\le i\le h\le g$ and non-positive for the remaining (possibly none) $g-h$ handles. Note that this notion does not depend on the choice of the base-point. In what follows we shall place the quadrilaterals on the complex plane according to the following rule:
\begin{quote}
    \textit{The right-most vertex of $\mathcal{Q}_i$ is identified with the left-most vertex of $\mathcal{Q}_{i+1}$. If $\mathcal{Q}_i$ has more than one right-most vertices, i.e. some  edges are vertical, then the top-most is chosen. If $\mathcal{Q}_{i+1}$ has more than one left-most vertices then the bottom-most is chosen. }
\end{quote}
Suppose there are $h \le g$ positive handles, and let $q$ the vertex that $\mathcal{Q}_h$ and $\mathcal{Q}_{h+1}$ have in common. Notice that such a point is unique because of the rule above. Let $\ell$ be a straight line passing through $q$ and such that it is not parallel to any edge of any quadrilateral. We introduce an orientation on $\ell$ in such a way the handles with non-positive volume are on the right of $\ell$ and the handles with positive volume are on the left of $\ell$. Moreover, according to this orientation, the point $q\in\ell$ divides the straight line in two rays, the upper one $\ell^+$ and the lower one $\ell^-$.
The right-hand side of $\ell$ is a halfplane $H$ in $\C$ containing $g-h$ quadrilaterals and let $R_0$ be the complement of $\mathcal{Q}_{h+1}\cup\cdots\cup\mathcal{Q}_g$ in $H$. If there are no handles with non-positive volume then $R_0$ is just the half-plane $H$. Similarly, if there are no handles with positive volume then the complement of $R_0$ in $\C$ is a half-plane $\C \setminus H$. 

\noindent Suppose $h\ge1$, on the left-hand side of $\ell$ we consider a chain of $h$ embedded quadrilaterals, obeying the rule above such that the $i$-th quadrilateral $Q_i$ has edges $\{e^i_1,e^i_2,e^i_3,e^i_4\}$ defined by 
\begin{equation}
e^i_1 := \overline{A_i(p_i)\,p_{i+1}} \quad e^i_2 := \overline{p_i\,A_i(p_i)} \quad e^i_3 := \overline{p_i\, B_i(p_i)}\quad e^i_4 := \overline{B_i(p_i)\,p_{i+1}}.
\end{equation}

\noindent where $p_i$ is the unique point such that $p_{i+1}=A_iB_i(p_i)$, where $p_{h+1}=q$ by definition. 

\smallskip 

\begin{figure}\label{fig:chain} 
  \centering
  \includegraphics[scale=0.48]{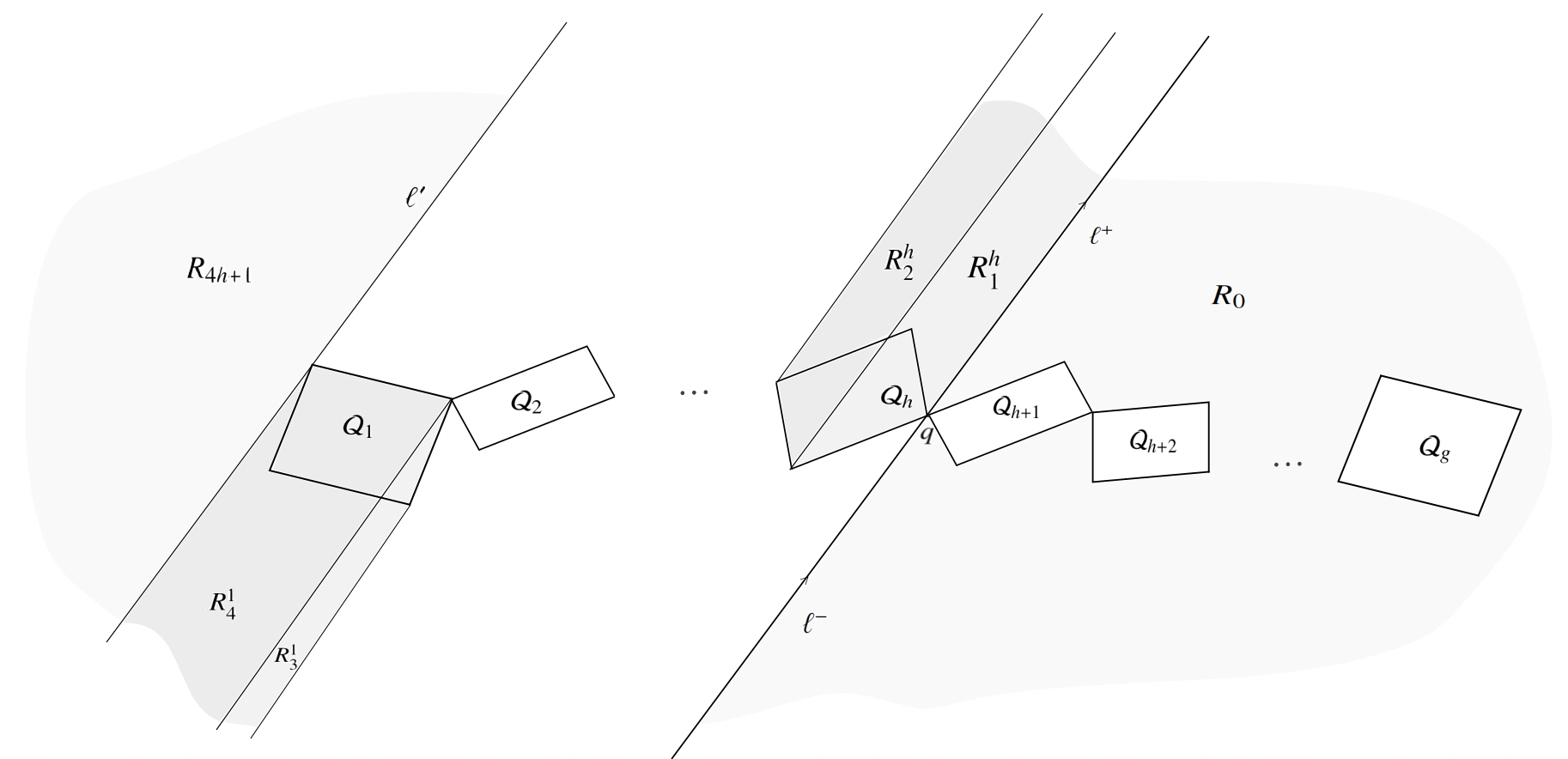}
  \caption{The chain of $g$ quadrilaterals corresponds to the positive handles ($1\leq i\leq h$) each like the right-hand figure in Figure \ref{leftrightchoice},   and the non-positive handles ($h+1 \leq i \leq g$) each like the left-hand figure in Figure \ref{leftrightchoice}. }
  \label{transhighergenus}
\end{figure}

\noindent For each $1\le i\le h$, consider the four polygonal regions $\{R_1^i, R_2^i,R_3^i,R_4^i\}$ in $\C$  satisfying \textit{(i)-(iii)} above, bounded by $e_1^i,e_2^i,e_3^i, e_4^i$ respectively together with infinite rays that are parallel to $\ell$ (i.e.\ a translated copy of either $\ell^+$ or $\ell^-$), see Figure \ref{transhighergenus}. 

\noindent By construction, $R_1^i$ and $R_3^i$ lie on opposite sides of $e_1^i$ and $e_3^i$ respectively, and $R_2^i$ and $R_4^i$ lie on opposite sides of $e_2^i$ and $e_4^i$ respectively (\textit{c.f.} Figure \ref{leftrightchoice}). Moreover, each $R_1^i$ lies on the left of the corresponding side $e_1^i$. 
The two  rays from the left-most point of $\mathcal{Q}_1$ forms a straight-line $\ell'$ to $\ell$. By introducing on $\ell'$ an orientation coherent to those of the straight-line $\ell$, it makes sense to say that $\ell'$ has the entire chain of quadrilaterals to its right. Finally, we define $R_{4h+1}$ the half-plane on the left of $\ell^\prime$. We can now proceed as above. The set 
\begin{equation}
    R_0\cup \left(\,\bigcup\limits_{i=1}^h \big(R^i_1 \cup R^i_2\cup R^i_3 \cup R^i_4\big) \,\right)\cup R_{4h+1}
\end{equation}
\noindent is topologically an immersed disk on the Riemann sphere, with its boundary being the chain of  quadrilaterals $\mathcal{Q}_1 \cup \mathcal{Q}_2 \cup \cdots \mathcal{Q}_g$.  The translation surface $\Sigma$ is obtained by identifying the remaining boundary sides $\{e_j^i\}$ as follows: $e_3^i$ is identified with $e_1^i$ via the translation $A_i$, and $e_2^i$ is identified with $e_4^i$ via the translation $B_i$. It is easy to see that $\Sigma$ is homeomorphic to a surface $S_{g,1}$ with the puncture at infinity, corresponding to a pole of order $2$, and one branch-point of order $2g$. We eventually delete the branch-point in order to get a translation surface on $S_{g,2}$ with the desired holonomy.
\end{proof}

\textit{Remark.} In the case that $g\ge2$ we  also have the following alternative construction which is easier. Namely, we construct a translation surface $\Sigma_j$  homeomorphic to $S_{1,1}$ exactly above, for each handle (so $1\leq j\leq g$), such that the holonomy of $\Sigma_j$ is precisely the holonomy of the $j$-th handle in the original character $\chi_{g,1}$.  Note that each $\Sigma_j$ has a pole of order two, and exactly one branch-point. Choose an embedded infinite ray in each, between the cone point and the pole such that each develops onto an infinite ray in $\C$ in the same direction. We can then glue $\Sigma_j$ to $\Sigma_{j+1}$ along these rays as in Definition \ref{glue-ray}, for each $1\leq j<g$. The resulting translation surface $\Sigma$ is homeomorphic to $S_{g,1}$, with holonomy $\chi_{g,1}$, and has one pole of order $(g+1)$ and one branch-point. As before, deleting the branch-point results in a surface homeomorphic to $S_{g,2}$, equipped with a translation structure having the desired monodromy $\chi_{g,2}$. However, notice that this construction results in a pole of order greater than $2$.  In Lemma \ref{transurfk1} below we shall need to consider a translation surface $\Sigma$ on a two-punctured genus $g$ surface with one pole of order \textit{exactly} two. This motivates the more complicated argument above.

\medskip 

\noindent In the case when $g=0$, we have the following construction of a translation structure realizing a prescribed monodromy:

\begin{prop}\label{trans2} If $k\geq 2$, any  representation $\chi_{0,k+1}:\Gamma_{0,k+1} \to \C$ with at least one puncture with trivial monodromy, is the holonomy of some translation structure on $S_{0,k+1}$,  where one of the punctures corresponds to a zero of the abelian differential. 
\end{prop}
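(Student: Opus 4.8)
The plan is to realize $\chi_{0,k+1}$ as the periods of a meromorphic abelian differential $\omega$ on $\cp$ that is holomorphic and non-vanishing away from the $k+1$ marked points, with at least one marked point a zero of $\omega$; deleting the marked points then yields the required translation structure on $S_{0,k+1}$, and the only constraint one has to meet --- namely \eqref{sumC} --- is exactly the residue theorem on $\cp$. I would first dispose of the trivial character by writing down such an $\omega$ explicitly: for instance $\omega=(z-1)z^{-3}\,dz$ has a pole of order three at $0$, a simple zero at $1$, and is regular and non-vanishing elsewhere, so marking the points $0,1,\infty$ together with $k-2$ further regular points of $\C$ gives a translation structure on $S_{0,k+1}$ with trivial holonomy in which the point $1$ is a zero.

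For a non-trivial character I would use the freedom, coming from the mapping class group action, to relabel the punctures so that $c_1,\dots,c_m\neq0$ and $c_{m+1}=\dots=c_{k+1}=0$, with $m\ge2$ and $\sum_{i=1}^m c_i=0$. Suppose first $m\ge3$, and consider $\omega_0=\sum_{i=1}^m\frac{c_i}{2\pi i}\cdot\frac{dz}{z-a_i}$ for distinct $a_1,\dots,a_m\in\C$: it has a simple pole of period $c_i$ at $a_i$, is regular at $\infty$ because the $c_i$ sum to zero, and --- since on $\cp$ the total order of the zeros of a meromorphic one-form equals the total order of its poles minus two --- has zeros of total order exactly $m-2$. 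I would then choose the configuration $(a_i)$ so that all of these zeros merge at a single point $b$, mark the points $a_1,\dots,a_m$, the point $b$, and $k-m$ further regular points of $\C$, and let $\omega:=\omega_0$; this has the prescribed periods and has a zero at the marked point $b$. The case $m=2$ is special: there $c_1=-c_2\neq0$ and two simple poles on $\cp$ would force $\omega$ to have no zero, so instead I would promote one of the poles to order two, e.g.\ $\omega=\frac{c_1}{2\pi i}\cdot\frac{z-2}{z^{2}(z-1)}\,dz$, which has a double pole of period $c_1$ at $0$, a simple pole of period $-c_1$ at $1$, a simple zero at $2$, and is regular elsewhere; marking $0,1,2$ and $k-2$ further regular points finishes this case. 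These surfaces can also be built geometrically in the style of Proposition~\ref{trans1}: the vectors $c_1,\dots,c_m$ sum to zero and hence close up to a polygon, to whose sides one glues half-planes along parallel rays as in that proof, the ends becoming the poles and the class of identified vertices becoming the zero.

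The step I expect to be the real obstacle is the one glossed over above: that, for the given residue vector $\big(c_i/(2\pi i)\big)$, a meromorphic differential on $\cp$ with $m$ simple poles of exactly those residues and with all of its forced $m-2$ zeros concentrated at one prescribed point actually exists. The parameter count is favourable --- such differentials form an $(m+2)$-parameter family on which the affine group of $\C$ acts preserving residues, leaving an $m$-dimensional effective space mapping to the $(m-1)$-dimensional space of residue vectors summing to zero, so the residue map ought to be a submersion --- but this must be made honest. I would do so by deforming the differential $\omega_0$ (which realizes the residues for every choice of distinct poles) within its stratum so as to shrink the saddle connections joining its zeros, an operation that leaves the periods around the poles unchanged; if collapsing all the zeros to a single point runs into a degeneracy for some residue vector, one can instead distribute them over several of the trivial-monodromy punctures. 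Alternatively one can invoke the realization theorems for residues of meromorphic differentials on the sphere established in \cite{CFG}. A last bookkeeping point, needed when this proposition is used in the proof of Theorem~\ref{thm1} and $S_{0,k+1}$ is glued along a ray running into a pole as in Definition~\ref{glue-ray}: the puncture along which one later glues should be kept among the poles --- possibly the one promoted to order two --- rather than taken to be the designated zero.
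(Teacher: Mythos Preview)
Your approach is genuinely different from the paper's. The paper gives a direct flat-geometric construction: for two non-trivial periods it builds an explicit annulus (either an infinite strip with parallel sides identified, or a plane slit along two parallel rays with the sides cross-identified), and for $m>2$ non-trivial periods it takes $m$ such annuli and glues them cyclically along rays as in Definition~\ref{glue-ray}, producing a translation surface homeomorphic to $S_{0,k}$ with a single branch-point, which one then deletes. No residue equations are solved.

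Your algebraic route is clean in the trivial case and for $m=2$, where your explicit differentials are correct. The difficulty you flag for $m\geq 3$ is indeed the whole content: showing that the pole locations $a_1,\dots,a_m$ can be chosen so that all $m-2$ zeros of $\sum_i \frac{c_i}{z-a_i}\,dz$ coalesce is a non-trivial algebraic statement, and your dimension count is only heuristic. Your fallback of distributing the zeros over several trivial-monodromy punctures does not save you in the critical case $m=k$ (exactly one trivial puncture), which is precisely the case one reduces to after deleting redundant regular points --- there you \emph{must} place a single zero of order $k-2$. Invoking \cite{CFG} does settle it, but note that the paper presents this proposition partly as motivation for \cite{CFG}, so leaning on it here inverts the intended logical flow.

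Your one-line geometric sketch at the end of the second paragraph (``the vectors $c_1,\dots,c_m$ close up to a polygon, to whose sides one glues half-planes along parallel rays'') is essentially the paper's idea, but as stated it needs care: the polygon can be self-intersecting or degenerate. The paper sidesteps this by building the individual annuli first and then gluing along rays, rather than trying to bound a single polygonal region. If you flesh that sketch out with the same care, you recover the paper's proof; if you stick with the algebraic approach, you need an honest argument (or a precise citation) for the single-zero realization when $m=k$.
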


\begin{proof}
If $\chi_{0,k+1}$ is trivial, then let $S_{0,k+1}$ be the complex plane $\C$ with $k$ punctures. This trivially defines a translation structure since the charts to $\C$ are given by the inclusion map (and the transition maps are all identity maps). The points removed are clearly apparent singularities, and so is the order-two pole at infinity. 

\smallskip

\noindent Now consider the case when $\chi_{0,k+1}$ is non-trivial. Since by removing regular points, it is easy to add punctures with trivial monodromy to a translation surface, we can assume without loss of generality, that each puncture except one, has non-trivial monodromy. We shall build a translation surface  $\Sigma$ that is homeomorphic to $S_{0,k}$ with one branch-point, such that after removing the branch-point,  the monodromy of the translation structure on the resulting surface is $\chi_{0,k+1}$; note that the monodromy around branch-point would be trivial.  

\smallskip

\noindent We now describe two such constructions for $k=2$; note that since one puncture has trivial monodromy, by \eqref{sumC} the other two punctures have monodromy $a$ and $ -a$ respectively, where $a$ is a non-zero complex number. 

\smallskip

\noindent (i) Choose a direction $\theta$ that is not parallel to the line passing through $0$ and $a$ (i.e. $\theta \neq \pm \arg(a)$). Consider the infinite strip $S$ in $\C$ in the direction $\theta$, with an orientation induced from the complex plane, such that the two boundary components differ by the translation $z\mapsto z + a$. Then let $A$ be the translation surface  obtained by identifying the two boundary components of $S$ via the translation. 

\smallskip

\noindent (ii) As before, choose a direction $\theta$ that is different from the direction determined by $a$. Consider the complex plane $\C$ with slits along two infinite rays $r_1$ and $r_2$, in the direction $\theta$, that start from $0$ and $a$ respectively. Let $r_i^-$ and $r_i^+$ be the upper and lower sides of the slits, respectively, where $i=1,2$. We then identify $r_1^-$ with $r_2^+$, and $r_1^+$ with $r_2^-$, each by the translation $z\mapsto z+a$. 

\smallskip

\begin{figure}
  \centering
  \includegraphics[scale=0.47]{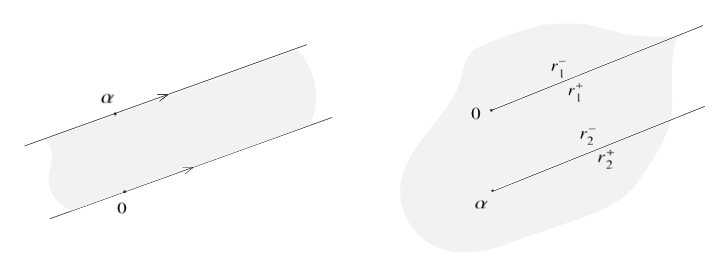}
  \caption{The construction of a translation surface homeomorphic to $S_{0,2}$, with prescribed holonomy. }
\end{figure}

\noindent In all these cases, the translation surface we obtain is homeomorphic to an annulus. Removing a regular point from this annulus, we obtain a puncture with trivial monodromy, and hence we obtain a translation structure on a surface homeomorphic to $S_{0,3}$ that has the desired monodromy. The abelian differential $\omega$ has poles at the two punctures with non-trivial monodromy: in construction (i), both poles are simple (\textit{i.e.} order one), whereas in construction (ii), one of them has order two (and the other is simple). 

\smallskip

\noindent The difference between constructions (i) and (ii) is the following: if you choose regular point on the resulting translation surface and consider an arc to a puncture that develops onto a ray in  $\C$ in the direction $\theta$, then if that puncture it is incident to is the one with monodromy $a$ in (i), it is the one with monodromy $-a$ in (ii), and vice versa. 

\smallskip

\noindent For $k>2$, let the translations around the $k$ punctures with non-trivial monodromy, as determined by $\chi_{0,k+1}$, be $A_1,A_2,\ldots A_k$. Let $A_i(z) = z + a_i$ where $a_i\in \mathbb{C}^\ast$,  for $1\leq i\leq k$.  Observe that by \eqref{sumC} we have 
\begin{equation*}
    a_k = - \sum\limits_{i=1}^{k-1} a_i.
\end{equation*}
 
\noindent Consider the cyclically ordered sequence of points $\{p_1,p_2,\ldots p_k\}$ in $\C$ where $p_1=0$ and $p_i = A_{i-1}(p_{i-1})$ for each $i=2,\ldots, k$. From our observation it follows that $p_1 = A_k(p_k)$. Choose a direction $\theta$ which is not parallel to $\overline{0\,a_i}$ for any $i$ and let $r_i$ be the infinite ray in $\C$ that starts from $p_i$ and has direction $\theta$. Now for each $1\leq i\leq k$, we construct a translation surface $A_i$ homeomorphic to an annulus, either by (i) or (ii) above, such that an arc from a regular point to the puncture with monodromy $A_i$ develops onto the ray $r_i$ on $\C$. We then glue $A_i$ to $A_{i+1}$ along the rays $r_i$ and $r_{i+1}$, as in Definition \ref{glue-ray}, where $i\in \{1,2,3,\ldots k\}$ is cyclically ordered so that $A_k$ is glued with $A_1$. The resulting translation surface is homeomorphic to $S_{0, k}$ with $k$ punctures having monodromy $A_1,A_2,\ldots A_k$ respectively, and a single branch-point of angle $2\pi k$ (which is the point corresponding to the $p_i$'s after identifications).  
Removing this branch-point, we obtain a translation structure on a surface homeomorphic $S_{0,k+1}$ with monodromy equal to $\chi_{0,k+1}$, as desired.
\end{proof}

\textit{Remark.} 
Although our main result concerns punctured surfaces with negative Euler characteristic, it is worth noting that in the case of the surface $S_{0,2}$ (i.e.\ $g=0, k=2$),  \textit{any} representation $\chi:\pi_1(S_{0,2})\cong\mathbb{Z}\longrightarrow \C$ can be realized as the monodromy of some translation structure on $S_{0,2}$. In case $\chi$ is trivial the desired structure is the complex plane punctured at any point. Otherwise, if $\chi(1)=\alpha\in\C^*$, we can proceed exactly as in the case $(i)$ above; the resulting Euclidean cylinder is the desired structure on $S_{0,2}$. Note that such a non-trivial representation $\chi$, though degenerate, is the monodromy of a complex projective structure \textit{without} apparent singularities. Theorem \ref{ab61} shows that this cannot happen in the case of negative Euler characteristic.

\medskip

\noindent We have thus been able to deal two cases -- one when the number of punctures $k=2$, but the genus $g>0$, and the other when the genus $g=0$, but the number of punctures is arbitrary, \textit{i.e.} $k\geq 2$. Using the gluing construction along rays once again, we can now prove the following more general statement:

\begin{prop}\label{trans} Let $\Gamma$ be the first  homology group of a surface $S_{g,k}$ where $k\geq 3$.  Let $\chi:\Gamma\to \C$ be a non-trivial representation such that there is at least one puncture with trivial monodromy. Then there is a translation structure on $S_{g,k}$ whose monodromy is $\chi$, such that the corresponding abelian differential $\omega$ on $S_{g,k}$ extends to a meromorphic abelian differential $\overline{\omega}$ on the closed surface $S_g$. 
\end{prop}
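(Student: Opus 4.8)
The plan is to reduce the general statement for $S_{g,k}$ with $k\geq 3$ to the two special cases already established: Proposition \ref{trans1} (genus $g>0$, two punctures, one pole of order exactly two) and Proposition \ref{trans2} (genus zero, $k+1$ punctures). The gluing operation of Definition \ref{glue-ray} is the tool that stitches these pieces together, so the first thing I would do is split the character $\chi:\Gamma_{g,k}\to\C$ according to the topological decomposition of $S_{g,k}$ into a genus-$g$ part with one boundary curve and a genus-zero part carrying the remaining punctures, as in Figure \ref{splitsurface}. This produces the two homomorphisms $\chi_{g,1}$ (really thought of as a character on $S_{g,1}$) and $\chi_{0,k+1}$ of \eqref{chig1} and \eqref{chi0n}, which agree on the separating curve $\gamma$ — and there $\gamma$ is trivial in homology, so both restricted characters vanish on it, which is exactly the compatibility needed for gluing.

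Next I would handle the two degenerate sub-cases of the splitting. If $\chi_{g,1}$ is trivial, then $\chi$ factors entirely through the genus-zero part, and since $k\geq 3$ the surface $S_{0,k+1}$ (hence also $S_{0,k}$ after reabsorbing a puncture, or directly $S_{g,k}$ with the handles carrying a trivial translation structure) can be built by Proposition \ref{trans2}; to attach trivial handles one simply takes the genus-one translation surface of Proposition \ref{trans1} with trivial holonomy — a torus built from a degenerate quadrilateral (a slit plane) — and glues it on along a ray. Symmetrically, if $\chi_{0,k+1}$ is trivial one only needs Proposition \ref{trans1} plus trivially-monodromic extra punctures, which are obtained by deleting regular points as noted in the proof of Proposition \ref{trans2}. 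So the substance is the case where both restricted characters are non-trivial.

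In that main case I would: (1) apply Proposition \ref{trans1} to $\chi_{g,1}$ to get a translation surface $\Sigma_1\cong S_{g,1}$ with holonomy $\chi_{g,1}$, one branch-point, and a single pole of order \emph{exactly} two at infinity — this is precisely the point of the more elaborate ``chain of quadrilaterals'' construction rather than the easier one in the Remark; (2) apply Proposition \ref{trans2} to $\chi_{0,k+1}$ to get a translation surface $\Sigma_2\cong S_{0,k+1}$ with holonomy $\chi_{0,k+1}$, one branch-point, and a puncture at infinity that is likewise a pole (of order one or two, depending on whether construction (i) or (ii) was used there); (3) choose in each $\Sigma_i$ an embedded straight-line ray from the branch-point into the pole at infinity, arranging by a rotation (which only rescales nothing but rotates the whole picture, hence only changes $\chi$ by an irrelevant automorphism — or more carefully, by choosing the free direction $\theta$ in Proposition \ref{trans2} and observing $\chi_{g,1}$ is already fixed, one instead rotates $\Sigma_2$) so that the two rays develop onto parallel rays in $\C$; (4) glue $\Sigma_1$ to $\Sigma_2$ along these rays via Definition \ref{glue-ray}. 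The resulting surface is homeomorphic to $S_{g_1+g_2,\,k_1+k_2-1}=S_{g,\,1+(k+1)-1}=S_{g,k+1}$, carries a translation structure whose holonomy restricts to $\chi_{g,1}$ on the handles and to $\chi_{0,k+1}$ on the sphere part, hence equals $\chi$, and has a single new branch-point (the identified cone-points) together with one pole at infinity of higher order $n\geq 3$. Deleting the branch-point yields $S_{g,k}$ — wait, this already has the punctures of $S_{g,k+1}$ plus one more, so in fact one of the ``trivial-monodromy'' punctures of $\Sigma_2$ can be taken to be the one we do not count, leaving exactly $S_{g,k}$; the abelian differential $\omega$ then extends over the filled-in branch-point to a meromorphic $\overline\omega$ on $S_g$ with a zero there and a pole at $\infty$, which is the asserted extension.

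The main obstacle, and the step to be careful about, is step (3): ensuring the two rays can be made to develop onto parallel rays \emph{without destroying the prescribed holonomy}. Since $\chi_{g,1}$ is rigidly determined once $\Sigma_1$ is built, one cannot rotate $\Sigma_1$; the freedom must come from $\Sigma_2$, and one must check that the direction $\theta$ used in Proposition \ref{trans2}'s construction can be chosen (subject only to the finitely many forbidden directions $\pm\arg(a_i)$) so that some embedded ray from the branch-point to the pole at infinity of $\Sigma_2$ runs parallel to the chosen ray in $\Sigma_1$; equivalently, one should verify that the pole-at-infinity end of $\Sigma_2$ is, up to translation, a (possibly branched cover of a) planar or cylindrical end through which rays in every direction escape to infinity, so that a suitable ray exists. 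A secondary bookkeeping point is confirming the orientations of the handle-generators and peripheral loops are preserved by the gluing so that the holonomy is $\chi$ on the nose and not some sign-twisted variant — this follows the orientation discussion in Proposition \ref{trans1} but should be stated. Finally one notes the extension of $\omega$ over the branch-point is automatic because a branch-point of a translation surface is exactly a zero of $\omega$, and the pole at infinity persists on $S_g$, giving $\overline\omega$.
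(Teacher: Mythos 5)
There is a genuine gap, and it sits exactly where the paper has to work hardest: the case where the restricted character $\chi_{g,1}$ is trivial. Your proposed fix for that case --- attaching handles via ``the genus-one translation surface of Proposition \ref{trans1} with trivial holonomy --- a torus built from a degenerate quadrilateral (a slit plane)'' --- does not exist. Proposition \ref{trans1} explicitly requires a non-trivial character (and, after Lemma \ref{cbas}, non-zero monodromy on each handle-generator); with $A=B=\mathrm{Id}$ the quadrilateral collapses to a point and no surface is produced. More fundamentally, no translation structure on a once- or twice-punctured torus (with one branch point) can have trivial holonomy: the developing map would descend to a branched cover of $\cp$ with at most two critical values, contradicting Riemann--Hurwitz --- this is precisely Lemma \ref{trivex}. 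The paper instead attaches trivial-holonomy handles via Lemma \ref{triv} (a degree-$(2g+1)$ cover of $\cp$ branched over three points), which unavoidably carries \emph{two} branch points. This matters because every branch point, and every gluing of Definition \ref{glue-ray}, becomes a puncture with trivial monodromy after deletion; so when $\chi_{g,1}$ is trivial and $\chi$ has \emph{exactly one} puncture with trivial monodromy (perfectly possible for $k\geq 3$), your scheme produces at least two apparent singularities and cannot realize $\chi$ on $S_{g,k}$. The paper's ``Case 2'' exists precisely for this situation: it builds a genus-$g$ piece whose handles have trivial holonomy but whose two punctures carry monodromy $a_1$ and $-a_1$ (pulling back a cylinder differential through the Lemma \ref{triv} cover), and then glues the sphere piece at a puncture of monodromy $a_1+a_2$ so the merged pole ends up with monodromy $a_2$. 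Nothing in your proposal plays this role.

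A secondary but real issue is the puncture bookkeeping in your main case. Gluing $\Sigma_1\cong S_{g,1}$ to $\Sigma_2\cong S_{0,k+1}$ and then deleting the new branch point yields $k+2$ punctures, two more than allowed; your parenthetical fix (``do not count one trivial-monodromy puncture of $\Sigma_2$'') removes only one of the two excesses. The correct repair --- and what the paper does --- is to realize on the sphere piece only the punctures with \emph{non-trivial} monodromy (a surface $S_{0,n}$ with one branch point, $n$ the number of non-trivial punctures), and let the single branch point created by the gluing, plus $(k-n-1)$ deleted regular points, account for all trivial-monodromy punctures; this works whenever $\chi_{g,1}$ is non-trivial because then exactly one branch point is created in total. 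Your worry in step (3) about arranging parallel rays is legitimate but minor (rays in all but finitely many directions escape through a pole, and the free direction $\theta$ in Proposition \ref{trans2} gives the needed flexibility); the decisive missing ingredient is the trivial-$\chi_{g,1}$, single-trivial-puncture case described above.
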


\begin{proof}
We have already seen the necessity of a puncture with trivial monodromy. Our construction of a translation structure with the prescribed monodromy $\chi$ splits into a few different cases. Note that the case for $g=0$ is already done in Proposition \ref{trans2}. Henceforth we shall assume that $g>0$. 

\smallskip

\noindent Let $\chi_{0,n+1}:\Gamma_{0,n+1} \to \C$ be the representation as in \eqref{chi0n}, obtained by restricting $\chi$ to the subsurface of $S_{g,k}$ that contains all the punctures with non-trivial monodromy.  Here we assume that there are $0<n<k$ such punctures, and hence the subsurface is homeomorphic to $S_{0,n+1}$. Note that  $\chi_{0,n+1}$ has exactly one puncture with trivial monodromy, corresponding to the boundary of the subsurface (which is trivial in homology).  Let the rest of the monodromies be represented by the corresponding translation vectors $a_1, a_2,\ldots a_{n} \in \C^\ast$. 
By Proposition \ref{trans2} there is a translation surface $\Sigma_0$ homeomorphic to $S_{0,n}$ with a single branch-point $p$, and at least one of the other punctures is at infinity (in the induced flat metric), such that the monodromy around the $n$ punctures are the translations by $a_1, a_2,\ldots a_{n}$. Then 
$\Sigma_0 \setminus \{p\}$ is homeomorphic to $S_{0, n+1}$ and carries a translation structure realizing the character $\chi_{0,n+1}$. 

\smallskip 

\noindent Now let $\chi_{g,1}:\Gamma_{g,1} \to \C$ be the representation as in \eqref{chig1}, obtained by restricting $\chi$ to the subsurface of $S_{g,k}$ that contains all the handles.  If $\chi_{g,1}$ is a non-trivial representation, then by Lemma \ref{cbas} we can assume that each handle-generator maps to a non-zero complex number. From the proof of Proposition \ref{trans1} there is a translation surface $\Sigma_1$  homeomorphic to $S_{g,1}$, with one branch-point and one puncture which is a pole of order two, with holonomy $\chi_{g,1}$. We then glue $\Sigma_0$ and $\Sigma_1$ along a suitable choice of rays from the branch-point on each surface to a pole, that develop to infinite rays in $\C$ that are parallel, see Definition \ref{glue-ray}. The resulting translation surface is then homeomorphic to $S_{g,n}$ and has one branch-point. Removing the branch-point, and $(k-n-1)$ additional regular points, we obtain the desired surface homeomorphic to $S_{g,k}$ equipped with a translation structure having monodromy $\chi$, as desired. 

\smallskip

\noindent If $\chi_{g,1}$ is the trivial representation, there are two cases:

\smallskip

\noindent \textit{Case 1: $\chi$ admits at least two punctures on $S_{g,k}$  with trivial monodromy.} We start with the translation structure on $S_{g,3}$ that realizes the trivial representation (see Lemma \ref{triv}). Recall from that construction, that such a translation structure is obtained from a translation surface $\Sigma_1$ with trivial holonomy that has two branch-points, and a single pole of higher order at infinity in the induced flat metric. We can now glue $\Sigma_0$ and $\Sigma_1$ along suitably chosen rays from a  branch-point to a pole at infinity, on either surface, as in Definition \ref{glue-ray}. Note that the resulting surface is homeomorphic to $S_{g,n}$ and has two branch-points. Removing these two branch-points, and an additional $(k-n-2)$ regular points if necessary, we obtain a surface homeomorphic to $S_{g,k}$ equipped with a translation structure having monodromy $\chi$.

\smallskip

\noindent \textit{Case 2:  There is exactly one puncture on $S_{g,k}$ with trivial monodromy.} By our assumption, there are exactly $k-1$ punctures with non-trivial monodromy, which we denote by  $a_1, a_2, \ldots, a_{k-1}$ as before. In this case, we first construct a translation structure on $S_{g,3}$ that realizes the representation that has all handle-generators have trivial monodromy, exactly one puncture (call it $p$) has trivial monodromy, and the other two punctures $q$ and $r$ have monodromy $a_1$ and $-a_1$ respectively: For this, we use the same covering map $\Pi:S_g\to\cp$ as in the proof of Lemma \ref{triv}, namely one that has three ramification points $0,1,\infty \in \cp$  and three critical points (the preimages of $0,1,\infty$) on the domain surface. This time, we equip $\cp$ with a translation structure for which the abelian differential $\omega$ has a simple pole at $0$ and $\infty$ (and residue $ a_1/2g$ and $-a_1/2g$ respectively), and $1$ is  regular point. Note that as a translation surface, the target is a just Euclidean cylinder with a distinguished regular point, and its pullback via $\Pi$ is then a translation surface $\Sigma_1$ homeomorphic to $S_{g,2}$ with one cone point (which is the pre-image of $1$) and two punctures which are two simple poles of $\Pi^\ast \omega$. Removing the branch-point, we obtain the desired translation structure on $S_{g,3}$. Note that in the case that $k=3$, the above construction completes the proof. 

\smallskip

\noindent We now assume that $k>3$; note that we can then assume without loss of generality that $a_1 \neq - a_2$. 

\noindent To $\Sigma_1$ we shall glue a translation surface $\Sigma_0^\prime$  homeomorphic to $S_{0,k-2}$ with a single branch-point,  where we construct $\Sigma_0^\prime$ such that the monodromy around the punctures are $a_1 + a_2, a_3, \ldots, a_{k-1}$. Such a translation surface exists by Proposition \ref{trans2}; in that construction, we can also ensure that the puncture $p^\prime$ having monodromy $a_1 + a_2$ is a simple pole. The gluing is now along a choice of a ray on $\Sigma_0^\prime$ from the branch-point to $p^\prime$, and of a ray on $\Sigma_1$ from the branch-point there to the puncture $r$ which has monodromy $-a_1$. Once again, we choose the rays so that they develop onto parallel rays on $\C$. The translation surface obtained after this gluing is homeomorphic to $S_{g,k-1}$; note that the pole obtained by identifying the end-points of the rays now has holonomy $-a_1 + (a_1 + a_2) = a_2$, and the other endpoints define a single branch-point after the identification. Removing this branch-point, we obtain a translation structure on $S_{g,k}$ with monodromy $\chi$, as desired. Thus, in all cases, we are able to construct the desired translation structure, and we are done.
\end{proof}

\subsection{Affine surfaces with at least two punctures}\label{ssaff1}

In this section (and the next) we shall assume that  $\rho:\Pi \to \text{Aff}(\C)$ is a non-trivial representation, where 
\begin{equation*}
 \text{Aff}(\C) = \big\{\ z\mapsto a z + b \ \vert\ a \in \C^\ast \text{ and } b \in \C \ \big\}
\end{equation*}
is the subgroup of $\pslc$ comprising affine transformations.  Here, recall that  $\Pi$ denotes the fundamental group of the surface $S_{g,k}$. Since $\text{Aff}(\C)$ is precisely the subgroup of $\pslc$ that stabilizes the point $\infty \in \cp$, any degenerate representation into $\pslc$ that has a global fixed point, can be conjugated to an affine representation as $\rho$ above. Note that this includes the case of co-axial monodromy, when the representation globally fixes \textit{two} points in $\cp$. 

\medskip 

\noindent In the language of geometric structures, an \textit{affine structure} on $S_{g,k}$ is an atlas of charts to $\C$ such that the transition maps are affine maps; notice that translation structures (see \S\ref{sstrans}) are a special case. Recalling our Definition \ref{psurf}, with the same spirit we describe an affine structure as follows:

\begin{defn}\label{def_affstruc}
An \textit{affine surface} to be one obtained by identifying sides of a (possibly disconnected, and possibly non-compact) polygon in $\C$ by affine maps. Note that the vertices after identification may result branch-points; a neighborhood of a branch-point on an affine surface develops to $\C$ as the map $z\mapsto z^n$ for some $n>1$. Unlike in Definition \ref{psurf}, however, the resulting surface $\Sigma$ may not have an induced Euclidean metric. Removing the set of branch-points $B$, we obtain an affine structure on the punctured surface with the punctures in $B$ having trivial monodromy (\textit{i.e.} apparent singularities of the affine structure).
\end{defn}

\medskip

\noindent In the course of this section we would need to also consider another particular type of affine structures, namely:

\begin{defn}\label{def_halftrans}
A \textit{half-translation structure} is obtained by starting with a collection of (possibly non-compact) polygons in $\C$ bounded by straight lines and/or rays and/or segments, and identifying such sides by half-translations, i.e.\ maps of the form $z\mapsto \pm z + c$. 
The resulting surfaces $\Sigma$ acquires an Euclidean metric, with possible cone-points with cone-angles $k\,\pi$, where $k\in\mathbb{Z}^+$. On the complement of the cone-points, we obtain an Euclidean structure locally modelled on $\C$ and such that the transition maps are affine maps of the form $z\longmapsto \pm z+c$. These structures naturally come equipped with a (possibly meromorphic) quadratic differential $q$, induced from the quadratic differential $dz^2$ on $\C$.  A zero of $q$ of order $m\geq 1$ corresponds to a cone-point of angle $(m+2)\pi$, and a simple pole is a cone-point $\pi$.    
\end{defn}


\medskip

\noindent In this section,  our strategy would be to construct affine structures with a given affine monodromy $\rho:\Pi \to \text{Aff}(\C)$. For this, we prove analogues of the results and  constructions in \S\ref{sstrans}. 

\medskip

\noindent Just like we did for translation surfaces in Definition \ref{glue-ray}, we can glue affine surfaces along rays as follows:

\begin{defn}[Gluing affine surfaces]\label{glue2} 
Let $\Sigma_1$ and $\Sigma_2$ be two affine surfaces, with embedded arcs $l_1$ and $l_2$ respectively from a branch-point or regular point to a puncture, that each develop onto an infinite ray in $\C$. Then we can define a new affine surface $\Sigma$ by making a slit along $l_1$ and $l_2$, and gluing cross-wise to obtain an affine surface $\Sigma$. This gluing is exactly as in Definition \ref{glue-ray}, except that now the 
the identifications between sides of the slit are by an affine map and its inverse. (In particular, the two rays that are the developed images of the lifts of $l_1$ and $l_2$ need not be parallel.)   As before, if $\Sigma_i$ is homeomorphic to $S_{g_i,k_i}$ for $i=1,2$, then $\Sigma$ is homeomorphic to $S_{g_1+g_2, k_1+k_2-1}$. Moreover, the starting points of the rays determine a branch-point on $\Sigma$.
\end{defn} 

\noindent However, the above gluing has a disadvantage: if the two rays in the developing image are not identical but related by an affine map $A$,  the holonomy of the resulting affine surface $\Sigma$ might be affected by $A$. For example, if one of the endpoints of the arc being slit is a puncture with non-trivial monodromy $M_1$ on one surface, and the corresponding puncture on the other surface has non-trivial monodromy $M_2$, then the monodromy around the puncture on $\Sigma$ will be $M_1 A M_2A^{-1}$.  Note that this issue does not arise in the case of a translation surface (\textit{c.f.} Definition \ref{glue-ray}) since the holonomy then is abelian.  

\smallskip

\noindent To handle this, we introduce the following variant of the gluing procedure that ensures that after gluing, the holonomy on the two constituent sub-surfaces remain unchanged. 

\begin{defn}[Gluing preserving holonomy]\label{glue-new} 
Let $\Sigma_0$ and $\Sigma_1$ be affine surfaces, as before, with rays $r_0$ and $r_1$ respectively, each from a (possibly branched) point of the surface to the puncture at infinity. The only requirement will be that the developing map of either surface takes the starting point of the ray to a common point $p\in \C$. 
In the gluing procedure we shall use the complex plane (thought of an affine surface) together with a choice of a ray $r_\star$ leaving from $p$, which we denote by  $(\mathbb{C},r_\star)$. The ray $r_0$ develops onto a ray $\overline{r}_0$ leaving from $p$. We slit $\Sigma_0$ along $r_0$ and $(\C,r_\star)$ along $\overline{r}_0$ and then we identify the resulting boundary rays cross-wise, as in the Definition \ref{glue2}, to obtain an affine surface $\Sigma_0'$ with holonomy $\rho_0$. In the same fashion, the ray $r_1$ develops on a ray $\overline{r}_1$ leaving from $p$ and hence we glue the affine surfaces $\Sigma_1$ and $(\mathbb{C},r_\star)$ along rays to obtain an affine surface $\Sigma_1'$ with holonomy $\rho_1$. By construction, the new surfaces $\Sigma_0'$ and $\Sigma_1'$ both contain a ray, say $r_0'$ and $r_1'$ respectively, from the branch-point to a puncture at infinity that develop onto the same ray $r_\star\subset\mathbb{C}$. We slit $\Sigma_0'$ and $\Sigma_1'$ along these two rays and glue as in Definition \ref{glue2} to obtain the final affine surface $\Sigma$. If $\Sigma_0 \cong S_{g_0,k_0}$ and  $\Sigma_1 \cong S_{g_1,k_1}$ then the resulting surface $\Sigma$ is homeomorphic to $S_{g_0 + g_1,k_0 + k_1-1}$. Note that the starting points of the arcs  get identified to a branch-point on $\Sigma$. In this construction the rays on the two surfaces being glued develop onto the same ray $r_\star$, therefore the resulting affine surface has monodromy $\rho$ that restricts to $\rho_0$ and $\rho_1$ on the sub-surfaces  corresponding to $\Sigma_0$ and $\Sigma_1$ respectively.
\end{defn} 

\begin{figure}
  \centering
  \includegraphics[scale=0.42]{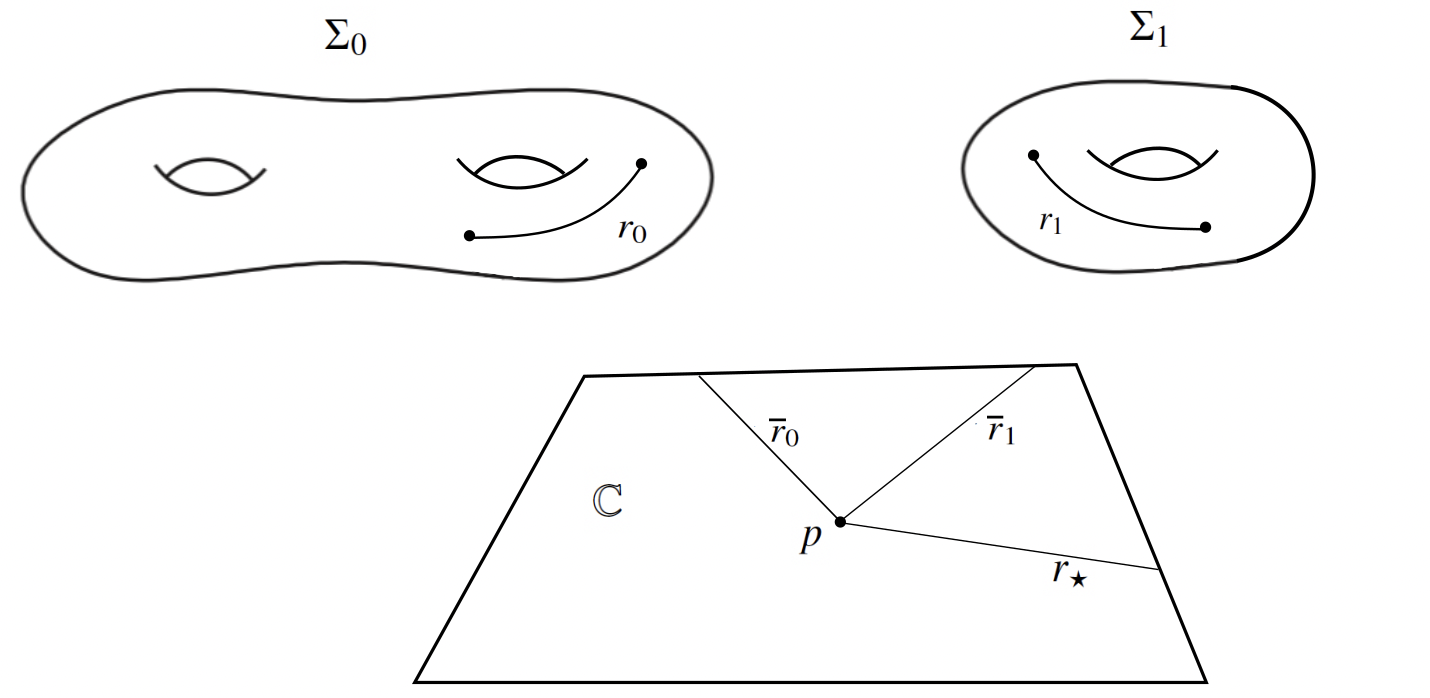}
  \caption{In the gluing preserving holonomy (Definition \ref{glue-new}) we introduce an intermediate copy of the complex plane. }
\end{figure}


\medskip

\noindent We start with the analogue of Proposition \ref{trans2}, that handles the case when $g=0$:

\begin{prop}\label{affg0}  If $k\geq 2$, any representation $\rho:\pi_1(S_{0,k+1}) \to \text{Aff}(\C)$ with at least one puncture with trivial monodromy, is the holonomy of some affine structure on $S_{0,\,k+1}$. 
\end{prop}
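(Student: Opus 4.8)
\noindent The plan is to follow the strategy of the translation case (Proposition~\ref{trans2}): we shall build the desired affine structure by assembling elementary ``affine annuli'', one for each puncture carrying non-trivial monodromy, and then gluing them together. First, some reductions. If $\rho$ is trivial, then $S_{0,k+1}\cong\C\setminus\{q_1,\dots,q_k\}$ (with one puncture at $\infty$) equipped with its tautological affine structure --- a single chart, the inclusion into $\C$ --- has trivial monodromy and identically vanishing Schwarzian derivative, so it lies in $\mathcal{P}_0(k+1)$. Assume henceforth that $\rho$ is non-trivial, and let $M_1,\dots,M_n\in\text{Aff}(\C)$ be the non-trivial peripheral monodromies, around punctures $p_1,\dots,p_n$. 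Since one may delete a regular point from an affine surface to create a puncture with trivial monodromy without affecting the holonomy of the rest, and since up to the action of the mapping class group (which in genus zero permutes punctures freely) it is enough to realize $\rho$ up to conjugacy, it suffices to produce an affine structure on $S_{0,n+1}$ with exactly one apparent singularity and peripheral monodromies $M_1,\dots,M_n$ on the remaining punctures, and then delete $k-n$ further regular points. Note that $n\neq1$, since the peripheral relation would force a single non-trivial $M_i$ to equal a product of identity elements; if $n=2$ then $M_2=M_1^{-1}$ and a single affine annulus (built below) with one regular point removed already does the job. So the substantive case is $n\geq3$, where $M_1M_2\cdots M_n=\text{Id}$ for a suitable cyclic ordering of the peripheral loops.

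\noindent For the building blocks, for each $i$ we construct an \emph{affine annulus} $A_i\cong S_{0,2}$ whose two peripheral monodromies are $M_i$ and $M_i^{-1}$ (as affine maps, up to a common conjugation), carrying a distinguished puncture ``at infinity'' together with an embedded straight-line ray from a regular point of $A_i$ to that puncture. If $M_i$ is a translation $z\mapsto z+b_i$ with $b_i\neq0$, use the infinite-strip or twin-slit constructions of Proposition~\ref{trans2}. If the linear part $a_i$ of $M_i$ is not $1$, conjugate so that the fixed point of $M_i$ is the origin, i.e.\ $M_i\colon z\mapsto a_i z$; then, when $a_i\notin\mathbb{R}^+$, take the Euclidean sector with vertex $0$ and opening angle $\arg(a_i)\in(0,2\pi)$ and identify its two bounding rays by $z\mapsto a_i z$, and when $a_i\in\mathbb{R}^+\setminus\{1\}$ --- where that sector degenerates --- instead slit $\C$ along two parallel rays issuing from a point $c$ and from $a_i c$ and glue the slits cross-wise by $z\mapsto a_i z$. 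In every case this yields an affine annulus with the prescribed pair of monodromies whose two ends are poles of the Schwarzian derivative of order at most two, the developing map having a local form such as $z\mapsto z^\mu$, $z\mapsto c/z$, or $z\mapsto\log z$ near the punctures.

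\noindent Next one glues $A_1,\dots,A_n$ together --- slitting the chosen ray in each and cross-gluing consecutive pieces into a single configuration around a common central vertex, in the spirit of the $k>2$ construction in Proposition~\ref{trans2}. The essential new feature, absent in the translation case, is that $\text{Aff}(\C)$ is non-abelian: the plain ray-gluing of Definition~\ref{glue2} would conjugate the monodromy of each glued-in piece by the affine map relating the two developed rays, so the local monodromies on the assembled surface would be conjugates of the $M_i$ that need not satisfy their defining product relation. This is exactly what the holonomy-preserving gluing of Definition~\ref{glue-new} circumvents: by interposing a copy of $(\C,r_\star)$ between consecutive pieces one forces the two rays being identified to develop onto the very same Euclidean ray, so the monodromy of each $A_i$ is transported to the assembled surface verbatim. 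The relation $M_1M_2\cdots M_n=\text{Id}$ then ensures that going once around the central vertex returns the developing map to itself, so that vertex is a genuine branch-point, hence an apparent singularity and a pole of order two of the Schwarzian by Definition~\ref{brpt}. Deleting this branch-point, and (as needed) deleting further regular points, produces an affine structure on $S_{0,k+1}$ whose monodromy is conjugate to --- hence, in the sense used here, equivalent to --- $\rho$, and which lies in $\mathcal{P}_0(k+1)$.

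\noindent I expect the main obstacle to lie in this last, non-abelian, part of the argument: one must simultaneously (i) exhibit an elementary affine annulus for an \emph{arbitrary} linear part $a_i\in\C^\ast$, which genuinely requires the split $a_i\notin\mathbb{R}^+$ versus $a_i\in\mathbb{R}^+$ (the latter a degeneration of the sector picture), and (ii) verify that the holonomy-preserving gluing of Definition~\ref{glue-new}, applied around the configuration, realizes the prescribed $M_i$ on the nose while keeping the central vertex an honest branch-point and controlling the number and type of the remaining punctures. Step (ii) is where the relation $M_1\cdots M_n=\text{Id}$ is used, and it is the non-abelian counterpart of the identity $\sum a_i=0$ that makes the translation construction of Proposition~\ref{trans2} close up.
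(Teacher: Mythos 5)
Your building blocks and your instinct to use the holonomy-preserving gluing are sensible, but the assembly has a genuine gap, and it sits exactly where you locate the difficulty. First, the normalization of the blocks is incompatible with the gluing: you construct the annulus for $M_i$ after conjugating its fixed point to the origin, so its holonomy is $C_iM_iC_i^{-1}$ (with $C_i$ depending on $i$), and Definition \ref{glue-new} transports each block's holonomy to the assembled surface \emph{verbatim}. Hence the assembled peripheral monodromies are $C_1M_1C_1^{-1},\dots,C_nM_nC_n^{-1}$ with unrelated conjugators; since in genus zero the representation is determined by the actual affine maps assigned to a standard generating system (not just by the peripheral conjugacy classes), this is in general not conjugate to $\rho$, and in particular these normalized maps satisfy no product relation. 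Your parenthetical ``up to a common conjugation'' is not available: the conjugations moving each fixed point to $0$ are different for different $i$. Second, the cyclic closing-up ``around a common central vertex'' is asserted, not proved. The analogy with Proposition \ref{trans2} is misleading: there the chain closes because $\sum a_i=0$ together with parallelism of the slits forces the developed endpoints to coincide; in the affine case, even with unnormalized blocks realizing the $M_i$ on the nose, closing a cycle of slits requires the developed rays and base points to match after composing the monodromies once around, which does not follow from $M_1\cdots M_n=\mathrm{Id}$ alone and fails outright for your normalized blocks.

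The paper avoids both issues by a different assembly. Each non-trivial $A_i$ (for $i=1,\dots,k-1$ only) is realized by slitting a copy of $\C$ along a ray $r_0$ emanating from one common base point $p_0$ and along its image $A_i(r_0)$, and cross-gluing by $A_i$; a short argument (passing to a sub-ray) guarantees the two rays are disjoint, which replaces your case analysis on the linear part, and every block realizes $\rho(\gamma_i)$ exactly while sharing the developed base point $p_0$ that Definition \ref{glue-new} requires. The blocks are then glued in an \emph{open} chain (no cyclic closing): the punctures carrying $A_i^{-1}$ merge into a single puncture at infinity whose monodromy is automatically the product $(A_1\cdots A_{k-1})^{-1}=A_k$, so the last non-trivial puncture never needs a block of its own, the branch point arises simply from the identified slit endpoints, and the product relation is never invoked to close anything up. If you want to rescue your scheme, you must (i) build each annulus with holonomy equal to $M_i$ itself, not a normalized conjugate, while still providing rays from points developing to one common $p$, and (ii) either prove the cyclic matching condition or, as in the paper, drop the last block and let its monodromy appear as the product at the merged puncture.
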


\begin{proof}
Let us start by assuming $k=2$ and let $\rho:\pi_1(S_{0,3}) \to \text{Aff}(\C)$ with at least one puncture with trivial monodromy. If all the punctures have trivial monodromy then $\C\setminus\{ q_1,\,q_2\}$ provides the desired structure for any pair of points $q_1,\,q_2\in\C$. We therefore assume the existence of at least one puncture with non-trivial monodromy, say $A\in\text{Aff}(\C)$. The remaining puncture  necessarily has monodromy $A^{-1}$. 

\smallskip

\noindent Let $p_0\in\C$ be any point. Let $r_0$ be a ray leaving from $p_0$ and let $A(r_0)$ be the image of $r_0$ leaving from $A(p_0)$. We can always choose $r_0$ such that the rays $r_0$ and $A(r_0)$ are not contained one in each other, \textit{i.e.} $r_0\not\subset A(r_0)$ nor $A(r_0)\not\subset r_0$. However, the rays $r_0$ and $A(r_0)$ may intersect at some point $s\in\,r_0\,\cap\,A(r_0)$. If this is the case, we replace $p_0$ with a point $p_\star\in r$ and such that $p_\star\notin\,\overline{p_0\,s}$. Notice that the segment $\overline{p_0\,s}$ may be degenerate, that is a point, if $p_0=\text{Fix}(A)$. Let $r_\star$ be the sub-ray of $r_0$, leaving from $p_\star$. By construction, $r_\star$ is disjoint from its image $A(r_\star)$.

\smallskip

\noindent We slit $\C$ along $r_\star$ and let $r_\star^+$ and $r_\star^-$ be the right and the left copy of $r_\star$ respectively. In the same fashion, we slit $\C$ along $A(r_\star)$ and let us denote $A(r_\star)^+$ and $A(r_\star)^-$ be the right and the left copy of $A(r_\star)$ respectively. We glue $r_\star^+$ with $A(r_\star)^-$ and similarly we glue $r_\star^-$ with $A(r_\star)^+$ by using the affine map $A$, see figure \ref{fig:3punctsphereaff}. 

\noindent The resulting surface is a two-punctured sphere which carries an affine structure with one branch-point of magnitude $4\pi$ arising from the identification of the points $p_\star$ and $A(p_\star)$. One of the punctures has monodromy $A$ and the other puncture has monodromy $A^{-1}$ by construction. We eventually delete the branch-point in order to get an affine structure $\Sigma$ on $S_{0,3}$ with the desired monodromy.

\begin{figure}[h!]
    \centering
    \begin{tikzpicture}[scale=0.7, every node/.style={scale=0.8}]
    \draw [thin, black] (3,-1)-- (8,5);
    \draw [thin, black] (3,-1)-- (7.75,5);
    \draw [thin, black] (1,0)-- (-2,5);
    \draw [thin, black] (1,0)-- (-1.75,5);
    \fill [white, pattern=north east lines, pattern color=pallido](-5,-3)--(11,-3)--(11,5)--(8,5)--(3,-1)--(7.75,5)--(-1.75,5)--(1,0)--(-2,5)--(-5,5)--(-5,-3);
    \draw plot [mark=*, smooth] coordinates {(3,-1)};
    \draw plot [mark=*, smooth] coordinates {(1,0)};
    \draw [-latex, thin, bend right] (5,3) to (0,3);
    \node at (2.5,4) {$A$};
    \node at (6.25,2) {$r_\star^-$};
    \node at (5,2) {$r_\star^+$};
    \node at (0.75,2) {$A(r_\star)^-$};
    \node at (-1,2) {$A(r_\star)^+$};
    \node at (3,-1.5) {$p_\star$};
    \node at (1,-0.5) {$A(p_\star)$};
    \node at (7,-1) {$\C$};
    \end{tikzpicture}
    \caption{}
    \label{fig:3punctsphereaff}
\end{figure}

\smallskip

\noindent We now consider the general case when $k\ge3$. Let $\rho:\pi_1(S_{0,k+1}) \to \text{Aff}(\C)$ be a representation such that at least one puncture has trivial monodromy. There is no loss of generality in assuming that \textit{exactly} one puncture has trivial monodromy. In fact, as already observed in Proposition \ref{trans2}, it is easy to add further punctures with trivial monodromy to an affine structure by deleting some regular points. The idea for the general case is to define $k-1$ affine structures on $S_{0,3}$ as above and then glue them together to obtain an affine structure on $S_{0,k+1}$ with one apparent singularity.

\smallskip

\noindent Let $A_1,\,A_2,\,\dots,A_k,\,A_{k+1}$ be the monodromy of the punctures. We may assume $A_{k+1}=\text{Id}$ and then we can notice that $A_k=\big(A_1A_2\cdots A_{k-1})^{-1}$. Let $p_0\in\C$ be any point and let $r_0$ be a ray leaving from $p_0$. For any $i=1,\dots,k-1$, we define $p_i=A_i(p_0)$ and $r_i=A_i(r_0)$. Clearly, $r_i$ is a ray leaving from $p_i$. Notice that $r_0$ can be chosen such that $r_i\not\subset r_0$ nor $r_0\not\subset r_i$ for $i=1,\dots,k-1$. However, the ray $r_i$ may still intersect $r_0$ at some point, say $s_i$, as observed above. We claim the existence of some good point $\overline{p}_0\in r_0$ and a sub-ray $\overline{r}_0\subseteq r_0$ leaving from $\overline{p}_0$ such that, upon setting $\overline{r}_i=A(\overline{r}_0)$, then $\overline{r}_i\subseteq r_i$ and the rays $\overline{r}_{0}$ and $\overline{r}_{i}$ are disjoint.

\smallskip

\noindent We briefly show why the claim above is true. Let $\xi_0:[0,\,\infty)\longrightarrow \C$ be a parametrization of $r_0$. We can easily note that any parameter $\tau\in[0,\,\infty)$ determines a sub-ray $r_\tau\subseteq r_0$. Moreover, given two parameters $\tau_1,\tau_2$ such that $\tau_1<\tau_2$, then the corresponding rays are such that $r_{\tau_2}\subset r_{\tau_1}$. We now define $\xi_i:[0,\,\infty)\longrightarrow \C$ to be the parametrization of $r_i$ satisfying the equation $\xi_{i}=A_i\circ \xi_{0}$. Upon setting $t_0=0$, as we showed above, it is possible to find a time $t_1$ such that the sub-ray $r_{t_1}\subset r_0$, leaving from $\xi_0(t_1)$, is disjoint from the ray $A_1(r_{t_1})$ leaving from $\xi_1(t_1)$. However, it may happen that the rays $r_{t_1}$ and $A_2(r_{t_1})$ still intersect. We apply again the same reasoning to these rays. There is a time $t_2\ge t_1$ such that $A_2(r_{t_2})\subseteq A_2(r_{t_1})\subset r_2$, leaving from $\xi_2(t_2)$, is disjoint from the sub-ray $r_{t_2}\subset r_0$ leaving from $\xi_0(t_2)$. By proceeding in the same fashion at most $k$ times, it is then possible to find a time $\overline{t}\ge t_0$ such that each sub-ray $\overline{r}_i=A_i(r_{\overline{t}})$, leaving from $\xi_{i}(\overline{t})$, is disjoint from $r_{\overline{t}}$ for any $i=1,\dots,k-1$. 

\smallskip

\noindent By replacing $p_0$ with $\overline{p}_0=\xi_0(\overline{t})$, we may assume without loss of generality that each ray $r_i$ is disjoint from $r_0$. For each $i=1,\dots,k-1$, we consider a copy of the $\C$ along with the rays $r_0$ and $r_i=A_i(r_0)$ that are disjoint. For the $i$-th copy of $\C$, we can proceed as above by slitting $\C$ along them and re-gluing to obtain a branched affine structure $\Sigma_i$ on a two-punctured sphere with one branch-point arising from the identification of the points $p_{0}$ and $p_i=A_i(p_{0})$. One of the punctures has monodromy $A_i$ and the other has monodromy $A_i^{-1}$. 

\noindent We now explain how to glue these $k-1$ surfaces. Let $\ell_i$ be an arc on $\Sigma_i$ from the branch-point to the puncture with monodromy $A^{-1}_i$ that develops onto an infinite ray in $\C$ starting from $p_0$.  We can now glue $\Sigma_1, \Sigma_2, \ldots \Sigma_{k-1}$ successively along these rays, as in Definition \ref{glue-new}. The resulting surface is homeomorphic to $S_{0,k}$ and carries an affine structure $\Sigma$. By construction, the punctures have monodromy $A_1,\,A_2,\dots A_{k-1},$ and $A_k=\big(A_1\cdots A_{k-1})^{-1}$.  The branch-points on each $\Sigma_i$ get identified to a single branch-point $P\in\Sigma$. By deleting that point, the surface $\Sigma\setminus\{P\}$ carries an affine structure with monodromy $\rho$ as desired. \qedhere
\end{proof}

\medskip

\noindent For the case when $g>0$, we begin with the following observation:

\begin{lem}\label{cbas2} Assume that $g>0$, and let $\rho: \Pi \to \text{Aff}(\C)$ be a representation such that $\rho(\gamma) \neq \text{Id}$ for at least one handle-generator $\gamma_0$ on $S_{g,\,k}$. Then there exists $\phi \in \text{MCG}(S_{g,\,k})$ with an associated outer automorphism $\phi_\ast:\Pi \to \Pi$ such that $\rho \circ \phi_\ast (\gamma) \neq \text{Id}$ for each handle-generator $\gamma$. 
\end{lem}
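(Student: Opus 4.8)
The plan is to mimic the argument of Lemma \ref{cbas}, which handled the analogous statement for translation (i.e.\ abelian) monodromy, but being careful that the target group $\text{Aff}(\C)$ is non-abelian. The strategy is to produce, handle by handle, a change of homology/homotopy basis realized by a mapping class, after which \emph{both} generators of each handle have non-trivial $\rho$-image. First I would fix the hypothesis: there is at least one handle, say the first, with a handle-generator $\gamma_0$ such that $\rho(\gamma_0) \neq \text{Id}$. Let $\{\gamma_0,\gamma_1\}$ be a pair of handle-generators for that handle. If $\rho(\gamma_1) = \text{Id}$, apply the Dehn twist along $\gamma_0$ which sends $\gamma_1$ to a curve $\gamma_1'$ whose class is $\gamma_0\gamma_1$ (up to the usual conventions on the order of composition in $\Pi$); since $\rho(\gamma_0)\neq\text{Id}$ and $\rho(\gamma_1)=\text{Id}$ we get $\rho(\gamma_1') = \rho(\gamma_0)\rho(\gamma_1) = \rho(\gamma_0)\neq\text{Id}$. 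Thus after one mapping class we may assume the first handle has both handle-generators non-trivial in $\rho$.

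Next I would propagate this to the remaining handles. Suppose some handle $H$ has handle-generators $\{\alpha,\beta\}$ with $\rho(\alpha)=\rho(\beta)=\text{Id}$. As in Lemma \ref{cbas}, there is a mapping class supported in a subsurface containing $H$ and the first handle that replaces $\beta$ by a curve whose homotopy class is (a conjugate of) $\beta\cdot w$, where $w$ is a word of the form $\gamma_0^{m}\gamma_1$ or similar built from the first handle's generators; concretely one can use the ``handle slide'' mapping classes described in \cite{Martens}. The point is that one can arrange $\rho$ of the new generator to equal $\rho(\gamma_0)^{m}\rho(\gamma_1)$ (times a fixed conjugating factor coming from the path connecting the handles), and I need this to be non-identity for \emph{some} integer $m$. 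Here the non-commutativity matters: I would argue that if $\rho(\gamma_0)^{m}\rho(\gamma_1) = \text{Id}$ for two distinct values of $m$, then $\rho(\gamma_0)$ has finite order, and moreover combining with the freedom to also conjugate (slide around the other loop of the handle) one can always escape the identity — the only way to stay trivial for all choices would force $\rho$ restricted to the relevant rank-$3$ free group to be trivial, contradicting $\rho(\gamma_0)\neq\text{Id}$. Once one of $\{\alpha,\beta\}$ is made non-trivial, apply the Dehn-twist step of the first paragraph within the handle $H$ itself to make \emph{both} its generators non-trivial.

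Finally I would record that each of these modifications is realized by an element of $\text{MCG}(S_{g,k})$ (Dehn twists and handle slides are), so the composition $\phi$ of all of them is a single mapping class, whose action on $\Pi$ is the desired outer automorphism $\phi_\ast$, and by construction $\rho\circ\phi_\ast(\gamma)\neq\text{Id}$ for every handle-generator $\gamma$.

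The main obstacle I anticipate is precisely the non-abelian bookkeeping in the second paragraph: in the abelian case of Lemma \ref{cbas} one simply observed $\chi(\beta+m\gamma_0+\gamma_1)\neq 0$ for some $m$ because $\chi(\gamma_0)\neq 0$; in $\text{Aff}(\C)$ one must instead control a product $\rho(\gamma_0)^m\rho(\gamma_1)$ (conjugated by a connecting path), show it is non-identity for suitable $m$, and handle the degenerate sub-case where $\rho(\gamma_0)$ is elliptic/torsion so that powers cycle. A clean way to dispatch this is a short lemma: if $a\in\text{Aff}(\C)\setminus\{\text{Id}\}$ and $b\in\text{Aff}(\C)$, then the set $\{m\in\Z : a^m b = \text{Id}\}$ has at most one element, so at most one ``bad'' $m$; choosing any other $m$ finishes the step.
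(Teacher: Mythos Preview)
Your proposal is correct and follows essentially the same route as the paper, which simply observes that the proof of Lemma~\ref{cbas} goes through verbatim once one notes that the changes of homology basis used there are realized by mapping classes acting on~$\Pi$. One small correction: your ``clean lemma'' is misstated --- if $a\in\affc\setminus\{\text{Id}\}$ has finite order $d$ then $\{m\in\Z:a^mb=\text{Id}\}$ can be an entire coset $m_0+d\Z$, not a singleton --- but this is harmless, since that set can never be all of~$\Z$ (if $m=0$ and $m=1$ were both bad then $b=\text{Id}$ and $a=b^{-1}=\text{Id}$), so a good $m$ always exists and your propagation step goes through.
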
 
\begin{proof}[Sketch of the proof]  The proof is exactly the same as that of Lemma \ref{cbas}: note that the standard basis of homology can be considered as a generating set for $\Pi$, and changes of homology basis used in the proof of Lemma \ref{cbas} can be realized by a mapping class. 
\end{proof}

\noindent Henceforth, we shall assume that in case $g>0$, the representation $\rho$ maps each handle-generator to a non-trivial affine map (\textit{c.f.} the remark following Lemma \ref{cbas}.) The following is the analogue of Proposition \ref{trans1}:

\begin{prop}\label{affk2}  Let $g>0$ and let $\rho: \pi_1(S_{g,2}) \to \affc$ be a non-trivial representation such that  there is at least one puncture with trivial monodromy.  
Then there is an affine structure on $S_{g,2}$ with monodromy $\rho$, obtained by puncturing an affine surface $\Sigma$  with a unique branch-point. 
\end{prop}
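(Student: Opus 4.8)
The plan is to mimic the construction in the proof of Proposition \ref{trans1} (the translation-structure case with two punctures), replacing translations by affine maps, and then to deal with the subtlety that affine holonomy is non-abelian. As in that proof, we first reduce using \eqref{sumC}: since one puncture has trivial monodromy, the other peripheral curve $\gamma_2$ is a product of commutators of handle-generators, and we work relative to a generating set $\{\alpha_1,\beta_1,\dots,\alpha_g,\beta_g,\gamma_1,\gamma_2\}$. By Lemma \ref{cbas2} (and the remark following it, which lets us change generators by a mapping class), we may assume $\rho(\alpha_i) =: A_i$ and $\rho(\beta_i) =: B_i$ are each a non-trivial affine map for every $i$. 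For a single handle, I would reproduce the ``affine quadrilateral'' picture of Figure \ref{leftrightchoice}: fix a point $p\in\C$, set $q := A B(p)$ (note $AB$ and $BA$ need \emph{not} agree now — this is the new phenomenon), and form the four edges $e_1 := \overline{A(p)\,q}$, $e_2 := \overline{p\,A(p)}$, $e_3 := \overline{p\,B(p)}$, $e_4 := \overline{B(p)\,q}$, using instead the path $q = BA(p)$ for $e_1, e_4$ so that $A(e_3) = e_1$ and $B(e_2) = e_4$ hold as affine identifications. One then attaches four unbounded polygons $R_1,\dots,R_4$ bounded by rays and the $e_i$, lying on appropriate sides so that the union is an immersed punctured disk in $\C$, and identifies $e_3 \sim e_1$ via $A$ and $e_2 \sim e_4$ via $B$. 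This yields an affine surface homeomorphic to $S_{1,1}$ with a pole at infinity and a single branch-point obtained from the $e_i$-endpoints; puncturing off the branch-point gives $S_{1,2}$ with holonomy $\rho$.

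For $g\geq 2$ the naive approach of chaining such quadrilaterals as in Proposition \ref{trans1} fails, because the developing rays used to concatenate successive handles need not be parallel, and re-gluing along non-parallel rays conjugates the holonomy of one block by the affine map relating the two rays — exactly the obstruction flagged in the discussion preceding Definition \ref{glue-new}. So the second step is to build, for each handle $i$, an individual affine surface $\Sigma_i \cong S_{1,1}$ realizing $\{A_i,B_i\}$ as above, each with a pole at infinity and one branch-point, and then to glue $\Sigma_1,\dots,\Sigma_g$ together using the holonomy-preserving gluing of Definition \ref{glue-new}: we route a ray from the branch-point of $\Sigma_i$ to its puncture at infinity, arrange (by choosing the base points so the developing maps send branch-points to a common $p\in\C$) that we may interpose a copy of $(\C, r_\star)$ between consecutive blocks, and glue successively. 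Since in that construction the rays being identified develop onto the \emph{same} ray $r_\star$, the resulting affine surface has holonomy restricting to $\rho$ on each handle-block, hence equal to $\rho$ overall (the two peripheral curves still have trivial monodromy, consistent with \eqref{sumC}). The glued surface is homeomorphic to $S_{g,1}$ with one pole of higher order at infinity and a single branch-point coming from all the identified branch-points; deleting it gives an affine structure on $S_{g,2}$ with monodromy $\rho$ and a unique branch-point, as asserted.

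The step I expect to be the main obstacle is verifying that the one-handle affine surface is genuinely well-defined and immersed — i.e.\ that the rays $r_1,\dots,r_5$ bounding the regions $R_i$ can be chosen so that the $R_i$ lie on the prescribed sides of the $e_i$ and their union is an \emph{immersed} (not necessarily embedded) punctured disk whose boundary is exactly the polygonal loop $\overline{e_1}\cup\overline{e_2}\cup e_3\cup e_4$. In the translation case this is controlled by the sign of $\mathrm{Vol}(\chi(\alpha),\chi(\beta))$ and the choice (iii) that $R_1$ lie on the left of $e_1$; in the affine case the edge vectors are genuinely different points of $\C$ and one must check the analogous side-compatibility condition, handle the degenerate configurations (e.g.\ when $p$ or $q$ is a fixed point of $A$ or $B$, or when some $e_i$ degenerates to a point), and confirm that the ``two rays need not be parallel'' freedom in Definition \ref{glue2} does not break the immersion property. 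A secondary technical point is checking that after the Definition \ref{glue-new} gluings the surface has genus exactly $g$, one puncture at infinity, and — crucially — a \emph{single} branch-point rather than several, which follows because the interposed $(\C,r_\star)$ copies carry no branch-points and the branch-points of the $\Sigma_i$ are all identified along the chain; I would spell this out with an Euler-characteristic count.
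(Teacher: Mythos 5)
Your overall plan (one affine ``quadrilateral'' block per handle, then the holonomy-preserving gluing of Definition \ref{glue-new} to assemble the handles, then delete the single branch-point) is indeed the paper's strategy, and your $g\geq 2$ assembly step matches the paper. But your one-handle block is not well defined as written when $A$ and $B$ do not commute, and the idea that repairs it is missing. Since the identifications force $e_1=A(e_3)=\overline{A(p)\,AB(p)}$ and $e_4=B(e_2)=\overline{B(p)\,BA(p)}$, the four segments terminate at \emph{two distinct} points $q_1=AB(p)$ and $q_2=BA(p)$; calling both of them ``$q$'' does not make the polygonal curve close up, so the union of your four regions is not a punctured disk bounded by a closed curve and no surface is produced. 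The paper closes the curve by adjoining two infinite rays $e_0$ and $e_5$ emanating from $q_1$ and $q_2$ and identifying them by the translation $[B,A]$. This is not a technicality: the second peripheral curve of $S_{g,2}$ has monodromy equal to the (generally non-trivial) product of commutators, and it is precisely this extra identification that makes the puncture at infinity carry the monodromy $[A,B]$ on each block, hence the full product of commutators after gluing. In your version the only non-branch puncture would have trivial monodromy, which is incompatible with $\rho$ whenever $\rho(\gamma_2)\neq\mathrm{Id}$. You flag the immersedness of the disk as the main obstacle, but the missing ingredient is the commutator edge, not the choice of rays.

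There is a second genuine gap: the case $\mathrm{Im}(\rho)\cong\mathbb{Z}_2$, which the statement does \emph{not} exclude for $k=2$ (only $k=1$ is exceptional, by Lemma \ref{excase}). If the image has order two, every non-trivially mapped element is one and the same involution $\sigma(z)=-z+c$, so after Lemma \ref{cbas2} every handle-generator maps to $\sigma$ and your quadrilateral $p\mapsto A(p)\mapsto AB(p)\mapsto B(p)\mapsto p$ collapses to the segment $\overline{p\,\sigma(p)}$ traversed back and forth, with $AB(p)=BA(p)=p$; no choice of base-point or of the regions $R_i$ repairs this (compare Case 2 of Proposition \ref{affg1k1}, where negative real dilation factors already obstruct the construction and are cured by a change of generators that is unavailable when the image is $\{\mathrm{Id},\sigma\}$). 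The paper handles this case by a genuinely different construction: two copies of the half-translation structure on $\cp$ coming from a quadratic differential with a double zero, two simple poles and a pole of order four, slit and reglued to give a half-translation structure on the torus with holonomy in $\{z\mapsto\pm z\}$, then chained for higher genus. Your proposal needs either this case split or an argument of comparable substance to cover it.
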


\begin{proof}
We shall now describe the construction of the affine surface $\Sigma$ by gluing of polygons, as in Proposition \ref{trans1}. A crucial difference from the construction there is that in the case of affine holonomy, the commutator of two elements (\textit{e.g.} a pair of handle-generators) need not map to the trivial (identity) element via $\rho$. Depending on whether the image of $\rho$ is finite of order two, we shall have to distinguish two cases. 

\smallskip

\noindent Assume $\text{Im}(\rho)$ is not finite of order two. We start with the case when $g=1$. Let $A,B$ be the affine maps that are the monodromies around the handle-generators $\alpha,\beta$ respectively, \textit{i.e.} $\rho(\alpha) = A$ and $\rho(\beta) = B$.  Note that by the assumption of non-triviality of $\rho$, and by Lemma \ref{cbas2}, we can assume that neither $A$ nor $B$ is the identity map. As already observed above, the commutator $C := [A, B] = ABA^{-1}B^{-1}$ need not be the identity map; however, it is easy to verify that $C$ is always a translation, and since $\rho:\pi_1(S_{1,2})\to \affc$ is a homomorphism,  the remaining puncture has monodromy $C^{-1}$ around it. 

\smallskip

\noindent Fix a point $p\in \C$ that is not a fixed point of $A$ or $B$, and let $q_1 := AB(p)$ and $q_2 :=BA(p)$. Note that $q_1 = q_2$ if $A$ and $B$ commute.  Consider the four directed line-segments $e_1 := \overline{A(p)q_1}$, $e_2 := \overline{pA(p)}$, $e_3 := \overline{pB(p)}$, and $e_4 := \overline{B(p)q_2}$. Note that $B(e_2) = e_4$ and $A(e_1) = e_3$. 
Consider two additional infinite rays $e_0$ and $e_5$  with starting points $q_1$ and $q_2$ respectively; if $q_1=q_2$ (when $A,B$ commute), we take $e_0=e_5$. The directed curve $L : = \overline{e_0} \cup \overline{e_1} \cup \overline{e_2} \cup e_3 \cup e_4 \cup e_5$ is then an immersed polygonal curve in $\C$.

\begin{figure}
  \centering
  \includegraphics[scale=0.44]{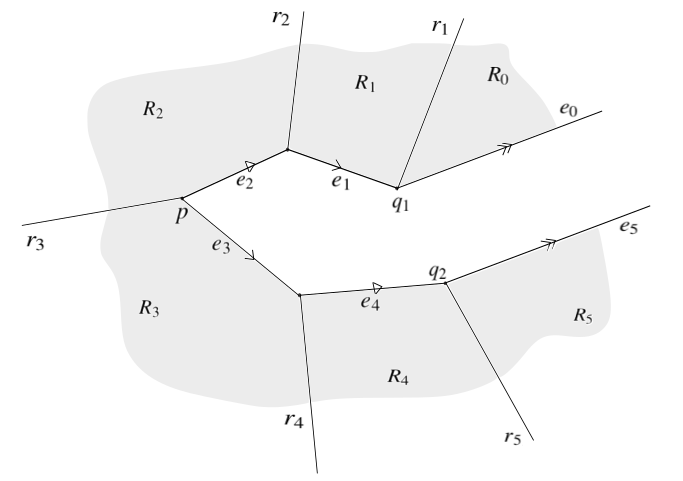}
  \caption{The construction of an affine surface  $\Sigma$ homeomorphic to $S_{1,1}$ where the puncture is at infinity, and $\Sigma$ has a single branch-point (see the proof of Proposition \ref{affk2}). }
  6
\end{figure}

\noindent As in the proof of Proposition \ref{trans1}, we then choose a collection of infinite rays $ \mathcal{R} = \{r_1,r_2,\ldots r_5\}$ with starting points at the vertices of the segments defined above, and consider embedded region $R_i$, for each $i \in \{0,1,\ldots, 5\}$, bounded by the segment $e_i$ and one or two infinite rays from the collection $\mathcal{R}$. As before, there are two choices of such a region, since the union of $e_i$ and the ray(s) from its endpoint(s) separates the complex plane; we choose the one that results in the correct orientation of the handle-generators $\alpha,\beta$ in the affine surface $\Sigma$ that we shall define below. Note that each region $R_i$ has one ideal vertex at $\infty$ on $\C$, and their union $R := R_0 \cup R_1 \cup \cdots \cup R_5$ is an immersed disk with a puncture at $\infty$ and boundary $\partial R = L$. 

\smallskip

\noindent Define the affine surface $\Sigma$ to be the quotient of $R$ obtained by identifying the boundary segments $e_1$ and $e_3$ via the affine map $A$, $e_2$ and $e_3$ via the affine map $B$, and $e_0$ and $e_5$ via the translation $[B,A]$ (which is the identity map if $A,B$ commute, compatible with our requirement that $e_0=e_5$ in such a case). The resulting surface $\Sigma$ is homeomorphic to a punctured torus, where the puncture is the point at $\infty$ on $\C$. From our construction, the pairs of segments $\{e_2,e_4\}$ and $\{e_1,e_3\}$ after identifications define the handle-generators $\alpha,\beta$; here note that the regions $R_i$ are chosen to lie on the correct "side" of these directed segments so that $\alpha$ and $\beta$ have the desired orientation on the punctured torus. Moreover, the handle-generators $\alpha$ and $\beta$ on $\Sigma$ have holonomy $A$ and $B$ respectively, and the monodromy around the puncture at $\infty$ is the translation $C$. Note that the vertices of the segments $\{e_i\}_{1\leq i\leq 4}$ get identified to a single branch-point $q$ on $\Sigma$. The surface $\Sigma \setminus \{q\}$ is thus homeomorphic to $S_{1,2}$ and acquires an affine structure with the desired monodromy $\rho$, in which $q$ is a puncture with trivial monodromy.

\smallskip

\noindent The higher genus case, \textit{i.e.} $g\ge2$ relies on the preceding construction. Let $\{\alpha_1,\beta_1,\dots,\alpha_g,\beta_g,\gamma_1,\gamma_2\}$ be a generating set of $\pi_1(S_{g,2})$, where $\{\alpha_i,\beta_i\}$ is a pair of handle-generators of the $i$-th handle, where $1\leq i\leq g$. For any $i=1,\dots,g$ there is an injection $\jmath_i:\langle\alpha_i,\beta_i\rangle\to \pi_1(S_{g,2})$ and define $\rho_i$ as $\rho\circ\jmath_i$. By Lemma \ref{cbas2}, we assume that each of these affine maps is non-trivial, \textit{i.e.} not the identity map.

\smallskip

\noindent Let $p\in \cp\setminus \big\{\,\text{Fix}(A)\,|\, A\in \text{Im}(\rho)\,\big\}$ be any point. For each $i$, let $\Sigma_i$ be the affine surface homeomorphic to the punctured torus, with monodromy $\rho_i$, obtained from the construction above based at $p$. In order to glue these $g$ affine surfaces together we have to find $g$ rays, one for each $\Sigma_i$, that all develop on the same ray on $\mathbb{C}$. For this, we employ the construction in Definition \ref{glue-new}, which we now spell out in more detail. 

\noindent Recall that $\mathbb{C}$ on its own right can be regarded as an affine structure with trivial monodromy on a disk. We single out on such a structure a ray, say $r_0$, leaving from the point $p$ above towards the infinity. The pair $(\C,\,r_0)$ is an affine structure with a marked $r_0$. On each surface $\Sigma_i$, we choose an infinite ray $r_i$ from the unique branch-point to the puncture at infinity and such that it develops on $\mathbb{C}$ along a ray, say $\overline{r}_i$ leaving from $p$ towards the infinity. Notice that a copy of the ray $\overline{r}_i$ is contained even in $(\C,\,r_0)$. For any $i$, we glue together the affine surfaces $\Sigma_i$ and $(\C,\,r_0)$ along the rays $r_0$ and $\overline{r}_i$ according to our Definition \ref{glue2}. The resulting surface is still homeomorphic to $S_{1,1}$ but it carries a new affine structure $\Sigma_i'$ with monodromy $\rho_i$. Moreover, on each surface $\Sigma_i'$ we can single out a copy of the ray $r_0$. 

\noindent We can now glue together affine surfaces $\Sigma_1',\ldots \Sigma_g'$ along these copies of $r_0$ (again according to our Definition \ref{glue2}) to obtain an affine surface $\Sigma$. This surface $\Sigma$ is homeomorphic to $S_{g,1}$ and has a unique branch-point $q$ which develops on $p\in\C$ and it is the starting-points of the rays, after the identifications. Removing $q$ from $\Sigma$, we obtain the desired surface homeomorphic to $S_{g,2}$ and equipped with an affine structure with monodromy $\rho$.

\smallskip

\noindent Let us finally assume $\text{Im}(\rho)\cong\mathbb{Z}_2$. Notice that we can always find a basis $\alpha_1,\beta_1,\dots,\alpha_g,\beta_g$ such that 
\begin{equation}
    \rho(\alpha_i)=1 \,\,\text{ and } \,\, \rho(\beta_i)=-1\,\, \text{ for any }1\le i \le g.
\end{equation}

\noindent Even in this case the proof is based on an inductive process, therefore we start with the case $g=1$. There exists a half-translation structure on $\cp$, see Definition \ref{def_halftrans}, associated to a meromorphic quadratic differential $\phi$ having one zero of order $2$ at $0$, two poles of order $-1$ at $\pm1$ and, finally, one pole of order $-4$ at $\infty\in\cp$, see Definition \ref{def_halftrans} and subsequent remark. Recall that, given a quadratic differential, a pole of order one corresponds to a cone point of angle $\pi$ and a zero of order two corresponds to a branch-point of magnitude $4\pi$. It is possible to verify that such a structure has non-trivial holonomy given by a representation $\chi:\pi_1\big(\cp\setminus\{\pm1\}\big)\longrightarrow \mathbb{Z}_2\cong\{ z\to \pm z\}$.
We now make use of this structure for realizing a half-translation structure $\Sigma$ with the desired holonomy. Take two copies of the structure $(\cp,\,\phi)$ and slit both along the segment $e_i=[-1,0]$ and the infinite ray $r_i=[1,\infty]$, with $i=1,2$. Denote the resulting sides $e_i^{\pm}$ and $r_i^{\pm}$. We define $\Sigma$ to be the half-translation structure on a torus obtained by identifying $e_1^+$ with $e_2^+$, $e_1^-$ with $e_2^-$ and, in the same fashion, $r_1^+$ with $r_2^+$, $r_1^-$ with $r_2^-$. Such a structure is naturally associated to a meromorphic quadratic differential $q$ having one zero and one pole of order $6$. By removing the singularities of $q$ we obtain an affine structure on $S_{1,2}$ with the desired monodromy. The general case $g\ge2$ now comes as follows. From our construction, there always exists an infinite ray $\overline{r}\subset \Sigma$ joining the two punctures. Then consider $g$ copies of $\Sigma$ slit along $\overline{r}$ and glue along rays (as in Definition \ref{glue-new}) in succession. The resulting surface is homeomorphic to $S_{g,2}$ and carries a half-translation structure with holonomy $\rho$ as desired.
\end{proof}

\noindent Using the previous two propositions, together the gluing construction as in Definition \ref{glue2}, we can now prove the analogue of Proposition \ref{trans}:

\begin{prop}\label{aff}  Let $g>0$ and $k>2$, and  let $\rho: \pi_1(S_{g,k}) \to \affc$ be a non-trivial representation such that  there is at least one puncture with trivial monodromy. Then there is an affine structure on $S_{g,k}$ with monodromy $\rho$, obtained by puncturing an affine surface $\Sigma$  with a unique branch-point. 
\end{prop}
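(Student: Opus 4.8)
The plan is to follow the architecture of the proof of Proposition~\ref{trans}: split $S_{g,k}$ along a simple closed curve $\delta$ into a subsurface $\Sigma'\cong S_{g,1}$ carrying all the handles and a complementary planar subsurface $\cong S_{0,n+1}$ containing the punctures of non-trivial monodromy, realize the restricted affine monodromy on each piece by an affine surface with a single branch-point, and glue the two pieces together along rays running out to punctures at infinity. The genus piece will be produced by Proposition~\ref{affk2}, applied to the representation of $\pi_1(S_{g,2})$ obtained by declaring the two punctures of $S_{g,2}$ to be $\delta$ and one apparent singularity; the planar piece will be produced by Proposition~\ref{affg0}. Each of these propositions outputs an affine surface with \emph{exactly one} branch-point, and the gluing of Definition~\ref{glue2} identifies the branch-points of the two constituents, so the surface we obtain will again have a unique branch-point; deleting it, together with any superfluous regular points to account for the apparent punctures of $S_{g,k}$, yields the desired affine structure.

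The feature distinguishing the affine case from the translation case of Proposition~\ref{trans} is that $\affc$ is non-abelian. First, the separating curve $\delta$ no longer has trivial monodromy: $\rho(\delta)=\prod_{i=1}^{g}[\rho(\alpha_i),\rho(\beta_i)]$ is a translation, but need not be the identity, so one must track how it is matched across the gluing and, when it is non-trivial, allot an apparent puncture of $S_{g,k}$ to the appropriate piece so that Propositions~\ref{affk2} and~\ref{affg0} still apply (recall that an apparent singularity in each piece is in any case forced by Theorem~\ref{ab61}). Second, the plain ray-gluing of Definition~\ref{glue2} can conjugate part of the monodromy of one constituent by the affine map relating the two developed rays, which would corrupt the prescribed representation; this is precisely why one uses instead the holonomy-preserving gluing of Definition~\ref{glue-new}, which inserts an intermediate copy of $\C$ so that the pieces are glued along rays with \emph{identical} developed images and the resulting monodromy restricts to the prescribed one on each piece.

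It remains to address the case in which $\rho$ is trivial on every handle-generator; then $\rho(\delta)=\mathrm{Id}$ and, since $\rho$ is non-trivial, some peripheral loop has non-trivial monodromy. One cannot build the genus piece with trivial holonomy and then glue: an affine surface of genus $g\ge 1$ with trivial holonomy has developing map descending to a branched covering $S_g\to\cp$, which by the Riemann-Hurwitz formula must carry at least two branch-points, so this route --- the one taken in Cases~1 and~2 of the proof of Proposition~\ref{trans} --- would violate the ``unique branch-point'' conclusion. Instead one first applies a mapping class (a point-push of a puncture of non-trivial monodromy along a handle-generator, or a handle-slide over such a puncture), which replaces a handle-generator by a curve freely homotopic to its product with a peripheral loop, hence of non-trivial monodromy; since we only need to realize $\rho$ up to the action of the mapping class group (see \S\ref{mps}) and Lemma~\ref{cbas2} then upgrades this to the statement that \emph{every} handle-generator has non-trivial monodromy, we are reduced to the case already treated.

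\smallskip
\noindent\textbf{Expected main obstacle.} The delicate part is the interplay of the ``unique branch-point'' requirement with the non-commutativity of $\affc$. The branch-point budget of one forces every handle to carry non-trivial holonomy --- this is what makes the mapping-class reduction above unavoidable, and why one invokes Proposition~\ref{affk2} rather than a trivial-holonomy construction --- while non-commutativity means that across each gluing the prescribed monodromies must be matched \emph{exactly}, not merely up to conjugacy of individual elements. Getting the holonomy-preserving gluing of Definition~\ref{glue-new} to achieve this, and bookkeeping which punctures are assigned to which piece so that the leftover puncture at infinity of each constituent surface carries precisely the intended affine map, is where the real work lies.
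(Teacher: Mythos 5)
Your overall architecture matches the paper's: restrict $\rho$ to a genus piece (realized via Proposition~\ref{affk2}) and a planar piece (realized via Proposition~\ref{affg0}), each an affine surface with one branch-point, and combine them with the holonomy-preserving gluing of Definition~\ref{glue-new}. But the step you yourself flag as the delicate one --- matching $C:=\rho(\delta)=\prod_i[\rho(\alpha_i),\rho(\beta_i)]$ across the gluing when exactly one puncture of $S_{g,k}$ is apparent and $C\neq\mathrm{Id}$ --- is not resolved by ``allotting an apparent puncture to the appropriate piece'', and this is a genuine gap. In that situation the unique trivial puncture must go to the genus piece (Proposition~\ref{affk2} needs it), so the planar piece sees only the non-trivial monodromies $A_2,\dots,A_k$ together with the boundary class of monodromy $C^{-1}$, and there is no trivial puncture left for Proposition~\ref{affg0}. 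If you try to fix this by adding a trivial puncture (the branch-point of Proposition~\ref{affg0}) and then glue the $C$-puncture of the genus piece to a $C^{-1}$-puncture of the planar piece, the identified puncture at infinity acquires monodromy $C^{-1}C=\mathrm{Id}$, and a count of punctures (using $S_{g_1,k_1}\cup S_{g_2,k_2}\rightsquigarrow S_{g_1+g_2,k_1+k_2-1}$) shows you end up with $S_{g,k+1}$ after deleting the branch-point --- one apparent puncture too many; if instead you glue the $C$-puncture directly to the $A_2$-puncture, the resulting peripheral monodromy is $A_2C\neq A_2$. The gluings of Definitions~\ref{glue2} and~\ref{glue-new} are slit-and-glue along rays to punctures at infinity, so what must be arranged is multiplicative matching at the identified puncture, not cancellation of boundary classes.

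The paper's proof (Case~2 of Proposition~\ref{aff}) supplies exactly the missing device: build the planar piece with \emph{corrected} peripheral data $A_2C^{-1},A_3,\dots,A_k$ (Proposition~\ref{affg0} applies since its branch-point is the trivial puncture), take the ray on the genus piece ending at its $C$-puncture and the ray on the planar piece ending at the $A_2C^{-1}$-puncture, and glue via Definition~\ref{glue-new}; the identified puncture then has monodromy $A_2C^{-1}\cdot C=A_2$, the puncture count comes out to $S_{g,k-1}$ with a unique branch-point, and deleting that branch-point gives $S_{g,k}$ with monodromy $\rho$. Without this correction (or an equivalent one) your construction does not close. Two secondary remarks: when there are at least two apparent punctures the paper simply assigns one to each piece, and the $C$/$C^{-1}$ cancellation at the identified puncture is then harmless because it accounts for the second apparent puncture --- your description of the planar piece as containing only the non-trivial punctures needs that adjustment; and your mapping-class reduction (point-push/handle-slide plus Lemma~\ref{cbas2}) for the case where $\rho$ kills every handle-generator is a legitimate alternative to the paper's treatment, which instead handles that case directly via the branched-cover construction of Lemma~\ref{triv} (at the cost of a second branch-point) and via Case~2 of Proposition~\ref{trans}.
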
 

\begin{proof}
We shall follow the strategy of the proof of Proposition \ref{trans}. Our construction of the affine surface $\Sigma$ will split into two cases.  \\

\noindent \textit{Case 1. The representation $\rho$ has at least two  punctures with trivial monodromy.} Let us define $\rho_0:\pi_1(S_{g,2}) \to \affc$ as the restriction of $\rho$ to a subsurface of $S_{g,k}$ homeomorphic to $S_{g,2}$ that contains all the handles and one puncture with trivial monodromy. Let $\rho_1:\pi_1(S_{0,k}) \to \affc$ be the restriction of $\rho$ to the complementary subsurface, that contains all the other punctures. Note that by our assumption $\rho_1$ has at least one puncture with trivial monodromy. There are two sub-cases:

\smallskip

\noindent \textit{Sub-case (i). The representation $\rho_0$ is non-trivial.} We shall build $\Sigma$ by gluing together two affine surfaces $\Sigma_0$ and $\Sigma_1$ where 
\begin{itemize}
 
 \item $\Sigma_0$ is homeomorphic to $S_{g,1}$ and has exactly one branch-point $p$, and the monodromy of the affine structure on the surface $\Sigma_0 \setminus \{p\}$ is $\rho_0$. If $\rho_0$ is non-trivial, such an affine surface exists by Proposition \ref{affk2}. 
 \item $\Sigma_1$ is homeomorphic to $S_{0, k-1}$, and has holonomy $\rho_1$ and exactly one branch-point. Such an affine surface exists by Proposition \ref{affg0}. 
\end{itemize} 
 
\noindent Recall that both the affine surfaces $\Sigma_0$ and $\Sigma_1$ depend on the choice of an initial base-point. We choose the same base-point, say $p$ for both structures. It follows from the constructions in the proofs of Propositions \ref{affg0} and \ref{affk2} that the branch-points on both surfaces develop to $p$. For the gluing, we slit along rays, say $r_0$ and $r_1$, on $\Sigma_0$ and $\Sigma_1$ respectively, from the branch-point to a puncture at infinity. These rays may develop onto different rays on $\C$ with the same starting-point $p$. We glue along these rays as in Definition \ref{glue-new} to obtain a surface $\Sigma$ homeomorphic to $S_{g,k-1}$ and having a single branch-point. Removing the branch-point, we obtain an affine surface homeomorphic to $S_{g,k}$ that has holonomy $\rho$.

\smallskip

\noindent \textit{Sub-case (ii). The representation $\rho_0$ is trivial.}  Recall that at least two punctures are trivial; let $A_3,A_4,\ldots , A_k$ be the affine maps that are the monodromy around the remaining punctures. We can exclude the case that $k=3$ here, since then the triviality of $\rho_0$ would imply that $\rho$ is trivial, contradicting our assumption. According to our Lemma \ref{triv}, there is a branched projective structure on $S_g$ with three branch-points one of which develops at $\infty\in\cp$. We first construct a (branched) affine surface $\Sigma_0$ homeomorphic to $S_{g,1}$ with two branch-points by removing the branch-point at infinity. Let $r_0$ be a ray starting from one of the branch-points to the puncture at infinity and let $\overline{r}_0$ its developed image on $\C$. It is an infinite ray leaving from a point $p\in\mathbb{C}$.
Also, construct an affine surface $\Sigma_1$ homeomorphic to $S_{0,k-2}$ with exactly one branch-point, such that the monodromy around the punctures are $A_3,\ldots A_k$; such a surface exists by Proposition \ref{affg0}.  Recall the construction is subject to the choice of a base-point. By choosing $p$ as the base-point, it follows by construction that the branch-point of $\Sigma_1$ develops at $p$. Let $r_1\subset\Sigma_1$ be any ray from the branch-point to a puncture at infinity and let $\overline{r}_1$ be the developed ray leaving from $p$. As before, we now glue preserving holonomy, as in Definition \ref{glue-new}. Namely, we slit $\Sigma_1$ along $r_1$ and then glue a copy of the marked affine structure $(\C, \overline{r}_0)$ slit along $\overline{r}_1$. Notice that the gluing is possible because $r_1$ develops on $\overline{r}_1\subset\C$ by construction. The resulting surface is homeomorphic to $S_{0,k-2}$ but carries a new branched affine structure $\Sigma_1'$ containing a whole copy of $\C$ with the embedded ray $\overline{r}_0$. We slit $\Sigma_0$ along $r_0$ and $\Sigma_1'$ along $\overline{r}_0$ and then we identify the resulting boundary rays cross-wise to obtain an affine surface $\Sigma$ homeomorphic to $S_{g,k-2}$ and two branch-points. Removing the branch-points we obtain the desired affine structure on $S_{g,k}$ with monodromy $\rho$.

\smallskip

\noindent \textit{Case 2. The representation $\rho$ has exactly one puncture with trivial monodromy.} Consider the subsurface of $S_{g,k}$ that contains all the handles, and the puncture with trivial monodromy; note that such a surface is homeomorphic to $S_{g,2}$. Let $\rho_0:\pi_1(S_{g,2}) \to \affc $ be the restriction of $\rho$ to that surface. Note that $\rho_0$ has trivial monodromy for one of the punctures, and the other puncture has monodromy $C$ that is the product of the commutators of the holonomies around the handle-generators, for each handle. If this product is the identity map, then we can use the same constructions as in \textit{Case 1} to finish the construction of the desired affine surface $\Sigma$. In what follows, we shall assume that $C$ is not the identity element (and is therefore some non-trivial translation).

\begin{figure}
  \centering
  \includegraphics[scale=0.4]{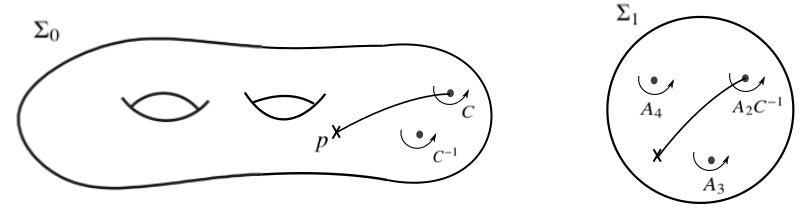}
  \caption{The construction of an affine surface  $\Sigma$ by gluing $\Sigma_0$ and $\Sigma_1$ along rays,  in Case 2 in the proof of Proposition \ref{aff}. }
\end{figure}

\noindent Either using Proposition \ref{affk2} if $\rho_0$ is non-trivial, or \textit{Case 2} of the proof of Proposition \ref{trans} if $\rho_0$ is trivial, we can then build an affine surface $\Sigma_0$ such that 
\begin{itemize}
    \item[(a)] it is homeomorphic to $S_{g,1}$ and a single branch-point, say $q$, 
    \item[(b)] on removing the branch-point, the affine structure on $\Sigma\setminus \{q\}$ has monodromy $\rho_0$, and
    \item[(c)] there is a ray $r_0$ from $q$ to the puncture at infinity which has monodromy $C$ by construction. This ray develops on an infinite ray $\overline{r}_0$ leaving from a point $p\in\C$.
\end{itemize}
 Let the monodromy around the remaining punctures be $A_2,A_3,\ldots A_k$. Given $p$ as above, by Proposition \ref{affg0} there is an affine surface $\Sigma_1$ that is homeomorphic to $S_{0,k-1}$ such that
 \begin{itemize}
    \item[(a)] there is exactly one branch-point which develops to the point $p$, and
    \item[(b)] the monodromy around the punctures are $A_2C^{-1}, A_3, A_4,\ldots, A_k$. 
\end{itemize}

\noindent Let $r_1$ be a ray from the branch-point to the puncture with holonomy $A_2C^{-1}$ and let $\overline{r}_1$ its developed image. We glue $\Sigma_0$ and $\Sigma_1$ along the rays $r_0$ and $r_1$, as in Definition \ref{glue-new}.  Recall that in that gluing preserving holonomy, we in fact first attach copies of the affine surface $\C$ to $r_0$ and $r_1$ respectively, and then glue along the same ray $r_\star$ in these copies via the identity map. The resulting affine surface $\Sigma$ is homeomorphic to $S_{g,k-1}$ and has one branch-point $p$ where the starting points of the rays get identified. Since the final gluing (along the ray $r_\star$) is by the identity map,  the other endpoints of the rays get identified to a puncture with holonomy $A_2 C^{-1} \cdot C = A_2$  (\textit{c.f.} the discussion just before Definition \ref{glue-new}). Hence the monodromy of the affine structure on $\Sigma \setminus \{q\} $ is precisely $\rho$, as desired. \qedhere
\end{proof}

\section{Affine holonomy and a single puncture}\label{saff2}

\noindent In this section we deal with the case when the representation $\rho$ is into the affine group $\text{Aff}(\C)$, as in the previous section, for once-punctured surfaces of positive genus, that is $k=1$ and $g>0$. For this, we need to modify the construction in Proposition \ref{affk2} such that the "puncture at infinity"  for $\Sigma$ is a regular point when viewed as a \textit{projective} structure. We can then "fill in" that puncture to obtain a surface with equipped with a projective structure (away from a single branch-point), like we did in the proof of Lemma \ref{transurfk1}. 

\subsection{Necessary conditions} We start by showing the necessity of assuming the image of $\rho$ is not a finite group of order two in Theorem \ref{thm1}. 

\begin{lem}\label{excase}
Let $\rho:\pi_1(S_{g,1}) \to \affc$ be a non-trivial representation such that the puncture has trivial monodromy and the image of $\rho$ is finite of order two. Then $\rho$ does not appear as the monodromy of any projective structure $S_{g,1}$.
\end{lem}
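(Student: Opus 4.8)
\textit{Proof proposal.} The plan is to argue by contradiction and reduce the statement to a violation of the Riemann--Hurwitz formula, in the spirit of Lemma~\ref{trivex}. First I would record the algebraic input: any order-two element of $\affc$ is an involution $z\mapsto -z+b$, so $\mathrm{Im}(\rho)$ is abelian; consequently the peripheral loop of $S_{g,1}$, namely the product $\prod_{i=1}^g[\alpha_i,\beta_i]$ of commutators of handle-generators, lies in $\ker\rho$, and $\rho$ descends to a representation $\bar\rho:\pi_1(S_g)\to\pslc$ with image of order two. Now suppose, for contradiction, that some $P\in\mathcal{P}_g(1)$ has monodromy $\rho$. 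Since the monodromy around the puncture is trivial, the puncture is an apparent singularity of $P$, hence (by the discussion following Definition~\ref{brpt}, the associated quadratic differential having a pole of order at most two) it is a regular point or a branch-point of $P$; in either case the developing map extends across the puncture with local model $z\mapsto z^n$, $n\ge 1$, so $P$ extends to a (possibly branched) $\cp$-structure $\bar P$ on the closed surface $S_g$, with monodromy $\bar\rho$ and with at most one branch-point $p_0$ (the former puncture), of some order $n\ge 2$ in the branched case.

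Next I would pass to the connected double cover $\pi:\widehat{S_g}\to S_g$ corresponding to $\ker\bar\rho$: this is a closed surface with $\chi(\widehat{S_g})=2\,\chi(S_g)=4-4g\le 0$, recalling that $g\ge 1$. The pulled-back structure $\widehat P:=\pi^{*}\bar P$ has trivial monodromy. Because a small loop around $p_0$ is null-homotopic in $S_g$, the fibre $\pi^{-1}(p_0)$ consists of two points, near each of which $\widehat P$ has the same local model as $\bar P$ at $p_0$; hence $\widehat P$ has exactly $0$ or $2$ branch-points, each of order $n$, according to whether $\bar P$ is unbranched or branched. Since $\widehat P$ has trivial monodromy, its developing map descends to an orientation-preserving branched covering $F:\widehat{S_g}\to\cp$ of some degree $d\ge 1$, and the local degree of $F$ at any branch-point is at most $d$, so $n\le d$.

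Finally I would apply Riemann--Hurwitz to $F$. If $\bar P$ is unbranched, then $F$ is an unbranched cover and $4-4g=\chi(\widehat{S_g})=d\cdot\chi(\cp)=2d$, forcing $d=2-2g\le 0$, which is absurd. If $\bar P$ has a branch-point of order $n$, then $F$ has two branch-points of order $n$ and $4-4g=\chi(\widehat{S_g})=2d-2(n-1)$, which rearranges to $n=d+2g-1\ge d+1$, contradicting $n\le d$. In either case we reach a contradiction, so $\rho$ is not the monodromy of any structure in $\mathcal{P}_g(1)$, which proves the lemma. The argument is short once the set-up is in place; the only point that requires care is the bookkeeping that $\bar P$ acquires at most one branch-point and that this branch-point lifts to two branch-points of equal order in the double cover — after which the Riemann--Hurwitz count closes the argument immediately.
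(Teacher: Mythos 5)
Your proof is correct and follows essentially the same route as the paper: pass to the index-two cover determined by $\ker\rho$, where the lifted structure has trivial monodromy, and then derive a Riemann--Hurwitz contradiction. The only difference is cosmetic --- the paper lifts the structure on the punctured surface to $S_{2g-1,2}$ and cites Lemma \ref{trivex}, whereas you first extend across the apparent singularity to a (possibly branched) structure on the closed surface and carry out the Riemann--Hurwitz count on the closed double cover explicitly.
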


\begin{proof}[Proof of Lemma \ref{excase}]
Let $G=\text{ker}(\,\rho\,)$ and let $\widehat{S}_{g,1}$ be the covering of $S_{g,1}$ associated to $G$. The group $G$ is a subgroup of $\pi_1(S_{g,1})$ of index two and hence the covering map $f:\widehat{S}_{g,1}\longrightarrow S_{g,1}$ turns out a Galois covering map of degree two. In particular, $\widehat{S}_{g,1}$ is homeomorphic to $S_{2g-1,2}$. Let us now assume the existence of a complex projective structure on $S_{g,1}$. Then, we may lift this structure to a complex projective structure on $S_{2g-1,2}$ with monodromy determined by the composition $\rho\circ f_*$, where $f_*:\pi_1(S_{2g-1,2})\to \pi_1(S_{g,1})$. Since the image of $f_*$ is nothing but $\text{ker}(\,\rho\,)$, the representation $\rho\circ f_*$ is just the trivial one. Therefore, by our Lemma \ref{trivex}, such a structure does not exist and, in turn, there is no complex projective structure on $S_{g,1}$ with monodromy $\rho$.
\end{proof}

\subsection{Once-punctured translation surfaces}
The case of translation structures on once-punctured surfaces is actually subsumed by the construction in the proof of Proposition \ref{trans1}, provided we only require a projective structure, and not a translation structure, on the surface. 


\begin{prop}\label{transurfk1} Let $S_{g,1}$ be a surface of genus $g>0$ and exactly one puncture, and let $\Gamma_{g,1}$ be its first homology group. Any non-trivial representation $\chi:\Gamma_{g,1} \to \C$ is the monodromy of some projective structure on $S_{g,1}$. 
\end{prop}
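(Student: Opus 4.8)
The plan is to reuse the polygon construction from the proof of Proposition \ref{trans1}, but to choose the base point and the translation vectors so that the "puncture at infinity" is a \emph{regular} point of the resulting structure regarded as a projective structure, rather than a genuine pole. Recall that in the proof of Proposition \ref{trans1} the end at infinity of the translation surface $\Sigma$ came from gluing the infinite-ray regions $R_i$; the order of the pole of $\omega$ there was governed by the total turning of the boundary rays, and generically we obtained a pole of order two. By the discussion in \S\ref{sdpunct} (see in particular the computation there, and Proposition 3.5 of \cite{GupMon1}), a pole of order two of the Schwarzian with the appropriate quadratic residue corresponds, after a change of coordinate $w = e^{2\pi i z}$, to a developing map of the form $\tilde f(w) = w^{-\alpha/2\pi}$; when the total bending angle is such that $\alpha = 2\pi$, this is $\tilde f(w) = w^{-1}$, a \emph{Möbius} coordinate, so the point is a regular point of the projective structure. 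Thus the first step is to arrange, by a suitable choice of the infinite rays bounding the regions $R_i$ (which is a free choice in the construction), that the total angle swept at $\infty$ equals $2\pi$.

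Concretely, I would proceed as follows. First, since $\chi$ is non-trivial, by Lemma \ref{cbas} (and the Remark following it, which lets us absorb the change of homology basis into a mapping class) we may assume every handle-generator maps to a non-zero complex number. Then run the construction of Proposition \ref{trans1} verbatim to obtain a translation surface $\widehat\Sigma$ homeomorphic to $S_{g,2}$ with one branch-point and one end at $\infty$; the abelian differential has a pole of order two there and the holonomy is $\chi$. Now observe that the end at infinity is a \emph{planar end} of the flat structure precisely when the union of the ray-regions develops to a neighbourhood of $\infty$ in $\C$ with total opening angle $2\pi$; examining the construction, the regions $R_1, \dots, R_{4h+1}$ (resp.\ $R_0$ and $R_{4h+1}$) are half-planes or sectors whose opening angles at $\infty$ sum to $2\pi$ exactly when the boundary rays are all parallel to a common line $\ell$ — which is exactly how they were chosen in the proof. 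Hence in that construction $\widehat\Sigma$ already has a planar end, and the developing map near the puncture is, in the coordinate $w$, simply $w \mapsto 1/w$ up to an affine map, which is a Möbius transformation. Therefore, viewed as a \emph{projective} structure, the puncture at infinity is a regular point; we may fill it in to obtain a projective structure on $S_{g,1}$ (away from the single branch-point coming from the polygon vertices), exactly as in the argument used for Lemma \ref{transurfk1} — delete instead only the branch-point. This gives a projective structure on $S_{g,1}$ with monodromy $\chi$, as desired.

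The step I expect to require the most care is verifying that the developing map extends across the filled-in puncture as a local projective chart, i.e.\ that the "planar end" really does produce a regular point and not a branch-point, and that the branch-point coming from the vertices of the polygon (of cone angle $2\pi g$ in the flat metric) is the \emph{only} special point, so that the resulting surface is genuinely $S_{g,1}$ with one branch-point rather than $S_{g,2}$. One must check that the total cone angle at the vertex-point is an integer multiple of $2\pi$ (it is $2\pi g$ from the construction) so that it is a genuine branch-point of the projective structure, and that no extra monodromy or singularity is introduced at infinity by the filling-in. A secondary subtlety is the degenerate case $\mathrm{Vol}(\chi(\alpha_i),\chi(\beta_i)) = 0$ for some or all handles, where some quadrilaterals $\mathcal Q_i$ collapse to slits; here one should check that the argument of Proposition \ref{trans1} still produces an immersed punctured disk and hence the construction goes through unchanged. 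Once these points are checked, the proposition follows immediately.
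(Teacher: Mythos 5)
Your proposal is correct and is essentially the paper's own argument: the construction in Proposition \ref{trans1} was arranged precisely so that the end at infinity is a planar end (pole of order exactly two), so one fills in $\infty$ as a regular point of the projective structure and deletes only the branch-point to get the desired structure on $S_{g,1}$. The only quibble is the parenthetical cone angle, which is $2\pi(2g+1)$ rather than $2\pi g$ (the zero of $\omega$ has order $2g$), but this does not affect the argument since cone angles of a translation surface are automatically integer multiples of $2\pi$.
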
 

\begin{proof}
From \eqref{sumC}, we know that the puncture must have trivial monodromy. We can then construct a translation surface $\Sigma$ homeomorphic to $S_{g,1}$ exactly as in the proof of Proposition \ref{trans1}, that has holonomy $\chi$, one branch-point $p$ and one pole of order two. Note that a pole of order two is the point at $\infty$ in a standard planar end of $\C$; thus in particular, $\infty$ is a regular point of $\cp$. Thus, we can consider $\widehat{\Sigma} = \Sigma \cup \{\infty\}$ to be a surface equipped with a projective structure, with exactly one branch-point (namely, $p$); the surface $\widehat{\Sigma}\setminus \{p\}$ is then the desired surface homeomorphic to $S_{g,1}$ equipped with a projective structure having monodromy $\chi$.   \end{proof}

\textit{Remark.} If a non-trivial representation  $\chi:\Gamma_{g,1} \to \C$ is the monodromy of a \textit{translation} structure, then the corresponding abelian differential $\omega$ must extend to an abelian differential with exactly one zero on the closed surface $S_g$.   The recent work of \cite{Fils} and \cite{BJJP} generalizing Haupt's theorem (see \cite{Haupt}, \cite{Kapovich})   provides necessary and sufficient conditions  on $\chi$ for the existence of such a structure.

\subsection{Once-punctured affine torus} 
For the once-punctured torus, the problem of finding an projective structure with prescribed affine holonomy is handled by the following result: 

\begin{prop}\label{affg1k1} Let $\rho: \pi_1(S_{1,1}) \to \affc$ be a non-trivial representation such that the puncture has trivial monodromy. Assume $\rho\big(\pi_1(S_{1,1})\big)$ is not finite of order two. Then there is a projective structure on $S_{1,1}$ with monodromy $\rho$.
\end{prop}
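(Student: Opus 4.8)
The plan rests on the structure of $\pi_1(S_{1,1})$: it is free on a pair of handle-generators $\alpha,\beta$, and the peripheral loop is represented by the commutator $[\alpha,\beta]$. Hence the hypothesis that the puncture has trivial monodromy is equivalent to saying that $A:=\rho(\alpha)$ and $B:=\rho(\beta)$ commute in $\affc$. Since $\rho$ is non-trivial one of $A,B$ is non-trivial, and by Lemma~\ref{cbas2} together with the fact (recalled in \S\ref{mps}) that it suffices to realise $\rho$ up to the $\mathrm{MCG}(S_{1,1})$-action, I would first change the marking so that \emph{both} $A$ and $B$ are non-trivial; this does not affect the conjugacy class of the peripheral monodromy, which therefore remains trivial, so $A$ and $B$ still commute.

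First I would dispose of the case where $A$ and $B$ are both translations: then $\rho$ takes values in the translation subgroup $\C\subset\affc$, it factors through a non-trivial character $\Gamma_{1,1}\to\C$, and Proposition~\ref{transurfk1} provides a projective structure in $\mathcal{P}_1(1)$ with monodromy $\rho$. Otherwise at least one of $A,B$, say $A$, has non-trivial linear part and hence a unique fixed point in $\C$; as $B$ commutes with $A$ it preserves this fixed point, so $A$ and $B$ have a common fixed point, which I would conjugate to $0$. Then $A(z)=az$ and $B(z)=bz$ with $a,b\in\C^{\ast}\setminus\{1\}$, and the hypothesis that $\rho(\pi_1(S_{1,1}))$ is not of order two becomes exactly $(a,b)\neq(-1,-1)$.

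In this co-axial case the plan branches on the moduli $|a|,|b|$. If these are not both equal to $1$, say $|a|\neq1$, I claim one can choose logarithms $\widetilde a,\widetilde b\in\C$ with $e^{\widetilde a}=a$ and $e^{\widetilde b}=b$ that are linearly independent over $\R$: since $\mathrm{Re}(\widetilde a)=\log|a|\neq0$, the number $\widetilde a$ is not purely imaginary, so at least one of $\widetilde b$ or $\widetilde b+2\pi i$ is not a real multiple of $\widetilde a$. Then $\Lambda:=\Z\widetilde a\oplus\Z\widetilde b$ is a lattice, and puncturing the flat torus $\C/\Lambda$ at one point $p_0$ yields a translation structure on $S_{1,1}$ whose developing map takes values in $\C$ (it is a lift of the inclusion $\C/\Lambda\setminus\{p_0\}\hookrightarrow\C/\Lambda$) and whose holonomy is the character $\alpha\mapsto\widetilde a,\ \beta\mapsto\widetilde b$. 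Post-composing this developing map with the covering $\exp\colon\C\to\C^{\ast}\subset\cp$ gives an immersion whose chart transitions are the M\"obius maps $z\mapsto e^{c}z$, hence a $\cp$-structure on $S_{1,1}$ whose holonomy is conjugate to $\rho$; since the puncture develops to a regular point of $\cp$ its Schwarzian has no pole there, so the structure lies in $\mathcal{P}_1(1)$, as required.

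The hard part will be the remaining case $|a|=|b|=1$, in which $\rho$ takes values in the group of rotations of $\cp$ fixing $\{0,\infty\}$ — the co-axial \emph{elliptic} case. Here the naive polygonal construction degenerates (the candidate vertices $1,\,b,\,ab,\,a$ all lie on the unit circle and in general do not bound an embedded polygon), and one cannot post-compose with $\exp$ to unwind a modulus. For this case the plan is to use the bubbling-type operations introduced in this section, in particular Definition~\ref{bubbhand}: starting from a simple model $\cp$-structure on $S_{1,1}$ adapted to the two fixed points $0,\infty$ (which need not be affine), one inserts branch-points and bubbles along the two handle-generators so as to correct their monodromies to the prescribed rotations $z\mapsto az$ and $z\mapsto bz$. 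The exclusion $(a,b)=(-1,-1)$ — the content of Lemma~\ref{excase} — is exactly the obstruction to having enough freedom to carry out these corrections, and making this rigorous is the crux; it is the once-punctured-torus instance of the co-axial constructions used elsewhere in the paper.
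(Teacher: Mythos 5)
Your reduction to the case of a common fixed point is fine, and your treatment of the two cases you do complete is correct: the pure-translation case via Proposition \ref{transurfk1}, and the case $\lvert a\rvert\neq 1$ or $\lvert b\rvert\neq 1$ via the elegant trick of choosing $\R$-independent logarithms $\widetilde a,\widetilde b$, puncturing the flat torus $\C/(\Z\widetilde a\oplus\Z\widetilde b)$, and post-composing the developing map with $\exp$. That is a genuinely different route from the paper, which instead builds, for any base-point, the quadrilateral $1\mapsto a\mapsto ab\mapsto b\mapsto 1$, attaches half-infinite regions along its sides, and identifies sides by $A,B$ to get an affine surface with a second-order pole at $\infty$ that is then filled in by a projective chart. (One small imprecision on your side: the Schwarzian of your structure relative to the cusped uniformizing structure is not pole-free at the puncture; what matters, and what is true, is that a regular or branch point gives a pole of order at most two, so the structure does lie in $\mathcal{P}_1(1)$.)

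The genuine gap is the unitary co-axial case $\lvert a\rvert=\lvert b\rvert=1$, which you explicitly leave as "the crux" with only a sketch. Your lattice trick indeed fails there (all logarithms are purely imaginary, hence $\R$-dependent), but the proposed repair does not work as stated: bubbling (Definition \ref{bubb}) and grafting in a handle (Definition \ref{bubbhand}) are monodromy-preserving surgeries — bubbling leaves the holonomy unchanged, and grafting in a handle only \emph{adds} a handle with its own prescribed monodromy while leaving the monodromy of the existing generators untouched — so there is no mechanism by which they can "correct" the monodromies of the two handle-generators of $S_{1,1}$ to the prescribed rotations, and no argument is given for why the order-two exclusion would be exactly the obstruction. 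Moreover, this case is not actually harder in the paper's framework: if $a$ and $b$ are unitary and the image is not of order two, at least one of them is non-real, so the four points $1,a,ab,b$ are not collinear and the paper's Case 1 quadrilateral construction (the same polygon-plus-rays device as in Proposition \ref{trans1}) applies verbatim; the finite cyclic case of order $m\geq 3$ also admits the explicit spherical slit construction described in the remark following the proposition. As it stands, your argument proves the proposition only for representations with a non-unitary linear part or with purely translational image.
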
 


\begin{proof}[Proof of Proposition \ref{affg1k1}] Let $\rho:\pi_1(S_{1,1})\longrightarrow \affc$ be a non-trivial affine representation, let $\alpha$ and $\beta$ denote two handle-generators and let $\gamma=[\alpha,\beta]$ be a curve enclosing the puncture. The monodromy around the puncture is assumed to be trivial, \emph{i.e.} $\rho(\gamma)=I$. This implies in particular that $\rho(\alpha)=A$ and $\rho(\beta)=B$ commute and hence the representation $\rho$ is abelian. Up to conjugation, we may assume without loss of generality that:
\begin{equation}\label{standform}
A=\begin{pmatrix}
a & 0\\
0& 1
\end{pmatrix} \qquad
B=\begin{pmatrix}
b & 0\\
0& 1
\end{pmatrix}
\end{equation}
where $a,b\notin\{0,1\}$. In fact, $a$ and $b$ cannot be both equal to one as the representation is assumed to be non-trivial and, whenever $A$ or $B$ is the identity matrix, a suitable change of basis put the matrices in the desired form. Given any point $p_0\in\C$ we define the points $p_i$, where $i=1,\dots,3$ as follows: $p_1=A(p_0)$, $p_2=AB(p_0)=BA(p_0)$, and finally $p_3=B(p_0)$. The polygon
\begin{equation}\label{pol}
 p_0\mapsto p_1\mapsto p_2\mapsto p_3\mapsto p_0
\end{equation} bounds a possibly self-intersecting and possibly degenerate quadrilateral $\mathcal{Q}$ on the complex plane. As already done before, we shall denote the directed edges as follows: $e_1=\overline{p_1\,p_2}$, $e_2=\overline{p_0\,p_1}$, $e_3=\overline{p_0\,p_3}$ and, finally, $e_4=\overline{p_3\,p_2}$. The edges of this polygon are related by the maps $A,B$ as follows: $A(e_3)=e_1$ and $B(e_2)=e_4$. Given the matrices $A$ and $B$ as in the equation \eqref{standform}, it is convenient to choose $p_0=1$. As a consequence $p_1=a$, $p_2=ab$ and $p_3=b$. We can notice that the points $\{1,a,b,ab\}\subset \C$ are all aligned if and only if they are all real and the quadrilateral $\mathcal{Q}$ degenerates to a segment. According to this property, we shall divide the discussion in two cases. \\

\noindent \textit{Case 1. The points $1,a,b,ab$ are not reals.} In this case, the points $1,a,b,ab$ are the vertices of some possibly self-intersecting quadrilateral $\mathcal{Q}$. As done before in Proposition \ref{trans1}, we can choose a collection of infinite rays $\mathcal{R} = \{r_0, r_1, r_2, r_3\}$ with starting points at the vertices $p_i$ of $\mathcal{Q}$ and consider embedded region $R_i$, for each $i \in \{0, \dots , 3\}$, bounded by the segment $e_i$ and two infinite rays from the collection $\mathcal{R}$. Even in this case there are two choices of each such a region, since the union of $e_i$ and the ray from its endpoints separates the complex plane; we choose the one that results in the correct orientation of the handle-generators $\alpha,\beta$ in the affine surface $\Sigma$ that we shall define below, see figure \ref{leftrightchoice}. Each region $R_i$ has one ideal vertex at $\infty\in\cp$ and the union of the regions  determines an immersed disc $R$ on the Riemann sphere with boundary $\partial R=\overline{e_1}\cup\overline{e_2}\cup e_3\cup e_4$. We define $\Sigma$ to be quotient of the region $R$ by identifying the boundary segments $e_1$ and $e_3$ via the affine map $A$, and the segments $e_2$ and $e_4$ via the affine map $B$. The resulting surface is homeomorphic to a punctured torus endowed with an affine structure on a punctured torus with one branch-point of magnitude $6\pi$ and one pole of order two. We can fill up the puncture by adding a complex projective chart locally modelled at $\infty\in\cp$ and eventually remove the (only) branch-point. The final surface is a punctured torus endowed with a complex structure - but not affine - having monodromy $\rho$.

\medskip 

\noindent \textit{Case 2. The complex numbers $1,a,b,ab$ are reals.} In this case the four points ${1,a,b,ab}$ are aligned, and a similar construction works. Recall that, in this case $a,b\notin\{\pm1\}$ in the light of Lemma \ref{excase} above. The main different from the case (1) is that the quadrilateral $\mathcal Q$ degenerates to a segment on the real axis. Whenever either $a$ or $b$ is greater than zero, then we can still find a collection of rays and regions $R_i$ with the desired properties and thence one can proceed as above in a similar fashion. However, when both $a,b$ are negative it turns out to be impossible to find out rays and regions as desired regardless of the choice of the base-point $p_0$. In this case, we first need to change the handle-generators in order to make either $a$ or $b$ a positive real. For instance we may replace $\{\alpha,\beta\}$ with $\{\alpha, \alpha\beta\}$. Then we can proceed as above.
\end{proof}

\textit{Remark.} Here is a construction, inspired by \cite[Lemma 2.2]{Mondello-Panov2}, of a projective structure (in fact, a spherical structure) on $S_{1,1}$ such that the image of the monodromy representation  is a finite cyclic group of order $k\geq 3$.  Let $C$ be a great circle in $\cp$ and let $\alpha$ be the ``orthogonal" geodesic line in $\mathbb{H}^3$ (thought of as the unit ball, with $\partial_\infty \mathbb{H}^3 = \cp$) passing through the origin. On $C$ we can single out two adjacent segments, say $l_1,\,l_2$, each of length $\frac{2\pi}{k}$ in the spherical metric. Of course, $l_1$ and $l_2$ are related by the elliptic element $E$ that is  a rotation of angle $\frac{2\pi}{k}$ around the axis $\alpha$. Slit $\cp$ along $l_1,l_2$. The resulting space is a bigon with two vertices each of angle $2\pi$. Then re-glue $l_1^+$ with $l_2^-$ and $l_1^-$ with $l_2^+$. The final surface is a torus equipped with a spherical structure and a single branch-point of angle $6\pi$. By deleting the branch-point we end up with the desired structure on $S_{1,1}$ having the desired monodromy, since the monodromy of each handle-generator is $E^{\pm 1}$.

\subsection{Higher genus affine surfaces}\label{hgask1} Let us finally consider the general case of punctured surfaces with genus $g\ge2$.  Our goal would be to realize the given representation to the affine group as the monodromy of some branched projective structure with one single branch-point. By deleting such a point, we end up with a complex projective structure on $S_{g,1}$ as desired.  Note that although the monodromy is into the affine group $\affc$, the projective structure obtained might not be an affine structure. Namely, we prove the following:

\begin{prop}\label{affk1} Let $g\ge2$ and let $\rho: \pi_1(S_{g,1}) \to \affc$ be a non-trivial representation such that the puncture has trivial monodromy. Assume $\rho\big(\pi_1(S_{g,1})\big)$ is not finite of order two. Then there is a $\cp$-structure on $S_{g,1}$ with monodromy $\rho$. 
\end{prop}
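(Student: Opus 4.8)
The plan is to reduce the genus-$g$ case to the once-punctured torus case (Proposition \ref{affg1k1}) together with the surfaces-with-two-punctures constructions of \S\ref{proof2}, using the holonomy-preserving gluing of Definition \ref{glue-new}, exactly in the spirit of Proposition \ref{aff} but adapted so that the single ``puncture at infinity'' of the glued surface remains a \emph{regular} point when regarded as a $\cp$-structure. First, by Lemma \ref{cbas2} we may change the generating set by a mapping class so that $\rho(\alpha_i)\neq\mathrm{Id}$ and $\rho(\beta_i)\neq\mathrm{Id}$ for every handle-generator; this is harmless since we only need to realize $\rho$ up to the mapping class group action (see \S\ref{mps}). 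We then split $S_{g,1}$ along a separating curve into a handle part homeomorphic to $S_{g-1,1}$ and a complementary $S_{1,2}$, and correspondingly write $\rho$ as data on these two pieces; the commutator curve between them carries the product of commutators of the handle holonomies, which is a translation $C$ (possibly trivial).

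The key steps, in order, would be: (1) Use Proposition \ref{affg1k1} to build a $\cp$-structure $\Sigma_1$ on $S_{1,1}$ realizing $\rho$ restricted to one handle; in that construction $\Sigma_1$ is obtained from an immersed disk in $\cp$ with a single branch-point, and the puncture at $\infty$ is a \emph{regular} point of the projective structure (a pole of order two of the associated quadratic differential), so we may fill it in. (2) Use the affine-surface construction of Proposition \ref{affk2} (for a genus $g-1$ surface with two punctures) to build $\Sigma_0$ homeomorphic to $S_{g-1,1}$ with a single branch-point whose developing map sends the branch-point to the same base-point $p\in\C$, and with one ray $r_0$ from the branch-point out to a puncture, along which the holonomy is a prescribed translation. (3) Glue $\Sigma_0$ and $\Sigma_1$ along rays using the holonomy-preserving procedure of Definition \ref{glue-new}: slit each surface along a ray out to its puncture at infinity, insert an intermediate copy of $(\C,r_\star)$, and identify cross-wise via the identity map along $r_\star$. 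Because that final identification is by the identity, the holonomies on the two subsurfaces are unchanged, so the monodromy of the result restricts correctly to each piece and hence equals $\rho$ on all of $\pi_1(S_{g,1})$. The resulting surface is homeomorphic to $S_{g,1}$ with a single branch-point (the common image of the slit starting-points); delete that branch-point. (4) Iterate over the $g$ handles, or equivalently induct on the genus, handling at each stage the possibility that a sub-holonomy is trivial by invoking Lemma \ref{triv} (as in Case~2 of the proof of Proposition \ref{aff}) to supply a projective structure on a surface with a couple of apparent singularities, and then gluing as before. One also needs to treat the degenerate sub-cases: when the relevant commutator translation $C$ is the identity, or when some handle is forced to have an abelian (commuting) pair of holonomies, the torus-level construction of Proposition \ref{affg1k1} still applies verbatim since it was phrased for arbitrary non-trivial abelian $\rho$ there; the hypothesis that $\rho(\pi_1(S_{g,1}))$ is not of order two is used only through Lemma \ref{excase} to rule out the genuine obstruction, and one should check that this exclusion of order two propagates correctly to every sub-representation invoked (for instance, if the whole $\rho$ has image of order three or larger but a handle restricts to $\mathbb{Z}_2$, one reorganizes the handle basis as in the proof of Proposition \ref{affg1k1}, replacing $\{\alpha,\beta\}$ by $\{\alpha,\alpha\beta\}$, so that no sub-piece is forced into the excluded case).

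The main obstacle I expect is \textbf{controlling the branch-point count and the regular-point condition simultaneously through the inductive gluing}. Each application of Proposition \ref{affk2} or Lemma \ref{triv} naturally produces one or two branch-points, and each gluing along rays introduces a new one from the identified ray-endpoints while the old branch-points persist; to land on a $\cp$-structure on $S_{g,1}$ with exactly \emph{one} branch-point one must arrange that all the intermediate branch-points get merged into a single point of the final surface, which forces a careful bookkeeping of which branch-points develop to the common base-point $p$ and a judicious ordering of the gluings (this is why the constructions of Proposition \ref{trans1} and Proposition \ref{affk2} were set up to produce branch-points all developing to one point). Equally delicate is ensuring that the puncture ``at infinity'' of every intermediate surface is a planar end (pole of order exactly two of the quadratic differential, hence a regular point of $\cp$) rather than a higher-order end, so that at the very end one can fill it in to get an honest $\cp$-structure; the alternative easier gluings (e.g.\ the remark after Proposition \ref{trans1}) raise the pole order and must be avoided here. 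Once these combinatorial constraints are met, the verification that the monodromy is $\rho$ is immediate from the holonomy-preserving property of Definition \ref{glue-new}, and the verification that the filled-in puncture is a regular point is the local computation already carried out in \S\ref{sdpunct} and in the proof of Proposition \ref{transurfk1}.
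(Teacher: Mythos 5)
There is a genuine gap, and it sits exactly at the point you flag as an ``obstacle'' without resolving it. Your steps (2)--(4) are built on the ray gluings of Definitions \ref{glue2} and \ref{glue-new}, but every such gluing slits two surfaces along rays running out to their punctures at infinity and identifies the ends; as the remark after Proposition \ref{trans1} already records, this produces an end of higher order (the developing map wraps several times around $\infty$), so the resulting point at infinity is \emph{not} a regular point of the projective structure and cannot be filled in by a projective chart. With $k\ge 2$ this is harmless because a spare puncture absorbs the bad end, but for $S_{g,1}$ the unique puncture must be the (apparent) branch-point coming from the identified vertices, so the end at infinity must remain an embedded planar end throughout. You say the pole-raising gluings ``must be avoided,'' yet your construction has no replacement mechanism; supplying one is essentially the entire content of the paper's proof, which never glues along rays to infinity in this case and instead arranges \emph{all} the polygons inside a single immersed disk containing $\infty$: non-overlapping chains of quadrilaterals after normalizing the handle-generators (Lemma \ref{coax-lem3}, Corollary \ref{approxlem}, Lemma \ref{coax-lem1}), the grafting-in-a-handle operation of Definition \ref{bubbhand} when the unitary part is finite, and, in the non-coaxial case, pentagons attached to a convex polygon built from the commutator translations (Lemma \ref{non-coax} together with the pentagon Lemma \ref{pent_lemma}).

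A second, related gap is the per-handle reduction to Proposition \ref{affg1k1}. That proposition assumes the puncture of $S_{1,1}$ has trivial monodromy, which forces the images of the handle-generators to commute; when $\rho$ is not co-axial, Lemma \ref{non-coax} shows one can (and generically must) take handles whose commutators are non-trivial translations, so Proposition \ref{affg1k1} simply does not apply to those restrictions. Your fallback — treating the commutator $C$ as in Case 2 of Proposition \ref{aff} — needs another puncture with non-trivial monodromy to absorb $C$ (there the glued puncture ends up with monodromy $A_2C^{-1}\cdot C$), and on $S_{g,1}$ no such puncture exists; only the \emph{product} of all handle commutators is trivial, not each one. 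The paper's Step 1--3 in \S\ref{affk1notcoa} (convex polygon $\mathcal{C}$ of commutators, one-holed tori from immersed pentagons, gluing the tori to $\partial\mathcal{C}$) is precisely the device that replaces your inductive gluing here, and nothing in your proposal substitutes for it. The same problem infects your use of Lemma \ref{triv} for trivial sub-holonomies, since that construction carries extra apparent singularities you cannot delete when only one puncture is available.
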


\noindent Our proof shall deal with the co-axial case and non co-axial case separately. In the final subsection we also provide an alternative proof using Le Fils' results from \cite{Fils2}. Before moving to the proof of Proposition \ref{affk1}, we shall need some technical results.
 
\smallskip 
 
\subsubsection{Some technical lemmata} In order to state and prove those lemmata we shall need, we begin by introducing the following definitions.

\begin{defn}[Unitary part and linear part]\label{up} Given a co-axial representation $\rho$, its \textit{unitary part} $\rho_u:\Pi \to U(1)$ is defined by $\rho_u(\gamma) = \exp{(i\arg({\rho(\gamma)}))}$, for each $\gamma \in \Pi$. Note that if $\rho$ is unitary then $\rho = \rho_u$. This notion easily extends to any affine representation as follows. In fact, there is a natural projection $\text{Li}:\affc\longrightarrow \C^*$ that associates to any mapping $A(z)=az+b$ its linear part, \textit{i.e.} $\text{Li}(A)=az$. Notice that, if $\rho$ is co-axial then $\text{Li}\circ\rho=\rho$. The \textit{unitary part} of a generic representation $\rho$ is a representation $\rho_u:\pi_1(S_{1,1})\longrightarrow U(1)$ defined as $\rho_u(\gamma)=\exp\big(i\arg(\text{Li}\circ\rho(\gamma))\big)$.
\end{defn}


\begin{defn}\label{irrhand} Given a co-axial representation $\rho$, a handle on $S_{g,1}$ generated by a pair $\{\alpha, \beta\}$ of simple closed curves intersecting only once will be called \textit{rational} if $\rho_u(\alpha)$ and $\rho_u(\beta)$ generate a discrete subgroup of $U(1)$. Alternatively, if the dilation factors of $\rho(\alpha)$ and $\rho(\beta)$ are $a$ and $b$ respectively, then the handle is rational if their arguments $\arg{a}, \arg{b}\in 2\pi\mathbb{Q}$ (the \textit{dilation factor} of an affine map $A(z) = a z$ is $a \in \C^\ast$). We will say the handle is \textit{irrational} if it is not rational.
\end{defn}

\noindent The following lemma concerns affine representation with dense unitary part.

\begin{lem}\label{coax-lem3} 
Let $\rho:\Pi \longrightarrow \text{Aff}(\C)$ be an affine representation such that $\textnormal{Li}\circ\rho$ is not unitary. Then there exists handle-generators $\{\alpha_j,\beta_j\}_{1\leq j\leq g}$ on $S_{g,1}$ such that  $\lvert a_j\rvert, \lvert b_j \rvert > 1$, where $a_j,b_j$ are the dilation factors of $\rho(\alpha_j), \rho(\beta_j)$ respectively. Moreover, in the case the unitary part $\rho_u$ has a dense image in $U(1)$, we can also ensure that $\arg{a_j}, \arg{b_j} \notin 2\pi \mathbb{Q}$ for each $1\leq j\leq g$ and, for any $\epsilon>0$, we can choose a set of handle-generators that satisfy, in addition to the above properties, $\lvert \arg{a_j} \rvert,  \lvert \arg{b_j} \rvert <\epsilon$  for each $j$. 
\end{lem}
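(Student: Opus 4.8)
The plan is to work with the two homomorphisms that the linear part $\text{Li}\circ\rho$ determines: set $L(\gamma):=\log\lvert\text{Li}(\rho(\gamma))\rvert\in\mathbb{R}$ and $\Theta(\gamma):=\arg\big(\text{Li}(\rho(\gamma))\big)\in\mathbb{R}/2\pi\mathbb{Z}$. Both are homomorphisms and, being valued in abelian groups, factor through $H_1(S_{g,1};\mathbb{Z})\cong\mathbb{Z}^{2g}$, on which the intersection pairing is the standard symplectic form. The hypothesis that $\text{Li}\circ\rho$ is not unitary says precisely that $L\not\equiv 0$; the hypothesis that $\rho_u$ has dense (equivalently, infinite) image in $U(1)$ says precisely that $\Theta$ does not take all its values in $2\pi\mathbb{Q}$, i.e.\ some class has $\Theta$-value not in $2\pi\mathbb{Q}$. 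As in Lemma \ref{cbas2} and the remark after Lemma \ref{cbas}, it suffices to produce the required handle-generators \emph{up to the action of} $\text{MCG}(S_{g,1})$; equivalently, it is enough to exhibit an abstract symplectic basis of $H_1(S_{g,1};\mathbb{Z})$ with the stated properties, since any such basis is realized by a handle-generator system after applying a suitable mapping class.

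\textbf{Step 1: a symplectic basis with $\lvert a_j\rvert,\lvert b_j\rvert>1$ for all $j$.} This is the assertion that a nonzero functional $L$ is strictly positive on some symplectic basis of $\mathbb{Z}^{2g}$, and the argument is the positivity-analogue of the proof of Lemma \ref{cbas}. Pick a primitive class $u$ with $L(u)>0$ (take $v$ with $L(v)\neq0$, replace $v$ by $-v$ if needed, divide by the gcd of its coordinates), extend it to a symplectic basis $\{\alpha_1=u,\beta_1,\dots,\alpha_g,\beta_g\}$, and Dehn twist $\beta_1\mapsto\beta_1+n\alpha_1$ with $n$ large so that $L(\beta_1)>0$. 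If $L$ is not identically zero on $W:=\langle\alpha_1,\beta_1\rangle^{\perp}$, finish by induction on the genus applied to the symplectic lattice $W$. If $L|_W\equiv0$, then $L(\alpha_j)=L(\beta_j)=0$ for $j\geq2$, and for each such $j$ one replaces $\{\alpha_1,\beta_1,\alpha_j,\beta_j\}$ by the symplectic basis $\{\alpha_1,\ \beta_1-\beta_j,\ \alpha_j+\alpha_1,\ \beta_j\}$ and then Dehn twists $\beta_j\mapsto\beta_j+(\alpha_j+\alpha_1)$; these moves keep handle $1$ symplectically orthogonal to handle $j$, leave $L$ positive on the (successively modified) generators of handle $1$, and produce $L>0$ on the generators of handle $j$. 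Running $j=2,\dots,g$ gives the basis, and Step 1 is complete.

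\textbf{Step 2: arranging in addition that $\arg a_j,\arg b_j\notin2\pi\mathbb{Q}$ and $\lvert\arg a_j\rvert,\lvert\arg b_j\rvert<\epsilon$.} Start from a symplectic basis $\{\alpha_j,\beta_j\}$ with $L>0$ on all $2g$ generators. Since $\Theta$ has infinite image, one of these generators $\delta_0$ has $\Theta(\delta_0)\notin2\pi\mathbb{Q}$; relabel the handles so that $\delta_0$ lies in handle $1$. Dehn twisting the \emph{other} generator of handle $1$ by a large positive multiple of $\delta_0$, we obtain a generator $g_1$ of handle $1$ with $L(g_1)>0$, $\Theta(g_1)\notin2\pi\mathbb{Q}$ and $\lvert\arg g_1\rvert<\epsilon$, while $L$ stays positive on the other generator of handle $1$: indeed $n\mapsto \Theta(\,\cdot\,)+n\Theta(\delta_0)$ is equidistributed mod $2\pi$, so infinitely many $n>0$ land the value in $(-\epsilon,\epsilon)$, at most one of these lies in $2\pi\mathbb{Q}$, and $L$ only increases along the twist. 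For each remaining handle $j\geq2$ we then (i) Dehn twist $\beta_j\mapsto\beta_j-m\alpha_j$ with $m$ large so that $L(\beta_j)<0$, and (ii) slide a multiple $n\,g_1$ ($n>0$) into $\alpha_j$, compensated in the usual way by modifying the $g_1$-dual generator of handle $1$ by a multiple of $\beta_j$; step (i) is exactly what makes this compensating combination monotone increasing in $n$, hence positive for \emph{every} $n>0$, so $n$ may be chosen (by equidistribution, using $\Theta(g_1)\notin2\pi\mathbb{Q}$) as large as the smallness of $\arg$ of $\alpha_j+ng_1$ requires, while $L(\alpha_j+ng_1)>0$ holds automatically. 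After processing $j=2,\dots,g$, every $\alpha$-generator has the three desired properties; a final Dehn twist of each $\beta$-generator around its (now good) $\alpha$-partner, with twisting number chosen by the same equidistribution argument and large enough to restore $L>0$, gives the three properties for the $\beta$-generators too. This produces the required symplectic basis, hence the required handle-generators.

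\textbf{Main obstacle.} The delicate point is the coordination in Step 2: pushing $\arg$ of a generator into $(-\epsilon,\epsilon)$ forces a potentially large twisting parameter, and the compensating term that every handle-slide must carry to stay symplectic then threatens to make $L$ negative on another generator. The preliminary twist (i) is precisely the device that renders the relevant linear combination monotone in the twisting parameter, so that the finitely many open positivity constraints and the density-type smallness-and-irrationality constraints can be met at once; the remaining care is purely bookkeeping—tracking the symplectic relations under the successive slides so that distinct handles stay orthogonal throughout.
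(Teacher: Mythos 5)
Your argument is correct in substance and rests on the same toolkit as the paper's proof: the three conditions depend only on the linear part, hence only on homology; changes of symplectic basis are realized by mapping classes (as in Lemma \ref{cbas} and Lemma \ref{cbas2}); positivity of $L=\log\lvert\mathrm{Li}\circ\rho\rvert$ is spread around by Dehn twists and two-handle slides; and irrationality and smallness of the arguments come from equidistribution of an irrational rotation together with the observation that at most one twisting parameter can produce a rational argument. What you do differently is the organization: your Step 1 is a clean induction on the rank of the symplectic lattice (a nonzero functional is strictly positive on some symplectic basis), where the paper first arranges $\lvert a_j\rvert,\lvert b_j\rvert\neq 1$ and then proves a per-handle Claim to push both above $1$; and in Step 2 you fix the arguments of all handles by sliding one ``good'' generator $g_1$ of the first handle into every other handle, whereas the paper propagates irrationality handle-by-handle with the two-handle move $\phi$ and then achieves smallness by twists supported inside each handle. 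Your version buys a tidier lattice-theoretic formulation at the cost of heavier bookkeeping of the compensating terms.

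One point to tighten: the role of your step (i) is convention-dependent. With the usual choices ($\langle h_1,g_1\rangle=\langle\alpha_j,\beta_j\rangle=1$, $h_1$ the dual of $g_1$), the slide $\alpha_j\mapsto\alpha_j+n\,g_1$ is compensated by $h_1\mapsto h_1+n\,\beta_j$; since $L(\beta_j)>0$ already holds after Step 1, the compensation automatically increases $L(h_1)$, so the preliminary twist making $L(\beta_j)<0$ is not only unnecessary but would drive $L$ of the compensated generator negative as $n$ grows. Under the opposite labelling of the pair $(g_1,h_1)$ the compensation is $h_1\mapsto h_1-n\,\beta_j$ and your step (i) is exactly what is needed. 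Either way the scheme closes, because the final twist of each $\beta$-generator around its good $\alpha$-partner restores $L>0$ and gives a small irrational argument; but you should fix one sign convention and verify it, or simply drop step (i) and use the positivity of $L(\beta_j)$ that Step 1 already provides.
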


\begin{proof}
First of all we notice that it is sufficient to prove the lemma for co-axial representations. In fact, the general case follows by replacing $\rho$ with $\text{Li}\circ\rho$. Choose an initial set of handle-generators $\{\alpha_j, \beta_j\}_{1\leq j\leq g}$; in the following argument whenever we modify this set of generators via a mapping class, we shall rename and continue denoting the resulting set by the same notation (\textit{viz.} $\alpha_j, \beta_j$). We shall also denote by $a_j$ and $b_j$ the dilation factors of $\rho(\alpha_j)$ and $\rho(\beta_j)$ respectively. We shall first show that we can choose handle-generators such that $\lvert a_j \rvert \neq 1$ and $\lvert b_j \rvert \neq 1$ for each $j$. 

\smallskip

\noindent Since $\rho$ is not unitary, it follows that $\lvert a_r \rvert \neq 1 $ for some $r$. Note that if $\lvert b_r \rvert \neq 1$ instead, we can interchange the handle-generators via the mapping class that takes the pair $\{\alpha_r, \beta_r\} \mapsto \{\beta_r, \alpha_r^{-1}\}$. Now if $\lvert b_r \rvert =1 $ we can change this pair via the mapping class that takes $\{\alpha_{r}, \beta_{r}\} \mapsto \{\alpha_{r}, \alpha_{r}\, \beta_{r}\}$; such a mapping class is supported on that handle, and Dehn-twists around $\alpha_{r}$. This makes the modulus of the dilation factor of the $\rho$-image of the second generator also different from $1$. If $\lvert a_s \rvert =1$ for some other index $s$, then we change the two handles (the $r$-th and $s$-th) via the mapping class  $\phi$  that takes  $\{\alpha_r, \beta_r\} \mapsto \{\alpha_r, \beta_r\,\beta_s\}$ and $\{\alpha_{s}, \beta_{s}\} \mapsto \{\alpha_r^{-1} \alpha_s,  \beta_{s}\}$, see Figure \eqref{changeofbasis}. In this way we can make sure that the modulus of the dilation factor of the first generator of the second handle is $\lvert a_r^{-1} a_s \rvert \neq 1$,  and so we can continue the process as above, until all handle-generators have their corresponding dilation factors not equal to $1$.

\begin{figure}
  \centering
  \includegraphics[scale=0.35]{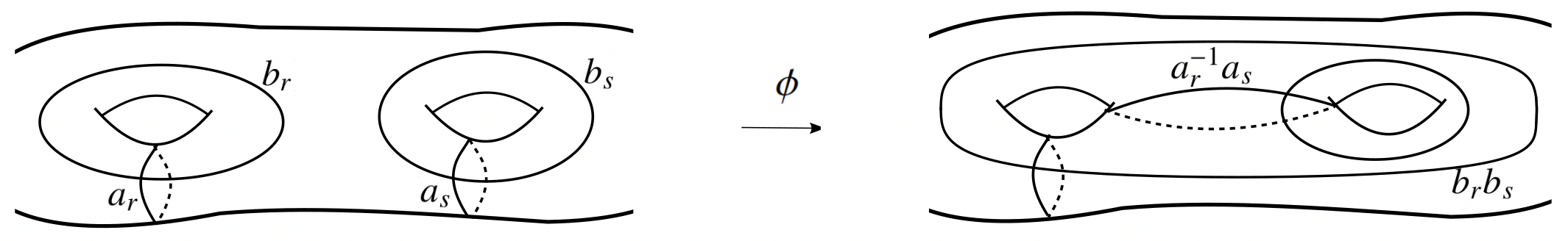}
  \caption{The mapping class $\phi$ changes the pairs of handle-generators  $\{\alpha_r, \beta_r\}$ and $\{\alpha_{s}, \beta_{s}\}$ to $ \{\alpha_r, \beta_r\,\beta_s\}$ and $ \{\alpha_r^{-1} \alpha_s,  \beta_{s}\}$ respectively.}
  \label{changeofbasis}
\end{figure}

\smallskip

\noindent To ensure the dilation factors are strictly greater than $1$ in modulus, we perform the following modifications:

\smallskip

\begin{claim}\label{claim1}
\textit{For any handle, there is a change of generators by a mapping class such that their dilation factors satisfy $\lvert a \rvert >1$ and $\lvert b \rvert >1$.}
\end{claim}

\begin{proof}[Proof of Claim \ref{claim1}]
Assume that $\lvert b \rvert <1$. Recall that we have already ensured above that $\lvert a\rvert \neq 1$. We can use the change of generators $(A,B) \mapsto (A, A^nB)$ for $n\in \mathbb{Z}$. that is effected by a (power of a) Dehn-twist around the handle-generator corresponding to $A$. Note that the dilation factor of $A^nB$ is $a^n b$. Thus for a suitable choice of sign of $n$, and for $\lvert n \rvert$ large enough, the dilation factor of $A^nB$ is strictly greater than $1$ in modulus.  Now the second generator $B^\prime = A^nB$ has the desired property. If the first generator (which remains unchanged) still has $\lvert a \rvert <1$, then we perform the change of generators $(A,B^\prime ) \mapsto (B^\prime, A^{-1})$, which is again effected by an element of $\text{SL}(2,\mathbb{Z})$ (and hence a mapping class).
\end{proof}

\smallskip

\noindent Now assume that the image of the unitary part $\rho_u$ is dense in  $U(1)$. We shall perform a change of generators exactly as in the first part of the proof, so that the arguments of the dilation factors are all irrational; we shall only observe that these modifications do not change the property that the dilation factors are greater than $1$ in modulus.

\noindent Assume there is some $r\in \{1,2,\ldots, g\}$ such that $\arg{a_r} \notin 2\pi \mathbb{Q}$; if instead there is some $r$ such that $\arg{b_r} \notin 2\pi \mathbb{Q}$, then we can switch the roles of $\alpha_r$ and $\beta_r$ in what follows (\textit{e.g.} instead of Dehn-twists around $\alpha_r$ we perform Dehn-twists around $\beta_r$). If $\arg{b_r} \in 2\pi \mathbb{Q}$ we can change this pair of handle-generators to $\{\alpha_{r}, \alpha_{r}^n\,\beta_{r}\}$ for any $n\in \mathbb{Z}$. It is easy to see that for any $n> 0$, the resulting new handle-generator will satisfy $\arg{b_r} \notin 2\pi \mathbb{Q}$, and since $\lvert a_r\rvert > 1$, the dilation factor  $\lvert b_r \rvert >1$ also. 
Now let $\arg{a_s} \in 2\pi \mathbb{Q}$ for some $s$; as before, we use the mapping class $\phi$ to change the two pairs of generators $\{\alpha_r, \beta_r\} \mapsto \{\alpha_r, \beta_r\,\beta_s\}$ and $\{\alpha_{s}, \beta_{s}\} \mapsto \{\alpha_r^{-1}\,\alpha_s, \beta_{s}\}$. Again, we rename these new pairs as $\{\alpha_r,\beta_r\}$ and $\{\alpha_s,\beta_s\}$ respectively. The new generator of the latter handle now has $\arg{a_s} \notin 2\pi \mathbb{Q}$.
Note that by the change $ \{\alpha_s, \beta_s\} \mapsto \{\alpha_s\,\beta_s^n, \beta_s\}$ for $n\gg 0$ (achieved by Dehn-twists along $\beta_s$ on the $s$-th handle), we could have arranged that prior to acting by $\phi$, the modulus of the dilation factor $\lvert a_s \rvert \gg \lvert a_r\rvert $, so that after acting by $\phi$, the dilation factor still satisfies $\lvert a_s \rvert >1$. 

\smallskip

\noindent Finally, fix $\epsilon>0$. We shall show that for $j$-th handle for any $1\leq j\leq g$, there is a change of generators by a mapping class supported on the handle, such for the resulting pair of generators we have $\lvert \arg{a_j} \rvert, \lvert \arg{b_j} \rvert < \epsilon$. Indeed, we can perform Dehn-twists as usual to change the handle-generators to $ \{\alpha_{j}, \alpha_{j}^n\,\beta_{j}\}$, for any $n\in \mathbb{Z}$. As before, for any $n>0$ the dilation factors remain greater than $1$ in modulus. Although the new argument could lie in $2\pi \mathbb{Q}$ for some integer, say $N$, it cannot be in $2\pi \mathbb{Q}$ for any $n\neq N$. (If $N\arg{a_j} + \arg{b_j} \in 2\pi \mathbb{Q}$ and $M \arg{a_j} + \arg{b_j} \in 2\pi \mathbb{Q}$ for $N\neq M$ then $(N-M) a_j \in 2\pi \mathbb{Q}$ which is a contradiction.) 
Since $\rho(\alpha_r)$ is an irrational rotation of the circle, we can choose $n>N$ such that $\lvert \arg{b_j}  \rvert < \epsilon$.  
Similarly, we perform a power of a Dehn-twist around $b_j$, to ensure that $\lvert \arg{a_j}  \rvert < \epsilon$.
\end{proof}

\textit{Remark.} Let $\rho$ be an \textit{Euclidean} representation, namely an affine representation with unitary linear part, that is $\rho_u=\text{Li}\circ\rho$. Assume the image of $\rho_u$ to be dense in $U(1)$. It worth noticing that, although the first claim of Lemma \ref{coax-lem3} never holds for Euclidean representations, it is still possible to find a basis of handle generators such that the linear parts of the $\rho$-images have arbitrarily small argument.

\smallskip

\begin{cor}\label{approxlem}
Let $\rho:\Pi \to \text{Aff}(\C)$ be an affine representation. If the unitary part $\rho_u$ has a dense image in $\text{U}(1)$, then for any $\epsilon>0$ there exists handle-generators $\{\alpha_j,\beta_j\}_{1\leq j\leq g}$ on $S_{g,1}$ such that the inequalities $-\epsilon<\arg b_j<0$ and $0\le \arg a_j +\arg b_j < \epsilon$ hold for each $j$.
\end{cor}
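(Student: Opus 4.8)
The plan is to derive this directly from Lemma \ref{coax-lem3}, specifically from the final ``smallness'' clause of that lemma together with a sign adjustment. Lemma \ref{coax-lem3} already gives us, for any $\delta>0$, a set of handle-generators $\{\alpha_j,\beta_j\}_{1\le j\le g}$ with $|a_j|,|b_j|>1$, with $\arg a_j,\arg b_j\notin 2\pi\mathbb{Q}$, and with $|\arg a_j|,|\arg b_j|<\delta$ for each $j$. The only gap between that conclusion and the statement of the corollary is the prescribed \emph{signs}: we want $\arg b_j$ to be strictly negative, and $\arg a_j+\arg b_j$ to be non-negative and small. So the work is to perform one further change of generators, supported on each handle separately, that flips signs of arguments as needed while preserving (i) the property $|a_j|,|b_j|>1$ and (ii) the irrationality of the arguments, and only shrinks the arguments further.

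First I would fix $\epsilon>0$ and apply Lemma \ref{coax-lem3} with $\delta:=\epsilon/3$ (the precise constant is not important; any $\delta\le\epsilon/2$ will do), obtaining handle-generators with $|\arg a_j|,|\arg b_j|<\delta$. Next, working one handle at a time, I adjust signs as follows. If $\arg b_j>0$ already, replace the pair $\{\alpha_j,\beta_j\}$ by $\{\alpha_j^{-1},\beta_j^{-1}\}$, which is realized by a mapping class supported on the handle (it is the hyperelliptic involution of $S_{1,1}$, or equivalently the action of $-I\in\mathrm{SL}(2,\mathbb{Z})$); this sends $a_j\mapsto a_j^{-1}$, $b_j\mapsto b_j^{-1}$, hence negates both arguments, giving $\arg b_j<0$, while $|a_j^{-1}|=|a_j|^{-1}<1$ — so this is not yet admissible and I must be more careful. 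The right move is instead to use Dehn twists, which change only one argument at a time: the change $\{\alpha_j,\beta_j\}\mapsto\{\alpha_j,\alpha_j^{\mp1}\beta_j\}$ (a single Dehn twist about $\alpha_j$, in one of the two directions) sends $b_j\mapsto a_j^{\mp1}b_j$, hence $\arg b_j\mapsto \arg b_j\mp\arg a_j$; since $|\arg a_j|<\delta$ and $\arg a_j\notin2\pi\mathbb{Q}$, by iterating this twist an appropriate number $n$ of times in the appropriate direction I can drive $\arg b_j$ into the interval $(-\delta,0)$ (the orbit of $\arg b_j$ under addition of $\mathbb{Z}\cdot\arg a_j$ is dense mod $2\pi$, but since each step has size $<\delta$ we can stop the first time we land in $(-\delta,0)$). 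This keeps $|a_j|$ unchanged and $|b_j|\mapsto|a_j|^{\pm n}|b_j|$ — which I must ensure stays $>1$; this is automatic if the twist direction happens to increase $|b_j|$, and if not, I first enlarge $|a_j|$ by twisting $\beta_j$ about $\alpha_j$... here I need to organize the order of operations. The clean way: first twist to make $-\delta<\arg b_j<0$, accepting whatever happens to $|b_j|$; then apply Claim \ref{claim1} (inside Lemma \ref{coax-lem3}) again, but note its moves change arguments — so instead I should note that once $\arg a_j,\arg b_j$ are both small and $\arg b_j<0$, a further twist $\{\alpha_j,\beta_j\}\mapsto\{\alpha_j\beta_j^{\,n},\beta_j\}$ with $n<0$ large shrinks $\arg a_j$ to be close to $n\cdot\arg b_j$, which is positive (since $\arg b_j<0$), and makes $|a_j|$ huge, so $|a_j|>1$; choosing $|n|$ with $n\arg b_j$ small and positive we can also ask $0\le\arg a_j+\arg b_j<\epsilon$. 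Finally a last twist of $\beta_j$ about $\alpha_j$ restores $|b_j|>1$ using that $|a_j|$ is now large, while perturbing $\arg b_j$ by an arbitrarily small amount (since we may take $\arg a_j$ as small as we wish by the previous step), so $\arg b_j$ stays in $(-\epsilon,0)$.

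The main obstacle, then, is bookkeeping: every Dehn twist that fixes an argument's \emph{magnitude} problem tends to spoil the modulus condition $|a_j|,|b_j|>1$ and vice versa, so the argument must interleave the twists in the correct order and exploit that after Lemma \ref{coax-lem3} all arguments are simultaneously \emph{small and irrational}, which gives us the freedom to make the needed adjustments with arbitrarily small side-effects. I expect the write-up to consist of: (1) invoke Lemma \ref{coax-lem3} with $\delta$ small; (2) handle-by-handle, a twist about $\alpha_j$ to put $\arg b_j\in(-\delta,0)$; (3) a twist about $\alpha_j$ (by $\beta_j$, i.e. $\{\alpha_j,\beta_j\}\mapsto\{\alpha_j\beta_j^{\,n},\beta_j\}$) with $n$ chosen so $\arg a_j+\arg b_j\in[0,\epsilon/2)$ and $|a_j|$ large; (4) a final small twist restoring $|b_j|>1$ with negligible effect on arguments; (5) check all four inequalities hold and that these are all mapping classes supported on the $j$-th handle, so by the usual remark (realizing monodromy only up to the mapping-class-group action) we are done. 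None of the steps requires a genuinely new idea beyond those already deployed in the proof of Lemma \ref{coax-lem3}; it is the same toolkit of handle-supported Dehn twists, used to fine-tune signs rather than magnitudes.
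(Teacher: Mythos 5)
The core of your argument --- Dehn-twist $\beta_j$ by powers of $\alpha_j$ (an irrational rotation in the unitary part) to drive $\arg b_j$ into $(-\epsilon,0)$, then Dehn-twist $\alpha_j$ by powers of $\beta_j$ to force $0\le \arg a_j+\arg b_j<\epsilon$ --- is exactly the paper's proof. But there is a genuine gap in scope. Corollary \ref{approxlem} is stated for an \emph{arbitrary} affine representation with dense unitary part; this includes the Euclidean case, where $\mathrm{Li}\circ\rho$ is itself unitary and every dilation factor has modulus $1$. Your opening move invokes Lemma \ref{coax-lem3}, whose hypothesis is precisely that $\mathrm{Li}\circ\rho$ is \emph{not} unitary and whose conclusion $\lvert a_j\rvert,\lvert b_j\rvert>1$ is impossible for Euclidean representations (this is flagged in the Remark immediately preceding the corollary). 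The paper's proof begins by observing that only the second half of the proof of Lemma \ref{coax-lem3} --- small, irrational arguments, obtained by twists that never use non-unitarity --- survives in that case, and then runs the two twisting steps above. The Euclidean case is not vacuous for the application either: in Case 1 of Lemma \ref{pent_lemma} the restriction of $\rho$ to a handle can be generated by two non-commuting irrational rotations. So as written your proof covers only part of the statement; the fix is one sentence (use only the argument-smallness part of the lemma's proof), but it must be said.

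A secondary point: the conditions $\lvert a_j\rvert,\lvert b_j\rvert>1$ do not appear in the corollary, and your bookkeeping around them is internally flawed. In your step (3) the twist $a_j\mapsto a_jb_j^{\,n}$ with $n<0$ multiplies $\lvert a_j\rvert$ by $\lvert b_j\rvert^{\,n}$, which makes $\lvert a_j\rvert$ \emph{smaller}, not ``huge'', if $\lvert b_j\rvert>1$; and in step (4), after you have arranged $\arg a_j\ge -\arg b_j>0$, any positive twist $b_j\mapsto a_j^{\,m}b_j$ (the only direction that could restore $\lvert b_j\rvert>1$) adds at least $-\arg b_j$ to $\arg b_j$ and pushes it out of $(-\epsilon,0)$; you cannot make $\arg a_j$ ``as small as we wish'' while keeping $\arg a_j+\arg b_j\ge 0$. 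Since the statement only concerns arguments, simply drop the modulus requirements: your steps (2)--(3) alone, preceded by the small-irrational-argument normalization valid for unitary linear part as well, give precisely the paper's argument.
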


\begin{proof}
The first thing we notice is that the second part of the proof above works \textit{mutatis mutandis} even when $\text{Li}\circ\rho$ is unitary with the only exception being that the dilatation factors are always equal to one. What follows is nothing but a refinement of Lemma \ref{coax-lem3}. Again, we shall assume for simplicity $\rho$ co-axial and the general case comes by replacing $\rho$ with $\text{Li}\circ\rho$. In fact, in the same notation as above, $\rho(\alpha_r)$ is an irrational rotation we can choose $n>N$ such that $\lvert \arg b_j\vert<\epsilon$ and $-\epsilon<\arg b_j< 0$. We now perform Dehn-twist around $b_j$ to ensure $0< -\arg b_j <\arg a_j<\epsilon$. The result follows.
\end{proof}

\noindent We now consider affine representations whose unitary part is discrete in $U(1)$.

\begin{lem}\label{coax-lem1} 
Let $\rho:\Pi \to \text{Aff}(\C)$ be an affine representation such that its unitary part $\rho_u$ has a discrete image in $U(1)$. Then
there exists handle-generators $\{\alpha_j,\beta_j\}_{1\leq j\leq g}$ on $S_{g,1}$ such that 
\begin{equation}
    \rho_u(\alpha_j) = \exp(2\pi\,i/m)\,\, \text{ and } \,\,\rho_u(\beta_j) = 1
\end{equation}
 for each $j$, where $\rho_u(\Pi) \cong \mathbb{Z}_m$. 
 In fact, for any surjective homomorphism $h:\Pi \to \mathbb{Z}_m$ we can find handle-generators such that $\rho_u(\alpha_j) = h(\alpha_j)$ and $\rho_u(\beta_j) = h(\beta_j)$ for each $j$. 
\end{lem}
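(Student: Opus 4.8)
The plan is to strip away the affine--geometric language and reduce the statement to a fact about the action of the mapping class group of $S_{g,1}$, equivalently of $\mathrm{Sp}(2g,\mathbb{Z})$, on homomorphisms $\mathbb{Z}^{2g}\to\mathbb{Z}_m$, and then to establish that fact.

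First I would note that $\rho_u$ is a genuine homomorphism into the \emph{abelian} group $U(1)$: it is the composite of $\rho$ with the linear-part projection $\mathrm{Li}:\affc\to\C^\ast$, the argument homomorphism $\C^\ast\to\R/2\pi\Z$, and the isomorphism $\R/2\pi\Z\cong U(1)$. Hence $\rho_u$ factors through $H_1(S_{g,1};\mathbb{Z})\cong\mathbb{Z}^{2g}$, the free abelian group on the classes of a set of handle-generators, carrying the standard symplectic intersection form; and --- as in Lemma \ref{cbas2} --- replacing the handle-generators by their images under a mapping class amounts to acting on this symplectic basis by an element of $\mathrm{Sp}(2g,\mathbb{Z})$. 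The discreteness hypothesis forces $\rho_u(\Pi)$ to be a finite subgroup of $U(1)$, i.e.\ the group of $m$-th roots of unity for some $m$, with $e^{2\pi i/m}$ a distinguished generator. Writing $\phi:\mathbb{Z}^{2g}\to\mathbb{Z}_m$ for the induced surjection, the first assertion becomes: there is a symplectic basis $\{v_1,w_1,\dots,v_g,w_g\}$ with $\phi(v_j)=1$ and $\phi(w_j)=0$ for every $j$ (such a pattern being surjective precisely because $g\ge1$); and the ``in fact'' clause is the equivalent statement that $\mathrm{Sp}(2g,\mathbb{Z})$ acts transitively, by precomposition, on the set of surjections $\mathbb{Z}^{2g}\to\mathbb{Z}_m$, the first part being the special case in which $h$ is the surjection sending each standard $\alpha_j$ to $1$ and each $\beta_j$ to $0$.

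For this transitivity I see two routes. The slick one: since $m\mathbb{Z}^{2g}\subseteq\ker\phi$, the map $\phi$ factors through $(\mathbb{Z}/m)^{2g}$ and is recorded by a unimodular row over $\mathbb{Z}/m$, so it suffices to find the base change in $\mathrm{Sp}(2g,\mathbb{Z}/m)$ and lift it to $\mathrm{Sp}(2g,\mathbb{Z})$ via surjectivity of the reduction map; by the Chinese Remainder Theorem one then reduces to $\mathbb{Z}/p^a$, a local ring, over which the symplectic group is well known to act transitively on unimodular vectors (a unimodular vector has a unit coordinate, which symplectic transvections use to clear the remaining ones). The hands-on route, closer to the constructive style of Lemmas \ref{cbas} and \ref{coax-lem3}: using surjectivity of $\phi$, produce a \emph{primitive} class $v_1$ with $\phi(v_1)=1$ (possible by a standard $\gcd$ argument), complete it to a symplectic basis so that $\rho_u(\alpha_1)=e^{2\pi i/m}$, then replace $\beta_1$ by $\beta_1\alpha_1^{-k}$ (powers of the Dehn twist about $\alpha_1$) to arrange $\rho_u(\beta_1)=1$; and proceed handle by handle for $j\ge2$, using the handle-slide of Lemma \ref{coax-lem3} (Figure \ref{changeofbasis}) between the $j$-th handle and the first to set $\rho_u(\alpha_j)=e^{2\pi i/m}$, then Dehn twists about $\alpha_j$ to kill $\rho_u(\beta_j)$, keeping track of the values disturbed on the first handle and correcting them at the end --- all of this being arithmetic in $\mathbb{Z}_m$.

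The main obstacle is the requirement that \emph{every} $\alpha_j$ map to the \emph{same prescribed} generator $e^{2\pi i/m}$, rather than merely to some generator: a single handle-generator cannot be rescaled, since replacing it by a nontrivial power breaks the symplecticity of the basis, so one genuinely needs the structural input --- a suitable primitive homology class, or equivalently the transitivity of $\mathrm{Sp}$ on unimodular rows. The remaining ingredients (the reduction through $H_1$, the dictionary between symplectic base changes and mapping classes, the lift from $\mathbb{Z}/m$ to $\mathbb{Z}$, and the bookkeeping with roots of unity) are routine.
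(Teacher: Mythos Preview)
Your proposal is correct and follows essentially the same line as the paper: reduce to homology (since $\rho_u$ is abelian), interpret the problem as the transitivity of $\mathrm{Sp}(2g,\mathbb{Z})$ on surjections $\mathbb{Z}^{2g}\to\mathbb{Z}_m$, and realize the required symplectic automorphism by a mapping class. The only difference is that the paper outsources the transitivity statement to \cite[Proposition 3.2]{Fils2}, whereas you supply two self-contained arguments for it (reduction mod $m$ and lifting via surjectivity of $\mathrm{Sp}(2g,\mathbb{Z})\to\mathrm{Sp}(2g,\mathbb{Z}/m)$, or a direct handle-by-handle construction); either route is sound and makes the lemma independent of that external reference.
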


\begin{proof}
Since $U(1)$ is abelian, $\rho_u$ factors through the homology group $\Gamma = H_1(S_{g,1}, \mathbb{Z})$. Fix a set of handle-generators $\{\alpha^\prime_j,\beta^\prime_j\}_{1\leq j\leq g}$; this is also a set of generators of $\Gamma$. Since the image in $U(1)$ is a discrete group, it must be cyclic, say of order $m\geq 3$. Here, the case of $m=2$ is ruled out by our assumption that the image is not of order two (\textit{c.f.} Lemma \ref{excase}). Thus, we can think of the unitary part as a surjective homomorphism $\rho_u:\Gamma \to \mathbb{Z}_m$, where $\mathbb{Z}_m$ is the cyclic subgroup of $U(1)$ generated by $\exp{(2\pi\, i/m)}$.  Then, by \cite[Proposition 3.2]{Fils2}, there is $A \in \text{Sp}(2g,\mathbb{Z})\cong\text{Aut}^+(\Gamma)$ such that $\rho_u \circ A:\Gamma \to \mathbb{Z}_m$  satisfies $\rho_u(\alpha^\prime_j)= 1$ and $\rho_u(\beta^\prime_j)= 0$ for each $1\leq j\leq g$. Indeed, by the same Proposition, for any surjective homomorphism $h_u:\Gamma \to \mathbb{Z}_m$ there exists an automorphism $A$ of $\Gamma$ such that $\rho_u \circ A = h_u$. The automorphism $A$ is induced by a mapping class $\phi:S_{g,1} \to S_{g,1}$, and defining $\alpha_j := \phi(\alpha^\prime_j)$ and $\beta_j: =\phi(\beta^\prime_j)$ then defines our desired set of handle-generators. 
\end{proof}
 
\smallskip

\noindent We finally conclude with a lemma specific to non co-axial representations. We shall make use of the following result in subsection \ref{affk1notcoa}.

\begin{lem}\label{non-coax} Let $\rho: \pi_1(S_{g,1}) \to \affc$ be a non-trivial representation as in the statement of Proposition \ref{affk1}. If $\rho$ is not co-axial, then we can choose pairs of handle-generators $\{\alpha_i, \beta_i\}_{1\leq i\leq g}$ such that their commutators are all non-trivial, i.e.\ $\rho([\alpha_i,\beta_i]) \neq I$ for each $1\leq i\leq g$. 
\end{lem}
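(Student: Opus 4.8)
\medskip

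\noindent First I would set aside the translation case: if $\text{Li}\circ\rho$ is trivial then $\rho$ has image in the translation subgroup, and such a representation is already realized by Proposition \ref{transurfk1} --- and has all commutators trivial --- so the lemma is not needed there; thus I would assume $\text{Li}\circ\rho$ is non-trivial. Two facts then follow from $\rho$ not being co-axial: $\rho(\Pi)$ is non-abelian (an abelian $\rho(\Pi)$ containing a non-translation $A$ would, since every element fixes $\infty$ and commutes with $A$, also fix the finite fixed point of $A$, and hence be co-axial), and for the same reason $\rho(\Pi)$ fixes no point of $\C$. Next I would invoke Lemma \ref{cbas2}, applied to $\text{Li}\circ\rho$ regarded as a (non-trivial) representation into the scaling subgroup of $\affc$, to choose handle-generators $\{\alpha_i,\beta_i\}_{1\le i\le g}$ for which \emph{every} $\rho(\alpha_i)$ and $\rho(\beta_i)$ has dilation factor $\ne 1$; each of these affine maps then has a unique fixed point in $\C$, and a handle is non-abelian exactly when its two generators have distinct fixed points.

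\medskip

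\noindent The key move would be the mapping class $\phi$ of Lemma \ref{coax-lem3}, sending $\{\alpha_r,\beta_r\}\mapsto\{\alpha_r,\beta_r\beta_s\}$ and $\{\alpha_s,\beta_s\}\mapsto\{\alpha_r^{-1}\alpha_s,\beta_s\}$. Expanding with the commutator identity $[\alpha,\beta\beta']=[\alpha,\beta]\cdot\beta[\alpha,\beta']\beta^{-1}$ (and its analogue for $[\alpha'\alpha,\beta]$), one finds that $\phi$ right-multiplies the $r$-th commutator by $\rho(\beta_r)\,\rho([\alpha_r,\beta_s])\,\rho(\beta_r)^{-1}$, and that if the $s$-th handle was abelian its new commutator equals $\rho([\alpha_r^{-1},\beta_s])$; both of these are non-trivial precisely when $\rho(\alpha_r)$ and $\rho(\beta_s)$ fail to commute. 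So, if the $r$-th handle is abelian and $\rho(\alpha_r)$ does not commute with $\rho(\beta_s)$, then after $\phi$ the $r$-th handle is non-abelian --- and if moreover the $s$-th handle was abelian, it becomes non-abelian as well.

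\medskip

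\noindent I would then iterate. Suppose the $r$-th handle is abelian. It cannot have been touched by a previous move (a $\phi$-move turns an abelian handle non-abelian), so by the normalization above $\rho(\alpha_r)$ and $\rho(\beta_r)$ have dilation $\ne1$ and share a fixed point $P\in\C$. Since $\rho(\Pi)$ does not fix $P$ and $\Pi$ is freely generated by the handle-generators, some handle-generator not belonging to the $r$-th handle has $\rho$-image not fixing $P$; it lies in some handle $s\ne r$ (here $g\ge 2$ is used), and after possibly applying the handle-rotation $\{\alpha_s,\beta_s\}\mapsto\{\beta_s,\alpha_s^{-1}\}$ we may take it to be $\beta_s$, whence $\rho(\alpha_r)$ and $\rho(\beta_s)$ do not commute. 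If the $s$-th handle was abelian, a single $\phi$ makes both the $r$-th and $s$-th handles non-abelian. If the $s$-th handle was already non-abelian, I would first perform a Dehn twist inside it, $\beta_s\mapsto\beta_s\alpha_s^n$, for a suitable $n$: as the $s$-th handle is non-abelian, the fixed point of $\rho(\beta_s\alpha_s^n)$ and the $\rho$-image of the $s$-th commutator after $\phi$ both vary non-trivially with $n$, so only finitely many $n$ are forbidden, and a good $n$ keeps $\rho(\alpha_r),\rho(\beta_s)$ non-commuting while leaving the $s$-th handle non-abelian after $\phi$. In either case the number of abelian handles strictly decreases, so finitely many steps reach a handle-generator system with no abelian handle, i.e.\ with all commutators of non-trivial $\rho$-image.

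\medskip

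\noindent The step I expect to be the main obstacle is this last one --- showing that in the non-abelian auxiliary-handle case the twist parameter $n$ can be chosen to avoid both of the ``bad'' behaviours ($\phi$ turning the $s$-th handle abelian; $\rho(\alpha_r),\rho(\beta_s)$ becoming commuting), i.e.\ that each bad set of $n$ is finite. This amounts to tracking how the fixed point of $\rho(\beta_s\alpha_s^n)$ and the $s$-th commutator depend on $n$, exactly the bookkeeping carried out in the proof of Lemma \ref{coax-lem3}. A secondary point, as usual, is to confirm that each generator change employed --- $\phi$, the handle-rotation, and the Dehn twists --- is induced by a mapping class of $S_{g,1}$.
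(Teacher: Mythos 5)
Your setup is fine (excluding translation representations, normalizing all dilation factors $\neq 1$ via Lemma \ref{cbas2} applied to $\text{Li}\circ\rho$, the commutator identities for $\phi$, and the existence of a generator not fixing the common fixed point $P$ of an abelian handle), but the step you yourself flagged is a genuine gap, and it does not reduce to the bookkeeping of Lemma \ref{coax-lem3}. Unwinding your identities: with $A=\rho(\alpha_r)$, $C=\rho(\alpha_s)$, $D=\rho(\beta_s)$, $K=\rho([\alpha_s,\beta_s])\neq I$, after the pre-twist $\beta_s\mapsto\beta_s\alpha_s^{\,n}$ and the move $\phi$ the new $r$-th commutator has image conjugate to $[A,DC^{\,n}]$ and the new $s$-th commutator has image $A^{-1}KA\,[A^{-1},DC^{\,n}]$, so the twist parameter $n$ works precisely when $[A,DC^{\,n}]\notin\{I,K\}$. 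If the linear part of $C$ is a root of unity, $DC^{\,n}$ is periodic in $n$, so the forbidden set is a union of arithmetic progressions rather than a finite set --- and it can be all of $\mathbb{Z}$. Concretely, take $A(z)=2z$ (so $P=0$), $C(z)=-z+1$, $D(z)=-\tfrac12 z+\tfrac12$: then $D$ does not fix $P$, $K=[C,D]$ is the translation $z\mapsto z+\tfrac12\neq I$, yet $[A,D]=K$ and $DC(z)=\tfrac12 z$ commutes with $A$; since $C^2=I$, every even $n$ trivializes the new $s$-th commutator and every odd $n$ leaves the $r$-th handle abelian. This configuration is compatible with all hypotheses (for $g\geq 3$, take an abelian handle with images $z\mapsto 2z$, $z\mapsto 3z$, the handle above, and a third handle whose commutator image is $z\mapsto z-\tfrac12$, so the puncture monodromy is trivial and $\rho$ is non-co-axial), and your selection rule --- any handle containing a generator not fixing $P$ --- may well pick exactly this $s$. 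So ``varies non-trivially with $n$'' does not yield finiteness, and in fact no admissible $n$ need exist for the chosen auxiliary handle; repairing this requires a genuinely different choice (of auxiliary handle/generator or of move), not a finiteness count.

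For comparison, the paper's proof anchors the induction the other way around and thereby sidesteps this periodicity problem: it pairs a \emph{good} handle $\{\alpha_i,\beta_i\}$ with a \emph{bad} handle $\{\alpha_j,\beta_j\}$ whose two images share a fixed-point set $F$, applies the same $\phi$, and, when needed, pre-twists the \emph{bad} handle ($\beta_j\mapsto\alpha_j\beta_j$) rather than the auxiliary one. The elementary fact that $\mathrm{Fix}(BA)\neq\mathrm{Fix}(A)$ whenever $\mathrm{Fix}(A)\neq\mathrm{Fix}(B)$ makes the new $j$-th handle automatically good, and the only possible failure --- the $i$-th handle becoming bad --- is an explicit coincidence of fixed points that cannot hold both before and after the pre-twist, because the dilation of $\rho(\alpha_j)$ is $\neq 1$ (equations \eqref{510-1}--\eqref{510-2}, resp.\ \eqref{510-3}); the paper also does not need all dilations $\neq1$, treating the translation-pair case (Case B) directly. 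Your preliminary reductions (including observing that the purely translational case is already covered by Proposition \ref{transurfk1} and must be excluded from the statement) are sound, but the inductive engine needs to be rebuilt along these lines for the proof to go through.
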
 

\begin{proof}
An affine map $A(z) = a z + b$ has a fixed point $\frac{b}{1-a} \in \C$, unless $a = 1$, \textit{i.e.} $A$ is a translation, in which case the fixed-point set $\text{Fix}(A) = \emptyset$.  Two affine maps $A$ and $B$ commute if and only if their fixed-point sets are identical. We shall also use the following elementary fact: if $A,B$ are affine maps with different fixed-point sets, then the fixed-point set of $B\circ A$ is different from that of $A$. Therefore, it suffices to show that one can choose each pair $\{\alpha_i,\beta_i\}$ of handle-generators such that their fixed-point sets are not identical; for the purposes of this proof we shall call a pair having this property (and the corresponding handle) "good". 

\smallskip

\noindent We start with some set of handle-generators $\{\alpha_i,\beta_i\}_{1\leq i\leq g}$; note that these $2g$ elements generate the fundamental group $\pi_1(S_{g,1})$. By Lemma \ref{cbas2} we can also assume that $\rho(\alpha_i)$ and $\rho(\beta_i)$ are non-trivial affine maps for each $1\leq i\leq g$. In what follows we shall modify this initial choice of generators by mapping class group elements till the pair of generators of each handle is good. The basic idea of this modification is the following: suppose $\{\alpha_i, \beta_i\}$ is a good pair, and $\{\alpha_j, \beta_j\}$ is not, then we replace these two  pairs of handle-generators by the pairs $\{\alpha_i, \beta_i\,\beta_j\}$ and $\{\alpha_i^{-1}\,\alpha_j, \beta_j\}$. Note that this change of handle-generators is effected by a mapping class $\phi:S_{g,1} \to S_{g,1}$, see Figure \eqref{changeofbasis}. Let $F$ be the common fixed-point set of $\rho(\alpha_j)$ and $\rho(\beta_j)$. We divide into two cases: 

\smallskip

\noindent \textit{Case A: The fixed-point set $F\neq \phi$.}  In this case $F$ is a single point; in what follows we assume that $F=0\in \C$ to simplify our computations because the general case can be reduced to this via a conjugation. Let $z\mapsto a_j z$ and $z\mapsto b_j z$ be the $\rho$-images of the generators $\alpha_j$ and $\beta_j$ respectively, where $a_j, b_j \in \C \setminus \{0,1\}$. We can assume without loss of generality that the fixed-point sets of $\rho(\alpha_i)$ and $\rho(\beta_i)$ are both distinct from $F$, since otherwise, if say $\text{Fix}(\rho(\alpha_i)) =F$ then we can perform a Dehn-twist in that handle around $\beta_i$ to change its generators $\{\alpha_i,\,\beta_i\} \mapsto \{\alpha_i\,\beta_i, \beta_i\}$. By the elementary fact noted above, $\text{Fix}(\rho(\alpha_i\,\beta_i)) \neq F$, so this new pair of generators has the required property. The same fact implies that the new second handle obtained after acting by the mapping class $\phi$, \textit{i.e.}  generated by  $\{\alpha_i^{-1}\,\alpha_j, \beta_j\}$, is good. However, it could still happen that for the new first handle, the $\rho$-images of the generators, namely $\rho(\alpha_i)$ and $\rho(\beta_i\,\beta_j) $, have the same fixed-point set. If $\rho(\alpha_i)$ is the affine map $z\mapsto a_i z + c_i$  and $\rho(\beta_i)$ is the affine map $z\mapsto b_i z + d_i$ then this happens when 


\begin{equation}\label{510-1} 
        \text{Fix}\big(\rho(\alpha_i)\big) = \frac{c_i}{1-a_i} = \frac{d_i}{1 - b_i\,b_j} = \text{Fix}\big(\rho(\beta_i\,\beta_j)\big).
\end{equation}

\noindent In this case, we first change the generators of the second  handle at the very beginning of the construction, by a Dehn twist around $\alpha_j$, namely $\{\alpha_j,\,\beta_j\} \mapsto \{\alpha_j, \alpha_j\,\beta_j\}$. This does not change the property that the fixed-point set of both generators is $F$; however after acting by the mapping class $\phi$, the new two pairs of handle-generators are now $\{\alpha_i,\, \beta_i\,\alpha_j\,\beta_j\}$ and $\{\alpha_i^{-1}\,\alpha_j,\, \alpha_j\,\beta_j\}$. The latter is a good pair for the same reason as before; the former is not a good pair only if
\begin{equation}\label{510-2}
\frac{c_i}{1-a_i} = \frac{d_i}{1-b_i\,a_j\,b_j}
\end{equation}
It is easy to check that \eqref{510-1} and \eqref{510-2} cannot simultaneously hold, since by our assumption $a_j\neq 1$.

\medskip

\noindent \textit{Case B: The fixed-point set $F= \phi$.} In this case both $\rho(\alpha_j)$ and $\rho(\beta_j)$ are translations, say $z\mapsto z + v$ and $z\mapsto z + w$ respectively. As above, let $z\mapsto a_i z + c_i$  and $z\mapsto b_i z + d_i$ be $\rho(\alpha_i)$ and $\rho(\beta_i)$ respectively. Then the pair of generators $\{\alpha_i^{-1}\,\alpha_j, \beta_j\}$ of the second handle is good since one generator maps to a translation, while the other does not. The new pair of generators $\{\alpha_i, \beta_i\,\beta_j\}$ of the first handle is either also good, in which case we are done, or else

\begin{equation}\label{510-3} 
    \text{Fix}(\rho(\alpha_i)) = \frac{c_i}{1-a_i} = \frac{b_i\,w+d_i}{1-b_i} = \text{Fix}(\rho(\beta_i\,\beta_j)).
\end{equation}


\noindent In the latter case, we proceed as in \textit{Case A}, namely, we first replace $\{\alpha_j,\beta_j\}$ with $\{\alpha_j, \alpha_j\,\beta_j\}$ to the second handle, at the beginning of the construction. The new second generator of the second handle is now the translation $z\mapsto z + v+w$, and after acting by the mapping class $\phi$, the new two pairs of handle-generators are $\{\alpha_i,\, \alpha\,\beta_i\,\alpha_j\,\beta_j\}$ and $\{\alpha_i^{-1}\,\alpha_j,\, \alpha_j\,\beta_j\}$. The latter is a good pair for the same reason as before, namely because one generator maps to a translation while the other does not. The first  pair must also be good, because otherwise 
\begin{equation}
    \frac{c_i}{1-a_i} = \frac{b_i\,v+b_i\,w+d_i }{1-b_i}  
\end{equation}
\noindent which contradicts \eqref{510-3} since we know $v \neq 0$ as none of the handle-generators map to the identity element. 

\smallskip


\noindent Thus, if there is one good pair of handle-generators, then we can use the above modification repeatedly to make each handle good. To complete the argument, we need to show that there exists a good handle: For this, note that since $\rho$ is not co-axial, there exists two elements from the initial set of generators that do not have the same fixed-point set. If they are generators for the same handle, then we already have one good pair. If not, suppose they belong to two handles neither of which is good; namely, suppose there are two pairs of handle-generators  $\{\alpha_i,\,\beta_i\}$ and $\{\alpha_j,\, \beta_j\}$ such that the elements in each pair have the same fixed point set, but the fixed-point sets for the pairs are not identical. Then we change the pair of handles by the mapping class $\phi$ exactly as above, namely where the two new pairs of handle-generators are $\{\alpha_i,\, \beta_i\,\beta_j\}$ and $\{\alpha_i^{-1}\,\alpha_j,\, \beta_j\}$. It follows from the elementary fact observed at the beginning of the proof that both of these are now good pairs.  
\end{proof} 

\subsubsection{Proof of Proposition \ref{affk1}: Co-axial representations}\label{affk1coa}

We shall divide this proof into three cases: 
\begin{itemize}
\item[(i)] $\rho$ is not unitary, i.e.\ its image in $\pslc$ does not lie in the circle subgroup \[U(1)= \{\text{diag}(e^{i\theta}, e^{-i\theta})\ \vert\ \theta \in \R\}/\{\pm I\},\]
    \item[(ii)] $\rho$ is unitary, but the image of $\rho$ is dense in $U(1)$, 
    \item[(iii)] $\rho$ is unitary, and the image of $\rho$ is finite, but not of order two.
\end{itemize}
Recall that as in the previous section, we are considering representations from $\Pi = \pi_1(S_{g,1})$.

\medskip 

\noindent\textit{Case (i): Non-unitary case.} First consider the quadrilaterals $\{Q_1, Q_2,\ldots, Q_g\}$ where for each $1\leq i\leq g$, the quadrilateral $Q_i$ is constructed exactly as in Proposition \ref{affg1k1} by taking the $\rho$-images $A_i$ and $B_i$ of the $i$-th handle, such that the base-point $p_i$ of $Q_i$ is also a vertex of $Q_{i-1}$ for each $i\geq 2$. 

\smallskip

\noindent The key idea is that we can do this so that each these quadrilaterals are pairwise disjoint, except for adjacent quadrilaterals which intersect only at a single vertex, \textit{e.g.}\ $Q_{i-1} \cap Q_i = \{p_i\}$. In order to show that we can do this, we consider two sub-cases, involving the unitary part $\rho_u$ of $\rho$, see Definition \ref{up}.

\medskip

\noindent \textit{Sub-case 1: $\rho_u$ has dense image.}  Recall from Proposition \ref{affg1k1} that the quadrilateral $Q$ corresponding to a handle (with generators mapping to $A$ and $B$) and base-point $p$ is defined by the oriented polygon $p \mapsto A(p) \mapsto AB(p)=BA(p) \mapsto B(p) \mapsto p$, see equation \eqref{pol}. It follows that if $A$ and $B$ have dilation factors each of modulus strictly greater than $1$, and with argument sufficiently small, then
\begin{itemize}
    \item[(a)] $Q$ lies entirely to the right of the vertical line passing through $p$, and
    \item[(b)] the "rightmost" point of $Q$ is $AB(p)=BA(p)$. 
\end{itemize}

\noindent Thus, to construct the desired non-overlapping "chain" of quadrilaterals $\{Q_1,Q_2,\ldots, Q_g\}$, we choose handle-generators as in Lemma \ref{coax-lem3}. We choose a base-point for $Q_1$, corresponding to the first handle, to be an point $p\in \mathbb{R}^+\subset \C$, and then define the base-point for each successive quadrilateral, corresponding to the next handle, to be the right-most point of the preceding quadrilateral, as in (b) above. 

\begin{figure}[h]
  \centering
  \includegraphics[scale=0.4]{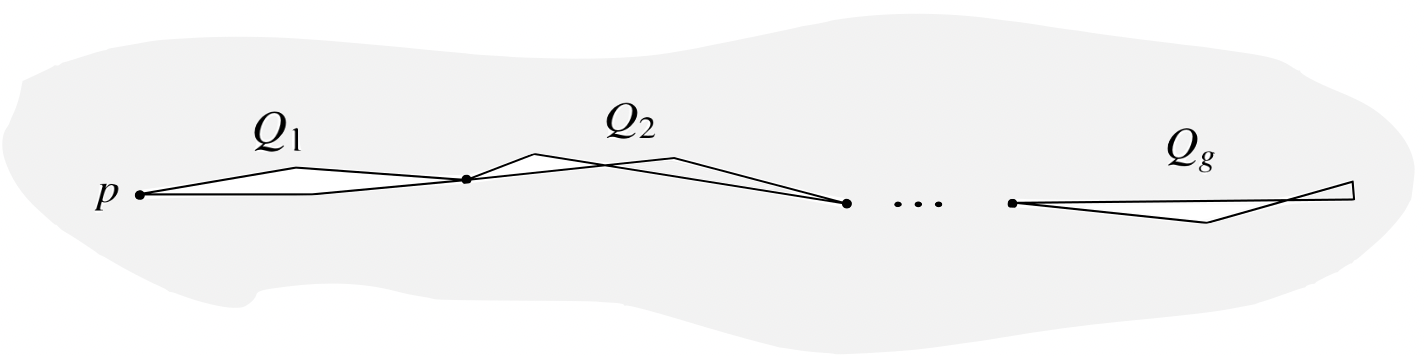}
  \caption{In sub-case 1, the chain of quadrilaterals proceeds towards the right and remains close to the positive real axis. }
  \label{chainofquads}
\end{figure}

\noindent This oriented chain of quadrilaterals bounds an immersed punctured disk on its right, where the puncture is at the point at infinity. In other words, $Q_1 \cup Q_2\cup \cdots \cup Q_g$ bounds an immersed disk in $\cp$ that contains the point $\infty$. For each quadrilateral, we can identify pairs of sides using the affine maps corresponding to the generators of that handle; this results in a genus $g$ surface equipped with a branched projective structure. Moreover, there is a unique branch-point, namely the point where all the vertices of the quadrilaterals get identified to; removing the branch-point we obtain the desired surface homeomorphic to $S_{g,1}$ with a projective structure having monodromy $\rho$. 

\medskip 

\noindent \textit{Sub-case 2: $\rho_u$ has discrete image.} Assume that the image is a cyclic group of order $m\geq 1$. Recall that we are in the case when $\rho$ is not unitary. We now observe that on each handle, we can perform the change of the pair of generators such that our Claim \ref{claim1} holds, and the handle-generators still satisfy the conclusion of Lemma \ref{coax-lem1}. To see this is true, observe that for the change of handle-generators  $\{\alpha,\beta\} \mapsto \{\alpha,\, \alpha^n \beta\}$ for $n\in \mathbb{Z}$, if $\rho_u(\alpha) =1$ and $\rho_u(\beta)=0$, then $\rho_u(\alpha^n\beta) = 1$ whenever $n \equiv m\, (\text{mod } 1)$. Recall from the proof of Claim \ref{claim1} that such Dehn-twists around $\alpha$ can ensure that the resulting new generator satisfies $\lvert b\rvert >1$. For the other generator, switch the roles of $\alpha$ and $\beta$, namely consider $\{\alpha,\beta\} \mapsto \{\alpha\,\beta^n,\, \beta\}$ for some $n\in \mathbb{Z}$. Note that in this case, the $\rho_u$-images of the new generators remain unchanged. Thus, we can assume that the handle-generators satisfy
\begin{itemize}
    \item[1.] $\lvert a_j \rvert, \lvert b_j\rvert >1$, and
    \item[2.] $\rho_u(\alpha_j) = \exp{\Big(\frac{2\pi\,i}{m}\Big)}$ and $\rho_u(\beta_j)=1$,
\end{itemize}
for each $1\leq j\leq g$.

\smallskip 

\noindent Let $Q_j$ be the quadrilateral corresponding to the $j$-th handle, with base-point $p_j\in \C$. Note that the edges of $Q_j$ are 
\begin{equation*}
p_j \longmapsto \lvert a_j \rvert \exp{\Bigg(\frac{2\pi\,i}{m}\Bigg)}\,p_j \longmapsto \lvert a_j \rvert \lvert b_j \rvert \exp{\Bigg(\frac{2\pi\,i}{m}\Bigg)}\, p_j \longmapsto \lvert b_j \rvert p_j \longmapsto p_j
\end{equation*}

\noindent In other words, in polar coordinates on $\C$, if $\lvert p_j \rvert = R$ and $\arg{p_j} = \theta_0$, the quadrilateral $Q_j$ bounds the rectangular region 
\begin{equation*}
    \Bigg\{(r,\theta)\ \vert\  R  \leq r  \leq \lvert a_j \rvert \lvert b_j \rvert \,R \text{ and }\ \theta_0 \leq \theta \leq \theta_0 + \frac{2\pi}{m} \Bigg\}.
\end{equation*} Note that the third vertex of $Q_j$ is an extreme point of the region, furthest from the origin.

\smallskip 

\noindent We choose the base-point of $Q_1$ to be $p_1=1$, and for each successive quadrilateral $Q_j$, define the base-point $p_j$ to be the third (i.e.\ extreme) vertex of $Q_{j-1}$. Note that $\lvert p_j\rvert > \lvert p_{j-1}\rvert$ for each $2\leq j\leq g$. The quadrilateral $Q_j$ can only intersect $Q_{j-1}$ at that common vertex, since all the remaining vertices of the preceding quadrilaterals $Q_1,Q_2,\ldots ,Q_{j-1}$ lie in the interior of the disk of radius $\lvert p_j \rvert$ around the origin.

\smallskip 

\begin{figure}
  \centering
  \includegraphics[scale=0.4]{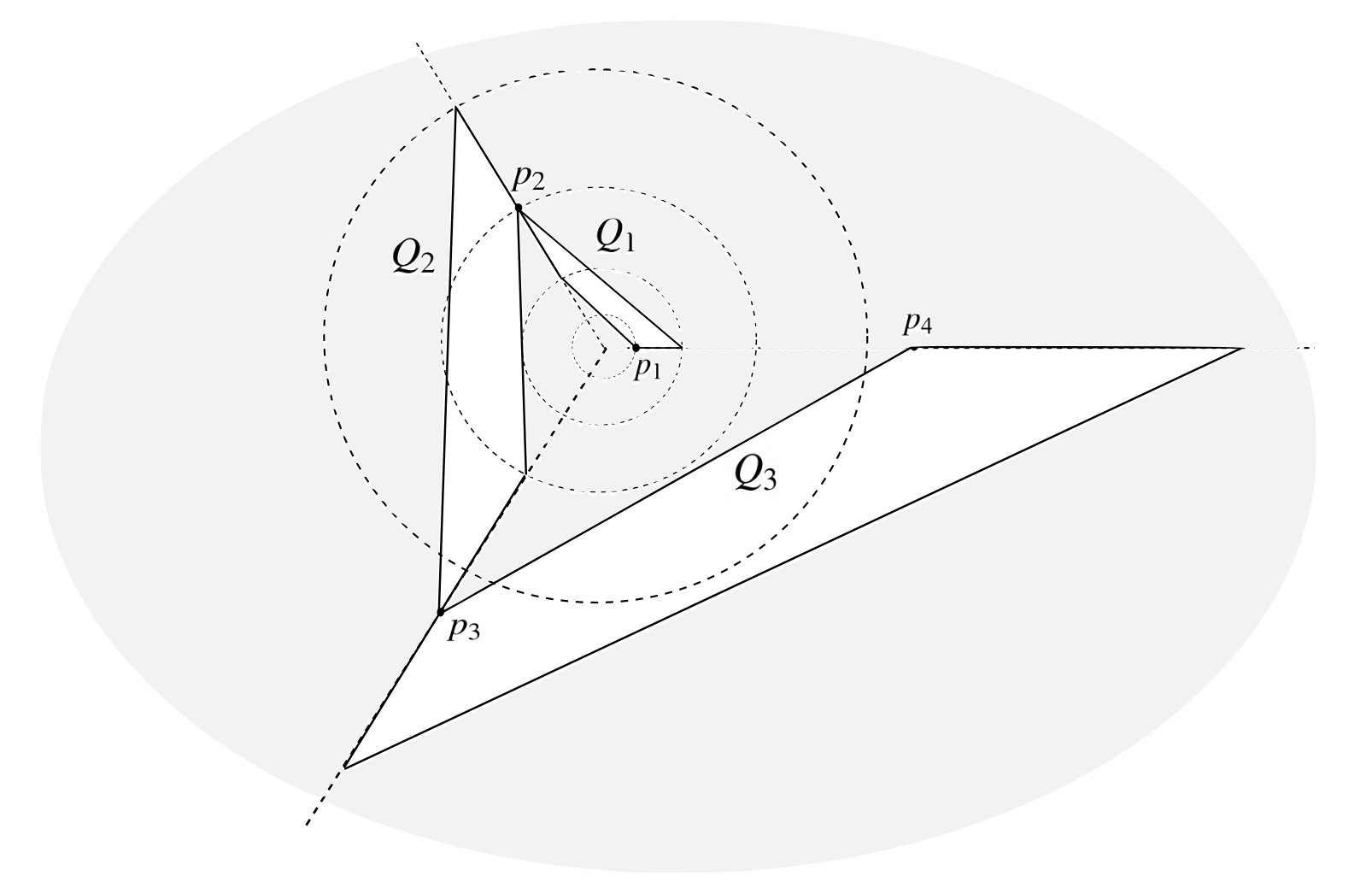}
  \caption{In sub-case 2, the chain of quadrilaterals is spiralling if the order of the discrete image is $m>1$. (Shown above for $m=3$.) }
  \label{spiralquad}
\end{figure}

\noindent The resulting sequence of quadrilaterals $Q_1,Q_2,\ldots, Q_g$ thus forms a non-overlapping chain, as we desired. Note that in the special case that $m=1$ (\textit{i.e.} $\rho_u$ is the trivial representation), then each quadrilateral is degenerate, as its sides lie along the real line, and they form a chain along the positive real axis; otherwise, if $m>1$, this chain is ``spiralling" as shown in Figure \eqref{spiralquad}.  As in sub-case 1, this (oriented) chain bounds an immersed (in fact an embedded) disk in $\cp$ on its right containing the point $\infty$. Identifying pairs of sides of each quadrilateral $Q_j$ using the affine maps $\rho(\alpha_j)$ and $\rho(\beta_j)$, we obtain a surface of genus $g$ equipped with a branched projective structure and monodromy $\rho$, with a unique branch-point where all the vertices of the chain get identified. Removing this branch-point, we obtain the desired projective structure on $S_{g,1}$.

\medskip 

\noindent\textit{Case (ii): Unitary with dense image.} Let us assume now that $\rho$ is unitary (i.e.\ $\rho=\rho_u$), and with an image that is dense in $\text{U}(1)$. Fix an $\epsilon>0$, the choice of which shall be made clearer later.  We can apply the proof of the second part of Lemma \ref{coax-lem3} to obtain a change of generators (effected by some mapping class) such that the resulting handle-generators $\{\alpha_j,\,\beta_j\}_{1\leq j\leq g}$ satisfy:
\begin{itemize}
    \item[1.] $\arg{a_j}, \arg{b_j} \notin 2\pi \mathbb{Q}$, and 
    \item[2.] $\lvert \arg{a_j} \rvert,  \lvert \arg{b_j} \rvert <\epsilon$
\end{itemize}
for each $1\leq j\leq g$. This change of generators is in fact exactly as in \cite[Lemmata 11.4 and 11.5]{CFG}.
\smallskip

\noindent Thus, we can assume that for the $j$-th handle, the pair of generators map to the elements of the form
\begin{equation*}
    z\mapsto \exp\Big({i\,\theta_j^1}\Big)\,z\,\,\text{ and }\,\, z\mapsto \exp\Big({i\,\theta_j^2}\Big)\,z
\end{equation*}

\noindent of $\text{U}(1)$ respectively, where $0<\theta_j^1,\theta_j^2 <\epsilon$, for each $1\leq j\leq g$. Choose the base-point for the first handle to be on the positive real axis, say $p_1 = R \in \mathbb{R}^+$. Then the quadrilateral $Q_1$ corresponding to the first handle, given by $p_1\mapsto \text{exp}({i\,\theta_j^1})p_1 \mapsto \text{exp}({i\,(\theta_j^1 + \theta_j^2)})p_1 \mapsto  \text{exp}({i\,\theta_j^2})p_1 \mapsto p_1$, has vertices on the circle of radius $R$, and lies in the sector bounded by rays at angles $0$ and $\theta_1^1 + \theta_1^2$. Choose the base-point of the next handle to be $p_2 = \text{exp}\Big({i\,(\theta_j^1 + \theta_j^2)}\Big)\,p_1$; the quadrilateral  $Q_2$ then lies in an adjacent  sector of angular width $\theta_2^1 + \theta_2^2 < 2\epsilon$.  We can continue placing quadrilaterals for successive handles, choosing the base-point of each to be the extreme point for the previous quadrilateral; each is contained in a sector of angular width less than $2\epsilon$.   Our initial choice of  $\epsilon>0$  can be made such that $g$ such sectors fit without overlapping, i.e.  $2g\epsilon < 2\pi$.  We thus obtain an oriented chain of quadrilaterals $Q_1,Q_2,\ldots, Q_g$, as in case (i), where successive handles intersect at a common vertex, and every other pair is disjoint. Their union then bounds an immersed disk  in $\cp$ on its right, containing the point $\infty$.  As in case (i), we then identify pairs of edges of each quadrilateral using the maps corresponding to the generators, to obtain a surface homeomorphic to $S_g$, equipped with a branched projective structure with a unique branch-point. Deleting the branch-point, we obtain the desired projective structure on $S_{g,1}$ with monodromy $\rho$. 

\medskip

\noindent\textit{Case (iii):} We now consider the remaining case when $\rho$ is co-axial, but the image of $\rho$ is a finite group in $\text{U}(1)$. Let the order of this finite group be $m\geq 3$; here, recall that  $m\neq 2$ by Lemma \ref{excase}.

\noindent We first apply Lemma \ref{coax-lem1} to  obtain handle-generators $\{\alpha_j,\,\beta_j\}_{1\leq j \leq g}$, such that
\begin{equation*}
    \rho(\alpha_j)=\rho(\beta_j) = \exp{\Bigg(\frac{2\pi\,i}{m}\Bigg)}
\end{equation*}
for each $j$. For any handle generated by $\{\alpha_j,\,\beta_j\}$, and a choice of a base-point $p\in \C$, the quadrilateral $Q$ with edges 
\begin{equation*}
    p\mapsto \rho(\alpha_j)p \mapsto \rho(\alpha_j\,\beta_j)p\mapsto \rho(\beta_j)p\mapsto p
\end{equation*}
is a degenerate ``V"-shaped quadrilateral, since the second and fourth vertices coincide. Such a quadrilateral bounds an immersed (in fact embedded) disk in $\cp$ in its exterior. However, all vertices lie on the circle of radius $\lvert p\rvert$ centered at $0$, and they span an angle $4\pi/m$ at the origin.  This makes it difficult to form a non-overlapping chain of quadrilaterals, as we were able to do in Cases (i) and (ii). We resolve this difficulty by using a ``grafting" construction that we shall describe next,  the idea of which is similar to Definition \ref{glue-new}. 

\smallskip 

\noindent First, we need to introduce the following 

\begin{defn}[Projective handle]\label{projhandle}
A \textit{projective handle} will refer to a branched projective structure on a torus with a single branch-point, obtained by identifying pairs of sides of a quadrilateral $Q$ in $\cp$ that bounds an immersed disk (recall that the preceding cases have involved constructing such projective handles).
\end{defn}

\begin{figure}[h] 
  \centering
  \includegraphics[scale=0.4]{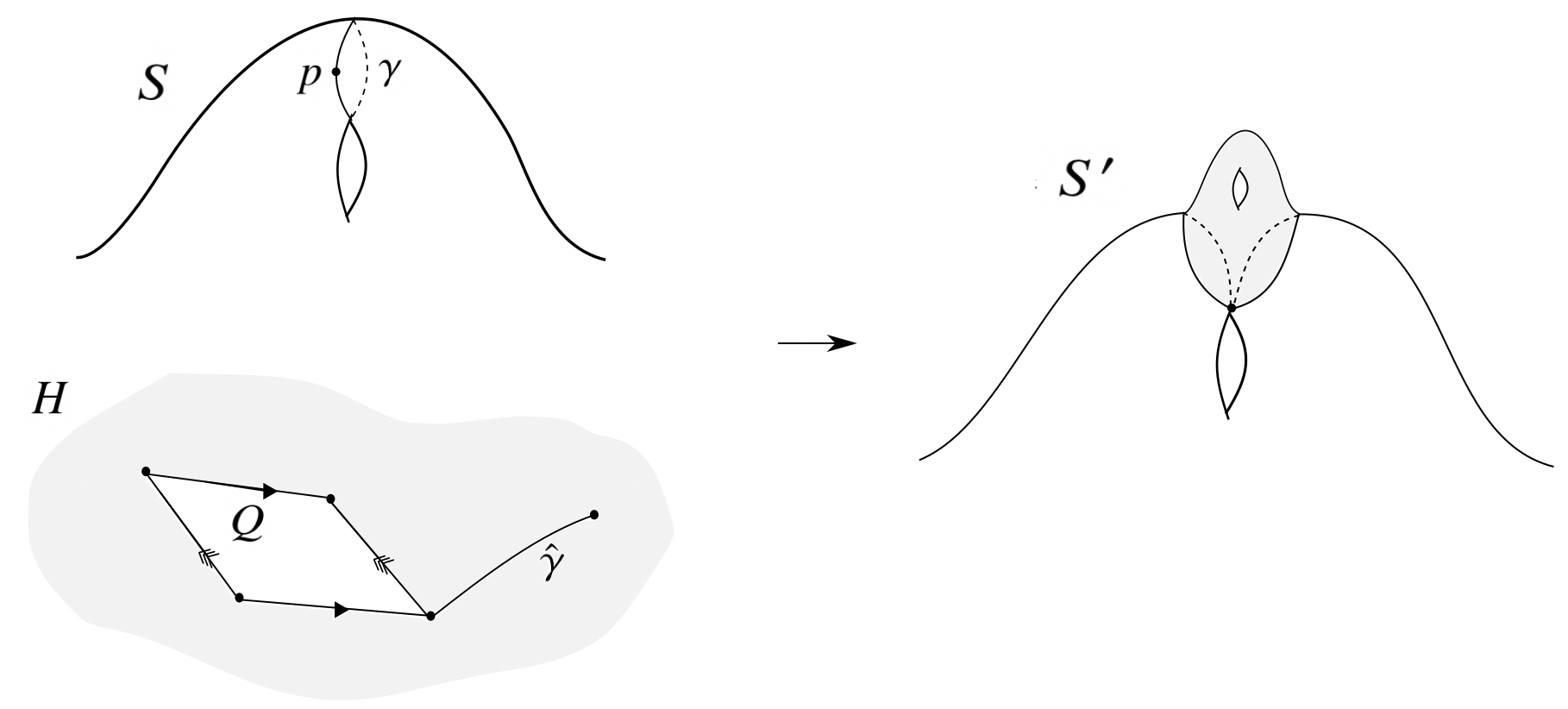}
  \caption{In Definition \ref{bubbhand} the handle $H$ is slit along $\widehat{\gamma}$ and grafted in on $S$ along $\gamma$. }
  \label{grafthandle}
\end{figure}

\begin{defn}[Grafting in a handle]\label{bubbhand} Let $S$ be a surface equipped with a branched projective structure, and let $\gamma$ be an embedded arc on $S$ from a branch-point $p$ to itself that develops onto an embedded arc $\widehat{\gamma}$ on $\cp$. Suppose $H$ is a projective handle that corresponds to a quadrilateral $Q$ on $\cp$ such that $\widehat{\gamma}$ lies in the disk in $\cp$ bounded by $Q$, and an endpoint of $\widehat{\gamma}$ is a vertex of $Q$.
Consider the one-holed torus $T$ obtained by introducing a slit in $H$ along the arc that develops onto $\widehat{\gamma}$; let the resulting two sides of the slit be $\sigma^+$ and $\sigma^-$. Then cut along the arc $\gamma$ on $S$ and identifying the resulting sides with the boundary arc $\sigma^+$ and $\sigma^-$ on $T$ respectively, so that the genus of the resulting surface $S^\prime$ is one more than that of $S$. Here, the identification is such that the developing maps to $\cp$ are precisely the same; the surface $S^\prime$ thus acquires a (branched) projective structure. See Figure \eqref{grafthandle}. 
\end{defn}

\noindent Note that in the construction above, 
\begin{itemize}
    \item there are no new branch-points that are introduced, but the order of the branch-point $p$ increases by two, and 
    \item the monodromy representations when restricted to the sub-surfaces $S$ and $H$ remain unchanged. 
\end{itemize}

\medskip 

\noindent We can now construct the projective structure on $S_{g,1}$ with monodromy $\rho$ (which is co-axial, unitary and discrete) by successively grafting in handles, as we now describe.

\smallskip

\noindent Start with the base-point $p_1\in \mathbb{R}^+\subset \C$ and the quadrilateral $Q_1$ corresponding to the first handle, generated by $\{\alpha_1,\,\beta_1\}$. The projective handle corresponding to $Q_1$ is our initial surface $S_1$. We shall successively graft in $g$ handles as in Definition \ref{bubbhand}; in what follows we describe the $j$-th step, where we assume we have a surface $S_{j}$ of genus $1\leq j <g$ equipped with a branched projective structure with a unique branch-point.

\begin{figure}[h!]
  \centering
  \includegraphics[scale=0.28]{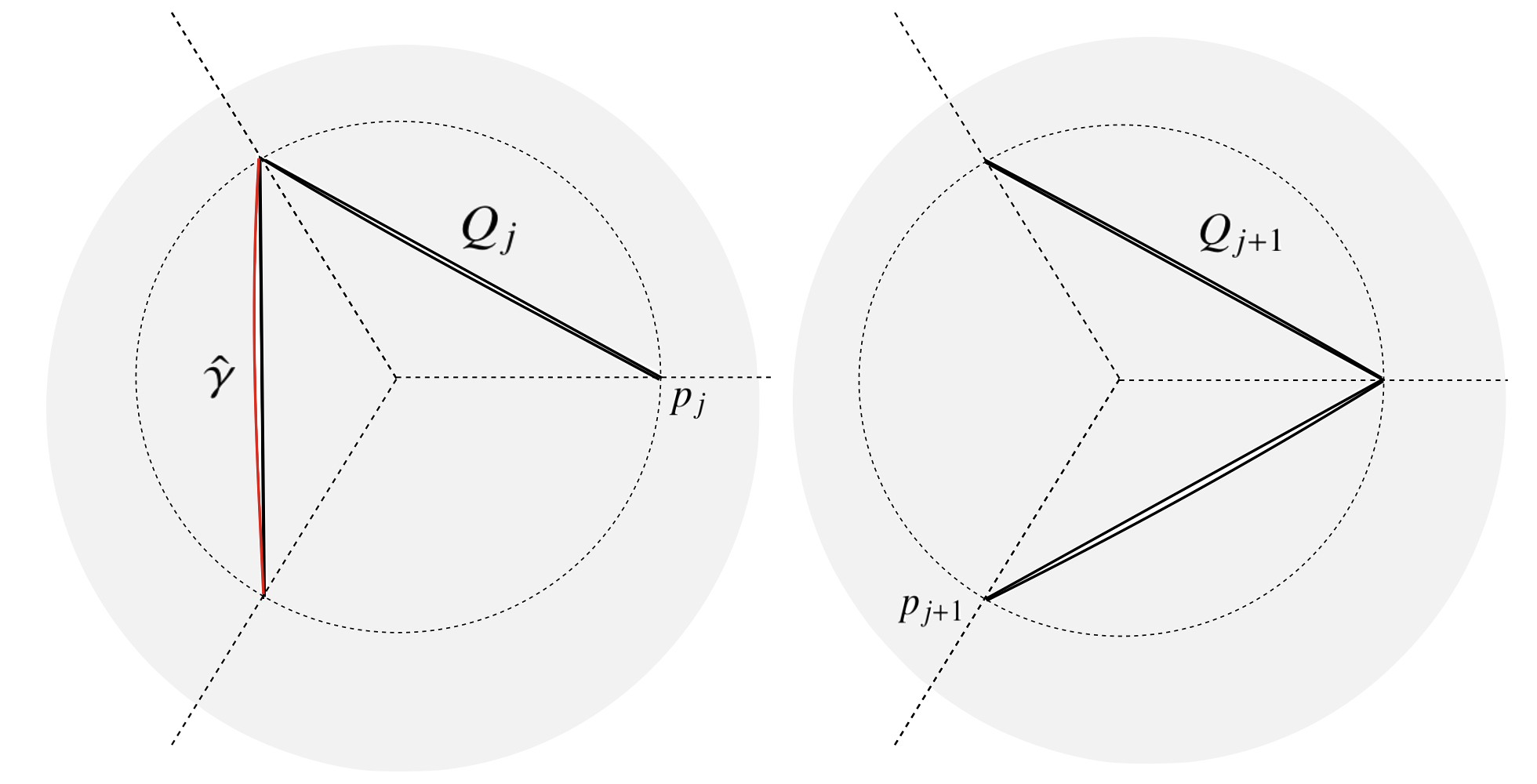}
  \caption{Two successive quadrilaterals in the case $m=3$, shown here on two different copies of $\cp$; in the inductive step the projective handle corresponding to the latter is grafted in by Definition \ref{bubbhand}. }
  \label{successivequads}
\end{figure}

\noindent Let $Q_j$ be the quadrilateral for the $j$-th handle, which by our construction will be a sub-surface of $S_j$. Let $H$ be the projective handle corresponding to the quadrilateral $Q_{j+1}$ of the next handle, when the base-point $p_{j+1}$ for that is taken to be the third (\textit{i.e.} extreme) point of $Q_j$. Note that $\gamma = \alpha_j\text{ or } \beta_j$ is an embedded arc from the branch-point on $S_j$ to itself, and we can choose a developing image that is an embedded arc $\widehat{\gamma}$ in $\cp$, namely one of the sides of $Q_j$ that is incident to $p_{j+1}$.
Moreover, $\widehat{\gamma}$ lies in the exterior of the quadrilateral $Q_{j+1}$ for $H$, see Figure \ref{successivequads}. 

\noindent Hence the construction in Definition \ref{bubbhand} can be applied, that is, we can cut along $\gamma$ on $S_j$ and graft in the handle $H$; the resulting surface is $S_{j+1}$.

\noindent At the end of $g$ such steps, we obtain a genus-$g$ surface $S_g$ with a branched projective structure with a unique branch-point. Removing the branch-point, we obtain a projective surface homeomorphic to $S_{g,1}$ with monodromy $\rho$, i.e.\ with $\rho(\eta) = \exp(2\pi\,i/m)$ for any handle-generator $\eta$. 

\medskip 

\noindent We have dealt with cases (i) - (iii), and this completes the first part of the proof of Proposition \ref{affk1}.

\subsubsection{Proof of Proposition \ref{affk1}: Non co-axial representations}\label{affk1notcoa}
Let $\rho: \pi_1(S_{g,1}) \to \affc$  be a non-trivial representation that is not co-axial. Since the puncture has trivial monodromy we regard $\rho$ as a representation $\overline{\rho}:\pi_1(S_g)\longrightarrow\affc$ and we shall realise it as the monodromy of a branch projective structure with a single branch point. We shall eventually delete the branch point to get a complex projective structure on $S_{g,1}$ with the desired monodromy.
\medskip 

\noindent \textit{Step 1: Commutators determine a convex polygon $\mathcal{C}$.} Given a non-coaxial representation $\rho$, Lemma \ref{non-coax} applies and hence we can assume the existence of a basis of handle-generators $\{\alpha_i, \beta_i\}_{1\leq i\leq g}$ such that $\rho([\alpha_i,\beta_i])\neq \text{I}$ for each $i$. It is easy to see that these commutators are all translations, which we denote by $t_1,t_2,\ldots, t_n$. It is also not hard to show that there exists a permutation of the handles (realized by a mapping class) such that there is an (possibly degenerate) oriented convex polygon $\mathcal{C}\subset\C$ with $g$ sides, such that the endpoints of the $i$-th side differ by the translation $t_i$, for each $1\leq i\leq g$. Indeed, one permutation that works is the one that puts the arguments of the translations in increasing order, \textit{i.e.} if $t_i\lvert t_i \rvert^{-1}$ are in counter-clockwise order on the unit circle (\textit{c.f.} \cite[Proof of Proposition 6.1]{CFG}). With respect to this choice, the piece-wise linear curve $\partial\mathcal{C}$ bounds the polygon $\mathcal{C}$ on its \textit{left}. Furthermore, the convex polygon $\mathcal{C}$ can be placed everywhere in $\C$. In fact, given any starting point $p_1\in\C$, the $i$-th side is from $p_i$ to $p_{i+1}=p_i+t_i$, where $i\in \{1,2,\ldots,g\}$ in the re-ordered set of handles and $p_{g+1}=p_1$ because $t_1+\cdots+t_g=0$.

\medskip

\noindent \textit{Step 2: Realizing a one-holed torus.} Let $S_{1,1}\cong H\subset S_g$ be any handle, see Definition \ref{handle}, such that the representation $\rho_{|H}$ induced by the inclusion $H\hookrightarrow S_g$ is not abelian. In this step we show how to realize
$\rho_{|H}:\pi_1(S_{1,1}) \to \text{Aff}(\C)$, as the holonomy of a branched projective structure on a one-holed torus $T$ with linear boundary, except for at most one corner point, and no interior branch point. 

\smallskip

\noindent For this, we shall need an immersed disk in $\cp$ containing $\infty$ bounded by an Euclidean pentagon $\mathcal{P}$. Such a pentagon is determined by a choice of a base-point $p$ and the $\rho_{|H}$-images of the handle-generators, that we denote by $A$ and $B$. Recall that, here, we shall assume that $A$ and $B$ do not commute. Thus, given a point $p\in\C$, the pentagon $\mathcal{P}\subset \cp$ is defined as the region containing the infinity $\infty$ and bounded  by the chain
\begin{equation}\label{pentbaseoncomm}
    p \longmapsto [A,B](p)\longmapsto B^{-1}(p)\longmapsto A^{-1}B^{-1}(p) \longmapsto BA^{-1}B^{-1}(p)\longmapsto p
\end{equation} 

\noindent on the \textit{right}, so that the base-point $p$ is an extremal point of the segment $\sigma$ corresponding to the commutator.

\smallskip

\noindent We denote the oriented sides of $\mathcal{P}$ as follows: $e_1 = \overline{BA^{-1}B^{-1}(p)\, p}$, $e_2 = \overline{A^{-1}B^{-1}(p)\,BA^{-1}B^{-1}(p) }$, $e_3 = \overline{A^{-1}B^{-1}(p)\, B^{-1}(p)}$, $e_4 = \overline{B^{-1}(p)\,[A,B](p)}$ and, finally, $\sigma = \overline{p\, [A,B](p)}$. Notice that a similar construction already appeared in Proposition \ref{affk2}, see figure \ref{leftrightchoice}, where the base-point has been chosen to be a different vertex. It is not clear a priori that this pentagon bounds an immersed disk for a suitable choice of $p$, therefore the key assertion here is the following:

\begin{lem}\label{pent_lemma}
Let $p_0\in\C$ be any point with positive imaginary part sufficiently large. Define
\begin{equation}\label{parbaspoint}
    p_t=
    \begin{cases}
    (1+t)\Re{(p_0)}+i\,\Im{(p_0)} \quad \text{if}\quad \Re{(p_0)}>0\\
    (1-t)\Re{(p_0)}+i\,\Im{(p_0)} \quad \text{if}\quad \Re{(p_0)}<0\\
    \end{cases}
\end{equation} for $t>0$. Then there is a basis $\{\alpha,\beta\}$ of $\pi_1(S_{1,1})$ and $t_0>0$ such that the pentagon \eqref{pentbaseoncomm}
 \[  p_t \longmapsto [A,B](p_t)\longmapsto B^{-1}(p_t)\longmapsto A^{-1}B^{-1}(p_t) \longmapsto BA^{-1}B^{-1}(p_t)\longmapsto p_t
 \] based at $p_t$ bounds an immersed disk in $\cp$ containing the infinity on its right for any $t\ge t_0$; where $A=\rho(\alpha)$ and $B=\rho(\beta)$.
\end{lem}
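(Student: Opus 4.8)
\noindent The plan is to write the five vertices of the pentagon \eqref{pentbaseoncomm} explicitly as affine functions of the real parameter $t$, and to show that for $t$ large the resulting closed polygonal curve is \emph{embedded} in $\cp$; then $\mathcal{P}$, being by definition the complementary component containing $\infty$, is automatically an embedded (hence immersed) disk, and one only has to check that its boundary orientation $p_t\mapsto[A,B](p_t)\mapsto B^{-1}(p_t)\mapsto A^{-1}B^{-1}(p_t)\mapsto BA^{-1}B^{-1}(p_t)\mapsto p_t$ leaves $\mathcal{P}$ on the right. First I would fix a convenient basis. Since $\rho$ is not co-axial, Lemma \ref{non-coax} lets us assume $A:=\rho(\alpha)$ and $B:=\rho(\beta)$ have distinct fixed-point sets, so that $[A,B]$ is a non-trivial translation $z\mapsto z+\tau$ with $\tau\neq 0$. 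When $\mathrm{Li}\circ\rho$ restricted to the handle is not unitary, the argument of Lemma \ref{coax-lem3} (together with Dehn-twists as in its proof) further arranges the dilation factors $a,b$ of $A,B$ to satisfy $|a|,|b|>1$ and $a,b\notin\mathbb{R}_{>0}$, with $|\arg a|,|\arg b|$ as small as we like when $\rho_u$ is dense; when the handle representation is Euclidean (so $|a|=|b|=1$) one still has $\rho_u$ dense -- a finite group of Euclidean isometries is co-axial -- and one can again make $|\arg a|,|\arg b|$ small. This generalises the pentagon picture already used for $g=1$ in Proposition \ref{affg1k1}, now with a base-point placed near the $\sigma$-vertex.

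Next I would carry out the vertex computation. Because $p_t=p_0+\varepsilon t\,\Re(p_0)$ with $\varepsilon=\mathrm{sgn}\,\Re(p_0)\in\{\pm1\}$ is affine in $t$, and each further vertex is an affine image of $p_t$, every vertex has the form $v_i(t)=w_i+c_i t$ with $w_i,c_i\in\C$; a direct computation gives $c_1=c_2=\varepsilon\Re(p_0)$ with $v_2(t)-v_1(t)\equiv\tau$ (so the side $\sigma$ is a fixed short segment carried off to infinity), together with $c_3=\varepsilon\Re(p_0)/b$, $c_4=\varepsilon\Re(p_0)/(ab)$ and $c_5=\varepsilon\Re(p_0)/a$. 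Thus each side of $\mathcal{P}(p_t)$ is a straight segment whose two endpoints move affinely in $t$, the combinatorial type of the configuration is eventually constant, and it suffices to analyse the limit $t\to\infty$: after rescaling by $t$ the normalised vertices $v_i(t)/t$ converge to the $c_i$.

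Then I would establish embeddedness and read off the orientation. When $|a|,|b|>1$ the trailing vertices $v_3,v_4,v_5$ grow strictly slower than $v_1,v_2$ ($|c_3|,|c_4|,|c_5|<|c_1|=|c_2|$), and with the arguments chosen small all five rays $\mathbb{R}_{>0}c_i$ lie in a narrow cone about the direction $\varepsilon$: so for $t\gg 0$ the curve is a thin ``spike''. One checks that no two of the five sides cross: consecutive sides meet only at their shared vertex because the relevant direction vectors are non-parallel (this is where $a,b\notin\mathbb{R}_{>0}$ is used) and the short side $\sigma$ does not fold back along $e_1$; and non-consecutive sides are disjoint because the two long sides $e_1=\overline{v_5v_1}$ and $e_4=\overline{v_2v_3}$ lie in disjoint thin neighbourhoods of the distinct rays they span, while $e_2,e_3$ remain in a bounded region near the origin. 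Hence the chain \eqref{pentbaseoncomm} is a simple closed curve; by the Jordan curve theorem on $\cp$ it bounds two disks, and comparing the local picture along $e_1$ with the given orientation identifies the one containing $\infty$ as the one lying to its right. The Euclidean case is entirely analogous, with the $|c_i|$ all equal to $|\Re(p_0)|$ so that the pentagon converges to a long thin parallelogram-like region, embeddedness then following from the smallness of $|\arg a|,|\arg b|$.

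I expect the genuinely delicate point to be the simultaneous control in step three: ruling out crossings of the two long sides $e_1,e_4$ and of $\sigma$ with the remaining sides \emph{uniformly} for all $t\ge t_0$, and, relatedly, verifying that the basis change of Lemma \ref{non-coax} (making the fixed-point sets distinct) can be performed compatibly with making the dilation arguments small in the Euclidean sub-case. The vertex computation and the identification of the correct complementary component are routine once the combinatorial picture is pinned down.
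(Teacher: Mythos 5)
There is a genuine gap, and it lies at the heart of your argument. You set out to prove that the whole pentagon is \emph{embedded} for $t\ge t_0$ and then invoke the Jordan curve theorem; but this is both stronger than what the lemma asserts (only an \emph{immersed} disk) and not true in general, and your justification of it does not hold up. The paper's proof deliberately avoids embeddedness: it first observes that a closed curve in $\C\cup\{\infty\}$ obtained by concatenating \emph{two} embedded arcs bounds an immersed disk on either side, and then only proves that the three consecutive sides $e_4,e_3,e_2$ --- the chain $AB(q_t)\to A(q_t)\to q_t\to B(q_t)$ with $q_t=A^{-1}B^{-1}(p_t)$ --- form an embedded arc, by forcing $\overline{q_t\,B(q_t)}$ and $\overline{A(q_t)\,AB(q_t)}$ into opposite half-planes for large $t$ (using $|q_t|\to\infty$ and $\arg q_t\to\arg\overline{ab}$). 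Your rescaling argument cannot deliver embeddedness of the full pentagon: since $c_1=c_2$, the normalised vertices $v_1(t)/t$ and $v_2(t)/t$ converge to the \emph{same} point, so the commutator side $\sigma$ is invisible in the limit and nothing can be concluded near $v_1,v_2$ at the original scale; the limit quadrilateral with vertices $r,\,r/b,\,r/(ab),\,r/a$ is affinely the quadrilateral on $1,a,ab,b$, which can perfectly well be self-intersecting (the paper allows exactly this in Proposition \ref{affg1k1}); and the claim that $e_2,e_3$ ``remain in a bounded region near the origin'' is false, since every side except $\sigma$ has length growing linearly in $t$.

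The case analysis is also incomplete. Lemma \ref{coax-lem3} gives small arguments only when $\rho_u$ has \emph{dense} image; when the unitary part is discrete and non-trivial the arguments of $a,b$ are fixed multiples of $2\pi/m$ and your narrow-cone picture fails, and when it is trivial one can have $a,b\in\mathbb{R}_{>0}$ (e.g.\ $A(z)=2z$, $B(z)=3z+1$, which is not co-axial), so your normalisation $a,b\notin\mathbb{R}_{>0}$ --- on which your non-parallelism of consecutive sides rests --- is unattainable. Likewise the assertion that a Euclidean handle forces $\rho_u$ dense is wrong: $A(z)=-z$, $B(z)=z+1$ generate a non-co-axial Euclidean group whose unitary part has order two. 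These are precisely the situations the paper treats separately (Corollary \ref{approxlem} in the dense case; Lemma \ref{coax-lem1} and $b\in\mathbb{R}^+$ in the discrete non-trivial case; comparison of the constant imaginary parts of $q_t$, $A(q_t)$, $B(q_t)$, $AB(q_t)$ when $\rho_u$ is trivial), and none of them is covered by your argument. Your computation of the vertex velocities is correct and close in spirit to the paper's asymptotics of $q_t$, but without the two-embedded-arcs reduction and the half-plane separation the proof does not go through.
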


\textit{(Here $\Re(z)$ and $\Im(z)$ are the real and imaginary parts respectively, of the complex number $z$.)}

\smallskip

\noindent The polygon \eqref{pentbaseoncomm} and its shape highly depend on the choice of a base-point $p_0$. A priori, there are \textit{no} restrictions on such a choice. The lemma above says that if the base-point $p_0$ is taken with positive imaginary part sufficiently large then, by moving $p_0$ horizontally, we eventually find a time $t_0$ such that the polygon \eqref{pentbaseoncomm} bounds on its right an immersed pentagon in $\cp$ containing the infinity. How large the imaginary part of $p_0$ must be will be clear in context, case by case. Due to the technicality of the Lemma \ref{pent_lemma}, we postpone its proof to the end of the current subsection \ref{hgask1}.

\medskip

\noindent Suppose Lemma \ref{pent_lemma} holds, \textit{i.e.} there is such an immersed disk. The desired branched projective structure is then obtained by identifying the oriented sides $e_1$ and $e_3$ via the affine map $A$ and the sides $e_2$ and $e_4$ via the affine map $B$. The resulting surface is a one-holed torus $T$, where the side $\sigma$ after the identifications forms the boundary  $\partial T$; the vertices of $\mathcal{P}$ get identified to the unique corner-point that lies on that boundary. We shall consider handles thus constructed in the next step below. 


 
\medskip 

\noindent \textit{Step 3: Gluing handles to the polygon $\mathcal{C}$.} Let $\{\alpha_i,\beta_i\}_{1\le i\le g}$ be a set of handle-generators as given in the Step 1, namely such that $t_i=\rho\big([\alpha_i,\beta_i]\big)\neq\text{I}$ for each $i$. We can order the handles cyclically so that the translations $t_i$ form a convex polygon $\mathcal{C}\subset \C$ and $\partial\mathcal{C}$ bounds the polygon on its left.

\smallskip

\noindent Let $H_i$ be the handle generated by $\{\alpha_i, \beta_i\}$, where $i \in \{1,2,\ldots, g\}$. For any $i$, we want to apply the second step above and then obtain a one-holed torus $T_i$ which carries a branched projective structure having holonomy $\rho_i = \rho\vert_{H_i}$. In fact, given a suitable starting point $p_i$, Lemma \ref{pent_lemma} above states that by perturbing $p_i$ horizontally, \text{i.e.} by preserving the imaginary coordinate, the chain \eqref{pentbaseoncomm} eventually bounds an immersed pentagon $\mathcal{P}_i\subset\cp$ on its right containing the point at infinity. 

\smallskip

\noindent Recall there are no restriction on where to place $\mathcal{C}$, that is its shape does not depend on the base-point; indeed, changing the base-point changes $\mathcal{C}$ by a translation. Therefore we place it sufficiently far from the origin so that the real and imaginary parts of each vertex are sufficiently large and hence Lemma \ref{pent_lemma} applies for each handle. More precisely, the $i$-th vertex of $\mathcal{C}$ will serve as the base-point for the $i$-th handle. A fundamental membrane for the developing image of $T_i$ is $R_i$, the immersed pentagonal region in $\cp$ on the right of the chain \eqref{pentbaseoncomm} based at the $i$-th vertex of $\mathcal{C}$. The image of the boundary $\partial T_i$ is exactly the $i$-th side of the convex polygon $\mathcal{C}$ constructed in Step 1.

\smallskip

\noindent Consider the region $R_0 \subset \C$ bounded by the polygon $\mathcal{C}$; note that $R$ is empty if $\mathcal{C}$ is degenerate (i.e.\ all sides are collinear).  Define the space $\overline{R}_0 = R_0/\sim$ where $\sim$ identifies all the vertices of $\mathcal{C}$ to a point; topologically, $\overline{R}_0$ is homotopy-equivalent to a $g$-holed sphere, and the $i$-th side of $\mathcal{C}$ defines an $i$-th "boundary circle" $c_i$ on $\overline{R}_0$. We now glue $\bar{R}_0$ with the one-holed tori obtained above by identifying the boundary of $T_i$ with $c_i$ for each $1\leq i\leq g$, such that the resulting surface is homeomorphic to $S_g$. This surface acquires a branched projective structure with a unique branch point, and a fundamental membrane in the image of its developing map is $R_0 \cup \bigcup_{i=1}^g R_i$. Removing the branch-point, we obtain our desired projective structure on $S_{g,1}$ with holonomy $\rho$. 

\smallskip

\noindent It only remains to prove Lemma \ref{pent_lemma}.

\begin{figure}
  \centering
  \includegraphics[scale=0.3]{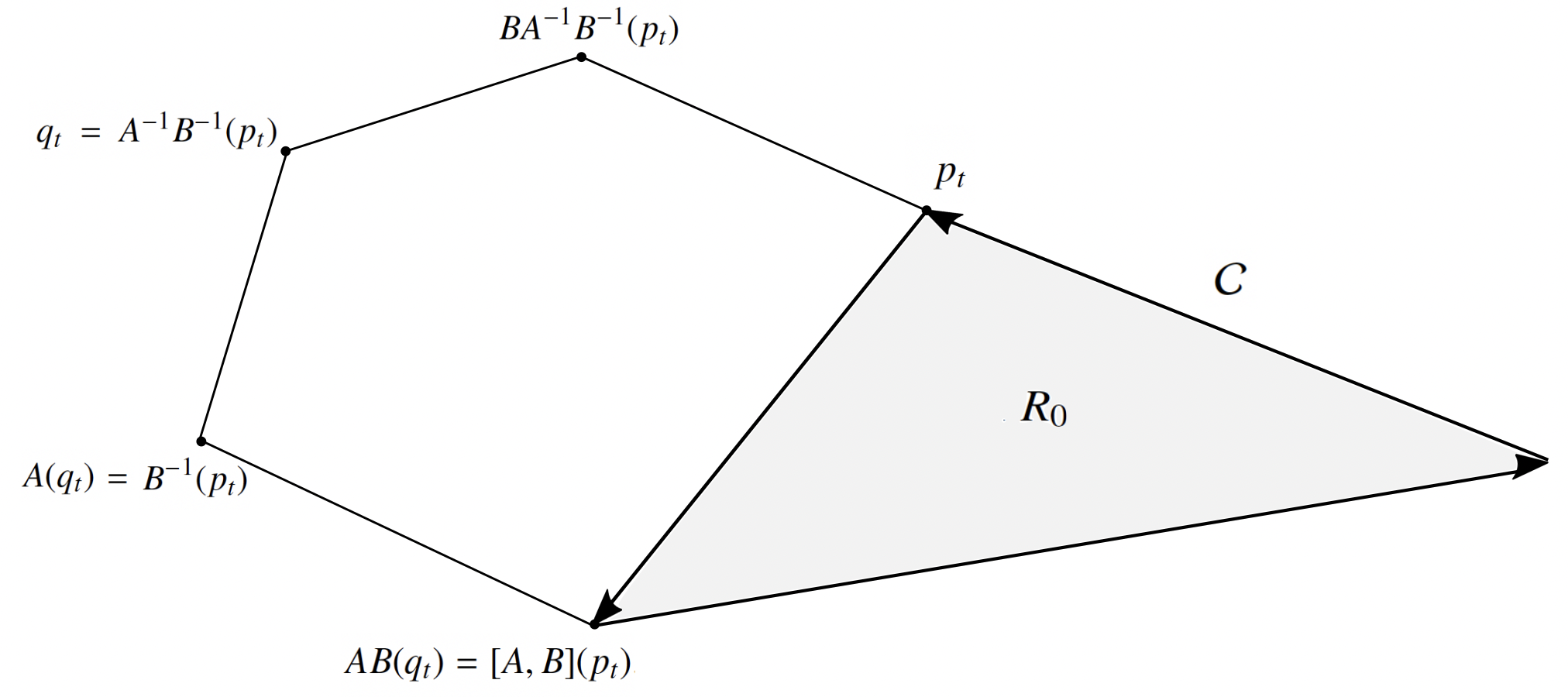}
  \caption{The vertices of the pentagon in the proof of Lemma \ref{pent_lemma} are shown labelled. The commutator edge $\overline{p_t\, [A,B](p_t)}$ coincides with one of the edges of the convex polygon $\mathcal{C}$, and the region $R_0$  bounded by $\mathcal{C}$ lies on its left.}
\end{figure}

\begin{proof}[Proof of Lemma \ref{pent_lemma}]
Given any set of handle generators, say $\{\alpha,\beta\}$, for $\pi_1(S_{1,1})$ we shall denote the $\rho$-images of $\alpha,\beta$ by $A(z)=az+c$ and $B(z)=bz+d$ respectively. We observe that the proof follows as soon as we show that three consecutive edges of the pentagon
are embedded in $\C$. This is because it is easy to verify that a closed oriented curve in $\C\cup \{\infty\} $ that is obtained by concatenating two embedded arcs always bounds an immersed disk on either of its sides (\textit{c.f.} Figure \ref{leftrightchoice}). Let us start with some generalities.

\smallskip

\noindent Let $p_0\in \C$ be any point on the upper half-plane such that $[A,B](p_0)$ is contained in the same half-plane. Notice that this can be made sure by choosing $p_0$ with positive imaginary part and greater than $2\,\lvert p_0 -  [A,B](p_0)\rvert $. Let $p_t$ be defined as in the equation \eqref{parbaspoint}. As a consequence of our definition, for $t$ running to the infinity, the imaginary part of $p_t$ remains constant and hence the points $p_t$ and $[A,B](p_t)$ both lie on the upper half-plane for any time $t\ge0$.

\smallskip

\noindent We now define $q_t=A^{-1}B^{-1}(p_t)$ for any $t\ge0$. We observe that, for $t$ tending to infinity, the following limits hold
\begin{equation}\label{limits}
    \lvert q_t\rvert \longrightarrow \infty \,\,\text{ and }\,\, \arg q_t \longrightarrow \arg \overline{ab}=\delta.
\end{equation}
The second limit can be easily explained as follows. We first notice that $q_t$ can be written as follows:
\begin{equation}
    q_t=\frac{\big(p_t-(bc+d)\big)\,\overline{ab}}{\lvert ab\rvert^2}.
\end{equation} By setting $w = bc+d$, then
\begin{align}
    \label{realqt} \Re{(q_t)}&=\frac{1}{\lvert ab \rvert^2}\Bigg(\Big(\Re{(p_t)}-\Re{(w})\Big)\Re{(\overline{ab})}-\Big(\Im{(p_t)}-\Im{(w)}\Big)\Im(\overline{ab})\Bigg)\\
    \label{imqt} \Im{(q_t)}&=\frac{1}{\lvert ab \rvert^2}\Bigg(\Big(\Re{(p_t)}-\Re{(w})\Big)\Im{(\overline{ab})}+\Big(\Im{(p_t)}-\Im{(w)}\Big)\Re(\overline{ab})\Bigg).
\end{align} Now it is a routine exercise to check that $\arg q_t\longrightarrow\delta$ for $t$ running to the infinity (recall that the imaginary part of $p_t$ is constant as a function of $t$ and equal to $\Im{(p_0)}$). 

\smallskip 

\noindent We shall now distinguish two cases according to the image of the unitary part of $\rho$.

\smallskip

\noindent \textit{Case 1. The unitary part $\rho_u$ has dense image in $U(1)$.} In this case our Corollary \ref{approxlem} applies and hence, for any arbitrarily small $\epsilon>0$, there is a set of handle generators $\{\alpha,\beta\}$ such that 
\[ 0<\arg a < \epsilon,\ 0< -\arg b<\epsilon\,\,\text{ and }\,\, 0\le \arg a+ \arg b<\epsilon.
\]

\smallskip

\noindent The inequalities $0\le\arg a+ \arg b<\epsilon$ readily implies $-\epsilon<\delta\le0$ since $\delta = -\arg a - \arg b$, and note that $\delta =0$ if and only if $ab\in\R$. Let us start by assuming $\delta<0$. Then, for any $t$ large enough, the point $q_t$ always lies on the lower half-plane and its norm can be taken to be  arbitrarily large. 
\smallskip 

\noindent We now consider the point $A(q_t)=aq_t+c$. For any $t$ large enough, $\arg A(q_t)$ is barely affected by the translational part of $A$, in other words $\arg A(q_t)\approx \arg aq_t$. More precisely, since $\arg q_t$ tends to $\delta$ (see formula \eqref{limits}) then $\arg aq_t\longrightarrow - \arg b>0$ and the open ball $B\big(aq_t,\,2|c|\big)$ is entirely contained in the upper half-plane for any $t$ sufficiently big. Clearly, $A(q_t)\in B\big(aq_t,\,2|c|\big)$.\\
In the same fashion we can observe that $\arg bq_t\longrightarrow -\arg a<0$ and the open ball $B\big(bq_t,\,2|d|\big)$ is entirely contained in the lower half-plane for any $t$ sufficiently big. Similarly to the above, it is clear that $B(q_t)\in B\big(bq_t,\,2|d|\big)$.

\smallskip

\noindent Recall that there exists a $t_1>0$ such that $q_t$ always lies in the lower half-plane for any time $t>t_1$. This necessarily forces  the segment $\overline{q_t\, B(q_t)}$ to be contained in the lower half-plane. On the other hand, there exists a  $t_2>0$ such that $A(q_t)$ always lies in the upper half-plane for any $t>t_2$ as already observed and this forces the edge $\overline{A(q_t),\,AB(q_t)}$ to be entirely contained in the upper half-plane, where $AB(q_t)=[A,B](p_t)$. As a consequence, the chain of segments 
\begin{equation}
    AB(q_t)\longrightarrow A(q_t) \longrightarrow q_t \longrightarrow B(q_t)
\end{equation} is embedded in $\C$. In fact, the edges $\overline{q_t\,B(q_t)}$ and $\overline{AB(q_t)\,A(q_t)}$ cannot intersect for any time $t>\max\{t_1,t_2\}$ because they lie on different half-planes. Therefore the polygon \eqref{pentbaseoncomm} bounds an immersed disk on its right containing the infinity on the Riemann sphere.

\smallskip

\noindent Let us now assume $\delta=0$. This case occurs if and only if $ab\in \mathbb{R}$ and this implies $\Im{(\overline{ab})}=0$. The imaginary part $\Im{(q_t)}$ of $q_t$ seen as a function of $t$ is constant, see the formula \eqref{imqt}, and it may be positive. Therefore the argument above might simply not apply in this case. We bypass this issue as follows. Recall that $ab\in\mathbb{R}$ if and only if $\arg a=-\arg b$. Then it is sufficient to replace the given pair of handle-generators  $\{\alpha,\beta\}$ with $\{\alpha',\beta'\}=\{\alpha\beta^{-1},\beta\}$ in order to fall in the case of $\delta<0$. In fact, let $a'$ and $b'$ be the linear parts of $\rho(\alpha')$ and $\rho(\beta')$ respectively, then
\begin{equation}
    \arg a'=\arg a-\arg b=2\arg a \,\, \text{ and } \arg b'=\arg b.
\end{equation} Now it is an easy matter to check that both $0<\arg a', -\arg b'<2\epsilon$ and $0<\arg a'+\arg b'<2\epsilon$ hold which imply $\arg \overline{a'b'}<0$ as desired.

\medskip

\noindent \textit{Case 2. The unitary part $\rho_u$ has discrete image in $U(1)$.} We now suppose the unitary part $\rho_u$ of $\rho$ is discrete, that means $\text{Im}(\rho_u)\cong\Z_m$ for some $m\ge2$ or trivial. We shall consider these cases separately.

\smallskip

\noindent \textit{Sub-case (i): $\rho_u$ non-trivial.} Suppose the unitary part $\rho_u$ of $\rho$ is a non-trivial representation with discrete image isomorphic to $\mathbb{Z}_m$ for some $m\ge2$. Our Lemma \ref{coax-lem1} applies and hence we can find a set of handle generators $\{\alpha,\beta\}$ such that
\[ \rho_u(\alpha)=\exp\Bigg(\,\frac{2\pi\,i}{m}\,\Bigg) \,\,\text{ and }\,\, \rho_u(\beta)=1.
\] In this case the proof does not differ much from the previous one but it can be simplified a little. Since $\text{Li}\circ\rho(\beta)=b\in\mathbb{R}^+$, we can immediately notice that the imaginary part $\Im{(A(q_t))}$ is constant as a function of $t$. In fact, by setting $q_t=A^{-1}B^{-1}(p_t)$ as above, it is sufficient to observe that
\[ A(q_t)=B^{-1}(p_t)=\frac{p_t-d}{b}.
\] In particular, since $p_0$ can be chosen arbitrarily, we may suppose $\Im{(p_0)}$ big enough to make $\Im{(A(q_t))}>0$. As a direct consequence, we can deduce that the segment $\overline{A(q_t), AB(q_t)}$ is entirely contained in the upper half-plane. 

\smallskip

\noindent The rest of the proof now proceeds as in the case $1$ above. For $t\longrightarrow +\infty$ the point $q_t$ tends to the infinity and $\arg q_t\longrightarrow -\arg a=-\frac{2\pi\,i}{m}<0$. For $t$ big enough, the open ball $B\big(bq_t,\,2|d|\big)$, which contains the segment  $\overline{q_t, B(q_t)}$ is entirely contained in the lower half-plane. Therefore, the chain of segments 
\begin{equation}
    AB(q_t)\longrightarrow A(q_t) \longrightarrow q_t \longrightarrow B(q_t)
\end{equation} is embedded in $\C$ because the edges $\overline{q_t\,B(q_t)}$ and $\overline{AB(q_t)\,A(q_t)}$ cannot intersect. Therefore the polygon \eqref{pentbaseoncomm} bounds an immersed disk on its right containing the infinity on the Riemann sphere.

\medskip

\noindent \textit{Sub-case (ii): $\rho_u$ trivial.} We begin by applying some reduction in order to put $a,b$ in a more convenient form. A first important fact to note is that, whenever $\rho_u$ is trivial, then $\text{Li}\circ\rho(\alpha)$ and $\text{Li}\circ\rho(\beta)$ are both real and different from zero.
Moreover, $a,b$ cannot be both equal to $1$. In fact, if this was the case, then $[A,B]=\text{I}$. In the case one between $a$ or $b$ is equal to $1$ we may apply a suitable Dehn-twist to make both different from $1$. Therefore, we can suppose $a,b\neq1$. We then apply our Claim \ref{claim1}, if necessary, to make them both greater that one in modulus; thus we may assume $|a|>|b|>1$. Finally, there is no loss of generality in assuming $b$ positive, whereas $a$ could be positive or negative. 
 
\smallskip
 
\noindent Let $q_t=A^{-1}B^{-1}(p_t)$ as above. Notice that $\Im{(\overline{ab})}=0$ because both $a,b\in\mathbb{R}^*$. In this special case formul\ae \,\eqref{realqt} and \eqref{imqt} simplify as follow:
\begin{equation}\label{realimqtred}
    \Re{(q_t)}=\frac{1}{ab}\Big(\Re{(p_t)}-\Re{(w})\Big)\,\text{ and }\,\Im{(q_t)}=\frac{1}{ab}\Big(\Im{(p_t)}-\Im{(w)}\Big),
\end{equation} where $w$ is defined as above. Thus the imaginary part of $q_t$, seen as a function of $t$, remains constant because it does not longer depend on $\Re{(p_t)}$. 

\smallskip

\noindent Let us consider the point $A(q_t)$. It is an easy matter to check that, for $t$ running to the infinity, the value $\Im{\big(A(q_t)\big)}$ seen as a function of $t$ remains constant. In fact, by recalling that $\Im{(p_t)}$ is assumed to be constant in $t$, it is sufficient to notice that
\[ A(q_t)=aq_t+c=\Bigg(\frac{1}{b}\,\Big(\Re{(p_t)}-\Re{(w)}\Big)+\Re{(c)}\Bigg)+i\,\Bigg(\frac{1}{b}\,\Big(\Im{(p_t)}-\Im{(w)}\Big)+\Im{(c)}\Bigg).
\]
\noindent In the same fashion, it is possible to check that 
\[ B(q_t)=bq_t+d=\Bigg(\frac{1}{a}\,\Big(\Re{(p_t)}-\Re{(w)}\Big)+\Re{(d)}\Bigg)+i\,\Bigg(\frac{1}{a}\,\Big(\Im{(p_t)}-\Im{(w)}\Big)+\Im{(d)}\Bigg),
\] and so even the imaginary part of $B(q_t)$ is constant as a function of $t$. Let us finally consider the points $BA(q_t)=p_t$ and $AB(q_t)=[A,B](p_t)$. As a consequence of our definitions even their imaginary parts are constants as functions of $t$. More precisely, we have already seen in \eqref{realimqtred} that $\Im{(q_t)}=\frac{1}{ab}\Big(\Im{(p_t)}-\Im{(w)}\Big)$ and an easy computation shows that \[ \Im{\big([A,B](p_t)\big)}=\Im{(AB(q_t))}=\Big(\Im{(p_t)}-\Im{(w)}\Big)+a\Im{(d)}+\Im{(c)}.
\]
\noindent Let us now recall that the starting point $p_0$ can be chose arbitrarily on the upper half-plane. This allows us to choose the initial point $p_0$ such that its imaginary part $\Im{(p_0)}$ is positive and sufficiently large to guarantee that the one of the following chain of inequalities holds for any $t\ge0$
\begin{align*}
    \frac{1}{ab}\Big(\Im{(p_t)}-\Im{(w)}\Big) & <\frac{1}{a}\,\Big(\Im{(p_t)}-\Im{(w)}\Big)+\Im{(d)}\\
    & <\frac{1}{b}\,\Big(\Im{(p_t)}-\Im{(w)}\Big)+\Im{(c)}\\
    & <\Big(\Im{(p_t)}-\Im{(w)}\Big)+a\Im{(d)}+\Im{(c)}
\end{align*} if $a>b>1$, or 
\begin{align*}
    \frac{1}{a}\Big(\Im{(p_t)}-\Im{(w)}\Big)+\Im{(d)} & <\frac{1}{ab}\,\Big(\Im{(p_t)}-\Im{(w)}\Big)\\
    & <\frac{1}{b}\,\Big(\Im{(p_t)}-\Im{(w)}\Big)+\Im{(c)}\\
    & <\Big(\Im{(p_t)}-\Im{(w)}\Big)+a\Im{(d)}+\Im{(c)}
\end{align*} if $a<0<b$ (recall that $1<|b|<|a|)$. Since $\overline{A(q_t)\,q_t}$ and $\overline{q_t\,B(q_t)}$ do not overlap and intersect only at $q_t$ for any $t$ large enough, we have that the chain of segments 
\begin{equation}
   AB(q_t)\longrightarrow A(q_t) \longrightarrow q_t \longrightarrow B(q_t)
\end{equation} is embedded in $\C$. Therefore, even in this case, the polygon \eqref{pentbaseoncomm} bounds an immersed disk on its right containing the infinity on the Riemann sphere.
\end{proof}

\noindent This completes the proof of the non-coaxial case and indeed the proof of Proposition \ref{affk1}. \qed

\smallskip

\subsubsection{An alternative proof}\label{altproofk1}  Proposition \ref{affk1} is also a consequence of the recent work of Le Fils in \cite{Fils2}, as we shall now describe. Indeed, that paper provides necessary and sufficient conditions for a representation $\rho:\pi_1(S_g)\longrightarrow \pslc$ to appear as the monodromy of some branched projective structure with prescribed singularities; here we are interested in the case of a single branch-point.   We begin by describing the obstructions for a representation to appear as the holonomy of some branched projective structure with a \emph{single} branch-point, following the discussion in \cite[\S 1]{Fils2}. 

\smallskip

\noindent \textit{The obstructions.} Here, we shall assume that $g\geq 2$. It is shown in \cite[Corollary 11.2.3]{GKM} that a representation $\rho:\pi_1(S_g)\longrightarrow \pslc$ that arises as the monodromy of a branched projective structure on $S_g$ with $n$ branch-points of orders $m_1,m_2,\dots,m_n$ lifts to a representation to $\slc$ if and only if $\sum m_i$ is even. Here we are interested in affine representations which are well-known to be liftable to a representation to $\slc$. Therefore, any representation $\rho:\pi_1(S_g)\longrightarrow \affc$ arises as the monodromy of some branched projective structure with one single branch-point of magnitude $2(m+1)\pi$ only if $m$ is even. This yields a first obstruction.

\smallskip

\noindent A second obstruction arises from the fact that whenever an affine representation $\rho$ arises as the monodromy of a branched projective structure with a single branch points of order $m$, then $m\ge2g-2$. In particular the equality holds if and only if the structure is a branched affine structure. This is stated in \cite[Proposition 6.18]{Fils2}.

\smallskip

\noindent When the image of $\rho$ is finite of order $N$, a third obstruction comes from the Riemann-Hurwitz formula. Let $\widehat{S}_g$ be the cover of $S_g$ associated to $\operatorname{ker}(\rho)$. The developing map yields a branched covering $\widehat{S}_g\longrightarrow \cp$ of degree $d$ and the Riemann-Hurwitz formula implies that
\begin{equation}
    N\chi(S_g)=\chi(\widehat{S}_g)=2d-Nm.
\end{equation} As $d$ cannot be smaller than $m+1$ we obtain
\begin{equation}\label{rhob}
    N\big(\,\chi(S_g)+m\big)\ge 2(m+1).
\end{equation} Compare with \cite[Section 6.2]{Fils2}.

\smallskip

\noindent An affine representation $\rho$ is said to be \emph{Euclidean} if $\text{Im}(\rho)$ is a subgroup of $\mathbb{S}^1\ltimes\C<\affc$. For Euclidean representations we may define the notion of \emph{volume} as a real number naturally attached to the representation. The volume appears as a further obstruction for realising a representation $\rho$ as the monodromy of a branched affine structure (and hence projective) on $S_g$ with a branch-point of order $2g-2$. However, this obstruction completely vanishes for realizing $\rho$ as the monodromy of some branched projective structure (no longer affine) on $S_g$ with a single branch-point of order $m\ge2g$. See \cite[Obstructions 4 and 5 in \S1]{Fils2} for further details. 

\noindent The sixth and last obstruction listed in \cite{Fils2}  concerns $g=2$ and dihedral (but not affine) representations, and rules out the possibility of a single branch-point of order two. We shall consider dihedral representations in the next section \S\ref{ssdih}.

\medskip

\textit{Remark.} The necessity of assuming that $\rho:\pi_1(S_{g,1})\longrightarrow \affc$ does not have a finite image of order two is a consequence of our Lemma \ref{excase}. Alternatively, such a necessity can be also deduced by the obstruction \eqref{rhob}. In fact, when $N=2$, the equation \eqref{rhob} is never satisfied for any $g\ge2$. 


\begin{proof}[Alternative proof of Proposition \ref{affk1}.] The proof is nothing but a direct consequence of the previous discussion. In \cite[Theorem 1.1]{Fils2} it is showed that an affine representation $\rho:\pi_1(S_g)\longrightarrow \affc$ arises as the holonomy of some branched projective structure, not necessarily affine, with one single branch-point if and only if it satisfies all the obstructions described above. By choosing $m$ even and bigger than $$\frac{2+k(2g-2)}{k-2}>2g-2,$$ we can see that the conditions of all the obstructions above are met. Hence, there exists a $\cp$-structure on $S_g$  with holonomy $\rho$ and  single branch-point of order $m$; deleting this branch-point we obtain our desired projective structure on $S_{g,1}$. \qedhere 
\end{proof}

\section{Dihedral representations}\label{ssdih}
\noindent A representation $\rho:\Pi\longrightarrow \pslc$ is called \emph{dihedral} if there exists a pair of points $F=\{p,q\}$ in $\cp$ globally preserved by the representation. Up to conjugation, we may assume $F=\{0,\infty\}$. Notice that a co-axial representation is, in particular, dihedral because the set $F$ is fixed point-wise by the representation. In this section we shall assume $\rho:\Pi \longrightarrow \pslc$ is a non-trivial, degenerate and dihedral (but not affine) representation. We recall for the reader's convenience that, according to our Definition \ref{degen}, a dihedral representation $\rho:\Pi\longrightarrow \pslc$ is degenerate if the monodromy around each puncture fixes the set $\{0,\infty\}$ pointwise and $\rho(\gamma)$ preserves $\{0,\infty\}$ for any $\gamma\in\Pi$.\\

\noindent The aim of this section is to prove the following

\begin{prop}\label{dihcase}
Let $\Pi$ be the fundamental group of a surface $S_{g,k}$ of negative Euler type and let $\rho:\pi_1(S_{g,\,k})\longrightarrow \pslc$ be a non-trivial, dihedral (but not affine) and degenerate representation such that at least one puncture has trivial monodromy. Then $\rho$ arises as the monodromy representation of a $\cp$-structure in $\mathcal{P}_g(k)$.
\end{prop}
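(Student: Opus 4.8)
\emph{Strategy.} The plan is to conjugate $\rho$ into an explicit dihedral subgroup of $\pslc$, pass to a reflection--translation ``logarithm'' of it, realize that logarithm by a half--translation structure on $S_{g,k}$ with ends of controlled order, and recover the desired $\cp$--structure by composing the developing map with the exponential. Up to conjugation take $F=\{0,\infty\}$, so $\rho$ maps into $D=\C^{\ast}\rtimes\langle\tau\rangle\subset\pslc$, where $\C^{\ast}=\{z\mapsto\lambda z\}$ stabilizes $0$ and $\infty$ and $\tau(z)=z^{-1}$; by Definition~\ref{degen} each peripheral $\gamma$ has $\rho(\gamma)\colon z\mapsto\lambda_{\gamma}z$, and since $\rho$ is dihedral but not affine its image contains a nontrivial rotation and a flip. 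Let $\Phi\colon\C\rtimes\langle\tau\rangle\to D$ be the surjection from the half--translation group of $\C$ with $\Phi(u\mapsto u+c)=(z\mapsto e^{c}z)$ and $\Phi(u\mapsto-u+c)=(z\mapsto e^{c}z^{-1})$; its kernel is $2\pi i\,\Z$, and $\exp\colon\C\to\C^{\ast}\subset\cp$ intertwines the tautological $\C\rtimes\langle\tau\rangle$--action on $\C$ with the $D$--action on $\cp$. As $S_{g,k}$ is non--compact, $\Pi=\pi_{1}(S_{g,k})$ is free, so $H^{2}(\Pi;M)=0$ for every $\Pi$--module $M$ and $\rho$ lifts to $\widetilde\rho\colon\Pi\to\C\rtimes\langle\tau\rangle$ with $\Phi\circ\widetilde\rho=\rho$, carrying peripheral loops into the translation subgroup; choosing a generating set in which some trivial--monodromy puncture is a free generator, we may further take $\widetilde\rho$ trivial on that loop, so that $\widetilde\rho$ has an apparent singularity.

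\emph{Step 1 (the main obstacle): a half--translation structure for $\widetilde\rho$.} The group $\C\rtimes\langle\tau\rangle$ is exactly the group of side--pairings of a half--translation structure (Definition~\ref{def_halftrans}), and I would realize $\widetilde\rho$ as the monodromy of one, say $\Xi$, on $S_{g,k}$, by adapting the polygon constructions of \S\ref{proof2}--\S\ref{saff2} so as to allow the reflection $z\mapsto-z$ among the identifications (and using the gluing operations of Definitions~\ref{glue-ray} and~\ref{glue-new}); equivalently one passes to the unramified double cover $\widehat S\to S_{g,k}$ associated with $\rho^{-1}(\C^{\ast})$, on which $\widetilde\rho$ restricts to a nontrivial translation character and Theorem~\ref{thm2} together with the translation--structure constructions (notably Proposition~\ref{trans1}) applies, since $\widehat S$ has at least two punctures, and carries out that construction equivariantly for the deck involution, which acts on the developed side by a reflection $z\mapsto-z+c$. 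The delicate point, exactly as in Proposition~\ref{trans1} and the remark after it, is the \emph{order of the ends} of $\Xi$: at each puncture with $\lambda_{\gamma}\neq1$ the induced quadratic differential $q=(du)^{2}$ must have a pole of order exactly two (so $u=a\log\zeta+\text{holo}$, $a\neq0$, in a local coordinate $\zeta$), at a trivial--monodromy puncture $\Xi$ must have a regular point or a cone point of angle $2\pi m$ with $m\geq1$ --- never a planar end --- and $\Xi$ must carry no branch points away from the punctures. Getting all of this to hold while the reflection is present in the side--pairings, and re--running the ``pole of order exactly two'' arguments accordingly, is where the real work lies; a few low--complexity surfaces (for instance $S_{1,1}$, where $\rho$ must have image the Klein four--group $\{\mathrm{id},\,z\mapsto-z,\,z\mapsto c/z,\,z\mapsto-c/z\}$) would be dealt with by an explicit polygon.

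\emph{Step 2: exponentiating.} If $f\colon\widetilde{S_{g,k}}\to\C$ is the developing map of $\Xi$, then $\exp\circ f\colon\widetilde{S_{g,k}}\to\C^{\ast}\subset\cp$ is an immersion equivariant with respect to $\Phi\circ\widetilde\rho=\rho$ (as $\exp$ is a local biholomorphism intertwining the actions), hence the developing map of a $\cp$--structure on $S_{g,k}$ with monodromy $\rho$. It lies in $\mathcal{P}_g(k)$: in a punctured--disk coordinate $\zeta$, when $\lambda_{\gamma}\neq1$ one has $\exp\circ f=\zeta^{a}\cdot(\text{unit})$ with $a\neq0$, whose Schwarzian has a pole of order two; at a trivial--monodromy puncture $\exp\circ f$ is either $\zeta^{m}\cdot(\text{unit})$ (a branch--point, Schwarzian pole of order two) or an immersion (a regular point); a planar end of $\Xi$ would instead produce an essential singularity of $\exp\circ f$ with no bound on the Schwarzian, which is why Step~1 must exclude it. Finally, the case $k=1$ does not fit this framework verbatim --- there one needs an \emph{unbranched} $\cp$--structure on $S_{g,1}$, equivalently a branched $\cp$--structure on the closed surface $S_{g}$ with a single branch--point and the descended monodromy --- and this would be built directly from a polygon in $\cp$ with side--pairings in $D$, in the spirit of \S\ref{saff2}, or deduced from \cite{Fils2} as in \S\ref{altproofk1}.
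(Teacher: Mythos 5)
Your reduction in Step 2 is fine as far as it goes (the exponential does intertwine $\C\rtimes\langle\tau\rangle$ acting on $\C$ with the dihedral group acting on $\C^\ast\subset\cp$, and the local Schwarzian computations at punctures are correct), but the proof has a genuine gap exactly where you locate "the real work": Step 1 is not carried out, and it is not a routine adaptation of \S\ref{proof2}--\S\ref{saff2}. The polygon constructions of Propositions \ref{trans1}, \ref{trans2}, \ref{affk2} and their relatives all place the distinguished puncture ``at infinity'' of an unbounded polygon, i.e.\ they systematically produce a planar or higher-order end (a pole of order $\geq 2$ of the abelian differential), and they create a branch-point that is then deleted at an arbitrary trivial-monodromy puncture. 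Your exponential trick forbids precisely this: every end with non-trivial monodromy must be a cylindrical end with prescribed residue (so that $u=a\log\zeta+\mathrm{holo}$), every trivial-monodromy puncture must be a cone point of angle in $2\pi\Z$, and no interior branch points are allowed. What you need is therefore a realization theorem for half-translation structures in prescribed strata with prescribed residues and no free poles, which is a different and substantially harder problem than the period-realization statements you cite (Theorem \ref{thm2}, Proposition \ref{trans1}); nothing in the paper's \S\ref{sstrans}--\S\ref{ssaff1} delivers it. Likewise, your fallback of ``carrying out the construction equivariantly for the deck involution'' on the double cover $\widehat{S}\to S_{g,k}$ is unsupported: Theorem \ref{thm2} realizes a monodromy on $\widehat{S}$, but gives no control whatsoever on equivariance of the resulting translation surface under the involution, and making a flat-surface construction equivariant under a prescribed symmetry is again a separate problem. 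Finally, the case $k=1$ is simply deferred (``an explicit polygon'', or \cite{Fils2}), so even the low-complexity cases you flag are not settled by the proposal.

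For contrast, the paper avoids the strata-type realization problem altogether: Lemma \ref{onedihhandle} changes handle-generators by a mapping class so that the dihedral behaviour is concentrated in a single handle while the restriction to the complementary subsurface is co-axial; the dihedral handle is then realized by an explicit (possibly self-intersecting) quadrilateral whose sides are paired by the order-two elliptic maps, using the constraints $a=\pm1$ or $a=\pm b$ forced by triviality of the commutator (Lemma \ref{dihg1k1}); and the two pieces are assembled by the holonomy-preserving gluing of Definition \ref{glue-new} or by grafting in a projective handle as in Definition \ref{bubbhand} (Lemmata \ref{dihgk1}, \ref{dihk2}). If you want to salvage your exponential approach, you would have to prove the missing half-translation realization statement with all ends cylindrical/conical and a single cone point at a prescribed trivial-monodromy puncture; as written, the argument does not establish the Proposition.
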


\medskip 

\noindent The following observation is an immediate consequence of our definitions:

\begin{lem}\label{dihdegiscoa}
If a representation $\rho:\pi_1(S_{0,k})\longrightarrow \pslc$ is either 
\begin{itemize}
    \item[(a)] dihedral and degenerate, or
    \item[(b)] has an image which is a cyclic group of finite order,
\end{itemize}
then $\rho$ is co-axial and hence affine.
\end{lem}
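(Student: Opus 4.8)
The statement to prove is Lemma \ref{dihdegiscoa}: a representation $\rho:\pi_1(S_{0,k})\to \pslc$ that is either (a) dihedral and degenerate, or (b) has cyclic finite image, must in fact be co-axial (hence affine). The plan is to exploit the fact that on a genus-zero surface the fundamental group $\pi_1(S_{0,k})$ is free of rank $k-1$, with a standard generating set $\gamma_1,\dots,\gamma_k$ of peripheral loops subject only to the single relation $\gamma_1\gamma_2\cdots\gamma_k=1$; in particular $\pi_1(S_{0,k})$ is \emph{generated by its peripheral elements}. This is the key structural input that distinguishes the genus-zero case, and it immediately reduces both (a) and (b) to a statement about the $\rho$-images of the peripheral loops alone.

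First I would treat case (a). By Definition \ref{degen} of a degenerate dihedral representation, $\rho(\pi_1)$ preserves the set $F=\{0,\infty\}$, and \emph{moreover} the monodromy $\rho(\gamma_i)$ around each puncture fixes $0$ and $\infty$ individually — i.e.\ each $\rho(\gamma_i)$ lies in the diagonal (affine, co-axial) subgroup $\{z\mapsto az\}$, rather than being a map of the form $z\mapsto a/z$ that swaps $0$ and $\infty$. Since the $\gamma_i$ generate the whole group (using the genus-zero hypothesis), it follows that $\rho(\pi_1)$ is contained in the diagonal subgroup, so $\rho$ fixes both $0$ and $\infty$ pointwise; that is exactly the definition of co-axial, and the diagonal subgroup sits inside $\affc$ (stabilizer of $\infty$), so $\rho$ is affine as claimed.

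Next, case (b): suppose $\rho(\pi_1)$ is a finite cyclic group, say generated by a single element $M\in\pslc$ of finite order. Every element of finite order in $\pslc$ is either the identity or conjugate to an elliptic rotation $z\mapsto \zeta z$ for some root of unity $\zeta$; in particular it has exactly two fixed points on $\cp$, and a cyclic group generated by a single such $M$ fixes that same pair of points $\{p,q\}$ \emph{pointwise} (all powers of a diagonalizable matrix share the eigenvectors). Conjugating so that $\{p,q\}=\{0,\infty\}$, the whole image lies in the diagonal subgroup, hence $\rho$ is co-axial and affine. The only edge cases to dispatch are the trivial group (co-axial by convention — the pair $\{0,\infty\}$ is fixed) and the possibility that $\rho(\pi_1)$ is cyclic but not generated by one of its own elements, which cannot happen for a finite cyclic group.

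I do not anticipate a serious obstacle here: the lemma is essentially definitional once one records that $\pi_1(S_{0,k})$ is generated by peripheral loops and that a degenerate dihedral representation on a genus-zero surface has all peripheral monodromies already \emph{fixing} (not merely permuting) the two marked points. The one point requiring a sentence of care is making explicit why "preserves $\{0,\infty\}$ as a set, and each peripheral generator fixes $0$ and $\infty$" forces the \emph{entire} image into the diagonal subgroup — this is where genus zero is used, and it would fail for $g>0$ where handle-generators could act by $z\mapsto a/z$. So the proof will be short: invoke the presentation of $\pi_1(S_{0,k})$, observe each $\rho(\gamma_i)$ is diagonal, conclude the image is diagonal, and identify "diagonal" with "co-axial" and with a subgroup of $\affc$.
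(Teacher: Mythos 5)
Your argument is correct and is exactly the observation the paper treats as immediate (the paper supplies no proof, noting only that the lemma follows from the definitions): in genus zero the peripheral loops generate $\pi_1(S_{0,k})$, each peripheral image of a degenerate dihedral representation fixes $\{0,\infty\}$ pointwise, so the whole image is diagonal; and a finite cyclic subgroup of $\pslc$ is generated by an elliptic (or trivial) element, so all its powers fix the same two points pointwise. Both cases land in the pointwise stabilizer of a pair of points, i.e.\ the co-axial (hence affine) subgroup, as required.
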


\noindent In the light of the Lemma \ref{dihdegiscoa} (a) above we need to consider only surfaces of positive genus, since the genus $g=0$ case is covered by Proposition \ref{affg0}. We shall distinguish two cases according to the number of punctures. In both cases, we shall make use of the following technical result.

\begin{lem}\label{onedihhandle}
Let $\Pi$ be the fundamental group of a surface $S_{g,k}$ of genus $g$ and $k\ge0$ punctures. Let $\rho:\Pi\longrightarrow \pslc$ be a dihedral (not affine) and degenerate representation. Then there is a set of handle-generators $\{\alpha_i,\beta_i\}_{1\le i\le g}$ such that the restriction of $\rho$ to the handle generated by $\{\alpha_1,\,\beta_1\}$ is a dihedral representation and the restriction of $\rho$ to the complementary sub-surface is co-axial.
\end{lem}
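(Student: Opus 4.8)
The plan is to exploit the structure of a dihedral representation that is not affine. Since $\rho$ is dihedral with preserved pair $F = \{0, \infty\}$, there is an index-two subgroup $\Pi_0 \leq \Pi$ consisting of those $\gamma$ for which $\rho(\gamma)$ \emph{fixes} $0$ and $\infty$ pointwise (the ``co-axial part''), while the remaining elements act by $z \mapsto \lambda/z$ for some $\lambda \in \C^\ast$, swapping $0$ and $\infty$. The hypothesis that $\rho$ is \emph{not} affine means precisely that $\Pi_0 \neq \Pi$, i.e.\ there exists at least one $\gamma \in \Pi$ with $\rho(\gamma)$ of swapping type. The degeneracy hypothesis ensures that each peripheral loop lies in $\Pi_0$. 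The kernel of the sign homomorphism $\epsilon : \Pi \to \Z/2\Z$ (recording whether $\rho(\gamma)$ fixes or swaps $F$) is $\Pi_0$, and $\epsilon$ factors through $H_1(S_{g,k};\Z/2\Z)$ since $\Z/2\Z$ is abelian. Because all peripheral classes vanish under $\epsilon$, this is really a homomorphism $\bar\epsilon$ from $H_1(S_g;\Z/2\Z)$, the homology of the \emph{closed} surface, to $\Z/2\Z$, and it is nonzero by the non-affine assumption.

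First I would recall the classical fact that the mapping class group of $S_g$ (and hence that of $S_{g,k}$, by the remark in \S\ref{mps} that allows us to realize any monodromy up to the MCG orbit) acts transitively on nonzero elements of $H_1(S_g;\Z/2\Z)$ of a given $\Z/2\Z$-``type'' — more concretely, one can use that $\mathrm{Sp}(2g,\Z)$ acts transitively on primitive vectors, which in turn acts transitively on nonzero vectors mod $2$. Thus, after applying a suitable mapping class $\phi$ (and replacing $\rho$ by $\rho \circ \phi_\ast$, which is permissible), I may assume that the nonzero functional $\bar\epsilon$ is dual to the homology class $[\alpha_1]$ of the first standard handle-generator; equivalently, $\bar\epsilon(\alpha_1) = 1$ and $\bar\epsilon(\beta_i) = \bar\epsilon(\alpha_j) = 0$ for all $i$ and all $j \neq 1$. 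With respect to such a basis of handle-generators, $\rho(\alpha_1)$ is of swapping type while $\rho(\alpha_j), \rho(\beta_i)$ for $(i,j)$ in the complementary range all fix $\{0,\infty\}$ pointwise.

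It then remains to check the two asserted conclusions. The restriction of $\rho$ to the handle generated by $\{\alpha_1, \beta_1\}$ sends $\alpha_1$ to a swapping element; hence this restriction is dihedral but not co-axial, in particular genuinely dihedral as claimed. For the complementary sub-surface $\Sigma'$ — the one carrying the handles generated by $\{\alpha_j, \beta_j\}_{2 \le j \le g}$ together with all the punctures — every generator of $\pi_1(\Sigma')$ that is a handle-generator lies in $\Pi_0$ by our arrangement, and every peripheral loop lies in $\Pi_0$ by degeneracy; since these generate $\pi_1(\Sigma')$, the image $\rho(\pi_1(\Sigma'))$ fixes $0$ and $\infty$ pointwise, i.e.\ $\rho|_{\Sigma'}$ is co-axial. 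The main obstacle I anticipate is the bookkeeping around $\beta_1$: a priori $\rho(\beta_1)$ could itself be of swapping type, which would not affect the conclusion about the first handle (it stays dihedral) but one must verify it does not leak swapping behaviour into the complementary subsurface. This is handled by the explicit choice of $\phi$ above — after the change of basis, $\bar\epsilon$ is supported exactly on the $\alpha_1$-coordinate, so $\beta_1$ may safely be of either type, and the complementary handles are untouched. A secondary care point is that the functional $\bar\epsilon$ is defined on $H_1$ of the \emph{closed} surface; one must confirm that the standard handle-generators of $S_{g,k}$ inject into $H_1(S_g;\Z/2\Z)$ as a symplectic basis, which is immediate.
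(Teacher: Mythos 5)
Your argument is correct, and it reaches the same conclusion as the paper (a distinguished handle containing a swapping element, with everything else co-axial), but by a genuinely different route. The paper works entirely by hand: starting from an arbitrary set of handle-generators it locates one handle whose restriction is dihedral, normalizes it so that one generator fixes $\{0,\infty\}$ pointwise and the other swaps, and then iteratively applies an explicit handle-mixing mapping class (replacing $\{\alpha_i,\beta_i\}$, $\{\alpha_j,\beta_j\}$ by $\{\alpha_i,\beta_i\beta_j\}$, $\{\alpha_i^{-1}\alpha_j\beta_j,\beta_j\}$, after Dehn twists ensuring both generators of the second handle swap) to absorb all swapping behaviour into a single handle. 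Your proof replaces this iteration by the observation that the fix/swap dichotomy defines a homomorphism $\epsilon:\Pi\to\Z/2\Z$ which, by degeneracy, kills all peripheral classes and hence descends to a nonzero functional on $H_1(S_g;\Z/2\Z)$; transitivity of $\mathrm{Sp}(2g,\Z/2\Z)$ on nonzero vectors, realized through the surjections $\Mod(S_{g,k})\to\Mod(S_g)\to \mathrm{Sp}(2g,\Z)\to\mathrm{Sp}(2g,\Z/2\Z)$, then concentrates $\epsilon$ on the single generator $\alpha_1$. This is shorter and more conceptual, and it makes transparent exactly where degeneracy and non-affineness enter (the latter only through the implication that some element swaps, which is the one direction you actually use — your ``means precisely'' is slightly too strong, since a dihedral representation with image of order two generated by a swap is affine). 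What the paper's construction buys in exchange is a concrete normal form inside the distinguished handle (one generator pointwise-fixing, the other swapping), whereas your change of basis leaves $\epsilon(\beta_1)$ uncontrolled only because you chose the dual functional; this costs nothing for the lemma as stated, and the later arguments (forms \eqref{standform2} and \eqref{standform3}) handle either configuration, so no gap results.
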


\begin{proof}
The argument of this proof shares a few similarities with the proof of Proposition \ref{non-coax}. Let us begin with some observations. Being dihedral, the representation $\rho$ globally preserves a pair of points in $\cp$ which we may assume to be $\{0,\infty\}$. Since $\rho$ is not affine, there is a simple closed curve $\delta\in\Pi$ such that $\rho(\delta)(z)=\frac{1}{az}$ for some $a\in\C^*$. Being $\rho$ degenerate (Definition \ref{degen}), $\rho(\gamma)$ is a co-axial transformation of $\cp$ fixing $\{0,\infty\}$ for any simple closed curve $\gamma$ enclosing a puncture. Thus $\delta$ must be a handle generator.

\smallskip

\noindent We start with some set of handle-generators $\{\alpha_i,\beta_i\}_{1\le i\le g}$ and, in what follows, we modify this initial choice of generators by mapping class group elements until we get a new set of handle-generators with the desired property. Notice that, if $\rho(\eta)$ globally fixed $\{0,\infty\}$ for any $\eta\in\{\alpha_i,\beta_i\}$ then the representation $\rho$ would be co-axial, hence affine, and this leads to a contradiction. Therefore there exists $i\in\{1,\dots,g\}$ such that the restriction of $\rho$ to the handle $\{\alpha_i,\beta_i\}$ is a dihedral representation. We may even suppose that the points $\{0,\infty\}$ are point-wise fixed by $\rho(\alpha_i)$ and swap by $\rho(\beta_i)$; \textit{i.e.}
\begin{equation*}
\rho(\alpha_i)=
\begin{pmatrix}
a_i & 0\\
0 & 1
\end{pmatrix} \qquad \rho(\beta_i)=
\begin{pmatrix}
0 & 1\\
b_i & 0
\end{pmatrix}.
\end{equation*}
\noindent Let $\{\alpha_j,\beta_j\}$ be another pair and suppose that the restriction of $\rho$ to such handle is not co-axial. Up to replacing this pair with $\{\alpha_j\beta_j, \beta_j\}$ or $\{\alpha_j,\alpha_j\beta_j\}$ if needed, we may assume that both $\rho(\alpha_j)$ and $\rho(\beta_j)$ preserve the couple $\{0,\infty\}$ by swapping the points.

\smallskip

\noindent The basic modification here is as follows. We replace the handle-generators $\{\alpha_i,\,\beta_i\}$ with $\{\alpha_i,\, \beta_i\beta_j\}$ and we replace the handle-generators $\{\alpha_j,\beta_j\}$ with $\{\alpha_i^{-1}\alpha_j\beta_j,\,\beta_j\}$. This modification is effected by a mapping class element and the handle generated by $\{\alpha_i,\, \beta_i\beta_j\}$ remains disjoint from the handle generated by $\{\alpha_i^{-1}\alpha_j\beta_j,\,\beta_j\}$. It remains to show that the restriction of $\rho$ to one of these handles is dihedral and the restriction to the other handle is co-axial. By writing 
\begin{equation*}
\rho(\alpha_j)=
\begin{pmatrix}
0 & 1\\
a_j & 0
\end{pmatrix} \qquad\rho(\beta_j)=
\begin{pmatrix}
0 & 1\\
b_j & 0
\end{pmatrix},
\end{equation*}
it is an easy matter now to check that $\rho$ once restricted to the handle $\{\alpha_i,\, \beta_i\beta_j\}$ is co-axial because $\rho(\alpha_i)$ and $\rho(\beta_i)\rho(\beta_j)$ both fix the pair $\{0,\infty\}$ point-wise. In fact, $\rho(\beta_i)$ and $\rho(\beta_j)$ both swap $\{0,\infty\}$ and hence their product fix them point-wise. It is also easy to check that $\rho$ once restricted to the handle $\{\alpha_i^{-1}\alpha_j\beta_j,\,\beta_j\}$ is dihedral because $\rho(\beta_j)$ swaps the pair $\{0,\infty\}$. We can even notice that the transformation $\rho(\alpha_i)^{-1}\rho(\beta_i)\rho(\beta_j)$ keeps the pair $\{0,\infty\}$ point-wise fixed. An iterative argument will provide the a set of handle-generators with the desired property. Notice that this argument does not involved the number of punctures and hence the claim holds for any $k\ge0$.
\end{proof}


\subsection{Once-punctured surfaces} \label{dihk1} We begin by considering the case of once-punctured surfaces, \emph{i.e.} we assume $g>0$ and $k=1$.

\subsubsection{The once-punctured torus case} In this subsection we prove the following

\begin{lem}\label{dihg1k1}
Let $\rho:\pi_1(S_{1,1})\longrightarrow \pslc$ be a non-trivial, dihedral (but not affine) and degenerate representation such that the puncture has trivial monodromy. Then there is a projective structure on $S_{1,1}$ with monodromy $\rho$.
\end{lem}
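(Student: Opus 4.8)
The plan is to realize $\rho:\pi_1(S_{1,1})\to\pslc$ as the monodromy of a branched projective structure on the torus with a single branch-point, and then delete that point. Since $\rho$ is dihedral but not affine, after conjugating the globally preserved pair to $\{0,\infty\}$ we may, by Lemma \ref{onedihhandle} (with $g=1$), choose handle-generators $\{\alpha,\beta\}$ such that $\rho(\alpha)=\mathrm{diag}(a,1)$ fixes $0,\infty$ and $\rho(\beta)=\begin{pmatrix}0&1\\ b&0\end{pmatrix}$ swaps them; here $[\alpha,\beta]$ encloses the puncture, so the degeneracy hypothesis forces $\rho([\alpha,\beta])=\mathrm{Id}$, i.e.\ the relation $ABA^{-1}B^{-1}=\mathrm{Id}$ must hold in $\pslc$ for these specific matrices. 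First I would compute this commutator explicitly: with $A=\mathrm{diag}(a,1)$ and $B=\begin{pmatrix}0&1\\ b&0\end{pmatrix}$ one finds (as elements of $\pslc$) that $[A,B]$ is a diagonal element whose nontriviality would contradict $\rho([\alpha,\beta])=\mathrm{Id}$; this pins down the admissible pairs $(a,b)$ — essentially $a=\pm1$ or some such constraint — and in fact reduces the problem to the genuinely dihedral (order-two rotational) situation. I expect $a=-1$ (so $\rho(\alpha)$ is the elliptic involution $z\mapsto -z$ fixing $0,\infty$), which is forced by $[A,B]=\mathrm{Id}$ modulo scalars.

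Granting the normal form, the construction mimics Case~1 of Proposition \ref{affg1k1} but over $\cp$ rather than $\C$: pick a base-point $p\in\cp\setminus\{0,\infty,\text{Fix}(\rho(\beta))\}$ and form the quadrilateral with vertices $p\mapsto A(p)\mapsto AB(p)=BA(p)\mapsto B(p)\mapsto p$, whose oriented sides are paired by $A$ (identifying $\overline{p\,B(p)}$ with $\overline{A(p)\,AB(p)}$) and by $B$ (identifying $\overline{p\,A(p)}$ with $\overline{B(p)\,BA(p)}$); since $[A,B]=\mathrm{Id}$ the polygon closes up. The key geometric point I would verify is that for a suitable choice of $p$ this closed polygonal curve (circular arcs on $\cp$, since the maps are Möbius not affine) bounds an immersed disk in $\cp$; I would argue this by choosing $p$ so that the four image points are distinct and in convex position on a suitable circle, exactly the kind of position-argument used repeatedly in \S\ref{saff2}. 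Identifying the paired sides yields a branched projective structure on the torus with one branch-point coming from the identified vertices; deleting it gives the desired $\cp$-structure on $S_{1,1}$, and its monodromy is $\rho$ by construction. Because the developing image is not a planar end but a neighbourhood of a generic point of $\cp$ (as in Proposition \ref{transurfk1}), the puncture is a regular point and the Schwarzian derivative has a pole of order at most two, so the structure lies in $\mathcal{P}_1(1)$.

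The main obstacle will be the embeddedness/immersion claim for the quadrilateral: unlike the affine cases, the sides are now arcs of circles on $\cp$, and one must choose the base-point $p$ carefully to avoid the arcs crossing badly — in particular to keep $\text{Fix}(\rho(\beta))$ and the point-pair $\{0,\infty\}$ out of the way, and to ensure the four vertices $p,A(p),AB(p),B(p)$ are cyclically ordered on the boundary of an embedded disk. I anticipate that taking $p$ near one of the globally-fixed points, or on the unit circle when $a=-1$ and $|b|=1$, makes the picture a genuine (possibly self-intersecting but immersed) quadrilateral; handling the degenerate sub-case where the four points become collinear on a common circle would, as in Case~2 of Proposition \ref{affg1k1}, require a further change of handle-generators such as $\{\alpha,\beta\}\mapsto\{\alpha,\alpha\beta\}$. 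A secondary point to check is that this change of generators is compatible with keeping the commutator trivial and the representation non-affine, which follows since Dehn-twisting acts by an automorphism of $\pi_1$ fixing the peripheral class.
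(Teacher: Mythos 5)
Your proposal is correct and takes essentially the same route as the paper: realize $\rho$ as the holonomy of a branched projective structure on the torus by gluing the quadrilateral $p\mapsto A(p)\mapsto AB(p)\mapsto B(p)\mapsto p$ along its sides via $A$ and $B$ (checking a base-point can be chosen so the resulting curve of arcs bounds an immersed disk) and then deleting the single branch-point; the paper simply treats the two normal forms \eqref{standform2} and \eqref{standform3} as separate cases, whereas you normalize to the first by a change of handle-generators, which is legitimate. One small precision: $[A,B]=\mathrm{Id}$ alone only forces $a=\pm1$, and ruling out $a=1$ uses the hypothesis that $\rho$ is not affine (an image of order two is co-axial by Lemma \ref{dihdegiscoa}), exactly as in the paper; likewise the deleted point is a branch-point rather than a regular point, which still gives a pole of order two as required.
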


\noindent Some generalities. Let $\rho:\pi_1(S_{1,1})\longrightarrow \pslc$ be a non-trivial dihedral (but not affine) and degenerate representation. We assume $\rho$ preserves $\{0,\infty\}$. Denote by $\alpha$ and $\beta$ denote two handle-generators and let $A=\rho(\alpha)$ and $B=\rho(\beta)$. We now distinguish two possible cases according on how many handle-generators act non-trivially on $\{0,\infty\}$. Without loss of generality, we may assume
\begin{equation}\label{standform2}
A=\begin{pmatrix}
a & 0\\
0 & 1
\end{pmatrix} \qquad
B=\begin{pmatrix}
0 & 1\\
b & 0
\end{pmatrix} ,
\end{equation} if one handle-generator fixes $\{0,\infty\}$ point-wise, or
\begin{equation}\label{standform3}
A=\begin{pmatrix}
0 & 1\\
a & 0
\end{pmatrix} \qquad
B=\begin{pmatrix}
0 & 1\\
b & 0
\end{pmatrix}
\end{equation} if both handle-generators act non-trivially on $\{0,\infty\}$. Notice that $a,b\in\C^*$. Let $\gamma=[\alpha,\beta]$ be a curve enclosing the puncture and let $\rho(\gamma)=[A,B]$. 

\smallskip 

\begin{proof}[Proof of Lemma \ref{dihg1k1}]Assuming the monodromy of the puncture to be trivial, \emph{i.e.} $\rho(\gamma)=\text{Id}$, it is possible to deduce some constraints about the possible values of $a,b$. A simple computation shows that $a=\pm1$ if $A,B$ are in the form \eqref{standform2} or $a=\pm b$ if $A,B$ are in the form \eqref{standform3}. Once again, given any base-point $p_0\in\C^\ast$, we can define in every case a polygon 
\begin{equation}\label{pol2}
 p_0\mapsto p_1\mapsto p_2\mapsto p_3\mapsto p_0
\end{equation} where the points $p_i$ are defined as: $p_1=A(p_0)$, $p_2=AB(p_0)=BA(p_0)$, and finally $p_3=B(p_0)$. The polygon 
bounds a possibly self-intersecting and possibly degenerate quadrilateral $\mathcal{Q}$ on the complex plane. As already done before, we shall denotes the directed edges as follows: $e_1=\overline{p_1\,p_2}$, $e_2=\overline{p_0\,p_1}$, $e_3=\overline{p_0\,p_3}$ and, finally, $e_4=\overline{p_3\,p_2}$. The edges of this polygon are related by the maps $A,B$ as follows: $A(e_3)=e_1$ and $B(e_2)=e_4$. Let us now discuss case by case.

\medskip 

\noindent \textit{Case 1: $A,B$ are in the form \eqref{standform2}.} We begin by observing that $a=1$ implies $A=I$ and therefore the image of $\rho$ is cyclic of order two. In particular, $\rho$ is co-axial (and hence affine) as observed in case (b) of the previous Lemma \ref{dihdegiscoa}. As $\rho$ is assumed to be dihedral but not affine, it follows that $a=-1$.  Given the matrices $A$ and $B$ as in the equation \eqref{standform2}, we notice that $p_0=-p_1$ and $p_2=-p_3$ because $A(z)=-z$. The polygon \eqref{pol2} is self-intersecting and bounds an immersed disk in $\cp$ containing the point $\infty$. Note that there always exists a good choice of $p_0$ such that the polygon is not degenerate (but still self-intersecting). This is because the polygon is degenerate whenever $p_0$, $-p_0=p_1$ and $p_3=B(p_0)$ are collinear and this happens if and only if there is a real $\lambda\neq0$ such that the equality $p_0^2\,(1-2\lambda)=b$ holds. It is easy to observe that then $p_2$ is necessarily collinear to the other three points. Then we proceed as in Proposition \ref{trans1} and Proposition \ref{affg1k1}.

\medskip 

\noindent \textit{Case 2: $A,B$ are in the form \eqref{standform3}.} In this second case we observe that $a=b$ implies $A=B$ and therefore the image of $\rho$ is cyclic of order two. In particular, $\rho$ is co-axial (and hence affine) as observed in Lemma \ref{dihdegiscoa}. As $\rho$ is supposed dihedral but not affine, it follows that $a=-b$. Given $A$ and $B$ as in the equation \eqref{standform3}, we note that $p_0=-p_2$ and $p_1=-p_3$. The base-point $p_0$ can be chosen in such a way the polygon \eqref{pol2} bounds an \textit{embedded} disk in $\cp$ containing the point $\infty$. More precisely, the polygon \eqref{pol2} is degenerate if and only if the three points $p_0$, $-p_0=p_2$ and $p_3$ are collinear and this happens whenever they satisfy the same relation as in Case 1 above. It is now an easy to see that there always $p_0\in\mathbb{C}$ such that the polygon \eqref{pol2} is non-degenerate. Even in this case we proceed as in Proposition \ref{trans1} and Proposition \ref{affg1k1}. \qedhere
\end{proof}

\subsubsection{Higher genus once-punctured surfaces} We then consider the case of once-punctured surfaces, \emph{i.e.} we assume $k=1$. Since we have already handled the case $g=1$ above, we can assume that $g\geq 2$. Here we shall prove the following

\begin{lem}\label{dihgk1}
Let $g\ge2$ and let $\rho:\pi_1(S_{g,1})\longrightarrow \pslc$ be a non-trivial, dihedral (but not affine) and degenerate representation such that the puncture has trivial monodromy. Then there is a projective structure on $S_{g,1}$ with monodromy $\rho$.
\end{lem}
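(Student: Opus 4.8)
The plan is to reduce the case of a higher-genus once-punctured surface $S_{g,1}$ to a combination of the once-punctured torus case (Lemma \ref{dihg1k1}) and the affine (co-axial) machinery already developed in \S\ref{affk1coa}, by separating off a single "dihedral handle." First I would invoke Lemma \ref{onedihhandle} to choose handle-generators $\{\alpha_i,\beta_i\}_{1\le i\le g}$ such that the restriction $\rho_1 := \rho|_{H_1}$ to the handle $H_1$ generated by $\{\alpha_1,\beta_1\}$ is dihedral (not affine), while the restriction $\rho'$ of $\rho$ to the complementary subsurface — which is a genus-$(g-1)$ surface with one boundary component and carries all the remaining handles — is co-axial, hence affine. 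Since the boundary of that complementary subsurface is the product of commutators $[\alpha_1,\beta_1][\alpha_2,\beta_2]\cdots[\alpha_g,\beta_g]$ and the peripheral loop of $S_{g,1}$ has trivial monodromy, the monodromy of $\rho'$ around its single boundary curve is $\rho_1([\alpha_1,\beta_1])^{-1} =: C$, a co-axial element fixing $\{0,\infty\}$ pointwise.

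The second step is to realize each piece as a branched projective structure with a single branch-point, with a marked ray from the branch-point developing to a prescribed ray from a common base-point $p\in\C$. For the dihedral handle $H_1$, I would run the construction in the proof of Lemma \ref{dihg1k1}: starting from a base-point $p$ (taken so that the relevant quadrilateral $\mathcal{Q}$ is nondegenerate, which is possible as shown there since degeneracy imposes only a codimension-one condition on $p$), identify the sides of $\mathcal{Q}$ via $A=\rho(\alpha_1)$, $B=\rho(\beta_1)$ to obtain a branched projective structure on a one-holed torus with a single branch-point developing to $p$ and with a ray to the puncture at infinity realizing the boundary monodromy $[A,B]$. For the complementary co-axial piece, which is a representation $\rho'\colon \pi_1(S_{g-1,2})\to\affc$ with one puncture of trivial monodromy (the branch-point-to-be) and one puncture with monodromy $C$, I would apply the constructions of \S\ref{affk1coa} adapted to two punctures — or, more directly, build $(g-1)$ projective handles as in Cases (i)--(iii) of \S\ref{affk1coa} arranged in a nonoverlapping chain (or spiral), together with a ray from the resulting branch-point to a puncture at infinity with monodromy $C$ attached by grafting in handles (Definition \ref{bubbhand}). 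One must check here that the relevant change of handle-generators from Lemma \ref{onedihhandle} is compatible with the normalizations needed by Lemmata \ref{coax-lem3} and \ref{coax-lem1}; this is routine since those normalizations only involve further Dehn-twists supported on individual handles, which do not disturb the dihedral/co-axial splitting.

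The third step is to glue. Using the holonomy-preserving gluing of Definition \ref{glue-new} (or, after attaching intermediate copies of $\C$, the slit-and-cross-glue of Definition \ref{glue2}), I would glue the one-holed-torus structure for $H_1$ to the structure for the complementary piece along their respective rays, which develop onto the same ray $r_\star$ emanating from $p$; the key point is that because the final identification is by the identity map along $r_\star$, the puncture formed by joining the two rays acquires monodromy $C\cdot[A,B] = \rho_1([\alpha_1,\beta_1])^{-1}\cdot\rho_1([\alpha_1,\beta_1]) = \mathrm{Id}$, exactly matching the trivial peripheral monodromy of $S_{g,1}$. The resulting surface is homeomorphic to $S_{g,1}$ with one extra branch-point coming from the identified starting points of the rays; deleting that branch-point yields a $\cp$-structure with monodromy $\rho$. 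Finally one verifies the Schwarzian has a pole of order at most two at the remaining puncture: this follows exactly as in \S\ref{sdpunct}, since near the puncture the developing map has the standard planar or lune form.

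The main obstacle I expect is not the topological bookkeeping of the gluing — that is handled uniformly by Definitions \ref{glue-new} and \ref{glue2} — but rather ensuring that the polygonal pieces bound \emph{immersed} disks in $\cp$ containing $\infty$ with rays in the prescribed direction, simultaneously for the dihedral handle and for all $(g-1)$ co-axial handles, with all the base-points chosen consistently. Concretely, one needs the dihedral quadrilateral of Lemma \ref{dihg1k1} to be nondegenerate for a base-point lying at (or near) a vertex of the co-axial chain, and the co-axial chain of \S\ref{affk1coa} to remain nonoverlapping while terminating with a ray developing onto $r_\star$; reconciling these two placements is the delicate point, and I would resolve it by first fixing the co-axial chain far from the origin (as in the non-coaxial argument of \S\ref{affk1notcoa}, where one has freedom to translate the configuration and to perturb base-points horizontally), and only then choosing $p$ as the free vertex where the dihedral handle is attached, invoking the nondegeneracy genericity from the proof of Lemma \ref{dihg1k1}.
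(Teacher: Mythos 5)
Your first two steps do follow the paper's strategy: Lemma \ref{onedihhandle} to isolate one dihedral handle with co-axial complement, Lemma \ref{dihg1k1} for that handle, and the co-axial machinery of \S\ref{affk1coa} for the complement. The gap is in your third step. Gluing the two pieces along rays to infinity as in Definitions \ref{glue2}/\ref{glue-new} inevitably creates \emph{two} distinguished points: the identified starting points of the rays become a branch point, and the identified far ends become a puncture at infinity around which the developing map wraps more than once (so, if filled, another branch point). A once-punctured surface can absorb only one of these. If you delete the branch point, as your last sentence proposes, the underlying surface is $S_{g,2}$, not $S_{g,1}$; if you keep it, you only get a \emph{branched} projective structure on $S_{g,1}$. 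This is precisely the obstruction that separates the $k\ge 2$ constructions (Propositions \ref{affk2} and \ref{aff}, Lemma \ref{dihk2}, where ray-gluing is the right tool) from the once-punctured case. The paper's proof of Lemma \ref{dihgk1} avoids it with a different combination move: the complement is realized as a \emph{closed} branched surface $S_{g-1}$ with a single branch point (Proposition \ref{affk1}, co-axial case), the dihedral handle as a closed projective handle coming from an embedded quadrilateral $Q$ (Case 2 of Lemma \ref{dihg1k1}), and the handle is then \emph{grafted in} along an admissible arc $\gamma$ from the branch point to itself (Definition \ref{bubbhand}), the base-point of $Q$ being chosen so that the developed arc $\widehat{\gamma}$ lies in the disk bounded by $Q$ with a vertex of $Q$ at one of its endpoints. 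Grafting in a handle introduces no new singular point and merges the two branch points into one on a closed genus-$g$ surface; deleting that single branch point produces the unique puncture of $S_{g,1}$.

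A secondary issue: since the complementary subsurface carries a co-axial, hence abelian, representation, all of its commutators map to the identity, and the triviality of the peripheral monodromy then forces $C=\rho([\alpha_1,\beta_1])=\mathrm{Id}$ automatically. Your bookkeeping with a possibly non-trivial $C$ to be cancelled at the glued puncture is therefore vacuous, but it also conceals a problem: the quadrilateral construction of Lemma \ref{dihg1k1} that you invoke for the dihedral handle only makes sense when $[A,B]=\mathrm{Id}$ (otherwise the quadrilateral does not close up and one would need a pentagon with a commutator edge, as in Proposition \ref{affk2}), so a handle ``with a ray to the puncture at infinity realizing the boundary monodromy $[A,B]$'' is not justified as written. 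The one-line observation that $C=[A,B]=\mathrm{Id}$ is exactly what makes the once-punctured-torus lemma applicable and should be made explicit.
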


\noindent Let $\rho:\pi_1(S_{g,1}) \to \pslc$ be a non-trivial, dihedral (but not affine) representation such that the puncture has trivial monodromy. We can regard $\rho$ as a representation $\overline{\rho}:\pi_1(S_g)\longrightarrow \pslc$ and therefore the basic idea, again, is to realize this latter as the monodromy of a branched projective structure with a single branch point. By deleting such a point, we will get the desired result.

\smallskip

\begin{proof}
By Lemma \ref{onedihhandle} we can assume that there is a handle, see Definition \ref{handle}, $H \subset S_g$ generated by the pair $\{\alpha_1,\beta_1\}$ such that $\rho\vert_H$ is dihedral (but not affine), and the restriction of $\rho$ to the complementary sub-surface is a co-axial representation. Recall that, up to conjugation, we may assume that $\rho$ globally preserves the pair of points $\{0,\infty\}\subset \cp$. Notice that, being $\rho\big([\alpha_i,\beta_i]\big)=\text{I}$ for each index $i=2,\dots,g$, it follows that $\rho\big([\alpha_1,\beta_1])=\text{I}$. Moreover, we may even assume that $\rho_{|H}(\alpha_1)$ and $\rho_{|H}(\beta_1)$ are in the form \eqref{standform3}. In fact, if they were in the form \eqref{standform2}, a suitable Dehn-twist make them in the desired form.

\smallskip

\noindent Let $\rho_0:\pi_1(S_{g-1,1})\longrightarrow \affc$ be the restriction of $\rho$ to the complement of $H\subset S_g$. It is co-axial and the puncture has trivial monodromy by construction. We can regard $\rho_0$ as a co-axial representation $\overline{\rho}_0:\pi_1(S_{g-1})\longrightarrow\affc$ and, by Proposition \ref{affk1} (in fact from the co-axial case of the proof in \S\ref{affk1coa}), $\overline{\rho}_0$ is realized as the holonomy of a branched projective structure on $S_{g-1}$ with a unique branch-point. Denote this projective surface by $S$. From the proof, see \S\ref{affk1coa}, this projective surface is in fact obtained by constructing a chain of quadrilaterals $\mathcal{C}$ in $\cp$ that bounds an immersed disk (see, for example, Figures \ref{chainofquads} or \ref{spiralquad}), and then identifying pairs of edges of these quadrilaterals. In particular, this construction defines a set of handle-generators that develop onto the edges of the quadrilaterals, which are embedded arcs in $\cp$. Let $\gamma$ be such a generator, which in our construction is a simple closed curve on the surface from the branch-point to itself, and let $\widehat{\gamma}\subset \cp $ be an embedded arc that it develops onto. 

\smallskip

\noindent Let us now consider the dihedral representation $\rho_{|H}:\pi_1(S_{1,1})\longrightarrow \pslc$. The puncture has trivial holonomy by construction, Lemma \ref{dihg1k1} applies and therefore $\rho_{|H}$ appears as the holonomy of a complex projective structure on a punctured torus. Let $\Sigma$ denote the projective handle, see Definition \ref{projhandle}, obtained by filling the puncture with a branched projective chart. From our construction, this projective handle is obtained by identifying sides of a quadrilateral $\mathcal{Q}$ on $\cp$ that bounds an embedded disk because the handle generators $\alpha_1,\beta_1$ are chosen in such a way $\rho_{|H}(\alpha_1)$ and $\rho_{|H}(\beta_1)$ are in the form \eqref{standform3}. In fact, from the proof of Proposition \ref{dihg1k1}, there was plenty of freedom in choosing the quadrilateral $\mathcal{Q}$, namely we could choose any base-point $p_0\in \C$ so that $\mathcal{Q}$, as defined by \eqref{pol2}, is non-degenerate and indeed embedded. In particular, we can choose such a base-point $p_0$ such that the following two properties are satisfied
\begin{itemize}
    \item $\widehat{\gamma}$ lies in the embedded disk bounded by $Q$, and 
    \item a vertex of $Q$ is an endpoint of $\widehat{\gamma}$.
\end{itemize} 
\noindent The desired structure with holonomy $\rho$ is then obtained by grafting the projective handle $\Sigma$ on $S$ along $\gamma$ as in Definition \ref{bubbhand}. The resulting surface is homeomorphic to $S_g$ and has a branched projective structure with a unique branch-point; recall that grafting in a handle does not change the monodromy of $H$ or its complement.  Deleting the branch-point we obtain our desired projective structure on $S_{g,1}$ with monodromy $\rho$. 
\end{proof} 
\medskip

\textit{Remark.} Even in this case there is an alternative proof can be derived from the results of  \cite{Fils2}. As in the preceding discussion, regard the representation  $\rho:\pi_1(S_{g,\,1})\longrightarrow \pslc$ with trivial monodromy around the puncture as a representation $\overline{\rho}:\pi_1(S_g)\longrightarrow\pslc$. As before, it suffice to realize this latter representation as the monodromy of a branched projective structure with one branch-point and thus obtain the desired projective structure on $S_{g,1}$ with monodromy $\rho$ by deleting the branch-point. However, according to the main Theorem in \cite{Fils2}, every dihedral representation $\pi_1(S_g)\longrightarrow \pslc$ can be realised as the monodromy of such a branched projective structure with a single branch-point of order at least three (\textit{c.f.} the last obstruction, as mentioned in \S\ref{altproofk1}). 

\subsection{Surfaces with at least two punctures.}\label{dihhgs} We finally consider punctured surfaces $S_{g,\,k}$ of genus at least one and with at least two punctures, \textit{i.e.} $g\ge1$ and $k\ge2$. This subsection is devoted to prove Proposition \ref{dihcase} for these remaining cases. We start by considering the case of surfaces with exactly two punctures and the general case with more than two punctures shall follow by extending our constructions. 

\begin{lem}\label{dihk2}
 Let $g\ge1$ and let $\rho:\pi_1(S_{g,2})\longrightarrow \pslc$ be a non-trivial dihedral (but not affine) and degenerate representation such that at least one puncture has trivial monodromy. Then there is a projective structure on $S_{g,2}$ with monodromy $\rho$.
\end{lem}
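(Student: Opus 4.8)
The plan is to induct on the genus, peeling off a single dihedral handle and realizing the co-axial remainder with the affine constructions of \S\ref{proof2} and \S\ref{saff2}. Since $\rho$ is degenerate, the monodromy around each puncture is diagonal in the basis for which the preserved pair is $F=\{0,\infty\}$, and by the surface-group relation the two peripheral monodromies multiply to the inverse product of the handle-commutators. I would split into the case that both punctures of $S_{g,2}$ carry trivial monodromy and the case that exactly one does.

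I would handle the base case $g=1$ directly. If both punctures are trivial, filling them lets $\rho$ descend to a dihedral, non-affine representation $\bar\rho:\pi_1(S_{1,1})\to\pslc$ with trivial peripheral monodromy, which Lemma \ref{dihg1k1} realizes by a $\cp$-structure on $S_{1,1}$; deleting one further regular point then gives the required structure on $S_{1,2}$. If exactly one puncture $\gamma_2$ is trivial, then after filling $\gamma_2$ the remaining puncture $\gamma_1$ acquires non-trivial \emph{loxodromic} monodromy, and (using the modifications in the proof of Lemma \ref{onedihhandle}) one may choose handle-generators with $\rho(\alpha)=A\colon z\mapsto a z$, $a\neq\pm1$, and $\rho(\beta)=B\colon z\mapsto b/z$, so that $[A,B]\colon z\mapsto a^{-2}z$ is non-trivial (the values $a=\pm1$ would force $\gamma_1$ trivial, a case covered above). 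I would then run the commutator-based pentagon construction of Proposition \ref{affk2}: for a base-point $p$ of sufficiently large imaginary part, translate $p$ horizontally and show that the pentagon \eqref{pentbaseoncomm} eventually bounds an immersed disk in $\cp$ containing $\infty$, which is the analogue of Lemma \ref{pent_lemma} for these dihedral side-pairings. Identifying the sides by $A$ and $B$ then produces a one-holed torus with a single corner-point, whose boundary circle, surrounded by a sector at $\infty$, is a pole of order two with the prescribed loxodromic holonomy; deleting the corner-point yields a (non-affine) $\cp$-structure on $S_{1,2}$ with monodromy $\rho$.

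For the inductive step $g\geq2$, I would apply Lemma \ref{onedihhandle} to fix handle-generators for which $\rho$ restricts to a dihedral (not affine) representation on the handle $H_1\cong S_{1,1}$ and to a co-axial — hence abelian, so commutator-free — representation $\rho_0$ on the complementary subsurface. If both punctures are trivial, then the boundary curve of $H_1$ has trivial monodromy, Lemma \ref{dihg1k1} realizes $\rho|_{H_1}$ on $S_{1,1}$, and filling its puncture produces a projective handle (Definition \ref{projhandle}) built from a quadrilateral bounding an immersed disk; meanwhile $\rho_0$, regarded on the closed surface $S_{g-1}$ after its trivial boundary is filled, is realized by the co-axial case of Proposition \ref{affk1} (or directly when $g-1\le1$) as a branched $\cp$-structure with a single branch-point built from a chain of quadrilaterals whose handle-generators develop onto embedded arcs. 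Grafting the projective handle along one such arc (Definition \ref{bubbhand}) gives a branched $\cp$-structure on $S_g$ with a single branch-point; deleting this branch-point and one regular point gives the structure on $S_{g,2}$. (Alternatively, for $g\ge2$ and both punctures trivial one may invoke Le Fils' theorem \cite{Fils2} for dihedral representations of $\pi_1(S_g)$, exactly as in the remark following Lemma \ref{dihgk1}.) If exactly one puncture $\gamma_2$ is trivial, I would absorb $\gamma_1$ into the dihedral piece: the subsurface $\Sigma_1=H_1\cup\gamma_1\cong S_{1,2}$ has boundary $[\alpha_1,\beta_1]\gamma_1$ with monodromy $\rho(\gamma_1)^{-1}\rho(\gamma_1)=\mathrm{I}$, so $\rho|_{\Sigma_1}$ is realized on $S_{1,2}$ by the base case above; we combine this with the realization of $\rho_0$ on $S_{g-1}$ by a grafting as in Definition \ref{bubbhand} performed near the trivial-monodromy puncture, so that $\gamma_1$ survives as a pole of the resulting structure. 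The degenerate sub-cases in which $\rho_0$ is trivial, or has image of order two (where the complement must be realized with two branch-points, as in Lemma \ref{triv}), are treated separately. In every case, deleting the branch-point(s) and, if necessary, a regular point produces a meromorphic $\cp$-structure in $\mathcal{P}_g(2)$ with monodromy $\rho$, using that a puncture with diagonal monodromy is a pole of order at most two.

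The step I expect to be the main obstacle is the pentagon estimate in the base case, i.e. the dihedral analogue of Lemma \ref{pent_lemma}. That lemma's proof uses crucially that $A$ and $B$ are affine, so that the vertex $q_t=A^{-1}B^{-1}(p_t)$ has controllable modulus and argument as $t\to\infty$; with $B$ an inversion $z\mapsto b/z$ these asymptotics have to be redone, again dividing according to whether the argument of the dilation factor $a$ is irrational, rational, or zero, and verifying in each situation that three consecutive edges of the pentagon are embedded, so that the standard principle — a closed curve built from two embedded arcs bounds an immersed disk on either side — applies.
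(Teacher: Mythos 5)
Your overall architecture (peel off one dihedral handle via Lemma \ref{onedihhandle}, realize the pieces, combine) matches the paper, but the step you yourself flag as "the main obstacle" is exactly where the proof lives, and you leave it unproven. For $g=1$ with exactly one trivial puncture you route the construction through the pentagon \eqref{pentbaseoncomm} -- which in fact belongs to the non-co-axial argument for Proposition \ref{affk1}, not to Proposition \ref{affk2} -- and you would need a dihedral analogue of Lemma \ref{pent_lemma}. This is not a routine redo of the asymptotics: the proof of Lemma \ref{pent_lemma} normalizes $\lvert a\rvert,\lvert b\rvert>1$ via Claim \ref{claim1} and controls $q_t=A^{-1}B^{-1}(p_t)$ under affine maps, whereas with $B\colon z\mapsto 1/(bz)$ the point $B^{-1}(p_t)$ tends to $0$ as $p_t$ escapes to infinity, and the peripheral monodromy is the dilation $z\mapsto a^2z$ (or $(b/a)^2z$), which -- contrary to your assertion -- need not be loxodromic: if the dilation factor is unimodular it is elliptic, and then the large-modulus normalizations your asymptotics would rely on are unavailable. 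The paper needs no such estimate at all: its $g=1$ case reuses the genus-one construction of Proposition \ref{affk2} verbatim -- the polygonal curve $L$ through $p, A(p), AB(p), BA(p), B(p)$ with the two rays identified by $[B,A]$ and freely chosen unbounded regions $R_i$ -- the only change being that the side-pairings are order-two elliptic elements preserving $\{0,\infty\}$. So the crux of your base case is missing, and the route you chose makes it harder than necessary.

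There is a second gap in the inductive step when one puncture has non-trivial monodromy: your combination step invokes Definition \ref{bubbhand}, but that definition grafts a \emph{closed} projective handle coming from a quadrilateral bounding an immersed disk along an arc from a branch-point to itself; it does not glue a twice-punctured dihedral piece (whose developing image involves rays running out to the puncture at infinity, not a quadrilateral) onto a branched realization of $\rho_0$ on the closed surface $S_{g-1}$, and it provides no mechanism by which the non-trivial puncture "survives as a pole". The paper's proof instead restricts $\rho$ to $S_{g-1,2}$ (the co-axial handles together with the trivial puncture, so both peripheral monodromies are trivial; non-triviality of $\rho_0$ is arranged by mixing the dilation from the dihedral handle into the other handles), realizes it by Proposition \ref{affk2} -- which, unlike Propositions \ref{affg1k1} and \ref{affk1}, has no exclusion for trivial or order-two image, so the sub-cases you defer simply do not arise -- realizes the dihedral restriction on $S_{1,2}$ by the $g=1$ case, and then glues the two punctured surfaces along rays to their punctures at infinity by the holonomy-preserving gluing of Definition \ref{glue-new}; the merged puncture then carries $\rho(\gamma_2)$ and the identified branch-points give the trivial puncture. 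Your both-punctures-trivial case is essentially fine (it amounts to filling a puncture, citing Lemma \ref{dihgk1}, and deleting a regular point), but as written the one-non-trivial-puncture case has genuine gaps in both the base case and the gluing step.
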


\begin{proof}
The case of genus $g=1$, namely of representations  $\rho:\pi_1(S_{1,2})\longrightarrow \pslc$ uses the same argument previously used for Proposition \ref{affk2} for the genus one case. The main difference here is that the sides are glued by elliptic transformations of order two preserving the pair $\{0,\infty\}\subset\cp$. Note that we can always find infinitely many rays joining the two punctures. The case of $g\geq 2$ is handled using the $g=1$ case, as we shall now describe. 

\smallskip

\noindent Let $\rho:\pi_1(S_{g,2})\longrightarrow \pslc$ be a dihedral and degenerate representation. By Lemma \ref{onedihhandle}, there is a set of handle-generators $\{\alpha_1,\beta_1,\dots,\alpha_g,\beta_g\}$ such that the restriction of $\rho$ to the handle generated by $\{\alpha_1,\,\beta_1\}$ is dihedral of the form \eqref{standform2} or form \eqref{standform3} and the restrictions of $\rho$ to each handle generated by $\{\alpha_i,\,\beta_i\}$, for $2\le i\le g$, is co-axial fixing $\{0,\infty\}$ point-wise. Assume $\rho(\gamma_1)=\text{Id}$, as a consequence we have that $\rho\big([\alpha_i,\beta_i]\big)=\text{Id}$ for all $i\ge2$ and $\rho\big([\alpha_1,\beta_1]\big)=\rho(\gamma_2)$ and is a dilation fixing $\{0,\infty\}$ point-wise.
\smallskip

\noindent Let $\rho_0\!\!:\!\pi_1(S_{g-1,2}) \to \pslc$ be the restriction of $\rho$ to the subsurface of $S_{g,2}$ homeomorphic to $S_{g-1,2}$ that contains all the handles with co-axial monodromy and one puncture with trivial monodromy. We notice that $\rho_0$ is a co-axial representation. Finally, let $\rho_1:\pi_1(S_{1,2}) \to \pslc$ be the restriction of $\rho$ to the complementary subsurface that contains, in particular, the remaining puncture. The representation $\rho_1$ is a dihedral (but non-affine) degenerate representation.

\smallskip
\noindent We may also assume the representation $\rho_0$ to be non-trivial. If $\rho_0$ was trivial then we can apply a proper change of basis $\{\alpha_2',\beta_2',\dots, \alpha_g',\beta_g'\}$ such that the restriction of $\rho_0$ to any handle $\langle\alpha_i',\beta_i'\rangle$ is not trivial and fix the set $\{0,\infty\}\subset \cp$ point-wise. In fact, since $\rho$ is dihedral but not affine, we may assume $\rho(\alpha_1)$ or $\rho(\beta_1)$ to be a dilation (that is of the form $z\mapsto c\,z$ for some $c\in\mathbb{C}^*$). Then we can apply the change of basis, handle by handle, as described in Lemma \ref{cbas} in order to get the desired basis.

\medskip

\noindent We start by considering the representation $\rho_0$. Our Proposition \ref{affk2} applies and therefore $\rho_0$ can be realized as the monodromy of some complex projective structure (in fact an affine structure) on $S_{g-1,2}$ with two punctures with trivial monodromy. We briefly recall the construction. Let $p\in\mathbb{C}^*$ be a point and suppose $p$ is not a fixed point of $A_i=\rho(\alpha_i)$ or $B_i=\rho(\beta_i)$ for any $i=1,\dots,g$. For any $i=2,\dots,g$, define $\mathcal{Q}_i$ to be the quadrilateral based at $p$ whose sides are defined by 
\begin{equation}\label{pol3}
 p\mapsto A_i(p)\mapsto A_iB_i(p)=B_iA_i(p)\mapsto B_i(p)\mapsto p
\end{equation} and define $\Sigma_i$ the $2-$punctured torus obtained by the cross-wise identification given by the mappings $A_i$ and $B_i$, where we subsequently delete the branch-point arising from the vertices of the polygon. 
Choose rays from $p$ to the puncture at infinity in $\mathbb{C}\setminus\mathcal{Q}_i$, for each $i$, and glue these surfaces along the rays (see Definition \ref{glue2}).
We thus obtain a surface $\Sigma$ homeomorphic to $S_{g-1,2}$ equipped with a complex projective structure (in fact, an affine structure) with monodromy $\rho_0$.

\smallskip

\noindent Let us now consider $\rho_1$. It is dihedral, degenerate and, by construction, at least one puncture has trivial monodromy. By the $g=1$ case handled at the beginning,  $\rho_1$ can be realized as the monodromy of some complex projective structure on $S_{1,2}$. In fact, given $p\in \mathbb{C}^*$ as above, we proceed as in the genus one case of Proposition \ref{affk2}: We can define an immersed polygonal curve based at $p$ (\textit{i.e.} the directed curve $L$ in Proposition \ref{affk2}) and then glue the sides of such a polygon by using the mappings $A_1$, $B_1$ and $[A_1,B_1]$. The resulting surface $\Sigma'$ is homeomorphic to $S_{1,2}$ and carries a complex projective structure with holonomy $\rho_1$. Note that there are arcs between the punctures that develop onto rays in $\mathbb{C}$ going towards the puncture at infinity.
\smallskip

\noindent We now glue together these two structures along the rays by using Definition \ref{glue-new}, as we now describe. Let $r$ any ray in $\Sigma$ joining the punctures and let $r'$ be any ray joining the two punctures of $\Sigma'$. By construction, the ray $r$ develops onto a ray $\overline{r}\subset \cp$ leaving from $p$. In the same fashion, the ray $r'$ develops onto a ray $\overline{r}'\subset\cp$ leaving from $p$. Note that these rays may or may not coincide, but they have the same starting point so Definition \ref{glue-new} applies. We glue the surfaces $\Sigma$ and $\Sigma'$ along the rays  $r$ and $r'$ as in that definition. The resulting surfaces is homeomorphic to $S_{g,2}$ and carries a complex projective structure with monodromy $\rho$ as desired.
\end{proof}

\begin{cor}\label{dihkmorethan2}
Let $g\ge1$, $k\ge3$ and let $\rho:\pi_1(S_{g,k})\longrightarrow \pslc$ be a non-trivial dihedral (but not affine) and degenerate representation such that at least one puncture has trivial monodromy. Then there is a projective structure on $S_{g,k}$ with monodromy $\rho$.
\end{cor}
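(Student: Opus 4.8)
The plan is to reduce to the two-puncture case, which is Lemma \ref{dihk2}, by the same decompose-and-glue scheme that lets Proposition \ref{aff} be deduced from Propositions \ref{affk2} and \ref{affg0}. First, by Lemma \ref{dihdegiscoa}(a) a dihedral degenerate representation of a genus-zero surface group is automatically co-axial, hence affine; since our $\rho$ is assumed not affine this forces $g\ge 1$, and after conjugation we take the globally preserved pair to be $\{0,\infty\}$. Next, by Lemma \ref{onedihhandle} I would fix handle-generators $\{\alpha_i,\beta_i\}_{1\le i\le g}$ so that $\rho$ restricted to the handle generated by $\{\alpha_1,\beta_1\}$ is dihedral but not affine while $\rho$ restricted to the complementary subsurface is co-axial; in particular the non-affine behaviour sits inside a single handle, and $C:=\rho([\alpha_1,\beta_1])$ is a dilation fixing $\{0,\infty\}$ pointwise (the co-axial handles contribute trivial commutators).

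Then I would cut $S_{g,k}$ along a separating curve into a subsurface $\Sigma_0$ homeomorphic to $S_{g,2}$ containing all the handles (hence the dihedral one) together with one original puncture, and a complementary genus-zero subsurface $\Sigma_1$ homeomorphic to $S_{0,k-1}$ carrying the remaining punctures. The restriction $\rho_0=\rho|_{\Sigma_0}$ is dihedral, not affine (it contains the dihedral handle), and degenerate, and --- by routing an apparent singularity of $\rho$ into $\Sigma_0$ when $\rho$ has at least two of them, or else by realizing the trivial puncture of $\rho_0$ as a branch-point exactly as in Case 2 of the proof of Proposition \ref{aff} --- it has a puncture with trivial monodromy; hence Lemma \ref{dihk2} provides a $\cp$-structure on $S_{g,2}$ with monodromy $\rho_0$, which from its construction has a puncture at infinity of monodromy $C$. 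The restriction $\rho_1=\rho|_{\Sigma_1}$ has genus zero, so by Lemma \ref{dihdegiscoa}(a) it is co-axial, hence affine; after pre-adjusting the monodromy around the separating curve (again as in Case 2 of Proposition \ref{aff}, so that the holonomy-preserving gluing below restores the correct global monodromy and so that the apparent singularity demanded by Proposition \ref{affg0} appears as the branch-point that construction produces), Proposition \ref{affg0} realizes $\rho_1$ by an affine structure on $S_{0,k-1}$ --- or, when $k=3$ forces the genus-zero piece to be a twice-punctured sphere, a direct cylinder construction as in \S\ref{sstrans}. Finally I would glue the two structures along rays from branch-points to punctures at infinity by the holonomy-preserving procedure of Definition \ref{glue-new}, obtaining a branched $\cp$-structure on $S_{g,k}$ whose restrictions to the two subsurfaces are $\rho_0$ and $\rho_1$; deleting the finitely many branch-points yields a structure in $\mathcal{P}_g(k)$ with monodromy $\rho$. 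For $k>3$ one may instead let $\Sigma_1$ absorb more punctures, or attach further genus-zero pieces successively by Definition \ref{glue-new}, with no essential change.

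The hard part is the holonomy bookkeeping around the separating curve, which is precisely why Definition \ref{glue-new} rather than the cruder Definition \ref{glue2} is needed: the curve has monodromy $C$, a dilation that is not the identity in general, and a naive gluing would compose or conjugate the two pieces' holonomies in the wrong way; inserting an intermediate copy of $\C$ and gluing along a common ray by the identity map fixes this. The companion delicacy, forced by Theorem \ref{ab61}, is that each of the two pieces must carry an apparent singularity; this is immediate when $\rho$ has at least two of them, but when $\rho$ has exactly one it must be arranged --- exactly as in the proof of Proposition \ref{aff} --- that the apparent singularity of each piece is a branch-point of its auxiliary structure and that the monodromy around the gluing curve is adjusted so that the two compose to the prescribed value after the gluing. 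All of this parallels bookkeeping already carried out in the proofs of Lemma \ref{dihk2} and Proposition \ref{aff}, so no genuinely new idea is required beyond assembling those pieces.
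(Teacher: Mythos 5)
Your proposal follows essentially the same route as the paper's proof: split off a subsurface $S_{g,2}$ containing all the handles and the apparent singularity, realize its dihedral degenerate restriction by Lemma \ref{dihk2}, realize the complementary genus-zero (hence co-axial, by Lemma \ref{dihdegiscoa}) representation by the branched construction in the proof of Proposition \ref{affg0} with the relevant puncture's monodromy pre-adjusted to $A_k\,C^{-1}$ exactly as in Case 2 of Proposition \ref{aff}, and then glue along rays by the holonomy-preserving procedure of Definition \ref{glue-new}. The only small slip is the $k=3$ aside: a Euclidean cylinder as in \S\ref{sstrans} realizes only translation holonomy and carries no branch-point to anchor the gluing ray, but this is harmless because the two-punctured-sphere (slit) case in the proof of Proposition \ref{affg0} already handles it, which is what the paper uses.
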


\begin{proof}
Suppose there are more than two punctures, \textit{i.e.} $k>2$. Let $\rho:\pi_1(S_{g,k})\longrightarrow \pslc$ be a dihedral (but not affine) degenerate representation. Let $A_1,A_2,\dots,A_k$ be the monodromies of the punctures. Since $\rho$ is a degenerate representation, we can assume without loss of generality that $A_1=\text{Id}$. Let $\rho_0:\pi_1(S_{g,2}) \to \pslc$ be the restriction of $\rho$ to the subsurface of $S_{g,k}$ homeomorphic to $S_{g,2}$ that contains one puncture with trivial monodromy. Let $C=\rho_0(\gamma_2)$ the monodromy of the other puncture of $S_{g,2}$. We can notice that $A_2\,A_3\,\cdots\,A_k\,C^{-1}=\text{Id}$. Let $S_{0,k-1}$ be the $(k-1)$-punctured sphere and let $\delta_i$ denotes a curve enclosing the $i$-th puncture. Similarly, we define $\rho_1:\pi_1(S_{0,k-1}) \longrightarrow \pslc$ to be the representation such that $\rho_1(\delta_i)=A_i$ for any $i=1,\dots,k-2$ and $\rho_1(\delta_{k-1})=A_k\,C^{-1}$. Note that the representation $\rho_1$ is by itself an affine representation.

\smallskip

\noindent Let us consider first the representation $\rho_0$. Our previous Lemma \ref{dihk2} applies and $\rho_0$ can be realized as the monodromy of some complex projective structure on $S_{g,2}$. Let us now denote by $\Sigma$ the surface $S_{g,2}$ equipped with such a structure. It follows by construction that there exists arcs between the punctures (in fact infinitely many) that develop onto rays in $\C$. Recall that one of these punctures is an apparent singularity and any neighborhood of it is locally modelled on a punctured disk centered at some point $p\in\mathbb{C}$. Let us fix any such arc $r\subset \Sigma$ joining the punctures and denote by $\overline{r}$ its developed image, which is an infinite ray on $\mathbb{C}$ leaving $p$ towards the infinity.

\smallskip

\noindent Let us now consider the affine representation $\rho_1$. According to the proof of our Proposition \ref{affg0}, after an appropriate choice pf a  base-point, $\rho_1$  appears as the holonomy of some branched affine structure (and hence a branched projective structure) on $S_{0,k-1}$.  We denote by $\Sigma'$ the surface $S_{0,k-1}$ equipped the branched affine structure we obtain by choosing $p$ as the base-point, where $p$ is the point we saw above. Let $r^\prime$ be an arc from the unique branch-point, say $q$, to the puncture with holonomy $A_k\,C^{-1}$. This ray develops on a ray $\overline{r}'$ leaving from $p$. Note that $\overline{r}$ and $\overline{r}'$ are two rays based at $p$; in particular, they intersect only at $p$ if they do not coincide.

\smallskip

\noindent It finally remains to glue together structures $\Sigma$ and $\Sigma'\setminus \{q\}$, along the rays $r$ and $r'$ defined above, by the gluing construction described in Definition \ref{glue-new}. 
\noindent The resulting surface, after the gluing, is homeomorphic to $S_{g,k}$ and carries a complex projective structure with holonomy $\rho$ as desired.
\end{proof}

\noindent This concludes the construction of the general case and indeed the proof of Proposition \ref{dihcase}. $\,\,\,\,\qed$


\section{Corollaries}\label{coros}

\subsection{Infinite fibers}\label{fib} Here we provide a proof of Corollary \ref{cor:fiber}. Let $\rho:\Pi \to \pslc$ be a representation that satisfies the requirements of Theorem \ref{thm1}, so that there exists a projective structure in $\mathcal{P}_g(k)$ with monodromy $\rho$. Here, we shall describe how the proof of Theorem \ref{thm1} shows that in fact, the set of such projective structures with monodromy $\rho$ is infinite. That is, for any such $\rho$, the fiber of the monodromy map $\Psi^{-1}(\rho)$ is infinite in cardinality. 

\smallskip

\noindent For this, we recall the following surgery, well-known in the context of branched projective structures, see \cite[Section \S12.1]{GKM} or \cite[Definition 2.5]{CDF} and references therein. 

\begin{defn}[Bubbling]\label{bubb} Let $S$ be a surface equipped with a projective structure, and let $\gamma$ be an embedded arc on $S$ with from one puncture to another, such that the developing image is an embedded arc $\widehat{\gamma}$ in $\cp$. We shall call such an arc $\gamma$ an \textit{admissible arc} for the $\cp$-structure on $S$. Take a copy of $\cp$ slit along $\widehat{\gamma}$, and let $\widehat{\gamma}_+$ and $\widehat{\gamma}_-$ be the resulting sides of the slit. Cut $S$ along $\gamma$, and identify the resulting sides with $\widehat{\gamma}_\pm$ so that the resulting surface $S^\prime$ is homeomorphic to $S$, and acquires a projective structure. The developing map of this new projective structure, when restricted to a fundamental domain, now wraps an additional time around $\cp$; however, the monodromy remains unchanged. Notice that we have already implicitly used this fact in Definition \ref{bubbhand}.  Moreover, a computation exactly as in \S\ref{sdpunct} shows that the resulting projective structure is also in $\mathcal{P}_g(k)$, that is, the Schwarzian derivative of developing map has a pole of order at most two at the punctures. 
\end{defn}

\noindent Indeed, once we have an admissible arc as in the definition above, then we can perform the bubbling operation $m$ times for any $m$, each time adding a new copy of $\cp$ along $\gamma$, thus obtaining infinitely many projective structures with the same monodromy. 

\smallskip 

\noindent It only remains to show that there exists admissible arcs in any of the $\cp$-structures we construct in the course of the proof of Theorem \ref{thm1}. If the representation $\rho$ is non-degenerate, then recall from \S\ref{consprojstrucnd} that the projective structure on a surface $S$ with monodromy $\rho$ is obtained by considering a $\rho$-equivariant pleated plane $\Psi:\widetilde{S}\to \mathbb{H}^3$, and then taking its ``shadow" at the conformal boundary at infinity $\partial_\infty \mathbb{H}^3 = \cp$. It follows from that construction that any of the pleating lines of $\Psi$ is the lift of an admissible arc on $S$; see also \cite[Theorem 1.3]{GupMon1} and its proof.

\noindent For a degenerate representation $\rho$, note that
\begin{itemize}
    \item in the case of the trivial representation handled in \S\ref{triv}, either $g=0$, in which case any arc between punctures is admissible, or else $g>0$, in which case the projective structure is obtained by taking a branched cover of $\cp \setminus \{0,1,\infty\}$. Since we can obtain infinitely many $\cp$-structures on the latter by bubbling along any arc between the three punctures, their pullbacks of under the same topological branched cover defines an infinite set of points in the fiber, as desired. 
    \item in all remaining constructions in \S\ref{proof2}-\S\ref{ssdih}, there is a handle-generator that develops onto an edge of a polygonal curve in $\cp$, and is hence admissible.
\end{itemize}

\noindent This completes the proof of Corollary \ref{cor:fiber}. We note that the above argument proves that each non-empty fiber is at least \textit{countably} infinite; however, as noted at the end of \S1, there are representations with connected (and hence uncountably infinite) fibers. 


\subsection{Spherical cone-metrics}\label{spher} Here we provide a proof of Corollary \ref{cor:spher}. Since a spherical cone-metric is also a $\cp$-structure on the punctured surface obtained by deleting the cone-points, the ``only if" direction  is an immediate consequence of Theorem \ref{ab61}, and Lemmata \ref{trivex} and \ref{excase}. Namely, it follows from these results that the holonomy of such a structure satisfies conditions (i) and (ii) of Theorem \ref{thm1}.  In what follows, we shall prove the "if" direction by handling the cases of non-degenerate and degenerate holonomy separately. 

\medskip 
\noindent Let $\rho: \Pi \to \text{SO}(3,\mathbb{R})$ be a non-degenerate representation. Note that this can be thought of as a representation into $\pslc$ that is unitary. In particular, note that the monodromy around any puncture is either elliptic or the identity element. By Proposition \ref{prop:nondegen} one can construct a $\cp$-structure $P$ on $S_{g,k}$ with monodromy $\rho$.  By virtue of the holonomy lying in the isometry group of the round metric on $\cp$, the punctured surface acquires a spherical metric.  It only remains to verify that the punctures are cone-points (or regular points if the cone-angle is $2\pi$). This is a consequence of our construction in \S\ref{consprojstrucnd}, see also \cite[Section 3]{GupMon1}. In what follows we describe briefly how the developing map for $P$ extends to each puncture as a branch-point.

\smallskip

\noindent Consider the $\rho$-equivariant pleated plane $\Psi$ in $\mathbb{H}^3$, see \S\ref{pleatedplane}; recall that the image of $\Psi$ comprises totally-geodesic ideal triangles with vertices in the image of the framing map $\beta:F_\infty \to \cp$. The edges of these totally-geodesic ideal triangles form an equivariant collection of \textit{pleating lines}, which are geodesic lines, each with a weight in $(0,2\pi)$ which equals the dihedral angle between the two adjacent  ideal triangles adjacent at the pleating line.

\noindent Let $\bar{p}$ be a puncture on $S_{g,k}$. Since the monodromy around $\bar{p}$ is elliptic, from \cite[Lemma 3.2]{GupMon1} it follows that up to the equivariance, there will be finitely many pleating lines incident on any lift  $p \in F_\infty$, and the  sum of their weights will be positive. Interpreting this in terms of our construction of $P$, this implies that, in the language of \S\ref{sdpunct}, the "total bending angle" $\alpha$ around $\beta(p)$ is positive. The developing map $f$ of the projective structure $P$ then takes a neighborhood of the puncture into the portion of a lune $L_\alpha$ in $\cp$ that lies in a neighborhood of one of its endpoints $\beta({p})$. The developing map can thus be extended to ${p}$ by mapping it to $\beta({p})$; as explained in \S3.4, in a conformal coordinate $w$ on the surface in a neighborhood of the puncture $\bar{p}$, if we take $\beta({p}) = 0\in \cp$ the developing map has the form $w\mapsto w^{\alpha/2\pi}$. This differs slightly from the map $\tilde{f}$ in \S\ref{sdpunct} since there we took $\beta({p}) = \infty \in \cp$. The puncture $\bar{p}$ is thus a cone-point of angle $\alpha$ (and a regular point if $\alpha = 2\pi$), as desired.

\medskip

\noindent Now let $\rho: \Pi \to \text{SO}(3,\mathbb{R})$ be a degenerate representation satisfying condition (ii) of Theorem \ref{thm1}. The constructions of \S4 apply to produce a $\cp$-structure on $S_{g,k}$ with monodromy $\rho$.  Away from the punctures, the charts to $\cp$ for this projective structure can be considered as charts to the round sphere $\mathbb{S}^2$. So as observed above, since the monodromy of any curve is an element of $\text{SO}(3,\mathbb{R})$, i.e.\ an isometry of  $\mathbb{S}^2$, the pullback of the spherical metric defines a spherical metric on the punctured surface. 
The key observation is that our constructions in \S\ref{proof2} always produce projective structures where the punctures are cone-points. Indeed, a puncture that is an apparent singularity is necessarily a branch-point (i.e.\ with cone-angle an integer-multiple of $2\pi$)  or a regular point (when the cone-angle is exactly $2\pi$). A puncture with non-trivial monodromy around it, say an elliptic rotation of angle $\alpha$, has a cone-angle $\alpha + 2\pi n$ for some integer $n\geq 0$. In particular, the developing map always extends to the puncture and has the form $z\mapsto z^{\alpha/2\pi}$ in a coordinate disk centered at the puncture. This defines a spherical cone-metric on $S_{g,k}$ with monodromy $\rho$ and cone-points at the punctures, as desired. 

\medskip

\subsection{Branched projective structures}\label{branch} We finally provide a proof of Corollary \ref{cor:branch}. Our main Theorem \ref{thm1} already covers all representations except those that are ``exceptional" in the following sense.

\begin{defn}
An exceptional representation $\rho:\pi_1(S_{g,k})\longrightarrow\pslc$ is necessarily degenerate and satisfies one of the following additional conditions
\begin{itemize} \item[-] $\rho$ does not have any apparent singularity (in the sense of Definition \ref{appsing}), or \item[-] $\rho$ is trivial when $g>0$ and $k=1$ or $2$, or \item[-] $\rho$ has an apparent singularity, but the image of $\rho$ is a group of order two, when $g>0$ and $k=1$. \end{itemize} 
\end{defn}

\noindent Our proof of Corollary \ref{cor:branch} is then an immediate consequence of the following Lemmata, that deals with each of these possibilities.

\begin{lem}\label{noappsing}
Let $\rho:\pi_1(S_{g,k})\longrightarrow \pslc$ be a degenerate representation without apparent singularity. Then $\rho$ arises as the monodromy of a branched $\cp$-structure on $S_{g,k}$ with a single branch-point.
\end{lem}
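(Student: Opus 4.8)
The plan is to reduce the statement to Theorem \ref{thm1}, applied on the surface with one extra puncture. The key observation is that a branched $\cp$-structure on $S_{g,k}$ whose single branch-point $x_0$ lies in the interior of the surface is the same datum as a $\cp$-structure on $S_{g,k+1} := S_{g,k}\setminus\{x_0\}$ for which the additional puncture $x_0$ is an apparent singularity that is a branch-point (see Definition \ref{appsing} and the discussion following Definition \ref{brpt}): one passes between the two pictures by removing, respectively filling in, the point $x_0$.

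First I would set up the representation on $S_{g,k+1}$. Let $i_\ast : \pi_1(S_{g,k+1}) \to \pi_1(S_{g,k})$ be the (surjective) homomorphism induced by the inclusion $S_{g,k+1}\hookrightarrow S_{g,k}$, and set $\hat\rho := \rho \circ i_\ast$. If $\delta_0$ denotes the peripheral loop around $x_0$, then $\delta_0$ lies in the kernel of $i_\ast$, so $\hat\rho(\delta_0)=\mathrm{Id}$; thus $\hat\rho$ has an apparent singularity at $x_0$. Moreover $\hat\rho$ is again degenerate in the sense of Definition \ref{degen}: it preserves the same invariant set $F\subset\cp$ as $\rho$; its monodromy around a puncture $x_i$ with $i\ge 1$ equals $\rho(\delta_i)$, which is a parabolic fixing the point of $F$ (case (a)) or an element fixing the two points of $F$ (case (b)); and its monodromy around $x_0$ is the identity. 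Hence $\hat\rho$ falls in case (ii) of Theorem \ref{thm1}, and I would then check it is not one of the exceptions listed there: since $k\ge 1$ we have $k+1\ge 2$, so no exception involving a single puncture can occur; and $\hat\rho$ cannot be trivial, because $i_\ast$ is onto and $\rho$ is non-trivial (its peripheral monodromies are non-trivial, as $\rho$ has no apparent singularity). Therefore Theorem \ref{thm1} produces a $\cp$-structure $P$ in $\mathcal{P}_g(k+1)$ with monodromy $\hat\rho$.

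Finally I would read off the branched structure on $S_{g,k}$. Since $x_0$ is an apparent singularity of $P$, it is either a branch-point or a regular point (Definition \ref{appsing} and the subsequent discussion). It cannot be a regular point: if it were, $P$ would extend to a $\cp$-structure on $S_{g,k}$ with monodromy $\rho$, still lying in $\mathcal{P}_g(k)$ and having no apparent singularity (all of $x_1,\dots,x_k$ have non-trivial monodromy $\rho(\delta_i)$), which contradicts Theorem \ref{ab61} because $\rho$ is degenerate. So $x_0$ is a genuine branch-point. On the other hand, the developing map of $P$ is an immersion on all of $S_{g,k}\setminus\{x_0\}$, and none of the punctures $x_1,\dots,x_k$ is a branch-point since their monodromy is non-trivial. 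Filling in $x_0$ thus yields a branched $\cp$-structure on $S_{g,k}$ with exactly one branch-point and monodromy $\rho$, as desired.

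The point requiring the most care is the last paragraph: ruling out that $x_0$ is a regular point — which is exactly where the Allegretti--Bridgeland non-degeneracy input (Theorem \ref{ab61}) is indispensable — and verifying that no spurious branch-points are created at the remaining punctures. The rest is routine bookkeeping with the degeneracy condition of Definition \ref{degen} and the exceptional cases in Theorem \ref{thm1}.
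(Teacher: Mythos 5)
Your argument is correct and follows essentially the same route as the paper's proof: regard $\rho$ as a representation of $\pi_1(S_{g,k+1})$ with trivial monodromy around an extra puncture, apply Theorem \ref{thm1} (checking degeneracy and the exceptional cases), and fill in the apparent singularity. Your extra step ruling out a regular point at $x_0$ via Theorem \ref{ab61} is a careful justification of what the paper states as ``necessarily branched'' without elaboration.
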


\begin{proof}[Proof of Lemma \ref{noappsing}]
Let $\rho:\pi_1(S_{g,k})\longrightarrow \pslc$ be a degenerate representation without any apparent singularity. Note that $\rho$ cannot be trivial. Then we may regard $\rho$ as a representation $\overline{\rho}:\pi_1(S_{g,k+1})\longrightarrow\pslc$ such that the monodromy of the extra puncture is trivial. Notice that $k+1\ge2$. Our Theorem \ref{thm1} applies and the representation $\overline{\rho}$ arises as the monodromy of a complex projective structure with one apparent singularity. We eventually fill the apparent singularity with a, necessarily branched, complex projective chart. The resulting structure is therefore a branched projective structure with a single branch-point and monodromy $\rho$.
\end{proof}

\begin{lem}\label{z2im}
Let $\rho:\pi_1(S_{g,1})\to \pslc$ be a degenerate representation with the puncture having trivial monodromy such that $\text{\emph{Im}}(\rho)\cong\mathbb{Z}_2$. Then $\rho$ arises as the monodromy of a branched $\cp$-structure on $S_{g,1}$ with a single branch-point.
\end{lem}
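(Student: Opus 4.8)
The plan is to reduce to the two-punctured case, which is already covered by the affine constructions of \S\ref{proof2}. Let $\rho:\pi_1(S_{g,1}) \to \pslc$ be degenerate with trivial monodromy around the puncture and with $\mathrm{Im}(\rho) \cong \mathbb{Z}_2$. Since the image has order two, $\rho$ factors through $H_1(S_{g,1},\mathbb{Z}) \to \mathbb{Z}_2$; in particular it is co-axial and hence, after conjugation, lands in $\affc$ (indeed in the finite subgroup $\{z \mapsto \pm z\}$ if we put the two fixed points at $0$ and $\infty$, or in a group generated by a single translation if the image is parabolic). First I would add an extra puncture: regard $\rho$ as a representation $\overline{\rho}:\pi_1(S_{g,2}) \to \affc$ where the new puncture also carries trivial monodromy. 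This new representation is non-trivial (since $\rho$ is) and has at least one apparent singularity, so Theorem \ref{thm2} (specifically Proposition \ref{affk2}, since $k = 2$) produces an affine structure on $S_{g,2}$ with monodromy $\overline{\rho}$, obtained by puncturing an affine surface $\Sigma$ with a single branch-point. Note that the case $\mathrm{Im}(\rho) \cong \mathbb{Z}_2$ is precisely the half-translation case treated at the end of the proof of Proposition \ref{affk2}, so the construction applies directly and yields a half-translation structure.

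The key point is now that in this construction the puncture of $S_{g,2}$ that we artificially introduced has trivial monodromy, and near it the developing map is that of a regular point of $\cp$ (the end is a half-translation end whose underlying projective structure is unbranched there — just as in Proposition \ref{transurfk1}, a pole of order two of the associated quadratic differential corresponds to a point where the developing map, viewed projectively, extends regularly). Concretely, one should check the flat geometry of the end at the extra puncture: it is a planar or cylindrical end, and $\infty \in \cp$ is a regular point for the projective chart there. Hence we may fill in that puncture with a projective chart, obtaining a surface homeomorphic to $S_{g,1}$ equipped with a branched $\cp$-structure whose only branch-point is the one already present on $\Sigma$, and whose monodromy is exactly $\rho$. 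Deleting nothing further (the original puncture of $S_{g,1}$ is precisely the branch-point, which we keep as the puncture), we get the desired branched $\cp$-structure on $S_{g,1}$ with a single branch-point.

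The main obstacle — and the step requiring care — is verifying that the artificial extra puncture is genuinely a \emph{regular} point of the projective structure, so that filling it in does not create a second branch-point. In the half-translation construction of Proposition \ref{affk2} for $\mathrm{Im}(\rho)\cong\mathbb{Z}_2$, the relevant quadratic differential has a pole of high order at infinity (order $6$ for $g=1$, and order $2(g+2)$ or so in general, after the gluing along rays in Definition \ref{glue-new}), which would \emph{not} give a regular point. So one cannot naively use the construction as stated; instead I would modify the construction for $S_{g,2}$ so that the extra puncture is a simple planar end: rather than placing the extra puncture "at infinity" where poles accumulate, one arranges the gluing so that the order-two-pole end (planar end) is the one we fill in, as is done in the more delicate argument preceding Lemma \ref{transurfk1} and in Proposition \ref{transurfk1} itself. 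Thus the honest statement to invoke is the $k=2$, genus-$g$, $\mathbb{Z}_2$-image case of the affine constructions, arranged to have a planar end at the apparent singularity — and then the fill-in argument of Proposition \ref{transurfk1} applies verbatim. The remaining bookkeeping (that the branch-point count is exactly one, that the Schwarzian has a pole of order at most two, and that the monodromy is unchanged under fill-in) is routine and parallels \S\ref{sdpunct}.
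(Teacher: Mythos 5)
Your reduction to $S_{g,2}$ is fine as far as it goes, but the step you yourself flag as ``requiring care'' is not merely delicate -- it is impossible, and this is where the proof breaks. You propose to fill in the artificially added puncture as a \emph{regular} point of the projective structure, while keeping the original puncture of $S_{g,1}$ at the deleted branch-point of $\Sigma$. If that configuration could be arranged, the result would be an \emph{unbranched} $\cp$-structure on $S_{g,1}$ with monodromy $\rho$ and trivial monodromy at the puncture: the branch-point of $\Sigma$ has been removed (it \emph{is} the puncture, hence not a point of the surface at all), and the filled-in end is regular by assumption. This is exactly what Lemma \ref{excase} forbids when $\text{Im}(\rho)\cong\mathbb{Z}_2$ (equivalently, it violates the Riemann--Hurwitz obstruction \eqref{rhob} with $N=2$). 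So the high order of the pole at infinity in the $\mathbb{Z}_2$ half-translation construction of Proposition \ref{affk2} is not an artifact that a cleverer gluing can remove: no structure on $S_{g,2}$ with both punctures apparent, one a deleted branch-point and the other a regular point of the developing map, can exist, and the ``modification'' you invoke has no instance. There is also an internal inconsistency in your conclusion: you assert the final structure has a single branch-point, ``the one already present on $\Sigma$'', but in your assignment that point is the puncture, so the structure you describe is in fact unbranched.

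The repair is to swap the roles of the two distinguished points: the single branch-point demanded by the lemma must survive as an \emph{interior} point, and the puncture of $S_{g,1}$ must be the other apparent singularity, where the developing map extends with local degree at least two (the Schwarzian still has a pole of order two there, so this is an admissible puncture). Concretely, one may simply keep the branch-point of the affine surface $\Sigma$ from Proposition \ref{affk2} and take the puncture to be the pole end; the paper instead argues directly: slit $\cp$ along $e\cup(-e)$ for a segment $e$ avoiding $0,\infty$, glue the sides via $z\mapsto -z$ to get a structure on the torus with two branch-points of angle $4\pi$, for $g\ge 2$ chain $g$ copies along an arc joining the two branch-points, and then delete \emph{one} of the two branch-points to create the puncture, leaving the other as the required single branch-point.
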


\begin{proof}[Proof of Lemma \ref{z2im}]
Let $\rho:\pi_1(S_{g,1})\to \pslc$ be a degenerate representation such that its image $\text{Im}(\rho)\cong\mathbb{Z}_2$. Assume the puncture has trivial monodromy. In this case we regard $\rho$ as a representation $\overline{\rho}:\pi_1(S_{g,2})\longrightarrow\pslc$ such that the monodromy of the extra puncture is trivial. Note that both punctures have trivial monodromy. Let us consider first the case $g=1$. Let $e=\overline{p\,q}\subset \C$ be any segment such that $p,q\notin\{0,\infty\}$. Slit $\cp$ along $e\,\cup\,-e$ and denote the resulting sides as $e^{\pm}$ and $-e^{\pm}$. Then glue $e^+$ with $-e^+$ and $e^-$ with $-e^-$ to obtain a half-translation structure $\Sigma$ on a torus and two branch-points of magnitude $4\pi$. By removing one of them we obtain a branched projective structure on $S_{1,1}$ with monodromy $\rho$. Assume now $g\ge2$. By construction, we can always find a geodesic segment $r$ joining the two branch-points on $\Sigma$. Suppose $\Sigma_1,\dots,\Sigma_g$ are $g$ copies of $\Sigma$. For any $i=1,\dots,g$, we slit $\Sigma_i$ along $r_i$ and denote the resulting segments $r_i^+$ and $r_i^-$. We then glue the $\Sigma_i$'s together by identifying $r_i^-$ with $r_{i+1}^+$. The resulting surface is homeomorphic to $S_g$ and carries a branched projective structure with two branch-points each one of magnitude $4g\pi$. By removing one of them we obtain a branched projective structure on $S_{g,1}$ with a single branch-point and desired monodromy.
\end{proof}

\begin{lem}\label{trivk12}
Let $k=1,\,2$ and let $\rho:\pi_1(S_{g,k})\longrightarrow \pslc$ be the trivial representation. Then $\rho$ arises as the monodromy of a branched $\cp$-structure on $S_{g,k}$ with a single branch-point if $k=2$ or two branch-points if $k=1$.
\end{lem}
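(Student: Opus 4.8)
\emph{Approach.} The key observation is that the covering map used in the proof of Lemma~\ref{triv} is available for \emph{every} $g>0$, not just when $k\ge 3$, and that deleting only some of its critical points leaves exactly the desired number of branch-points. Since $2-2g-k<0$ with $k\le 2$ forces $g\ge 1$, I would recall from the proof of Lemma~\ref{triv} the branched covering $\Pi\colon S_g\to\cp$ of degree $2g+1$, branched over $0,1,\infty\in\cp$, with exactly three critical points $p:=\Pi^{-1}(0)$, $q:=\Pi^{-1}(1)$, $r:=\Pi^{-1}(\infty)$, each of ramification order $2g+1$. Pulling back the standard $\cp$-structure on $\cp$ (which has trivial holonomy, as $\pi_1(\cp)$ is trivial) along $\Pi$ yields a branched $\cp$-structure on the closed surface $S_g$ whose branch-points are exactly $p,q,r$; near each of these the developing map has the local form $z\mapsto z^{2g+1}$, so that by Definition~\ref{brpt} its Schwarzian derivative has a pole of order two there. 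The monodromy of this structure is trivial, being the composite $\pi_1(S_g)\to\pi_1(\cp)=\{1\}\to\pslc$.

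\emph{The two cases.} For $k=2$ I would delete the points $p$ and $q$: the surface $S_g\setminus\{p,q\}\cong S_{g,2}$ inherits a branched $\cp$-structure with trivial holonomy whose only branch-point is $r$. Around each of the two punctures $p,q$ the monodromy is trivial (the local map $z\mapsto z^{2g+1}$ is single-valued) and the Schwarzian derivative has a pole of order two, as required. For $k=1$ I would instead delete only the point $p$, obtaining a branched $\cp$-structure on $S_g\setminus\{p\}\cong S_{g,1}$ with trivial holonomy, with exactly the two branch-points $q$ and $r$, and again with trivial monodromy and a double pole of the Schwarzian at the puncture $p$. This produces the structures claimed in the statement.

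\emph{Main obstacle.} There is in fact no serious obstacle, since the only nontrivial ingredient --- the existence of $\Pi$ --- is furnished by the proof of Lemma~\ref{triv} (via Proposition 3.3 of \cite{E-K-S}). The one point worth recording is that the branch-point counts are optimal. Indeed, by the Riemann--Hurwitz formula a degree-$d$ branched covering $f\colon S_g\to\cp$ with $g\ge 1$ satisfies $\sum_x(e_x-1)=2d-2+2g$ with $e_x\le d$ for every $x$, whence it has at least three critical points; consequently, deleting $k$ points from any trivial-holonomy branched $\cp$-structure on $S_g$ leaves at least $3-k$ branch-points, and (by Lemma~\ref{trivex}, or directly by the same Riemann--Hurwitz count applied after filling the puncture) a trivial-holonomy branched $\cp$-structure on $S_{g,1}$ must have at least two branch-points and one on $S_{g,2}$ at least one. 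So the counts $2$ (for $k=1$) and $1$ (for $k=2$) cannot be improved.
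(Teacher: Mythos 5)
Your proof is correct and is essentially the paper's argument: the paper regards $\rho$ as the trivial representation of $\pi_1(S_{g,3})$, invokes Lemma \ref{triv} (i.e.\ the same branched cover $\Pi$), and then fills one or two of the punctures with branched charts, which is exactly your construction phrased as deleting one or two critical points of the pulled-back structure on the closed surface $S_g$. Your additional Riemann--Hurwitz remark on the optimality of the branch-point counts goes beyond what the paper records but does not change the substance.
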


\begin{proof}[Proof of Lemma \ref{trivk12}]
Let $k=1,2$ and let $\rho:\pi_1(S_{g,k})\longrightarrow \pslc$ be the trivial representation. We can regard $\rho$ as the trivial representation $\overline{\rho}:\pi_1(S_{g,3})\longrightarrow \pslc$. Our Lemma \ref{triv} applies and hence $\overline{\rho}$ appears as the monodromy of a complex projective structure on $S_{g,3}$. We eventually fill one or two punctures with a (necessarily) branched projective chart depending on whether $k=2$ or $k=1$ respectively. In both cases, we obtain a branched projective structure on $S_{g,k}$, with $k=1,2$, with trivial monodromy.
\end{proof}

\bibliographystyle{amsalpha}
\bibliography{monodromy-aug-new}

\providecommand{\bysame}{\leavevmode\hbox to3em{\hrulefill}\thinspace}
\providecommand{\MR}{\relax\ifhmode\unskip\space\fi MR }
\providecommand{\MRhref}[2]{%
  \href{http://www.ams.org/mathscinet-getitem?mr=#1}{#2}
}
\providecommand{\href}[2]{#2}
\begin{thebibliography}{GKM00}

\bibitem[AB20]{AllBrid}
Dylan Allegretti and Tom Bridgeland, \emph{The {m}onodromy of meromorphic
  {p}rojective {s}tructures}, Transactions of the AMS \textbf{373} (2020),
  6321--6367.

\bibitem[Bab17]{Baba2}
Shinpei Baba, \emph{{$2\pi$}-grafting and complex projective structures with
  generic holonomy}, Geom. Funct. Anal. \textbf{27} (2017), no.~5, 1017--1069.

\bibitem[BBCR]{BBCR}
S.A. Ballas, P.L. Bowers, A.~Casella, and L.~Ruffoni, \emph{Tame and relatively
  elliptic $\mathbb{C}{P}^1$-structures on the thrice-punctured sphere}, {\it
  preprint, arXiv:2107.06370}.

\bibitem[BDG19]{BDG}
Indranil Biswas, Sorin Dumitrescu, and Subhojoy Gupta, \emph{Branched
  projective structures on a {R}iemann surface and logarithmic connections},
  Doc. Math. \textbf{24} (2019), 2299--2337.

\bibitem[BJJP20]{BJJP}
M.~Bainbridge, C.~Johnson, C.~Judge, and I.~Park, \emph{Haupt's theorem for
  strata of abelian differentials}, {\it preprint, aXiv:2002.12901}, 2020.

\bibitem[CDF14]{CDF}
Gabriel Calsamiglia, Bertrand Deroin, and Stefano Francaviglia, \emph{Branched
  projective structures with {F}uchsian holonomy}, Geom. Topol. \textbf{18}
  (2014), no.~1, 379--446.

\bibitem[CFG21]{CFG}
S.~Chenakkod, G.~Faraco, and S.~Gupta, \emph{Periods of meromorphic abelian
  differentials}, preprint, available at
  \url{https://arxiv.org/pdf/2103.01580.pdf}, 2021.

\bibitem[dSG16]{Saint-Gervais}
Henri~Paul de~Saint-Gervais, \emph{Uniformization of {R}iemann surfaces},
  Heritage of European Mathematics, European Mathematical Society (EMS),
  Z\"{u}rich, 2016, Revisiting a hundred-year-old theorem, Translated from the
  2010 French original.

\bibitem[Dum09]{Dum}
David Dumas, \emph{Complex projective structures}, Handbook of {T}eichm\"uller
  theory. {V}ol. {II}, IRMA Lect. Math. Theor. Phys., vol.~13, Eur. Math. Soc.,
  Z\"urich, 2009, pp.~455--508.

\bibitem[EKS84]{E-K-S}
Allan~L. Edmonds, Ravi~S. Kulkarni, and Robert~E. Stong, \emph{Realizability of
  branched coverings of surfaces}, Trans. Amer. Math. Soc. \textbf{282} (1984),
  no.~2, 773--790.

\bibitem[Ere]{Eremenko}
Alexandre Eremenko, \emph{Co-axial monodromy}, {\it preprint,
  arXiv:1706.04608}.

\bibitem[Far20]{Faraco}
Gianluca Faraco, \emph{Distances on the moduli space of complex projective
  structures}, Expo. Math. \textbf{38} (2020), no.~4, 407--429.

\bibitem[FG06]{FG}
Vladimir Fock and Alexander Goncharov, \emph{Moduli spaces of local systems and
  higher {T}eichm\"{u}ller theory}, Publ. Math. Inst. Hautes \'{E}tudes Sci.
  (2006), no.~103, 1--211.

\bibitem[GKM00]{GKM}
Daniel Gallo, Michael Kapovich, and Albert Marden, \emph{The monodromy groups
  of {S}chwarzian equations on closed {R}iemann surfaces}, Ann. of Math. (2)
  \textbf{151} (2000), no.~2, 625--704.

\bibitem[GM20]{GM2}
Subhojoy Gupta and Mahan Mj, \emph{Monodromy representations of meromorphic
  projective structures}, Proc. Amer. Math. Soc. \textbf{148} (2020), no.~5,
  2069--2078.

\bibitem[GM21]{GM1}
\bysame, \emph{Meromorphic projective structures, grafting and the monodromy
  map}, Adv. Math. \textbf{383} (2021), 107673,
  \url{https://doi.org/10.1016/j.aim.2021.107673}.

\bibitem[Gun67]{Gunn}
R.~C. Gunning, \emph{Special coordinate coverings of {R}iemann surfaces}, Math.
  Ann. \textbf{170} (1967), 67--86.

\bibitem[Gup21]{GupMon1}
Subhojoy Gupta, \emph{Monodromy groups of $\mathbb{C}{P}^1$-structures on
  punctured surfaces}, Journal of Topology \textbf{14} (2021), no.~2, 538--559.

\bibitem[Hau20]{Haupt}
Otto Haupt, \emph{Ein {S}atz \"{u}ber die {A}belschen {I}ntegrale 1.
  {G}attung}, Math. Z. \textbf{6} (1920), no.~3-4, 219--237.

\bibitem[Hus62]{Husemoller}
Dale~H. Husemoller, \emph{Ramified coverings of {R}iemann surfaces}, Duke Math.
  J. \textbf{29} (1962), 167--174.

\bibitem[Inc44]{Ince}
E.~L. Ince, \emph{Ordinary {D}ifferential {E}quations}, Dover Publications, New
  York, 1944.

\bibitem[Kap20]{Kapovich}
Michael Kapovich, \emph{Periods of abelian differentials and dynamics},
  Dynamics: topology and numbers, Contemp. Math., vol. 744, Amer. Math. Soc.,
  [Providence], RI, [2020] \copyright 2020, pp.~297--315.

\bibitem[LF20]{Fils}
T.~Le~Fils, \emph{Periods of abelian differentials with prescribed
  singularities}, {\it preprint, aXiv:2003.02216}, 2020.

\bibitem[LF21]{Fils2}
\bysame, \emph{Holonomy of complex projective structures on surfaces with
  prescribed branch data}, {\it preprint, arXiv:2103.11451}, 2021.

\bibitem[Luo93]{Luo}
Feng Luo, \emph{Monodromy groups of projective structures on punctured
  surfaces}, Invent. Math. \textbf{111} (1993), no.~3, 541--555.

\bibitem[Man73]{Mandelbaum}
Richard Mandelbaum, \emph{Branched structures and affine and projective bundles
  on {R}iemann surfaces}, Trans. Amer. Math. Soc. \textbf{183} (1973), 37--58.

\bibitem[Mar85]{Martens}
Henrik~H. Martens, \emph{On a theorem of {O}. {H}aupt characterizing periods of
  abelian differentials}, Ann. Acad. Sci. Fenn. Ser. A I Math. \textbf{10}
  (1985), 377--380.

\bibitem[MP16]{Mondello-Panov1}
Gabriele Mondello and Dmitri Panov, \emph{Spherical metrics with conical
  singularities on a 2-sphere: angle constraints}, International Mathematical
  Research Notices (2016), no.~16, 4937--4995.

\bibitem[MP19]{Mondello-Panov2}
\bysame, \emph{Spherical surfaces with conical points: systole inequality and
  moduli spaces with many connected components}, Geom. Funct. Anal. \textbf{29}
  (2019), no.~4, 1110--1193.

\bibitem[Nas21]{Nas}
G.~Nascimento, \emph{Monodromies of projective structures on surface of
  finite-type}, \url{https://arxiv.org/pdf/2105.07084.pdf}, 2021.

\bibitem[New09]{Newstead}
P.~E. Newstead, \emph{Geometric invariant theory}, Moduli spaces and vector
  bundles, London Math. Soc. Lecture Note Ser., vol. 359, Cambridge Univ.
  Press, Cambridge, 2009, See also
  \url{https://www.cimat.mx/Eventos/c_vectorbundles/newstead_notes.pdf}.

\bibitem[Poi84]{PActa}
H.~Poincar\'{e}, \emph{Sur les groupes des \'{e}quations lin\'{e}aires}, Acta
  Math. \textbf{4} (1884), no.~1, 201--312.

\end{thebibliography}

\end{document}